\documentclass[12pt]{amsart}

\usepackage{amsmath,amssymb,amsthm,upref}
\usepackage[margin=1.5in]{geometry}                
\geometry{a4paper} 
\usepackage[ansinew]{inputenc}
\usepackage{mathrsfs}
\usepackage[all]{xy}
\usepackage{fullpage}
\usepackage{paralist}
\usepackage{graphicx}

%
%
%

\newcommand{\Dl}{\ensuremath{\Delta} }
\newcommand{\simp}{\Delta^\comp}

\newcommand{\ds}{\displaystyle}

\newcommand{\mc}[1]{\mathcal{#1}}
\newcommand{\mrm}[1]{\mathrm{#1}}
\newcommand{\mbf}[1]{\mathbf{#1}}

\newcommand{\mtt}[1]{\mathtt{#1}}
\newcommand{\mbb}[1]{\mathbb{#1}}
\newcommand{\comp}{{\scriptscriptstyle \ensuremath{\circ}}}
\newcommand{\compc}{{\scriptscriptstyle \ensuremath{\stackrel{\star}{\mbox{}}}}}

\newcommand{\scr}[1]{{\scriptscriptstyle #1}}

\theoremstyle{theorem}
\newtheorem{thm}{Theorem}[section]

\newtheorem{lema}[thm]{Lemma}
\newtheorem{prop}[thm]{Proposition}
\newtheorem{cor}[thm]{Corollary}

\theoremstyle{definition}
\newtheorem{ej}[thm]{Example}

\newtheorem{obs}[thm]{Remark}
\newtheorem{defi}[thm]{Def{i}nition}

\newenvironment{num}{\medskip
\refstepcounter{thm}\noindent {\bf (\thethm)}}{\vspace{0.2ex}\par}

\footskip = 30pt

%
%

%
%

\setcounter{tocdepth}{1}

\begin{document}

\title[Realizable homotopy colimits]
{Realizable homotopy colimits}
%
%
\markboth{Beatriz Rodr\'iguez Gonz\'alez}{Realizable homotopy colimits}
%
%
 \author[ Beatriz Rodr\'iguez Gonz\'alez]{ Beatriz Rodr\'iguez-Gonz\'alez}
 \address{\hspace{-0.5cm}  B. Rodr\'iguez-Gonz\'alez
    \newline Instituto de Ciencias Matem\'aticas
    \newline
    Campus Cantoblanco UAM 
    \newline
    28049 Madrid
    \newline
    SPAIN }
 \email{rgbea@icmat.es}

%
 \subjclass[2000]{55U35; 18G30}
%
%
 \keywords{Bousf{i}eld-Kan homotopy colimit, Grothendieck derivator, $\Dl$-closed class.}
%
%
 \thanks{Partially supported by the ERC Starting Grant project TGASS and the contracts MTM2007-67908-C02-02, FQM-218 and SGR-119. 
The author gratefully acknowledges STORM department of London Metropolitan University 
for excellent working conditions.}

%

\begin{abstract}
In this paper we prove that for any model category, the Bousf{i}eld-Kan construction of the homotopy colimit is 
the absolute left derived functor of the colimit. This is achieved by showing that the Bousf{i}eld-Kan homotopy colimit 
is  moreover a \textit{realizable homotopy colimit}, def{i}ned 
by means of a suitable 2-category of \textit{relative categories}. In addition, in the case of exact coproducts, we characterize the realizable homotopy 
colimits that satisfy a cof{i}nality property as those given by a formula following the pattern of Bousf{i}eld-Kan construction: they are the composition 
of a `geometric realization' with the simplicial replacement.
\end{abstract}

\maketitle

\renewcommand{\contentsname}{Table of Contents.}
\tableofcontents 


\section*{Introduction.}

Homotopy colimits are a cornerstone of Algebraic Topology and Homotopy Theory, but they are presented under dif{f}erent approaches in the 
literature. Sometimes a homotopy colimit is def{i}ned through a specif{i}c construction: for instance, the Bousf{i}eld-Kan homotopy colimit 
is a explicit formula f{i}rst introduced in \cite{BK} for simplicial sets, and generalized to general model categories (\cite{H}).
Other times a homotopy colimit is introduced through a universal property: in the Quillen model category setting, homotopy colimits are def{i}ned as 
left derived functors of colimits. Also, in the setting of Grothendieck derivators a homotopy colimit is def{i}ned as a left adjoint to the 
localized constant diagram functor. More recently, in the context of $\Dl$-closed classes V. Voevodsky def{i}nes a homotopy colimit as a particular 
construction involving the simplicial replacement of diagrams (\cite{V}). And there are other treatments of homotopy colimits, for instance the one of \cite{DHKS}.

In this paper we prove that the specif{i}c constructions of Bousf{i}eld-Kan and Voevodsky 
satisfy the universal properties of Grothendieck and Quillen homotopy colimits. 
With this aim, we work in the general framework of relative categories 
of \cite{BaK} (or, equivalently, of `homotopical categories' under the terminology of \cite{DHKS}).
A natural notion of homotopy colimit in this framework is the one of 
\textit{realizable homotopy colimit}, that we define, broadly speaking, as a homotopy colimit `realized' by a functor 
defined \textit{before} localizing with respect to the weak equivalences. This is made precise 
using a suitable 2-category structure on relative categories (see Definition \ref{defiRhocolim}). 
It holds that, after localizing with 
respect to the weak equivalences, a realizable homotopy colimit is a Grothendieck homotopy colimit and 
an absolute left derived functor of the colimit.\\
\indent In the case of exact coproducts, we characterize the realizable homotopy colimits that satisfy a cof{i}nality property as those given by a formula 
following the pattern of Bousf{i}eld-Kan and Voevodsky constructions.
More precisely, they are the composition of a `geometric realization' for simplicial objects with the simplicial replacement. For a relative category 
$(\mc{C},\mc{W})$ the property of possessing a `geometric realization' $\mbf{s}:\simp\mc{C}\rightarrow\mc{C}$, that we call \textit{simple functor}, is 
encoded in the notion of \textit{simplicial descent category} (\cite{R}), which is inspired by the cubical homological descent categories 
of \cite{GN}. Under this notations, the characterization previously mentioned is the\\[0.2cm]
\textbf{Theorem \ref{characterization}.}
\textit{Let $(\mc{C},\mc{W})$ be a relative category closed by coproducts. The following are equivalent:
\begin{compactitem}
\item[\textbf{i.}] $(\mc{C},\mc{W})$ admits realizable homotopy colimits $\mtt{hocolim}_I:\mc{C}^I\rightarrow \mc{C}$, which
are invariant under homotopy right cof{i}nal changes of diagrams.
\item[\textbf{ii.}] $(\mc{C},\mc{W})$ admits a simplicial descent structure with simple functor $\mbf{s}:\simp\mc{C}\rightarrow \mc{C}$.
\end{compactitem}
In addition, if these equivalent conditions hold then for each small category $I$ there is a natural 
isomorphism $\mtt{hocolim}_I \simeq \mbf{s}\amalg^I$ of $\mc{R}el\mc{C}at$, 
where $\amalg^I:\mc{C}^I\rightarrow \simp\mc{C}$ denotes the simplicial replacement.}\\[0.2cm]
Furthermore, we prove that under the previous equivalent conditions all homotopy left Kan extensions exist on $(\mc{C},\mc{W})$ 
and may be computed pointwise. In the setting of Grothendieck derivators, this means that the prederivator associated with $(\mc{C},\mc{W})$ 
is a \textit{weak right derivator} (see Theorem \ref{RightDerivator}).
Then, in the case of exact coproducts we have reduced the task of checking the existence of homotopy colimits and pointwise homotopy left Kan extension on a 
given $(\mc{C},\mc{W})$ to the task of verifying that $(\mc{C},\mc{W})$ admits a simple functor $\mbf{s}:\simp\mc{C}\rightarrow \mc{C}$.

In the context of model categories, we show that the Bousf{i}eld-Kan simplicial homotopy colimit is a simple functor on pointwise cof{i}brant diagrams.
We use this fact together with previous characterization to prove the\\[0.2cm] 
\textbf{Theorem \ref{MCHCbis}.}
\textit{Let $(\mathcal{M},\mathcal{W})$ be a model category. Then $(\mc{M},\mc{W})$ admits realizable homotopy colimits, which are invariant under homotopy right cof{i}nal changes of diagrams. 
In addition, they may be computed using the corrected 
Bousf{i}eld-Kan formula
$$\mtt{{}_c hocolim}^{BK}_I X = \int^i  \widetilde{QX}(i) \otimes \mrm{N}(i/I)^{\comp}$$}
\noindent As a corollary, we deduce that the corrected Bousf{i}eld-Kan homotopy colimit on any model category is indeed the absolute left derived functor of the colimit.
A proof of this fact for the particular case of a simplicial model category may be found in \cite{Ga}.
Another consequence of previous theorem is that a functor between model categories that preserves weak equivalences between cof{i}brant objects 
commutes with homotopy colimits if and only if it commutes with homotopy 
coproducts and with the homotopy colimit for simplicial objects (see Corollary \ref{FcommutesHocolim}).

Returning to the general setting of realizable homotopy colimits, we remark that Voevodsky homotopy colimits for $\Dl$-closed classes are not only a particular 
instance of them, but a key ingredient in the proof of previous results. Indeed, it is proved in \cite{R} that a simple functor gives an equivalence
in $\mc{R}el\mc{C}at$ between a simplicial descent category $(\mc{C},\mc{W})$ and $(\simp\mc{C},\mc{S})$, where $\mc{S}$ is a $\Dl$-closed class.
Under this equivalence the realizable homotopy colimit $\mbf{s}\amalg^I$ on $(\mc{C},\mc{W})$ corresponds to the Voevodsky homotopy colimit on 
$(\simp\mc{C},\mc{S})$. Using simplicial homotopy theory, and remarkably Illusie's bisimplicial decalage (\cite{I}), 
we construct in section \ref{VoevodskySection} explicit adjunction morphisms to prove that Voevodsky homotopy colimit is realizable.\\

\noindent {\sc Acknowledgements:} I would like to show my gratitude to Luis Narv{\'a}ez Macarro and Vicente Navarro Aznar for encouraging me to continue in the 
research world. This work would not be possible without their mathematical and professional advice. I am also deeply 
indebted to Javier Fern{\'a}ndez de Bobadilla for his conf{i}dence and patience during my participation in his ERC Starting Grant research project.

\section{Simplicial Preliminaries.}

\subsection{Simplicial objects.}\mbox{}\\[0.2cm]
\indent We denote by $\Dl$ the \textit{simplicial category}, with objects the ordered sets $[n]=\{0<\ldots < n\}$, $n\geq 0$, and morphisms the order
preserving maps. The \textit{face maps} $d^i:[n-1]\rightarrow [n]$ are characterized by $d^i([n-1])=[n]-\{i\}$, and the degeneracy maps $s^j:[n+1]\rightarrow [n]$ are the
surjective monotone maps with $s^j(j)=s^j(j+1)$. They satisfy the well-known \textit{simplicial identities}, and generate all maps in $\Dl$ (see \cite{May}).
We will also write $[n]$ for the category associated with the ordered set $\{0< 1 < \cdots < n\}$.

By $\simp\mc{D}$ (resp. $\simp\simp\mc{D}$) we mean the category of \textit{simplicial} (resp. \textit{bisimplicial}) \textit{objects} in a f{i}xed category $\mc{D}$. 
The \textit{constant simplicial object} $c(A)$ def{i}ned by an object $A$ of $\mc{D}$ is is the simplicial object equal to $A$ in each degree and with identities as 
face and degeneracy maps. In this way we obtain the constant functor $c:\mc{D}\rightarrow \simp\mc{D}$. When understood, we will
denote also $c(A)$ by $A$. The \textit{diagonal functor} $\mrm{D}:\simp\simp\mc{D}\rightarrow\simp\mc{D}$ is given by 
$\mrm{D}(\{Z_{n,m}\}_{n,m\geq 0})=\{Z_{n,n}\}_{n\geq 0}$.

Dually, $\Dl\mc{D}$ is the category of \textit{cosimplicial objects} in $\mc{D}$. Since $\Dl\mc{D}=(\simp\mc{D}^\comp)^\comp$, all
def{i}nitions and constructions concerning simplicial objects may be dualized to the cosimplicial setting.
\vspace{0.2cm}
\subsection{Simplicial homotopies.}\mbox{}\\[0.2cm]
\indent If $\mc{D}$ has coproducts, there is a natural action
$\simp\mc{D}\times \simp Set \rightarrow \simp\mc{D}$, $(X,K)\mapsto X\otimes K$, given by
\begin{equation}\label{ActionSset}(X\otimes K)_n=\coprod_{K_n}X_n\end{equation}
We also denote by $\otimes$ the induced action $\mc{D}\times \simp Set \rightarrow \simp\mc{D}$, $A\otimes K= c(A) \otimes K$.
Note that only f{i}nite coproducts are needed to construct $X\otimes K$ in case $K$ is a \textit{simplicial f{i}nite set}, that is, in case each 
$K_n$ is a f{i}nite set.\\ 
Recall that $\Dl[k]$ is the simplicial f{i}nite set with $\Dl[k]_n=Hom_{\Dl}([n],[k])$. Given a simplicial object $X$, then $X\otimes \Dl[1]$ is the \textit{simplicial cylinder} of $X$. The maps
$d^0,d^1:[0]\rightarrow [1]$ induce $d_0^X,d_1^X:X\rightarrow X\otimes \Dl[1]$. Simplicial homotopies and simplicial homotopy equivalences are def{i}ned in $\simp\mc{D}$ as usual. That is,
$f,g:X\rightarrow Y$ are called \textit{simplicially homotopic}, denoted $f\sim g$, if there exists $H:X\otimes\Dl[1]\rightarrow Y$ such that $H d_0^X =f$ and $H d_1^X =g$. A map
$f:X\rightarrow Y$ is said to be a \textit{simplicial homotopy equivalence} if there exists $f':Y\rightarrow X$ such that $f f' \sim 1$ and $f' f\sim 1$.

Given an object $A$ of $\mc{D}$ and a simplicial object $X$, an augmentation $\epsilon:X\rightarrow A$ is just a simplicial morphism $\epsilon:X\rightarrow c(A)$. It holds that $\epsilon$ is a simplicial homotopy equivalence if and only if $\epsilon$ has an
\textit{extra degeneracy} ${s}_{-1}$ or $s_{n+1}$ (see \cite[3; 3.2]{B}). This means that there exists $s_{-1}:A\rightarrow X_0$ (resp. ${s}_{0}:A\rightarrow X_0$) and 
$s_{-1}:X_{n}\rightarrow X_{n+1}$ (resp. $s_{n+1}:X_{n}\rightarrow X_{n+1}$) satisfying the simplicial identities.\\
Dually, if $\mc{D}$ has products there is a natural coaction $\Dl\mc{D}\times (\simp Set)^{\comp}\rightarrow \Dl\mc{D}$, $(X,K)\mapsto X^K$,
$$(X^K)^n =\prod_{K_n}X^n$$
In this way, the cosimplicial path object of $X$ is def{i}ned as $X^{\Dl[1]}$, and cosimplicial homotopies and homotopy equivalences are def{i}ned dually.
\vspace{0.2cm}
\subsection{Undercategories and overcategories.}\mbox{}\\[0.2cm]
\indent Given a functor $F:I\rightarrow J$ and an object $x\in J$, recall that the \textit{overcategory} $(F/ x)$ has as objects the pairs $(y,f)$
where $y\in {I}$ and $f:F(y)\rightarrow x$ of $J$. The morphisms of $(F/ x)$ are the commutative triangles
$$\xymatrix@M=4pt@H=4pt@C=25pt{ F(y) \ar[r]^{f} \ar[d]_{F(g)} & x  \\
F(y') \ar[ru]_{f'} & }$$
We denote by $(I/ x)$ the overcategory $(1_{I}/ x)$, where $1_I:I\rightarrow I$ is the identity functor. Analogously, the morphisms $x\rightarrow F(y)$ form the \textit{undercategory} $(x/F)$.
\vspace{0.2cm}
\subsection{Simplicial nerve and cof{i}nality.}\mbox{}\\[0.2cm]
\indent Denote by $cat$ the category of small categories. Given $I$ in $cat$, its \textit{simplicial nerve} $\mrm{N}(I)$ is the simplicial set given in degree $n$ by 
$Hom_{cat}([n],I)$. In other words, an $n$-simplex of $\mrm{N}(I)$ is a functor from $[n]$ to $I$. The face and degeneracy maps of $\mrm{N}(I)$ are obtained by 
composition with $d^i : [n-1]\rightarrow [n]$ and $s^j:[n]\rightarrow [n+1]$.

A functor $F:I\rightarrow J$ is called \textit{left cof{i}nal} if for each $x\in J$ the simplicial set $\mrm{N}(F/x)$ is non-empty and connected.
It is \textit{homotopy left cof{i}nal} if $\mrm{N}(F/ x)$ is  contractible for each $x\in J$. This means that $\mrm{N}(F/x)\rightarrow \Dl[0]$ is a weak homotopy 
equivalence of simplicial sets.\\
Dually $F$ is \textit{right cof{i}nal}, resp. \textit{homotopy right cof{i}nal}, if $F^{op}:{I}^{op}\rightarrow J^{op}$ is left cof{i}nal, resp. homotopy left cof{i}nal.
This is the same as saying that for each $x\in J$, $\mrm{N}(x/F)$ is non-empty and connected (resp. contractible).
\vspace{0.2cm}
\subsection{Simplicial replacement of diagrams.}\mbox{}\\[0.2cm]
\indent Let $\mc{D}$ be a category closed by coproducts and $I$ a small category, and denote by $\mc{D}^I$ the category of functors from $I$ to $\mc{D}$.
The \textit{simplicial replacement functor} $ \amalg {}^{I} :\mc{D}^I \longrightarrow  \simp\mc{D} $ maps $X:I\rightarrow \mc{D}$ to the simplicial object 
$\amalg^I X$ given in degree $n$ by
 $$\amalg^I_n X= \ds\coprod_{i_0\rightarrow \cdots\rightarrow i_n} X_{i_0} $$
The coproduct is indexed over the set consisting of those $\underline{i}=\{ i_0\rightarrow \cdots\rightarrow i_n\}\in\mrm{N}_n(I)$, the $n$-simplexes of the 
simplicial nerve of $I$. The face and degeneracy maps of $\amalg^I X$ are def{i}ned as follows. If $0< k \leq n$ then $d_k : \amalg^I_n X\rightarrow \amalg^I_{n-1} X$ maps
the term $X_{i_0}$ with index $\underline{i}$ to the term $X_{i_0}$ with index $d_k(\underline{i})$ through the identity, while $d_0$ sends this term to
$X_{i_1}$ with index $d_0(\underline{i})$, through $X(i_0\rightarrow i_1)$. The degeneracy map $s_k : \amalg^I_n X\rightarrow \amalg^I_{n+1} X$
sends $X_{i_0}$ with index $\underline{i}$ to $X_{i_0}$ with index $s_k(\underline{i})$ through the identity.

Note that the simplicial replacement construction is natural on $I$. Given two diagrams $X:I\rightarrow \mc{D}$ and $Y:J\rightarrow \mc{D}$, a morphism
$(f,\tau):X\rightarrow Y$ between them is a functor $f:I\rightarrow J$ plus a natural transformation $\alpha :  X\rightarrow f^\ast Y$.
Then 
\begin{equation}\label{SRnatural}\amalg^\bullet (f,\tau): \amalg^I X \rightarrow \amalg^J Y\end{equation} 
sends the component $X_{i_0}$ indexed by $\underline{i}$ to the component $Y_{f(i_0)}$ indexed by $f(\underline{i})$ through 
$\tau_{i_0}:X_{i_0}\rightarrow (f^\ast Y)_{i_0}=Y_{f(i_0)}$.

\begin{obs}
The colimit of a diagram $X:I\rightarrow \mc{D}$, if it exists, agrees with the colimit of $\amalg^I X$. Indeed,  the colimit of the simplicial diagram $\amalg^I X$ is the coequalizer of $\coprod_{i} X_{i} \leftleftarrows \coprod_{i\rightarrow j} X_{i}$, which computes the colimit of $X$.
Therefore, $\amalg^I X$ has in this case the augmentation
\begin{equation}\label{Augmcolim}\xymatrix@M=4pt@H=4pt@C=25pt{ \mtt{colim}_I X  &  \coprod_{i} X_{i} \; \ar[l] \ar@/_1.5pc/[r] & {\;} \coprod_{i\rightarrow j} X_{i} \;
\ar@<0.5ex>[l] \ar@<-0.5ex>[l] \ar@/_1.5pc/[r]
\ar@/_1pc/[r] & \; \coprod_{i\rightarrow j\rightarrow k} X_{i}\; \ar@<0ex>[l] \ar@<1ex>[l]
\ar@<-1ex>[l] & \cdots\cdots }\end{equation}
\end{obs}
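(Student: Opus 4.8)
The plan is to factor the statement through two observations and then read off the augmentation. First I would reduce the colimit of \emph{any} simplicial object to the coequalizer of its first two faces; second, I would compute that coequalizer for $Z=\amalg^I X$ and match it, by comparing universal cocones, with $\mtt{colim}_I X$. The augmentation in \eqref{Augmcolim} then comes for free from the universal cocone.

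First I would prove that for a simplicial object $Z$ in $\mc{D}$ one has $\mtt{colim}\, Z=\mathrm{coeq}(d_0,d_1\colon Z_1\rightrightarrows Z_0)$ whenever either side exists. The point is that a cocone $Z\to c(A)$, i.e.\ a family $\{\phi_n\colon Z_n\to A\}$ natural in $[n]\in\simp$, is determined by $\phi_0$: every object $[n]$ of $\Dl$ admits a morphism $\sigma\colon[0]\to[n]$, so naturality forces $\phi_n=\phi_0\circ Z(\sigma)$. Conversely, a map $\phi_0\colon Z_0\to A$ with $\phi_0 d_0=\phi_0 d_1$ determines a cocone by setting $\phi_n:=\phi_0\circ Z(\sigma)$ for any $\sigma\colon[0]\to[n]$; the coequalizer relation guarantees independence of $\sigma$ (any two vertices of $[n]$ are joined by edges, each reducing to the level-one relation), and the simplicial identities, with $s_0$ exhibiting $(d_0,d_1)$ as a reflexive pair, guarantee naturality for all maps of $\Dl$. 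Thus cocones on $Z$ correspond naturally to maps out of $Z_0$ coequalizing $d_0,d_1$. I expect this step, precisely the verification that the level-one relation suffices and that the higher simplicial identities impose nothing further (equivalently, that the full subcategory of $\simp$ on $[0],[1]$ is final), to be the one point requiring genuine care.

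It then remains to identify $\mathrm{coeq}(d_0,d_1)$ for $Z=\amalg^I X$, where $Z_0=\coprod_i X_i$ and $Z_1=\coprod_{u\colon i\to j}X_i$. Using the explicit faces recalled above, on the summand indexed by $u\colon i\to j$ the map $d_1$ is the identity into the summand $X_i\subseteq Z_0$, while $d_0$ is $X(u)\colon X_i\to X_j\subseteq Z_0$. Hence a map $f\colon\coprod_i X_i\to A$ with components $f_i\colon X_i\to A$ coequalizes $d_0,d_1$ if and only if $f_j\circ X(u)=f_i$ for every $u\colon i\to j$, which is exactly the condition that $\{f_i\}$ be a cocone on $X\colon I\to\mc{D}$. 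So cocones on $\amalg^I X$ correspond naturally to cocones on $X$, whence the two colimits agree: either exists iff the other does, and then they are canonically isomorphic. Finally, when $\mtt{colim}_I X$ exists its universal cocone is a map $\coprod_i X_i\to\mtt{colim}_I X$ coequalizing $d_0,d_1$, i.e.\ a simplicial morphism $\amalg^I X\to c(\mtt{colim}_I X)$; this is the augmentation of \eqref{Augmcolim}, whose initial datum is precisely $\coprod_i X_i\leftleftarrows\coprod_{i\to j}X_i$.
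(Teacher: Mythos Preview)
Your proposal is correct and follows precisely the approach sketched in the paper's one-line justification: the paper simply asserts that the colimit of the simplicial diagram $\amalg^I X$ is the coequalizer of $\coprod_i X_i \leftleftarrows \coprod_{i\to j} X_i$, which computes $\mtt{colim}_I X$, and you have carefully unpacked both halves of that assertion. Your verification that cocones on a simplicial object are governed by the level-one data (equivalently, that the inclusion of $[0],[1]$ into $\Dl$ is final for colimit purposes) and your identification of the resulting coequalizer with the standard presentation of $\mtt{colim}_I X$ are exactly the details the remark leaves implicit.
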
 
\vspace{0.2cm}
\subsection{Illusie's decalage bisimplicial construction.}\mbox{}\\[0.2cm]
\indent We will strongly use Illusie's decalage bisimplicial construction introduced in \cite[p.7]{I}.
Recall that the \textit{bisimplicial decalage} $dec:\simp\mc{D}\rightarrow\simp\simp\mc{D}$ is induced by the ordinal sum $\Dl\times\Dl\rightarrow \Dl$, 
$[n]+[m]=[n+m+1]$. Given a simplicial object $Y$, $dec(Y)$ is the bisimplicial object given in bidegree $(n,m)$ by 
$dec(Y)_{n,m}=Y_{n+m+1}$.  
The face and degeneracy maps of $dec(Y)$ are def{i}ned as follows. On one hand $d_k^{I} : dec(Y)_{n,m}\rightarrow dec(Y)_{n-1,m}$  is $d_k : Y_{n+m+1}\rightarrow Y_{n+m}$, while $s_k^I$ is $s_k:Y_{n+m+1}\rightarrow
Y_{n+m+2}$. On the other hand $d_k^{II} : dec(Y)_{n,m}\rightarrow dec(Y)_{n,m-1}$ is $d_{n+k+1}:Y_{n+m+1}\rightarrow Y_{n+m}$, and $s^{II}$ is $s_{n+k+1}:Y_{n+m+1}\rightarrow
Y_{n+m+2}$.\\
Denote by $Y\times\Dl$ and $\Dl\times Y$ the bisimplicial objects with
$(Y\times\Dl)_{n,m}=Y_n$ and $(\Dl\times Y)_{n,m}=Y_m$. There are two natural augmentations $\Lambda^{I}: dec(Y)\rightarrow \Dl\times Y$ and $\Lambda^{II}:dec(Y)\rightarrow Y\times\Dl$ given respectively by
$\Lambda^{I}_{0,m}=d_0 : Y_{m+1}\rightarrow Y_m$ and $\Lambda^{II}_{n,0}=d_{n+1} : Y_{n+1}\rightarrow Y_n$.

We will make use of the following result.
\begin{prop}\label{IlDec}\emph{(\cite[Proposition 1.6.2]{I})} For each simplicial object $Y$ of $\simp\mc{D}$, the diagonals of
the augmentations $\Lambda^I$ and $\Lambda^{II}$, $\mrm{D}(\Lambda^I),\mrm{D}(\Lambda^{II}):\mrm{D}(dec(Y))\rightarrow Y$ are
simplicial homotopy equivalences which are in addition simplicially homotopic.
\end{prop}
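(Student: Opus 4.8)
The plan is to treat the two assertions separately — that each diagonal is a simplicial homotopy equivalence, and that the two diagonals are simplicially homotopic — and to reduce the first to the extra-degeneracy criterion recalled from \cite{B}, applied one simplicial variable at a time before passing to $\mrm{D}$. First I would record that, read in a single simplicial direction, each augmentation of $dec(Y)$ carries an extra degeneracy. Fixing the second index $m$, the augmented object $n\mapsto dec(Y)_{n,m}=Y_{n+m+1}$ with augmentation $\Lambda^{I}_{0,m}=d_0$ admits the ``top'' extra degeneracy $s^{I}_{n+1}:=s_{n+1}$ together with the bottom map $s_0:Y_m\to Y_{m+1}$, since the simplicial identities $d_{n+1}s_{n+1}=\mathrm{id}$, $d_is_{n+1}=s_nd_i$ $(i\le n)$ and $d_0s_0=\mathrm{id}$ are exactly the required axioms. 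Dually, fixing the first index $n$, the augmentation $\Lambda^{II}_{n,0}=d_{n+1}$ of $m\mapsto dec(Y)_{n,m}$ (whose faces are $d^{II}_k=d_{n+k+1}$) admits the ``bottom'' extra degeneracy $s^{II}_{-1}:=s_n$, since $d_{n+1}s_n=\mathrm{id}$ and $d_{n+k+1}s_n=s_nd_{n+k}$ for $k\ge1$. Hence every row of $\Lambda^{I}$ and every column of $\Lambda^{II}$ is a simplicial homotopy equivalence.

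Second, I would pass to the diagonal. The basic tool is the natural identification $\mrm{D}(Z\otimes_I\Dl[1])\cong\mrm{D}(Z)\otimes\Dl[1]$ (and its second-variable analogue), under which a homotopy carried out in one bisimplicial direction realizes to a genuine simplicial homotopy on $\mrm{D}$; since $\mrm{D}(\Dl\times Y)=Y=\mrm{D}(Y\times\Dl)$, this turns the row/column homotopy equivalences of the previous step into candidate simplicial homotopy equivalences $\mrm{D}(dec(Y))\to Y$. The delicate point — and the main obstacle — is that the one-variable extra degeneracies are \emph{not} natural in the transverse variable: $s^{I}_{n+1}$ fails to commute with the extremal face $d^{II}_0=d_{n+1}$, and $s^{II}_{-1}=s_n$ fails to commute with $d^{I}_n=d_n$, precisely at the block boundary of the ordinal sum $[n]+[m]=[n+m+1]$. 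Thus the homotopy inverse and contracting homotopies are not bisimplicial maps on the nose, and one must instead assemble them coherently into honest bisimplicial homotopies before applying $\mrm{D}$; this coherent assembly, effected by explicit prism/shuffle homotopies, is the technical core of Illusie's argument. It is enough to carry it out for one of the two augmentations, say $\Lambda^{II}$, so that $\mrm{D}(\Lambda^{II})$ is established to be a simplicial homotopy equivalence.

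Finally, for the homotopy between the two diagonals I would argue operadically, which is clean. Writing $D:\Dl\to\Dl$ for the doubling endofunctor $[n]\mapsto[n]+[n]=[2n+1]$ (ordinal sum precomposed with the diagonal $\Dl\to\Dl\times\Dl$), a direct check on faces gives $\mrm{D}(dec(Y))=Y\circ D^{\comp}$, and $\mrm{D}(\Lambda^{II})$, $\mrm{D}(\Lambda^{I})$ are induced by the two summand inclusions $u,v:\mathrm{Id}_{\Dl}\Rightarrow D$, namely $u_{[n]}(i)=i$ (first block) and $v_{[n]}(i)=n+1+i$ (second block). Since $u_{[n]}\le v_{[n]}$ pointwise and both are natural in $[n]$, the order relation is a $2$-cell $u\Rightarrow v$; concretely, for each $\phi\in\Dl[1]_n$ the monotone map $i\mapsto h_{[n]}(i,\phi(i))$ interpolating between $u$ and $v$ yields, upon applying $Y$, a simplicial homotopy $\mrm{D}(dec(Y))\otimes\Dl[1]\to Y$ between $\mrm{D}(\Lambda^{II})$ and $\mrm{D}(\Lambda^{I})$. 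Being simplicially homotopic to the equivalence $\mrm{D}(\Lambda^{II})$, the map $\mrm{D}(\Lambda^{I})$ is then automatically a simplicial homotopy equivalence as well, which completes the argument.
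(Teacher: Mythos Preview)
The paper does not give its own proof of this proposition: it is stated with a bare citation to \cite[Proposition 1.6.2]{I}, and only the sentence following it (about naturality in $Y$) is added. So there is no in-paper argument to compare against, and your outline is already more than what the paper supplies.

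Your proposal is a correct and well-organized sketch of Illusie's argument. The identification of row/column extra degeneracies is right, and your diagnosis of the obstruction is accurate: the candidate extra degeneracy for $\Lambda^{II}$, namely $s_n:Y_{n+m+1}\to Y_{n+m+2}$, commutes with all first-direction faces $d_0^{I},\ldots,d_{n-1}^{I}$ and all $s_k^{I}$ but fails precisely against $d_n^{I}$ (since $d_ns_n=\mathrm{id}\ne s_{n-1}d_n$), and dually for $\Lambda^{I}$. This is exactly why the homotopy inverse cannot be written as a strict bisimplicial map and one needs the explicit prism homotopies that you attribute to Illusie. You do not carry that step out, but since the paper does not either, this is not a gap relative to the paper.

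Your treatment of the simplicial homotopy $\mrm{D}(\Lambda^{I})\sim\mrm{D}(\Lambda^{II})$ via the doubling endofunctor $D:[n]\mapsto[n]+[n]$ and the order relation $u\le v$ between the two block inclusions is clean and correct; the naturality check $(\alpha+\alpha)\circ h_{\phi\alpha}=h_{\phi}\circ\alpha$ goes through on the nose for your formula $h_{\phi}(i)=i+(n+1)\phi(i)$. Note, for consistency with the paper's conventions, that $\mrm{D}(\Lambda^{I})$ corresponds to the \emph{second}-block inclusion $v$ (since $\Lambda^{I}_{0,m}=d_0$ iterates to $d_0^{\,n+1}$, induced by $i\mapsto n+1+i$) and $\mrm{D}(\Lambda^{II})$ to the first-block inclusion $u$; you have them labeled the other way, but this is harmless. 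Your final reduction --- once $\mrm{D}(\Lambda^{II})$ is known to be a simplicial homotopy equivalence, the homotopy $\mrm{D}(\Lambda^{I})\sim\mrm{D}(\Lambda^{II})$ forces $\mrm{D}(\Lambda^{I})$ to be one too --- is valid.
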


In addition, it follows from the proof given in loc. cit. that the simplicial homotopies involved in previous proposition are natural on $Y$.

\section{Realizable homotopy colimits.}

\subsection{Relative categories as a 2-category.}\mbox{}\\[0.2cm]
\indent
Given a class $\mc{W}$ of morphisms in a category $\mc{C}$, recall that the \textit{localization} of $\mc{C}$ with respect to $\mc{W}$ is the result of formally 
inverting the morphisms of $\mc{W}$ in $\mc{C}$. This gives a (possibly big category) $\mc{C}[\mc{W}^{-1}]$ plus a localization functor
$\gamma:\mc{C}\rightarrow \mc{C}[\mc{W}^{-1}]$ sending the elements of $\mc{W}$ to isomorphisms, and inducing for each category $\mc{E}$ an equivalence
of categories
\begin{equation}\label{localization}- \compc \gamma: Fun(\mc{C}[\mc{W}^{-1}],\mc{E}) \longrightarrow Fun_{\mc{W}}(\mc{C},\mc{E})\end{equation}
Here $Fun(\mc{C}[\mc{W}^{-1}],\mc{E})$ is the category of functors $F':\mc{C}[\mc{W}^{-1}]\rightarrow\mc{E}$ and 
$Fun_{\mc{W}}(\mc{C},\mc{E})$ is the full subcategory of $Fun(\mc{C},\mc{E})$ of functors $F:\mc{C}\rightarrow\mc{E}$ that send the elements in 
$\mc{W}$ to isomorphisms.

\begin{defi}
A \textit{relative category} consists of a pair $(\mc{C},\mc{W}_{\mc{C}})$ formed by a category $\mc{C}$ and a class of morphisms
$\mc{W}_{\mc{C}}$ of $\mc{C}$, whose elements are called \textit{weak equivalences}. The class $\mc{W}_{\mc{C}}$, also denoted by $\mc{W}$ for brevity, is 
assumed to be saturated. That is, $\mc{W}$ is the inverse image  by the localization functor $\gamma:\mc{C}\rightarrow\mc{C}[\mc{W}^{-1}]$ of the isomorphisms of 
$\mc{C}[\mc{W}^{-1}]$. 
\end{defi}

\begin{defi}
We say that a relative
category $(\mc{C},\mc{W})$ is \textit{closed by $($f{i}nite$)$ coproducts} if $\mc{C}$ has an initial object $0$ and both $\mc{C}$ and $\mc{W}$ are closed by
(f{i}nite) coproducts. Note that in this case $\mc{C}[\mc{W}^{-1}]$ is again closed by (f{i}nite) coproducts, and they 
are preserved by $\gamma:\mc{C}\rightarrow \mc{C}[\mc{W}^{-1}]$.
\end{defi}

If $(\mc{C},\mc{W})$ is a relative category and $I$ is any category, the category $Fun(I,\mc{C})$ of functors from $I$ to $\mc{C}$, also denoted
by $\mc{C}^I$ for shortness, is again a relative category with the \textit{pointwise weak equivalences}. These are by def{i}nition the natural transformations $\tau:F\rightarrow G$  
such that $\tau_i:F(i)\rightarrow G(i)$ is a weak equivalence of $\mc{C}$ for each object $i$ of $I$. The pointwise weak equivalences are easily seen 
to form again a saturated class. They will be denoted by $\mc{W}^{I}$, or just by $\mc{W}$ if there is no risk of confusion. 

\begin{defi}
We consider the 2-category $\mc{R}el\mc{C}at$ of relative categories, whose 1-morphisms and 2-morphisms are respectively the relative functors and relative 
natural transformations def{i}ned as follows.
A \textit{relative functor} $F:(\mc{C},\mc{W})\rightarrow (\mc{D},\mc{W})$ is a weak equivalence preserving functor $F:\mc{C}\rightarrow\mc{D}$. 
That is, $F$ maps a weak equivalence of $\mc{C}$ to a weak equivalence of $\mc{D}$.\\ 
A  \textit{relative natural transformation} between the relative functors $F,G:(\mc{C},\mc{W})\rightarrow (\mc{D},\mc{W})$ is a morphism  
$\tau:F\dashrightarrow G$ in $Fun(\mc{C},\mc{D})[\mc{W}^{-1}]$, the category of relative functors from $\mc{C}$ to $\mc{D}$ localized by 
the pointwise weak equivalences. More precisely, $\tau$ is represented by a f{i}nite zigzag connecting $F$ and $G$
$$ F \cdots \bullet \rightarrow \bullet\leftarrow \bullet \rightarrow \bullet \cdots G $$
formed by functors and natural transformation between them, such that those natural transformations going to the left are 
pointwise weak equivalences.
\end{defi}

\begin{lema} With the above notions of relative functors and relative natural transformations, $\mc{R}el\mc{C}at$ is a \emph{2}-category. 
\end{lema}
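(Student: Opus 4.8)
The plan is to verify the 2-category axioms directly, following the structure of the definitions just given. Recall that the objects are relative categories, the 1-morphisms are relative (weak-equivalence-preserving) functors, and the 2-morphisms $\tau:F\dashrightarrow G$ are morphisms in the localized functor category $Fun(\mc{C},\mc{D})[\mc{W}^{-1}]$. The first thing I would set up is the vertical composition of 2-morphisms: given $\tau:F\dashrightarrow G$ and $\sigma:G\dashrightarrow H$ in $Fun(\mc{C},\mc{D})[\mc{W}^{-1}]$, their composite is just composition inside this localized category. Associativity of vertical composition and the existence of identity 2-morphisms (the identity arrows $1_F$ in the localization) are then automatic, since they are inherited from the categorical structure of $Fun(\mc{C},\mc{D})[\mc{W}^{-1}]$. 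So for each pair of objects the hom-category is genuinely a category, and no real work is needed there.

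Next I would treat horizontal composition, which is where the content lies. Given relative functors and 2-morphisms arranged as
$$
\xymatrix@C=30pt{
(\mc{C},\mc{W}) \rtwocell<4>^{F}_{F'}{\tau} & (\mc{D},\mc{W}) \rtwocell<4>^{G}_{G'}{\sigma} & (\mc{E},\mc{W}),
}
$$
I must produce a composite $\sigma\ast\tau:GF\dashrightarrow G'F'$ in $Fun(\mc{C},\mc{E})[\mc{W}^{-1}]$. The natural strategy is to use whiskering: a relative functor $G$ induces a functor $G_\ast:Fun(\mc{C},\mc{D})\rightarrow Fun(\mc{C},\mc{E})$, and because $G$ preserves weak equivalences it sends pointwise weak equivalences to pointwise weak equivalences, hence descends to $G_\ast:Fun(\mc{C},\mc{D})[\mc{W}^{-1}]\rightarrow Fun(\mc{C},\mc{E})[\mc{W}^{-1}]$. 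Dually, precomposition with $F'$ gives $(F')^\ast:Fun(\mc{D},\mc{E})[\mc{W}^{-1}]\rightarrow Fun(\mc{C},\mc{E})[\mc{W}^{-1}]$. I would then define $\sigma\ast\tau=(G'_\ast\tau)\circ((F)^\ast\sigma)$, or equivalently $((F')^\ast\sigma)\circ(G_\ast\tau)$, and the key lemma to check is that these two a priori different composites agree.

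That coincidence is exactly the \emph{interchange law}, and it is the step I expect to be the main obstacle. At the level of honest natural transformations the interchange law $(\sigma'\circ\sigma)\ast(\tau'\circ\tau)=(\sigma'\ast\tau')\circ(\sigma\ast\tau)$ is the classical Godement calculus and holds on the nose. The subtlety here is that $\tau$ and $\sigma$ are not natural transformations but zigzags representing classes in the localization, so I would argue that whiskering and vertical composition are well defined on representatives, that they respect the equivalence relation defining morphisms of the localized functor categories, and that the interchange identity then passes to these classes. The cleanest way to see descent is to invoke that the localization functor $Fun(\mc{C},\mc{D})\rightarrow Fun(\mc{C},\mc{D})[\mc{W}^{-1}]$ together with the universal property \eqref{localization} makes $G_\ast$ and $(F')^\ast$ the unique factorizations through the localizations, so the square of localized whiskering functors commutes by the strict interchange law downstairs plus uniqueness.

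Finally I would record the unit axioms: the identity 1-morphism on $(\mc{C},\mc{W})$ is $1_{\mc{C}}$, and its induced whiskering functors are identities on the localized hom-categories, so horizontal composition with identity 1-morphisms and 2-morphisms acts trivially. With vertical composition associative and unital, horizontal composition functorial and unital, and interchange verified, all the axioms of a 2-category are met, which completes the proof.
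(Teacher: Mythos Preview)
Your proposal is correct and follows essentially the same approach as the paper: both define whiskering by observing that precomposition $-\compc F$ and postcomposition $G\compc -$ preserve pointwise weak equivalences (the latter because $G$ is relative) and hence descend to the localized functor categories, after which the 2-category axioms are inherited from the ordinary Godement calculus. The paper's proof is terser than yours---it simply asserts that the compatibility axioms ``satisfy by definition the requested compatibility axioms'' as in usual categories---whereas you spell out the interchange law more carefully, but the substance is the same.
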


\begin{proof}  The composition of relative functors is just the usual composition of functors. To def{i}ne a 2-category structure on $\mc{R}el\mc{C}at$ it remains 
to def{i}ne the compositions $\tau\compc F$ and $G\compc \tau$ of a relative natural transformation $\tau : T\dashrightarrow L$ between the relative functors 
$T,L:(\mc{C},\mc{W})\rightarrow (\mc{D},\mc{W})$, and the  relative functors $F:(\mc{C}',\mc{W})\rightarrow (\mc{C},\mc{W})$, $G:(\mc{D},\mc{W})\rightarrow 
(\mc{D}',\mc{W})$. The usual compositions between natural transformations and functors of categories are functors
$$\begin{array}{cccc} -\compc F :&  Fun(\mc{C},\mc{D}) &  \longrightarrow & Fun(\mc{C}',\mc{D})\\
			      & T  & \mapsto &  T\comp F\\
				& \alpha: T\rightarrow L & \mapsto & \alpha\compc F  \\   
    G\compc - :& Fun(\mc{C},\mc{D}) & \longrightarrow & Fun(\mc{C},\mc{D}')\\
& T  & \mapsto &  G \comp T \\
				& \alpha: T\rightarrow L & \mapsto & G\compc \alpha 
  \end{array}
$$
Note that $-\compc F$ always preserves pointwise weak equivalences, while $G\compc -$ preserves them because $G$ is a relative functor. Therefore 
they induce
$$\begin{array}{c} -\compc F : Fun(\mc{C},\mc{D})[\mc{W}^{-1}] \longrightarrow Fun(\mc{C}',\mc{D})[\mc{W}^{-1}]\\
    G\compc - : Fun(\mc{C},\mc{D})[\mc{W}^{-1}] \longrightarrow Fun(\mc{C},\mc{D}')[\mc{W}^{-1}]
  \end{array}
$$    
which def{i}ne compositions of relative natural transformations and relative functors. As in the case of usual categories, these compositions satisfy by def{i}nition 
the requested compatibility axioms, so $\mc{R}el\mc{C}at$ is a 2-category.
\end{proof}

\begin{lema}\label{loc2func}  
Localization by weak equivalences  
$$\mrm{loc}:\mc{R}el\mc{C}at\rightarrow \mc{C}at\ \ , \ \ (\mc{C},\mc{W}) \mapsto \mc{C}[\mc{W}^{-1}]$$ 
is a \emph{2}-functor from relative categories to categories.
\end{lema}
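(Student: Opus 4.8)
The plan is to verify directly that the assignment $\mrm{loc}$ respects the 2-categorical structure: it must send relative functors to functors, relative natural transformations to natural transformations, and preserve the vertical and horizontal compositions as well as identities. The essential tool is the universal property of localization recorded in \eqref{localization}, namely that precomposition with $\gamma$ induces an equivalence $Fun(\mc{C}[\mc{W}^{-1}],\mc{E}) \to Fun_{\mc{W}}(\mc{C},\mc{E})$. The key observation driving everything is that a relative natural transformation $\tau: F \dashrightarrow G$, being an isomorphism in $Fun(\mc{C},\mc{D})[\mc{W}^{-1}]$ between two objects $F,G$ that lie in $Fun_{\mc{W}}(\mc{C},\mc{D})$, should correspond under this equivalence to a genuine natural transformation $\mrm{loc}(\tau)$ between the induced functors on localizations.

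\emph{First} I would define the action of $\mrm{loc}$ on 1-morphisms. Given a relative functor $F:(\mc{C},\mc{W})\to(\mc{D},\mc{W})$, the composite $\gamma_{\mc{D}}\comp F:\mc{C}\to\mc{D}[\mc{W}^{-1}]$ sends weak equivalences to isomorphisms, so by the universal property of $\gamma_{\mc{C}}$ it factors uniquely as $\mrm{loc}(F)\comp\gamma_{\mc{C}}$ for a functor $\mrm{loc}(F):\mc{C}[\mc{W}^{-1}]\to\mc{D}[\mc{W}^{-1}]$; uniqueness immediately yields $\mrm{loc}(G\comp F)=\mrm{loc}(G)\comp\mrm{loc}(F)$ and $\mrm{loc}(1_{\mc{C}})=1_{\mc{C}[\mc{W}^{-1}]}$. \emph{Next}, for a relative natural transformation $\tau:F\dashrightarrow G$, I would use that $\tau$ is an arrow of $Fun(\mc{C},\mc{D})[\mc{W}^{-1}]$; applying the equivalence \eqref{localization} (for $\mc{E}=\mc{D}[\mc{W}^{-1}]$, after composing with $\gamma_{\mc{D}}$) transports $\tau$ to a morphism in $Fun(\mc{C}[\mc{W}^{-1}],\mc{D}[\mc{W}^{-1}])$, that is, to a natural transformation $\mrm{loc}(\tau):\mrm{loc}(F)\Rightarrow\mrm{loc}(G)$. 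Concretely, one reads a representing zigzag $F\cdots\bullet\to\bullet\leftarrow\bullet\cdots G$, applies $\gamma_{\mc{D}}$ levelwise so that the backward (pointwise-weak-equivalence) arrows become invertible, and composes to obtain a single natural transformation between the localized functors.

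\emph{Then} I would check that this assignment is functorial for vertical composition and compatible with the two whiskering operations $\tau\compc F$ and $G\compc\tau$ defined in the previous lemma. Since $\mrm{loc}(\tau)$ is, up to the equivalence \eqref{localization}, literally the image of $\tau$ under the functors $-\compc F$ and $G\compc -$ used in that construction, and since those functors were already shown to descend to the localized functor categories, compatibility follows by naturality of \eqref{localization} in the variable $\mc{E}$ together with the uniqueness in the universal property. The identity 2-morphism of $F$ maps to the identity of $\mrm{loc}(F)$ for the same reason.

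\emph{The main obstacle} I anticipate is purely bookkeeping rather than conceptual: one must ensure that the passage from a zigzag representative to an honest natural transformation is well-defined, i.e. independent of the chosen representative, and that horizontal composition is respected on the nose and not merely up to coherent isomorphism. This is exactly what the equivalence of categories in \eqref{localization} guarantees—it identifies relative natural transformations (morphisms in the localized functor category) with genuine natural transformations of localized functors—so the whole verification reduces to tracking that the whiskering functors $-\compc F$ and $G\compc -$ commute with localization, which was established in the proof of the preceding lemma. Granting that, $\mrm{loc}$ preserves all the 2-categorical data and the identities and compositions, hence is a 2-functor.
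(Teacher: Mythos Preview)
Your approach is essentially the same as the paper's: define $\mrm{loc}$ on 1-morphisms via the universal property of $\gamma$, and on 2-morphisms by postcomposing a representing zigzag with $\gamma_{\mc{D}}$ (so that the backward pointwise weak equivalences become invertible) and then invoking the equivalence \eqref{localization} with $\mc{E}=\mc{D}[\mc{W}^{-1}]$. Two small slips in your ``key observation'' paragraph deserve correction, though they do not affect your detailed argument: a relative natural transformation is merely a \emph{morphism} (not necessarily an isomorphism) in $Fun(\mc{C},\mc{D})[\mc{W}^{-1}]$, and relative functors $F,G$ do not lie in $Fun_{\mc{W}}(\mc{C},\mc{D})$---they send weak equivalences to weak equivalences, not to isomorphisms; it is only $\gamma_{\mc{D}}\comp F$ and $\gamma_{\mc{D}}\comp G$ that land in $Fun_{\mc{W}}(\mc{C},\mc{D}[\mc{W}^{-1}])$, which is exactly what your concrete description later uses.
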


\begin{proof}
To see that $loc$ is a 2-functor, note that a relative functor $F:(\mc{C},\mc{W})\rightarrow (\mc{D},\mc{W})$ 
induces by the universal property of localization a unique functor $loc(F): \mc{C}[\mc{W}^{-1}]\rightarrow  \mc{D}[\mc{W}^{-1}]$ such that 
$loc(F)\, \gamma = \gamma\, F$. This def{i}nes $loc$ on 1-morphisms. To def{i}ne it on 2-morphisms, consider relative functors $F,G:(\mc{C},\mc{W})\rightarrow (\mc{D},\mc{W})$
and $\tau:F\dashrightarrow G$ in $Fun(\mc{C},\mc{D})[\mc{W}^{-1}]$. If $c$ is an object of $\mc{C}$, then $\tau_c : Fc\dashrightarrow Gc$ may be thought of as
a morphism in $\mc{D}[\mc{W}^{-1}]$, which is natural on $c$ by construction. More precisely, the functor $\gamma\compc - : Fun(\mc{C},\mc{D})\rightarrow 
Fun(\mc{C},\mc{D}[\mc{W}^{-1}])$ sends a pointwise weak equivalence to an isomorphism, inducing $\gamma\compc - : Fun(\mc{C},\mc{D})[\mc{W}^{-1}]\rightarrow 
Fun(\mc{C},\mc{D}[\mc{W}^{-1}])$. Therefore, $\gamma\compc \tau: \gamma\comp F\rightarrow\gamma\comp G$ is a morphism of 
$Fun(\mc{C},\mc{D}[\mc{W}^{-1}])$. Since $F$ and $G$ are relative functors, $\gamma\comp F$ and $\gamma\comp G$ send
a weak equivalence of $\mc{C}$ to an isomorphism of $\mc{D}[\mc{W}^{-1}]$. This means that $\gamma\compc\tau$ is a morphism of 
$Fun_{\mc{W}}(\mc{C},\mc{D}[\mc{W}^{-1}])$, and it correspond through the equivalence of categories (\ref{localization}) to the desired 
$loc(\tau): loc(F)\rightarrow loc(G)$.
\end{proof}

\begin{lema}\label{exp2func}
 Given a small category $I$, exponentiation by $I$ 
$$(-)^I:\mc{R}el\mc{C}at\rightarrow \mc{R}el\mc{C}at \ \ , \ \ (\mc{C},\mc{W}) \mapsto (\mc{C}^I,\mc{W}^I)$$ 
is a \emph{2}-functor. Here, $\mc{W}^I$ denotes the class of pointwise weak equivalences of $\mc{C}^I$.
\end{lema}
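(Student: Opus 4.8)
The plan is to verify the three pieces of data required of a 2-functor — the action on objects, on 1-morphisms, and on 2-morphisms — and then check that these actions respect the compositions, all of which reduce to the elementary fact that postcomposition by a fixed functor is functorial. First I would address the object level: given a relative category $(\mc{C},\mc{W})$, the excerpt has already observed that $(\mc{C}^I,\mc{W}^I)$ is a relative category, so this part is settled. For a relative functor $F:(\mc{C},\mc{W})\rightarrow(\mc{D},\mc{W})$, I would define $F^I:\mc{C}^I\rightarrow\mc{D}^I$ by postcomposition, $F^I(X)=F\comp X$ for a diagram $X:I\rightarrow\mc{C}$, and $F^I(\eta)=F\compc\eta$ for a natural transformation $\eta$ between diagrams. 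The key point here is that $F^I$ is itself a relative functor: if $\eta:X\rightarrow Y$ is a pointwise weak equivalence of $\mc{C}^I$, then each $\eta_i$ lies in $\mc{W}$, so each $(F^I\eta)_i=F(\eta_i)$ lies in $\mc{W}$ because $F$ is relative, whence $F^I\eta\in\mc{W}^I$.

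Next I would define the action on 2-morphisms. A relative natural transformation $\tau:F\dashrightarrow G$ is an element of $Fun(\mc{C},\mc{D})[\mc{W}^{-1}]$, represented by a finite zigzag of natural transformations whose backward arrows are pointwise weak equivalences. Postcomposition with $I$ sends each stage of such a zigzag to the corresponding stage over $I$, giving a functor
$$(-)^I:Fun(\mc{C},\mc{D})\longrightarrow Fun(\mc{C}^I,\mc{D}^I).$$
As in the previous lemma, the essential observation is that this functor sends a pointwise weak equivalence of $Fun(\mc{C},\mc{D})$ to a pointwise weak equivalence of $Fun(\mc{C}^I,\mc{D}^I)$: a transformation $\alpha:T\rightarrow L$ that is a weak equivalence at every object $c\in\mc{C}$ induces $\alpha^I$ whose component at a diagram $X$ is weak at every $i\in I$, since $(\alpha^I_X)_i=\alpha_{X(i)}\in\mc{W}$. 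Therefore $(-)^I$ descends to the localizations,
$$(-)^I:Fun(\mc{C},\mc{D})[\mc{W}^{-1}]\longrightarrow Fun(\mc{C}^I,\mc{D}^I)[\mc{W}^{-1}],$$
and this is exactly the assignment $\tau\mapsto\tau^I$ on 2-morphisms.

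Finally I would confirm that these assignments are compatible with composition. At the level of 1-morphisms, $(G\comp F)^I=G^I\comp F^I$ and $(1_{\mc{C}})^I=1_{\mc{C}^I}$ hold on the nose, since postcomposition is strictly associative and unital. For the various compositions of 2-morphisms — vertical composition in $Fun(\mc{C},\mc{D})[\mc{W}^{-1}]$ and the two whiskerings $\tau\compc F$ and $G\compc\tau$ from the previous lemma — the identities $(\tau\compc F)^I=\tau^I\compc F^I$ and $(G\compc\tau)^I=G^I\compc\tau^I$ follow by inspecting their action componentwise and using that all the operations in sight are instances of the same postcomposition-by-$I$ functor, which commutes with the localization by the displayed compatibility above. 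The mild obstacle, and the only place requiring care, is the verification that $(-)^I$ genuinely respects the localization — that zigzags representing the same relative transformation $\tau$ are carried to zigzags representing the same $\tau^I$ — but this is immediate once one checks that pointwise weak equivalences are preserved, which we did; the remaining verifications are strictly formal and parallel to those in Lemma \ref{loc2func}.
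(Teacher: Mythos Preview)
Your proof is correct and follows essentially the same approach as the paper's: define $F^I$ by postcomposition, observe that $(-)^I:Fun(\mc{C},\mc{D})\rightarrow Fun(\mc{C}^I,\mc{D}^I)$ preserves pointwise weak equivalences, and conclude that it descends to the localizations to give the action on 2-morphisms. The paper's version is considerably terser, leaving the compatibility verifications implicit, but the argument is the same.
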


\begin{proof}
If $F:(\mc{C},\mc{W})\rightarrow (\mc{D},\mc{W})$ is a relative functor then
$F^I:(\mc{C}^I,\mc{W}^I)\rightarrow (\mc{D}^I,\mc{W}^I)$,  given by $(F^I(X))(i)=F(X(i))$,
is clearly a relative functor. On the other hand, if $\tau:F\dashrightarrow G$ is a relative natural transformation,
$\tau^I:F^I\dashrightarrow G^I$ is also def{i}ned pointwise. This is possible because 
$(-)^I:Fun(\mc{C},\mc{D})\rightarrow Fun(\mc{C}^I,\mc{D}^I)$ preserves pointwise weak equivalences, inducing 
$$(-)^I:Fun(\mc{C},\mc{D})[\mc{W}^{-1}]\rightarrow Fun(\mc{C}^I,\mc{D}^I)[(\mc{W}^I)^{-1}]$$
\end{proof}

A relative functor $F:(\mc{C},\mc{W})\rightarrow (\mc{D},\mc{W})$ is a \textit{relative equivalence} if it has a quasi-inverse in $\mc{R}el\mc{C}at$.
More explicitly, there exists a relative functor $G:(\mc{D},\mc{W})\rightarrow (\mc{C},\mc{W})$ 
and relative natural transformations $FG\dashrightarrow 1_{\mc{D}}$, $GF\dashrightarrow 1_{\mc{C}}$ which are invertible in $Fun(\mc{D},\mc{D})[\mc{W}^{-1}]$ 
and $Fun(\mc{C},\mc{C})[\mc{W}^{-1}]$ respectively. In this case we say that $(\mc{C},\mc{W})$ and $(\mc{D},\mc{W})$ are \textit{equivalent relative categories}.
\vspace{0.2cm}
\subsection{Relative adjunctions.} 

\begin{defi}
A \textit{relative adjunction} $(F,G,\alpha,\beta)$ between the relative categories 
$(\mc{C},\mc{W})$ and $(\mc{D},\mc{W})$ is an adjunction in the 2-category $\mc{R}el\mc{C}at$. 
More concretely, it consists of:\\
$1$. Relative functors $F:(\mc{C},\mc{W})\rightarrow (\mc{D},\mc{W})$ and $G:(\mc{D},\mc{W})\rightarrow (\mc{C},\mc{W})$.\\
$2$. Relative natural transformations $\alpha:FG\dashrightarrow 1_{\mc{D}}$ and $\beta:1_{\mc{C}}\dashrightarrow GF$ satisfying the so called 
\textit{triangle identities}. That is, the compositions
$$\xymatrix@M=4pt@H=4pt@R=5pt@C=30pt{F\ar@{-->}[r]^{F\beta} & FGF \ar@{-->}[r]^{\alpha_F} & F \\
G\ar@{-->}[r]^{\beta_G} & GFG \ar@{-->}[r]^{F\alpha} & G}$$
are the identity in $Fun(\mc{C},\mc{D})[\mc{W}^{-1}]$
and $Fun(\mc{D},\mc{C})[\mc{W}^{-1}]$ respectively.\\
If $(F,G,\alpha,\beta)$, or $(F,G)$ for short, is a relative adjunction we say that $F$ is a 
left relative adjoint of (or relative left adjoint to) $G$ and that $G$ is a right relative adjoint of (or relative left adjoint to) $F$. 
\end{defi}

\begin{ej} If $(\mc{C},\mc{W})$ and $(\mc{D},\mc{W})$ are relative categories and $(F,G,\alpha,\beta)$ is an usual adjunction between
$\mc{C}$ and $\mc{D}$, then $(F,G,\alpha,\beta)$ is a relative adjunction provided that $F$ and $G$ preserve weak equivalences.
\end{ej}

\begin{obs}
In case the relative natural transformations $\alpha$ and $\beta$ are isomorphisms of $\mc{R}el\mc{C}at$ we say that
$(F,G)$ is a \textit{relative adjoint equivalence}. Although we will not use this fact, we remark that as happens in the context of usual categories,
a relative equivalence of relative categories gives rise to a relative adjoint equivalence. 
\end{obs}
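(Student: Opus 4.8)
The plan is to treat this as the purely formal statement, internal to the 2-category $\mc{R}el\mc{C}at$, that any equivalence can be upgraded to an adjoint equivalence. Indeed, by the lemma establishing that $\mc{R}el\mc{C}at$ is a 2-category, the data of a relative equivalence $(F,G)$ together with invertible relative natural transformations $\alpha:FG\dashrightarrow 1_{\mc{D}}$ and $\beta:1_{\mc{C}}\dashrightarrow GF$ is exactly the data of an equivalence in the 2-category $\mc{R}el\mc{C}at$, whose 2-cells are the morphisms of the localized functor categories $Fun(\mc{D},\mc{D})[\mc{W}^{-1}]$ and $Fun(\mc{C},\mc{C})[\mc{W}^{-1}]$. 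The goal is to keep $\beta$, say, and to replace $\alpha$ by an isomorphic relative natural transformation $\alpha'$ so that the pair $(\alpha',\beta)$ satisfies the triangle identities; this then exhibits $(F,G,\alpha',\beta)$ as a relative adjoint equivalence. I would carry out the classical adjoint-equivalence argument verbatim, reading every composition as horizontal ($\compc$) or vertical composition of relative natural transformations.

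Concretely, I would first form the ``defect'' 2-cell measuring the failure of the first triangle identity, namely the vertical composite $\phi=(\alpha\compc F)\circ(F\compc\beta):F\dashrightarrow F$ in $Fun(\mc{C},\mc{D})[\mc{W}^{-1}]$. Since $\alpha$ and $\beta$ are invertible as relative natural transformations, so is $\phi$. I would then define the corrected counit by whiskering $\phi^{-1}$ into $\alpha$, setting $\alpha'=\alpha\circ(\phi^{-1}\compc G):FG\dashrightarrow 1_{\mc{D}}$, which is again an invertible relative natural transformation because $\alpha$ and $\phi^{-1}$ are. Applying the interchange law for the 2-category $\mc{R}el\mc{C}at$ to rewrite $(\phi^{-1}\compc GF)\circ(F\compc\beta)$ as $(F\compc\beta)\circ\phi^{-1}$, one computes $(\alpha'\compc F)\circ(F\compc\beta)=\phi\circ\phi^{-1}=1_F$, so the first triangle identity holds for $(\alpha',\beta)$ by construction.

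It then remains to deduce the second triangle identity $(G\compc\alpha')\circ(\beta\compc G)=1_G$. Here I would invoke the standard categorical lemma that, once both structure 2-cells are invertible and one triangle identity holds, the other follows automatically; its proof is again a formal manipulation of whiskerings, vertical composites and the interchange law, all of which are available since $\mc{R}el\mc{C}at$ is a genuine 2-category. The main point to be careful about --- and the only place where anything specific to relative categories enters --- is that every intermediate 2-cell really lives in the appropriate localized functor category and that horizontal composition, vertical composition and whiskering of relative natural transformations obey the 2-categorical axioms. But this is precisely the content of the lemma asserting that $\mc{R}el\mc{C}at$ is a 2-category, so no genuine obstacle arises: the difficulty is purely bookkeeping, namely checking that the classical pasting identities transcribe correctly when the 2-cells are represented by finite zigzags of functors and pointwise weak equivalences.
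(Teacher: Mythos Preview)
Your proposal is correct. The paper does not actually prove this remark: it is stated as an aside, explicitly flagged with ``we will not use this fact,'' and no argument is given. Your approach is the standard one --- recognizing that the statement is purely internal to the 2-category $\mc{R}el\mc{C}at$ and invoking the classical fact that any equivalence in a 2-category can be promoted to an adjoint equivalence by correcting one of the structure 2-cells. The computation you outline (defining $\phi=(\alpha\compc F)\circ(F\compc\beta)$, setting $\alpha'=\alpha\circ(\phi^{-1}\compc G)$, and checking the first triangle identity via interchange) is exactly the textbook argument, and your observation that the second triangle identity then follows formally from the first when both 2-cells are invertible is also standard. Since the paper's earlier lemma establishes that $\mc{R}el\mc{C}at$ is a genuine 2-category, all the required whiskering and interchange manipulations are available, and there is nothing specific to zigzag representatives that needs separate verification.
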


Since 2-functors preserve adjunctions, the following facts follow from Lemmas \ref{loc2func} and \ref{exp2func}.

\begin{lema}\label{reladjLoc} The localization of a relative adjunction $F:(\mc{C},\mc{W})\leftrightarrows (\mc{D},\mc{W}):G$ is an adjunction 
$F:\mathcal{C}[\mc{W}^{-1}]\leftrightarrows \mathcal{D}[\mc{W}^{-1}]:G$ between the corresponding localized categories. 
\end{lema}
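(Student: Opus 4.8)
The plan is to deduce this immediately from Lemma \ref{loc2func} together with the general principle, already invoked in the text, that a 2-functor carries adjunctions to adjunctions. First I would recall that by definition a relative adjunction $(F,G,\alpha,\beta)$ is nothing but an adjunction in the 2-category $\mc{R}el\mc{C}at$: it consists of the 1-morphisms $F$ and $G$, the 2-morphisms (relative natural transformations) $\alpha:FG\dashrightarrow 1_{\mc{D}}$ and $\beta:1_{\mc{C}}\dashrightarrow GF$, and the requirement that the two triangle composites be identity 2-morphisms. Since $\mrm{loc}:\mc{R}el\mc{C}at\rightarrow\mc{C}at$ is a 2-functor, it suffices to apply it to this data.

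Next I would read off the image under $\mrm{loc}$. The objects $(\mc{C},\mc{W})$ and $(\mc{D},\mc{W})$ are sent to the ordinary categories $\mc{C}[\mc{W}^{-1}]$ and $\mc{D}[\mc{W}^{-1}]$; the relative functors $F$ and $G$ are sent to the induced functors $\mrm{loc}(F)$ and $\mrm{loc}(G)$ (denoted again by $F$ and $G$ by abuse of notation); and the relative natural transformations $\alpha$ and $\beta$ are sent to honest natural transformations $\mrm{loc}(\alpha):\mrm{loc}(F)\mrm{loc}(G)\rightarrow 1$ and $\mrm{loc}(\beta):1\rightarrow\mrm{loc}(G)\mrm{loc}(F)$ of $\mc{C}at$. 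Here I would invoke the explicit construction of $\mrm{loc}$ on 2-morphisms from the proof of Lemma \ref{loc2func}, which guarantees that a relative natural transformation descends to a genuine natural transformation after localizing.

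The only remaining point is that these candidate unit and counit satisfy the triangle identities, and this is exactly where 2-functoriality does the work: a 2-functor preserves whiskerings, vertical composition of 2-morphisms, and identity 2-morphisms, so it carries each triangle composite for $(\alpha,\beta)$ to the corresponding triangle composite for $(\mrm{loc}(\alpha),\mrm{loc}(\beta))$, and carries the 2-cells witnessing them to identities in $\mc{C}at$. Hence $(\mrm{loc}(F),\mrm{loc}(G),\mrm{loc}(\alpha),\mrm{loc}(\beta))$ is an ordinary adjunction between $\mc{C}[\mc{W}^{-1}]$ and $\mc{D}[\mc{W}^{-1}]$. I expect no genuine obstacle here: the substantive content lies entirely in Lemma \ref{loc2func}, and once $\mrm{loc}$ is known to be a 2-functor the conclusion is formal.
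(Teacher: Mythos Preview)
Your proposal is correct and matches the paper's own argument exactly: the paper simply notes that 2-functors preserve adjunctions and invokes Lemma \ref{loc2func}. Your write-up is a faithful unpacking of that one-line justification.
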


\begin{lema}\label{reladjDiag}  If $I$ is a small category and $F:(\mathcal{C},\mc{W})\leftrightarrows (\mathcal{D},\mc{W}):G$
is a relative adjunction then $F^I:(\mathcal{C}^I,\mc{W})\leftrightarrows (\mathcal{D}^I,\mc{W}):G^I$ is again a 
relative adjunction.
In particular, $F^I:\mathcal{C}^I[\mc{W}^{-1}]\leftrightarrows \mathcal{D}^I[\mc{W}^{-1}]):G^I$ is a natural adjunction for 
each small category $I$.
\end{lema}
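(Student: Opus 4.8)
The plan is to deduce the statement formally from the general principle that a 2-functor between 2-categories preserves adjunctions, applied to the exponentiation 2-functor of Lemma \ref{exp2func}. Recall that by definition a relative adjunction $(F,G,\alpha,\beta)$ is precisely an adjunction internal to the 2-category $\mc{R}el\mc{C}at$: it is the data of two 1-morphisms $F,G$ together with two 2-morphisms $\alpha:FG\dashrightarrow 1_{\mc{D}}$ and $\beta:1_{\mc{C}}\dashrightarrow GF$ whose two triangle composites equal the identity 2-morphisms in $Fun(\mc{C},\mc{D})[\mc{W}^{-1}]$ and $Fun(\mc{D},\mc{C})[\mc{W}^{-1}]$. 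Since every ingredient of this definition --- 1-morphisms, 2-morphisms, horizontal composition $\compc$, vertical composition, and identities --- is preserved by any 2-functor $\Phi$, the image $(\Phi F,\Phi G,\Phi\alpha,\Phi\beta)$ automatically satisfies the triangle identities in the target 2-category and is therefore again an adjunction. This assertion needs no model-category or homotopical input; it is an immediate unravelling of the triangle identities under $\Phi$.

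Concretely, I would first apply $\Phi=(-)^I$. By Lemma \ref{exp2func} this is a 2-functor $\mc{R}el\mc{C}at\rightarrow\mc{R}el\mc{C}at$, sending $(\mc{C},\mc{W})$ to $(\mc{C}^I,\mc{W}^I)$, a relative functor $F$ to $F^I$, and a relative natural transformation $\tau$ to $\tau^I$, all defined pointwise. Feeding the relative adjunction $(F,G,\alpha,\beta)$ through $(-)^I$ thus produces 1-morphisms $F^I,G^I$ and 2-morphisms $\alpha^I:F^IG^I\dashrightarrow 1_{\mc{D}^I}$, $\beta^I:1_{\mc{C}^I}\dashrightarrow G^IF^I$ satisfying the triangle identities in $Fun(\mc{C}^I,\mc{D}^I)[(\mc{W}^I)^{-1}]$ and $Fun(\mc{D}^I,\mc{C}^I)[(\mc{W}^I)^{-1}]$; that is, $(F^I,G^I)$ is a relative adjunction, which is the first assertion. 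For the ``in particular'' clause I would then apply Lemma \ref{reladjLoc} (itself the instance $\Phi=\mrm{loc}$ of the same principle, via Lemma \ref{loc2func}) to the relative adjunction $(F^I,G^I)$, obtaining an ordinary adjunction $\mrm{loc}(F^I)\dashv\mrm{loc}(G^I)$ between the localized categories $\mc{C}^I[\mc{W}^{-1}]$ and $\mc{D}^I[\mc{W}^{-1}]$.

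There is essentially no genuine obstacle here: the substantive work has already been carried out in establishing that $(-)^I$ and $\mrm{loc}$ are 2-functors, so the lemma is a formal corollary. The only point meriting a word of care is the naturality in $I$ recorded in the final sentence. For this I would observe that a functor $u:J\rightarrow I$ induces a 2-natural transformation $u^\ast:(-)^I\Rightarrow(-)^J$ between exponentiation 2-functors, compatible with $F^{(-)}$ and $G^{(-)}$, so that the localized adjunctions $\mrm{loc}(F^I)\dashv\mrm{loc}(G^I)$ assemble into a family indexed by $I$ and compatible with restriction along such $u$. This compatibility is a routine verification on the pointwise definitions of $F^I$, $G^I$, $\alpha^I$, $\beta^I$ and carries no conceptual difficulty.
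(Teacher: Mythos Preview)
Your proposal is correct and matches the paper's approach exactly: the paper states just before Lemmas \ref{reladjLoc} and \ref{reladjDiag} that ``since 2-functors preserve adjunctions, the following facts follow from Lemmas \ref{loc2func} and \ref{exp2func},'' and gives no further argument. Your expansion of this one-line justification, including the remark on naturality in $I$, is a faithful elaboration of precisely that reasoning.
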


The following two results are formal properties of
adjoints in 2-categories. 

\begin{prop} Let $F: (\mc{C},\mc{W})\rightarrow (\mc{D},\mc{W})$ and $G: (\mc{D},\mc{W})\rightarrow (\mc{C},\mc{W})$ be relative functors.
Then $(F,G)$ is a relative adjunction if and only if for each category $I$ and functors $T:I\rightarrow \mc{D}$, $T':I\rightarrow \mc{C}$ there is
a natural bijection
$$\mrm{Hom}_{Fun(I,\mc{D})[\mc{W}^{-1}]} ( FT', T )  \leftrightarrow \mrm{Hom}_{Fun(I,\mc{C})[\mc{W}^{-1}]}(T',GT)$$
\end{prop}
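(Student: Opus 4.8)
The plan is to read this statement as the \emph{representable} characterization of adjunctions in the $2$-category $\mc{R}el\mc{C}at$. The first observation I would record is that for a category $I$, equipped with the relative structure in which only identities are inverted, every functor $I\to\mc{C}$ is a relative functor and the relative natural transformations between two of them are by definition the morphisms of $Fun(I,\mc{C})[\mc{W}^{-1}]$; thus the hom-category of $\mc{R}el\mc{C}at$ on such diagrams is canonically $Fun(I,\mc{C})[\mc{W}^{-1}]$, with postcomposition by $F$ corresponding to $T'\mapsto FT'=\mrm{loc}(F^I)(T')$. Under this dictionary the asserted bijection says exactly that $\mrm{loc}(F^I)$ is left adjoint to $\mrm{loc}(G^I)$, naturally in $I$, so the proposition reduces to the formal fact that a $1$-cell is a left adjoint precisely when postcomposition induces an adjunction of all representable hom-categories.

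For the direct implication I would push the given relative adjunction $(F,G,\alpha,\beta)$ through the two $2$-functors already available. Exponentiation $(-)^I$ (Lemma \ref{exp2func}) produces the relative adjunction $(F^I,G^I)$ of Lemma \ref{reladjDiag}, and localization $\mrm{loc}$ (Lemma \ref{loc2func}) turns it into an honest adjunction $\mrm{loc}(F^I)\dashv\mrm{loc}(G^I)$ between $\mc{C}^I[\mc{W}^{-1}]$ and $\mc{D}^I[\mc{W}^{-1}]$. An ordinary adjunction of categories is the same datum as a hom-set bijection natural in both variables, which is the one in the statement; naturality in $I$ holds because a functor $u:I\to J$ restricts the whole package compatibly. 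This half is therefore immediate from the cited lemmas.

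The content lies in the converse. Writing $\Phi$ for the given bijection, I would reconstruct the adjunction data by the universal-element recipe, carried out with $2$-cells of $\mc{R}el\mc{C}at$. Taking $I=\mc{C}$, $T'=1_{\mc{C}}$ and $T=F$, I set $\beta:=\Phi(\mrm{id}_{F}):1_{\mc{C}}\dashrightarrow GF$; dually, with $I=\mc{D}$, $T'=G$ and $T=1_{\mc{D}}$, I set $\alpha:=\Phi^{-1}(\mrm{id}_{G}):FG\dashrightarrow 1_{\mc{D}}$. Naturality of $\Phi$ in the target variable then gives $\Phi(g)=(Gg)\circ\beta^{I}_{T'}$ for any $g:FT'\dashrightarrow T$, where $\beta^{I}_{T'}:=\Phi(\mrm{id}_{FT'})$, and naturality of $\Phi$ in $I$ along $T':I\to\mc{C}$ identifies $\beta^{I}_{T'}$ with the whiskering $\beta\compc T'$; the dual formula expresses $\Phi^{-1}$ through $\alpha\compc T$. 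Substituting $\mrm{id}_{FT'}=\Phi^{-1}\Phi(\mrm{id}_{FT'})$ and specializing to $I=\mc{C}$, $T'=1_{\mc{C}}$ yields the first triangle identity $\alpha_{F}\circ F\beta=\mrm{id}_{F}$, while the symmetric computation at $I=\mc{D}$, $T=1_{\mc{D}}$ yields $G\alpha\circ\beta_{G}=\mrm{id}_{G}$. Hence $(F,G,\alpha,\beta)$ satisfies the triangle identities and is a relative adjunction.

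The main obstacle is exactly this converse, and within it the interplay between different index categories. Since the unit and counit are extracted at $I=\mc{C}$ and $I=\mc{D}$ respectively, while the triangle identities are whiskered equalities living over further indices, the computation only closes if the bijection is natural in $I$ --- in particular along the functors $F$ and $G$ themselves --- and this is precisely the naturality I use to match the intrinsic composites $\beta^{I}_{T'}$ and $\alpha^{I}_{T}$ with the whiskerings occurring in the triangle identities. A secondary point requiring care is that every composite above is a composite of relative natural transformations, i.e.\ of morphisms of localized functor categories represented by finite zigzags up to the localization relation; one must confirm that the classical hom-set manipulations remain valid there, which they do because $\mc{R}el\mc{C}at$ has already been verified to be a genuine $2$-category. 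Granting these, the triangle identities emerge exactly as in the classical proof that a natural hom-set bijection underlies a unit--counit adjunction.
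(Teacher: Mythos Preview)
Your proposal is correct and aligned with the paper's treatment: the paper does not give a proof of this proposition, stating only that it is a ``formal property of adjoints in 2-categories,'' and your argument is precisely the standard representable (hom-set) characterization of adjunctions specialized to $\mc{R}el\mc{C}at$. The one point worth recording explicitly is that your converse requires the bijection to be natural in $I$ as well as in $T,T'$---you flag this yourself, and it is indeed implicit in the statement---since otherwise the unit extracted at $I=\mc{C}$ and the counit extracted at $I=\mc{D}$ cannot be compared.
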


Under the above notations, if $\tau: FT' \dashrightarrow T$ is a relative natural transformation, the corresponding $\tau':T'\dashrightarrow GT$ 
will be called the \textit{adjoint relative natural transformation} of $\tau'$, and conversely $\tau'$ will be called the adjoint relative natural transformation 
of $\tau$.

\begin{prop}\label{UniqRelAd} Let $F,F':(\mc{C},\mc{W})\rightarrow (\mc{D},\mc{W})$ be relative
functors sharing the same relative right adjoint $G$. Then there exists a unique 
isomorphism $\tau:F\dashrightarrow F'$ of $Fun(\mc{C},\mc{D})[\mc{W}^{-1}]$ compatible
with the adjunction morphisms of $(F,G)$ and $(F',G)$.
\end{prop}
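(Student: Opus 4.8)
The statement is a purely formal consequence of the 2-category axioms, so the classical proof of the uniqueness of adjoints will carry over verbatim once we use that $\mc{R}el\mc{C}at$ is a genuine 2-category (established above). Write $(\alpha,\beta)$ for the counit and unit of $(F,G)$, so that $\alpha:FG\dashrightarrow 1_{\mc{D}}$ and $\beta:1_{\mc{C}}\dashrightarrow GF$, and likewise $(\alpha',\beta')$ for $(F',G)$; all of these are relative natural transformations, i.e.\ morphisms in the appropriate localized functor categories. By \emph{compatible with the adjunction morphisms} I understand that $\tau$ intertwines the units and counits, namely $(G\tau)\circ\beta=\beta'$ in $Fun(\mc{C},\mc{C})[\mc{W}^{-1}]$ and $\alpha'\circ\tau_G=\alpha$ in $Fun(\mc{D},\mc{D})[\mc{W}^{-1}]$.

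For existence I would take the standard pasting
$$\tau\ :\ F\xrightarrow{\ F\beta'\ }FGF'\xrightarrow{\ \alpha_{F'}\ }F',\qquad \tau'\ :\ F'\xrightarrow{\ F'\beta\ }F'GF\xrightarrow{\ \alpha'_F\ }F,$$
and verify that $\tau$ and $\tau'$ are mutually inverse in $Fun(\mc{C},\mc{D})[\mc{W}^{-1}]$. This is the usual diagram chase: expanding $\tau'\circ\tau$ and applying the interchange law to slide the whiskerings past one another produces two legs that collapse to identities by the triangle identities of $(F,G)$ and of $(F',G)$, leaving $1_F$; the computation of $\tau\circ\tau'=1_{F'}$ is symmetric. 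Since these manipulations invoke only horizontal and vertical composition, the interchange law and the triangle identities, they are legitimate in $\mc{R}el\mc{C}at$ without ever touching individual components.

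For uniqueness I would invoke the preceding adjunction Hom-bijection for $(F,G)$ with $I=\mc{C}$, $T'=1_{\mc{C}}$ and $T=F'$, giving a natural bijection
$$\Hom{Fun(\mc{C},\mc{D})[\mc{W}^{-1}]}(F,F')\ \longleftrightarrow\ \Hom{Fun(\mc{C},\mc{C})[\mc{W}^{-1}]}(1_{\mc{C}},GF'),\qquad \sigma\longmapsto (G\sigma)\circ\beta.$$
A 2-isomorphism $\sigma:F\dashrightarrow F'$ is compatible with the units exactly when its image under this bijection equals $\beta'$; injectivity of the bijection then forces at most one such $\sigma$, and a final triangle identity shows that the $\tau$ above indeed has image $\beta'$. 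Hence $\tau$ is the unique compatible 2-morphism, it is an isomorphism by the existence step, and its compatibility with the counits follows either from the dual Hom-bijection or directly from the explicit formula $\tau^{-1}=\tau'$.

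The proof has no real obstacle of content; the one point demanding care is that 2-morphisms of $\mc{R}el\mc{C}at$ are represented by finite zigzags in localized functor categories rather than by ordinary natural transformations, so one must argue solely with the abstract 2-categorical operations and never manipulate components $\tau_c$ directly. Because $\mc{R}el\mc{C}at$ has already been certified as a 2-category and $(F,G),(F',G)$ are adjunctions within it, each step above is justified formally, which is precisely what lets the classical argument run unchanged.
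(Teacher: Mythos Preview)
Your proposal is correct and is precisely the intended argument: the paper does not spell out a proof at all, merely remarking that this proposition, together with the preceding one, ``are formal properties of adjoints in 2-categories.'' Your write-up supplies exactly that standard 2-categorical argument (the mutually inverse pastings built from units and counits, verified via the triangle identities and interchange), and your caveat about working abstractly with 2-morphisms rather than componentwise is well taken.
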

\vspace{0.1cm}
\subsection{Realizable homotopy colimits.}\mbox{}\\[0.2cm]
\indent
Given a small category $I$ and a relative category $(\mc{C},\mc{W})$, the \textit{constant diagram functor} $c_I:\mc{C}\rightarrow \mc{C}^I$ is def{i}ned by 
$(c_I(x))(i) = x$ for all $i\in I$. Note that it clearly sends a weak equivalence to a pointwise weak equivalence, so $c_I:(\mc{C},\mc{W})\rightarrow (\mc{C}^I,\mc{W})$
is a relative functor. 

\begin{defi}\label{defiRhocolim}
A \textit{realizable homotopy colimit} on a relative category $(\mc{C},\mc{W})$ is a relative adjunction $(\mtt{hocolim}_I,c_I,\alpha,\beta)$ between $(\mc{C}^I,\mc{W})$ and $(\mc{C},\mc{W})$.
By simplicity, we will often drop the adjunction morphisms and refer to a realizable homotopy colimit as the relative left adjoint
$\mtt{hocolim}_I : (\mc{C}^I,\mc{W})\rightarrow (\mc{C},\mc{W})$ of the constant diagram functor $c_I:(\mc{C},\mc{W})\rightarrow 
(\mc{C}^I,\mc{W})$. 
\end{defi}

\begin{obs} A \textit{realizable homotopy limit} is defined in the dual way. Throughout the paper we focus on realizable homotopy colimits, 
but all the constructions and results presented here can be dualized to the setting of realizable homotopy limits.
\end{obs}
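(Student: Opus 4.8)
The plan is to realize the asserted duality by means of an involutive duality of $\mc{R}el\mc{C}at$ given by passing to opposite categories, and then to transport Definition \ref{defiRhocolim} across it. On objects I would set $(\mc{C},\mc{W})\mapsto(\mc{C}^\comp,\mc{W}^\comp)$, where $\mc{W}^\comp$ is the class of morphisms of $\mc{C}^\comp$ opposite to those of $\mc{W}$. First I would check that $\mc{W}^\comp$ is again saturated: since localization commutes with passing to opposites, there is a canonical identification $\mc{C}^\comp[(\mc{W}^\comp)^{-1}]\cong(\mc{C}[\mc{W}^{-1}])^\comp$, and a morphism of $\mc{C}^\comp$ is inverted by the localization functor if and only if its opposite is inverted by $\gamma$; hence $\mc{W}^\comp$ is exactly the preimage of the isomorphisms. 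On $1$-morphisms I would send a relative functor $F$ to $F^\comp:(\mc{C}^\comp,\mc{W}^\comp)\rightarrow(\mc{D}^\comp,\mc{W}^\comp)$, which runs in the same direction and is again relative, and on $2$-morphisms I would send a relative natural transformation $\tau:F\dashrightarrow G$ to $\tau^\comp:G^\comp\dashrightarrow F^\comp$, reversing its direction.

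The key technical point is that this assignment respects the zigzag description of relative natural transformations. A $2$-cell of $\mc{R}el\mc{C}at$ is a morphism of $Fun(\mc{C},\mc{D})[\mc{W}^{-1}]$ represented by a finite zigzag whose leftward arrows are pointwise weak equivalences; passing to opposites reverses every arrow of the zigzag and carries pointwise weak equivalences to pointwise weak equivalences. I would record this as a canonical isomorphism $Fun(\mc{C}^\comp,\mc{D}^\comp)[(\mc{W}^\comp)^{-1}]\cong(Fun(\mc{C},\mc{D})[\mc{W}^{-1}])^\comp$, which makes $(-)^\comp$ into a duality that is covariant on $1$-morphisms and contravariant on $2$-morphisms (a $\mrm{co}$-duality), and is manifestly involutive, $((-)^\comp)^\comp=\mrm{id}$.

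Next I would identify the exponential and constant-diagram data under this duality. There is a canonical isomorphism of relative categories $(\mc{C}^I)^\comp\cong(\mc{C}^\comp)^{I^\comp}$ (a functor $I\rightarrow\mc{C}$ corresponds to a functor $I^\comp\rightarrow\mc{C}^\comp$, and pointwise weak equivalences match), under which the opposite $c_I^\comp$ of the constant-diagram functor is identified with the constant-diagram functor $c_{I^\comp}$ on $(\mc{C}^\comp,\mc{W}^\comp)$. Since a duality reversing $2$-cells carries an adjunction into an adjunction with the roles of left and right adjoint exchanged, applying $(-)^\comp$ to a realizable homotopy colimit $(\mtt{hocolim}_I,c_I,\alpha,\beta)$ — that is, to a relative left adjoint of $c_I$ — yields a relative right adjoint $\mtt{hocolim}_I^\comp$ of $c_{I^\comp}$ on $(\mc{C}^\comp,\mc{W}^\comp)$. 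This right relative adjoint of the constant-diagram functor is precisely what one means by a realizable homotopy limit, so the definition dual to Definition \ref{defiRhocolim} is the correct one, and $\mtt{holim}_{I^\comp}$ on $(\mc{C}^\comp,\mc{W}^\comp)$ is nothing but $\mtt{hocolim}_I^\comp$.

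Finally, because $(-)^\comp$ is an involutive duality of $\mc{R}el\mc{C}at$ and each construction of the paper (the localization $2$-functor of Lemma \ref{loc2func}, the exponentiation $2$-functor of Lemma \ref{exp2func}, relative adjunctions, the simplicial replacement, the simple functor, and the cofinality conditions) has an evident opposite, any statement proved for realizable homotopy colimits transports verbatim to realizable homotopy limits by applying $(-)^\comp$ and replacing coproducts, initial objects, and right cofinality by products, terminal objects, and left cofinality. I expect the only delicate step to be the verification that the localized functor categories identify contravariantly under opposites in a way compatible with the saturated-class and zigzag conventions; once that bookkeeping is in place, the duality is formal.
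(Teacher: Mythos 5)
Your proposal is correct and is precisely the formalization the paper has in mind: the remark carries no proof of its own, and the intended justification is exactly the involutive duality $(\mc{C},\mc{W})\mapsto(\mc{C}^{\comp},\mc{W}^{\comp})$ of $\mc{R}el\mc{C}at$, covariant on 1-cells and contravariant on 2-cells, under which $Fun(\mc{C}^{\comp},\mc{D}^{\comp})[(\mc{W}^{\comp})^{-1}]\cong(Fun(\mc{C},\mc{D})[\mc{W}^{-1}])^{\comp}$ and a relative left adjoint of $c_I$ becomes a relative right adjoint of $c_{I^{\comp}}$, i.e.\ a realizable homotopy limit. The checks you single out (saturation of $\mc{W}^{\comp}$ via commutation of localization with opposites, the zigzag bookkeeping for 2-cells, the identification $(\mc{C}^I)^{\comp}\cong(\mc{C}^{\comp})^{I^{\comp}}$ matching constant-diagram functors, and the exchange of coproducts, initial objects and right cofinality with products, terminal objects and left cofinality) are the complete list of what is needed, in line with the paper's own observation that $\Dl\mc{D}=(\simp\mc{D}^{\comp})^{\comp}$ allows all simplicial constructions to be dualized.
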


\begin{obs}
Note that realizable homotopy colimits generalize usual colimits. If $\mc{C}$ is a category, recall that a colimit $\mtt{colim}:\mc{C}^I\rightarrow \mc{C}$
is by def{i}nition a left adjoint of $c_I:\mc{C}\rightarrow \mc{C}^I$, which is the same thing as a realizable homotopy colimit  
on $(\mc{C},\mc{W}=\{\mbox{isomorphisms}\})$.
\end{obs}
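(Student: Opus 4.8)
The plan is to unwind all the relevant definitions in the special case $\mc{W}=\{\text{isomorphisms}\}$ and to check that every piece of $\mc{R}el\mc{C}at$-structure collapses to its classical counterpart, so that a relative adjunction reduces to an ordinary adjunction. First I would record that $(\mc{C},\mc{W})$ really is a relative category in the sense of the paper: when $\mc{W}$ consists of all isomorphisms of $\mc{C}$, the morphisms to be inverted already possess inverses, so the identity functor $\mathrm{id}:\mc{C}\rightarrow\mc{C}$ satisfies the universal property of the localization. Hence $\mc{C}[\mc{W}^{-1}]=\mc{C}$ with $\gamma=\mathrm{id}$, and consequently the inverse image under $\gamma$ of the isomorphisms of $\mc{C}[\mc{W}^{-1}]$ is again $\mc{W}$; that is, $\mc{W}$ is saturated.

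Next I would identify the pointwise weak equivalences. A natural transformation in $\mc{C}^I$ is a pointwise isomorphism if and only if it is an isomorphism of the functor category $\mc{C}^I$, so $\mc{W}^I$ is exactly the class of isomorphisms of $\mc{C}^I$. Applying the previous observation to $\mc{C}^I$ gives $\mc{C}^I[\mc{W}^{-1}]=\mc{C}^I$. More generally, for any two such relative categories the localized functor category $Fun(\mc{C},\mc{D})[\mc{W}^{-1}]$ is obtained by inverting the pointwise isomorphisms, which are precisely the isomorphisms of $Fun(\mc{C},\mc{D})$; by the same argument this localization equals $Fun(\mc{C},\mc{D})$ itself.

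With these identifications the comparison is immediate. Since every functor preserves isomorphisms, a relative functor between categories whose weak equivalences are the isomorphisms is simply a functor; in particular $c_I$ is the ordinary constant diagram functor and a candidate $\mtt{hocolim}_I$ is just an ordinary functor $\mc{C}^I\rightarrow\mc{C}$. A relative natural transformation between two such functors is, by definition, a morphism of $Fun(-,-)[\mc{W}^{-1}]=Fun(-,-)$, i.e. an ordinary natural transformation, and the finite zigzag in the definition collapses to a single arrow because no inversion is taking place. Therefore a relative adjunction $(\mtt{hocolim}_I,c_I,\alpha,\beta)$ between $(\mc{C}^I,\mc{W})$ and $(\mc{C},\mc{W})$ is exactly an ordinary adjunction $\mtt{hocolim}_I\dashv c_I$ in $\mc{C}at$, its triangle identities in the localized functor categories being the usual triangle identities. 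By Definition \ref{defiRhocolim} this is what a realizable homotopy colimit on $(\mc{C},\{\text{isomorphisms}\})$ is, while by the very definition of colimit a left adjoint of $c_I$ is $\mtt{colim}_I$; hence the two notions coincide.

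I expect the only point requiring genuine (if still routine) care to be the claim that localizing a functor category at its class of isomorphisms returns the same category, so that relative natural transformations reduce to ordinary ones and the zigzag description becomes vacuous. Everything else follows by directly substituting $\mc{W}=\{\text{isomorphisms}\}$ into the definitions of relative functor, relative natural transformation and relative adjunction.
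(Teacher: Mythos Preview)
Your argument is correct and is precisely the unwinding of definitions that the paper leaves implicit: the statement appears in the paper as an unproved remark, and your verification that localizing at isomorphisms is the identity, that pointwise isomorphisms are the isomorphisms of the functor category, and hence that relative adjunctions collapse to ordinary adjunctions, is exactly what justifies it.
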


Since by Lemma \ref{reladjLoc} the localization of a relative adjunction gives a usual adjunction, 
the localization of a realizable homotopy colimit produces a left adjoint $\mtt{hocolim}_I : 
\mc{C}^{I}[\mc{W}^{-1}]\rightarrow \mc{C}[\mc{W}^{-1}]$ of the localized 
constant diagram functor. In other words, a realizable homotopy colimit gives, after localizing, a homotopy colimit in the sense of Grothendieck derivators. 
On the other hand, it is proved in \cite[Proposition 4.2]{RII} that a left adjoint of the localized 
constant diagram functor is an absolute left derived functor of $\mtt{colim}_I : 
\mc{C}^{I}\rightarrow \mc{C}$, in case it exists. Therefore we deduce the

\begin{prop}\label{DerivedFunctorColim}
 Let $(\mc{C},\mc{W})$ be a relative category. Assume that for a small category $I$ there exists the colimit $\mtt{colim}_I:\mc{C}^I\rightarrow\mc{C}$.
If $\mtt{hocolim}_I:(\mc{C}^I,\mc{W})\rightarrow (\mc{C},\mc{W})$ is a realizable homotopy colimit on $(\mc{C},\mc{W})$ then
the absolute left derived functor $\mbb{L}\mtt{colim}_I$ of $\mtt{colim}_I$ exists and it agrees with the functor 
$\mtt{hocolim}_I : \mc{C}^{I}[\mc{W}^{-1}]\rightarrow \mc{C}[\mc{W}^{-1}]$ induced by the realizable homotopy colimit on localizations.
\end{prop}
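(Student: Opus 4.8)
The plan is to reduce the statement entirely to the two facts recalled just before it, so that the argument becomes purely formal. The first ingredient is Lemma \ref{reladjLoc}, by which the localization of a relative adjunction is an ordinary adjunction between the localized categories; the second is \cite[Proposition 4.2]{RII}, which identifies any left adjoint of the localized constant diagram functor with the absolute left derived functor of $\mtt{colim}_I$ whenever the latter exists. Nothing beyond these and the 2-functoriality of localization (Lemma \ref{loc2func}) should be needed.

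First I would unwind Definition \ref{defiRhocolim}: the hypothesis that $(\mc{C},\mc{W})$ admits a realizable homotopy colimit $\mtt{hocolim}_I$ means precisely that $(\mtt{hocolim}_I, c_I, \alpha, \beta)$ is a relative adjunction between $(\mc{C}^I,\mc{W})$ and $(\mc{C},\mc{W})$, with $c_I$ the constant diagram functor. Applying Lemma \ref{reladjLoc} to this relative adjunction yields an ordinary adjunction $\mtt{hocolim}_I : \mc{C}^I[\mc{W}^{-1}] \leftrightarrows \mc{C}[\mc{W}^{-1}] : c_I$, where the two functors are the images under the localization 2-functor of Lemma \ref{loc2func}. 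Thus the functor $\mtt{hocolim}_I$ induced on localizations is a genuine left adjoint.

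Next I would observe that the right adjoint obtained this way is exactly the localized constant diagram functor: since $c_I : (\mc{C},\mc{W}) \to (\mc{C}^I,\mc{W})$ preserves weak equivalences, $\mathrm{loc}(c_I)$ is the functor induced on localizations by the constant diagram construction. Consequently the localized $\mtt{hocolim}_I$ is a left adjoint of the localized constant diagram functor. Invoking \cite[Proposition 4.2]{RII}, together with the hypothesis that $\mtt{colim}_I : \mc{C}^I \to \mc{C}$ exists, then gives at once that the absolute left derived functor $\mbb{L}\mtt{colim}_I$ exists and coincides with this localized $\mtt{hocolim}_I$, which is the desired conclusion.

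The argument is essentially a bookkeeping of 2-categorical formalities, so I expect the only delicate points to be the precise matching of hypotheses rather than any substantial computation. Concretely, the step requiring care is checking that the right adjoint produced by Lemma \ref{reladjLoc} is literally the localized constant diagram functor appearing in \cite[Proposition 4.2]{RII}, and that the notion of absolute left derived functor used there agrees with the one intended in the statement. The genuine mathematical content, namely that a left adjoint of the localized constant diagram functor computes the absolute left derived colimit, is entirely supplied by that cited proposition and is taken as given here.
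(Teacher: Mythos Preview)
Your proposal is correct and follows exactly the paper's own argument: the proposition is stated there as an immediate consequence of Lemma \ref{reladjLoc} (localizing the relative adjunction $(\mtt{hocolim}_I,c_I)$ gives an ordinary adjunction with the localized constant diagram functor) together with \cite[Proposition 4.2]{RII}. There is no separate proof in the paper beyond the paragraph preceding the statement, and your write-up faithfully expands that paragraph.
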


As in the case of colimits, the Fubini property of realizable homotopy colimits is a formal consequence of adjointness.

\begin{prop}\label{Fubini} Let $(\mc{C},\mc{W})$ be a relative category. Given small categories $I$ and $J$, there is a unique isomorphism of relative 
functors $$\mtt{hocolim}_{I\times J}  \dashrightarrow \mtt{hocolim}_I \mtt{hocolim}_J  $$
compatible with the adjunction morphisms.
\end{prop}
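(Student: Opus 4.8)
The plan is to deduce the Fubini isomorphism for realizable homotopy colimits purely formally from the adjunction characterization, mirroring exactly how the Fubini property for ordinary colimits follows from uniqueness of adjoints. The key observation is that a realizable homotopy colimit is, by Definition \ref{defiRhocolim}, nothing but a relative left adjoint of the constant diagram functor; so two relative functors that are both relative left adjoints of the \emph{same} relative right adjoint must be canonically isomorphic by Proposition \ref{UniqRelAd}. Thus the entire task reduces to exhibiting $c_{I\times J}$ as a composite of constant diagram functors whose iterated left adjoint is $\mtt{hocolim}_I\,\mtt{hocolim}_J$.

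\begin{proof}
First I would identify the relevant right adjoints. The constant diagram functor $c_{I\times J}:(\mc{C},\mc{W})\rightarrow(\mc{C}^{I\times J},\mc{W})$ factors, up to the canonical isomorphism $\mc{C}^{I\times J}\cong(\mc{C}^J)^I$, as the composite
\begin{equation*}
\xymatrix@C=30pt{(\mc{C},\mc{W})\ar[r]^-{c_J}&(\mc{C}^J,\mc{W})\ar[r]^-{(c_I)}&((\mc{C}^J)^I,\mc{W})}
\end{equation*}
where the second arrow is the constant diagram functor for the relative category $(\mc{C}^J,\mc{W})$ with respect to $I$. Both $c_J$ and this $c_I$ are relative functors, and $\mtt{hocolim}_J$ is a relative left adjoint of $c_J$ by hypothesis. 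By Lemma \ref{exp2func}, exponentiation by $I$ is a 2-functor; since 2-functors preserve adjunctions, applying $(-)^I$ to the relative adjunction $(\mtt{hocolim}_J,c_J)$ yields a relative adjunction $((\mtt{hocolim}_J)^I,(c_J)^I)$ between $((\mc{C}^J)^I,\mc{W})$ and $(\mc{C}^I,\mc{W})$. Under the identification $(\mc{C}^J)^I\cong\mc{C}^{I\times J}$, the functor $(c_J)^I$ is exactly the constant-in-$I$ functor $(c_I)$ above, so $(\mtt{hocolim}_J)^I$ is a relative left adjoint of $(c_I)$. Next, $\mtt{hocolim}_I$ is by hypothesis a relative left adjoint of $c_I:(\mc{C},\mc{W})\rightarrow(\mc{C}^I,\mc{W})$.

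Now I would compose these two relative adjunctions. Since adjunctions in any 2-category compose, the pair
\begin{equation*}
\bigl(\mtt{hocolim}_I\,(\mtt{hocolim}_J)^I,\; (c_I)\,c_J\bigr)
\end{equation*}
is a relative adjunction between $(\mc{C}^{I\times J},\mc{W})$ and $(\mc{C},\mc{W})$. But its right adjoint $(c_I)\,c_J$ is precisely $c_{I\times J}$, the very functor whose relative left adjoint is $\mtt{hocolim}_{I\times J}$. Thus $\mtt{hocolim}_{I\times J}$ and $\mtt{hocolim}_I\,(\mtt{hocolim}_J)^I$ are two relative left adjoints sharing the same relative right adjoint $c_{I\times J}$. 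By Proposition \ref{UniqRelAd} there is a unique isomorphism $\mtt{hocolim}_{I\times J}\dashrightarrow\mtt{hocolim}_I\,\mtt{hocolim}_J$ in $Fun(\mc{C}^{I\times J},\mc{C})[\mc{W}^{-1}]$ compatible with the adjunction morphisms, where we write $\mtt{hocolim}_I\,\mtt{hocolim}_J$ for $\mtt{hocolim}_I\,(\mtt{hocolim}_J)^I$ under the usual abuse of notation.
\end{proof}

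The step I expect to demand the most care is the bookkeeping around the identification $\mc{C}^{I\times J}\cong(\mc{C}^J)^I$ and the verification that $(c_J)^I$ really coincides with $c_{I\times J}$ after transport along this isomorphism; everything else is a formal consequence of the 2-categorical machinery already set up in Lemma \ref{exp2func} and Proposition \ref{UniqRelAd}. The reason this is genuinely no harder than the classical case is that Definition \ref{defiRhocolim} builds the homotopy colimit \emph{inside} the 2-category $\mc{R}el\mc{C}at$, so the standard argument that left adjoints compose and are unique up to canonical isomorphism transfers verbatim.
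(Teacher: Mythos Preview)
Your approach is essentially the same as the paper's: exhibit $\mtt{hocolim}_I\,\mtt{hocolim}_J$ as a second relative left adjoint of $c_{I\times J}$ by composing adjunctions, then invoke Proposition~\ref{UniqRelAd}. The paper cites Lemma~\ref{reladjDiag} where you cite Lemma~\ref{exp2func}, but these are the same content.

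There is, however, a bookkeeping slip in your middle step. The functor $(c_J)^I$ obtained from Lemma~\ref{exp2func} has domain $\mc{C}^I$ and codomain $(\mc{C}^J)^I$, whereas the arrow you label $(c_I)$ in your displayed factorization has domain $\mc{C}^J$; they are not the same functor, so the sentence ``$(c_J)^I$ is exactly the constant-in-$I$ functor $(c_I)$ above'' is false as written. What you actually need is the \emph{other} factorization
\[
\xymatrix@C=30pt{(\mc{C},\mc{W})\ar[r]^-{c_I}&(\mc{C}^I,\mc{W})\ar[r]^-{(c_J)^I}&((\mc{C}^J)^I,\mc{W}),}
\]
which is also equal to $c_{I\times J}$ and whose two arrows are precisely the right adjoints of $\mtt{hocolim}_I$ and $(\mtt{hocolim}_J)^I$ respectively. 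With that correction your composition-of-adjoints argument goes through verbatim; the displayed factorization through $\mc{C}^J$ should simply be replaced, since it plays no role in the rest of the proof.
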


\begin{proof} We have that $\mtt{hocolim}_{I\times J}:(\mc{C}^{I\times J},\mc{W})\rightarrow (\mc{C},\mc{W})$ is relative 
left adjoint to $c_{I\times J}:(\mc{C},\mc{W})\rightarrow (\mc{C}^{I\times J},\mc{W})$. We claim that the same holds for 
$\mtt{hocolim}_{I}\mtt{hocolim}_{J}:(\mc{C}^{I\times J},\mc{W})\rightarrow (\mc{C},\mc{W})$.
Indeed, $\mtt{hocolim}_{I}:(\mc{C}^{I},\mc{W})\rightarrow (\mc{C},\mc{W})$ 
is relative left adjoint to $c_I$. On the other hand, relative adjunctions are inherited by taking diagram categories by Lemma \ref{reladjDiag}. Therefore 
$\mtt{hocolim}^{\mc{C}^I}_J:((\mc{C}^I)^J,\mc{W}^{-1})\rightarrow (\mc{C}^I,\mc{W}^{-1})$ is relative left adjoint to $c_J$. Since the composition of relative
adjunctions is a relative adjunction, it follows that
$\mtt{hocolim}_{I}\mtt{hocolim}_{J}$ is relative left adjoint to $c_Jc_I = c_{I\times J} : (\mc{C},\mc{W})\rightarrow (\mc{C}^{I\times J},\mc{W})$. We conclude
by Proposition \ref{UniqRelAd} that there is a unique isomorphism between $\mtt{hocolim}_{I}\mtt{hocolim}_{J}$ and $\mtt{hocolim}_{I\times J}$ compatible with the adjunction
morphisms.
\end{proof}

There are other properties of usual colimits which are also formal consequences of adjointness in the 2-category $\mc{C}at$, but it is not clear whether 
they hold or not for realizable homotopy colimits. That is, it is not clear if they are or not formal consequences of adjointness in the 2-category 
$\mc{R}el\mc{C}at$. 
For instance, colimits are known to be \textit{invariant under cof{i}nal changes of diagrams}. This means that given a right cof{i}nal functor between small categories
$f:I\rightarrow J$, the natural morphism $\mtt{colim}_I f^{\ast}\rightarrow \mtt{colim}_J$ induced on colimits is an isomorphism. In the relative category 
setting the corresponding property is the following.

\begin{num}\label{hrcdef{i}bis} 
 Given a functor $f:I\rightarrow J$ between small categories, we consider the relative natural transformation 
\begin{equation}\label{hocolimf}\mtt{hocolim}(f):\mtt{hocolim}_I f^{\ast}\dashrightarrow \mtt{hocolim}_J\end{equation}
induced on homotopy colimits by adjunction. More concretely, $\mtt{hocolim}(f)$ is the adjoint relative natural transformation of $f^{\ast} \beta : f^{\ast} 
\rightarrow f^{\ast} c_J \mtt{hocolim}_J = c_I \mtt{hocolim}_J$, where $\beta:1_{\mc{C}^J}\rightarrow c_J\mtt{hocolim}_J$ denotes the adjunction morphism
of the adjoint pair $(\mtt{hocolim}_J,c_J)$.
\end{num}

\begin{defi}\label{hrcdef{i}}
We say that realizable homotopy colimits on $(\mc{C},\mc{W})$ are \textit{invariant under homotopy right cof{i}nal changes of diagrams} if 
given a homotopy right cof{i}nal functor between small categories
$f:I\rightarrow J$, the induced relative natural transformation (\ref{hocolimf})
is an isomorphism of $\mc{R}el\mc{C}at$.
\end{defi}

In good situations, such as model categories, homotopy colimits are known to be invariant under homotopy right cof{i}nal changes of diagrams (see \cite[Theorem 19.6.7]{H}). 
Then, a natural question is whether the invariance under homotopy right cof{i}nal changes of diagrams is a formal property of realizable homotopy colimits or not.
For a Grothendieck derivator, it holds that homotopy colimits are also invariant under homotopy right cof{i}nal changes of diagrams (see \cite[Corollaire
1.14]{C}). Although the proof is formal, it uses not only the existence of homotopy colimits, but also the existence of certain homotopy limits.


\section{Voevodsky homotopy colimits are realizable.}\label{VoevodskySection}

We prove in this section that the Voevodsky formula for homotopy colimits introduced in \cite[p. 11]{V} produces realizable homotopy colimits.
These are def{i}ned for relative categories  $(\simp\mc{C},\mc{S})$  such that $\mc{S}$ is a $\Dl$-closed class closed by coproducts, 
and are constructed combining the diagonal with the classical simplicial replacement  $(\simp\mc{C})^I\rightarrow\simp(\simp\mc{C})$.
\vspace{0.2cm}
\subsection{Voevodsky homotopy colimits.}\mbox{}\\[0.2cm]
\indent
To begin with, we remind the def{i}nition and properties of $\Dl$-closed classes.

\begin{defi}(\cite{V})
Let $\mc{C}$ be a category closed by coproducts. A saturated class $\mc{S}$ of morphisms in $\simp\mc{C}$ is $\Dl$-\textit{closed} if it satisf{i}es the following properties:\\[0.1cm]
\textit{1.} The class $\mc{S}$ contains the simplicial homotopy equivalences.\\[0.1cm]
\textit{2.} If $F=F_{\cdot,\cdot}:Z_{\cdot,\cdot}\rightarrow T_{\cdot,\cdot}$ is a morphism of bisimplicial objects in $\mc{C}$ such that
$F_{n,\cdot}\in\mc{S}$ (or $F_{\cdot,n}\in\mc{S}$) for all $n\geq 0$, then the diagonal $\mrm{D}(F)$ of $F$ is in $\mc{S}$.\\
\indent A morphism $F:X\rightarrow Y$ in $\simp\mc{C}$ is a \textit{termwise coprojection} if for each $n\geq 0$ there exists an object $A^{(n)}$ of $\mc{C}$ and a 
commutative diagram
$$\xymatrix@M=4pt@H=4pt@C=25pt@R=10pt{ X_n \ar[r]^{F_n} \ar[rd] &  Y_n \ar@{-}[d]^{\wr} \\
				                         &  X_n \amalg A^{(n)} }$$
where $X_n\rightarrow X_n\amalg A^{(n)}$ is the canonical morphism.
\end{defi}

\begin{obs}\mbox{}\\
\textit{1}. We restrict ourselves to \textit{saturated}
$\Dl$-closed classes. In this case the previous def{i}nition agrees with the one given in \cite{V}.\\
\textit{2}. Note that hypothesis \textit{1} is equivalent to the fact that two simplicially homotopic morphisms of $\simp\mc{C}$ become equal in $\simp\mc{C}[\mc{S}^{-1}]$.
\end{obs}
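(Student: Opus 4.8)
The plan is to establish the asserted equivalence as a pair of implications, using nothing beyond saturation of $\mc{S}$ together with one elementary fact about the simplicial cylinder: the projection $\pi:X\otimes\Dl[1]\rightarrow X$ induced by the unique map $[1]\rightarrow[0]$ is a simplicial homotopy equivalence, admitting both $d_0^X$ and $d_1^X$ as homotopy inverses, so that $\pi\,d_0^X=\pi\,d_1^X=1_X$. Writing $\gamma:\simp\mc{C}\rightarrow\simp\mc{C}[\mc{S}^{-1}]$ for the localization functor, I recall that saturation of $\mc{S}$ means precisely that $\mc{S}=\gamma^{-1}(\text{isomorphisms})$.

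For the implication ``condition \emph{1} $\Rightarrow$ homotopic maps agree after localizing'', first I would observe that $\pi$ is a simplicial homotopy equivalence and hence lies in $\mc{S}$ by condition \emph{1}, so that $\gamma(\pi)$ is invertible. From $\pi\,d_0^X=\pi\,d_1^X=1_X$ I then left-cancel to get $\gamma(d_0^X)=\gamma(\pi)^{-1}=\gamma(d_1^X)$. Now, given $f\sim g:X\rightarrow Y$ via a homotopy $H:X\otimes\Dl[1]\rightarrow Y$ with $Hd_0^X=f$ and $Hd_1^X=g$, applying the functor $\gamma$ yields
\[
\gamma(f)=\gamma(H)\,\gamma(d_0^X)=\gamma(H)\,\gamma(d_1^X)=\gamma(g),
\]
so that $f$ and $g$ become equal in $\simp\mc{C}[\mc{S}^{-1}]$.

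For the converse, I would suppose that every pair of simplicially homotopic morphisms becomes equal after localizing, and take an arbitrary simplicial homotopy equivalence $\phi:X\rightarrow Y$ with homotopy inverse $\psi$, so that $\phi\psi\sim 1_Y$ and $\psi\phi\sim 1_X$. By assumption $\gamma(\phi)\gamma(\psi)=\gamma(\phi\psi)=1$ and $\gamma(\psi)\gamma(\phi)=\gamma(\psi\phi)=1$, whence $\gamma(\phi)$ is an isomorphism of $\simp\mc{C}[\mc{S}^{-1}]$. Saturation of $\mc{S}$ then forces $\phi\in\mc{S}$, proving that $\mc{S}$ contains every simplicial homotopy equivalence, which is exactly condition \emph{1}.

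The only non-formal ingredient, and therefore the step I would treat most carefully, is the claim that $\pi$ is a simplicial homotopy equivalence with $d_0^X$ and $d_1^X$ as inverses; everything else is a direct manipulation of $\gamma$ and the saturation hypothesis. This cylinder fact is standard: the homotopies $1_{X\otimes\Dl[1]}\sim d_i^X\pi$ are induced, via the action $\otimes$, from the monotone maps $\min,\max:[1]\times[1]\rightarrow[1]$ regarded as simplicial homotopies $\Dl[1]\times\Dl[1]\rightarrow\Dl[1]$ between $1_{\Dl[1]}$ and the two constant maps. In fact for the forward implication one needs slightly less: it suffices that $\pi\in\mc{S}$ and $\pi d_0^X=\pi d_1^X=1_X$, since left-cancelling the isomorphism $\gamma(\pi)$ already delivers $\gamma(d_0^X)=\gamma(d_1^X)$.
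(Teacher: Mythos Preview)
Your argument is correct. The paper states this equivalence as a remark without proof, so there is no ``paper's own proof'' to compare against; your argument is exactly the standard one that the author presumably had in mind, and it uses only the ingredients available at that point in the paper (saturation of $\mc{S}$ and the contractibility of the simplicial cylinder).
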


\begin{lema}\label{DeltaCst} Let $(\simp\mc{C},\mc{S})$ be a relative category such that $\mc{S}$ is $\Dl$-closed. Let $f:X\rightarrow Y$ be a morphism of simplicial objects
such that for each $n\geq 0$ the constant simplicial morphism $f_n:X_n\rightarrow Y_n$ is in $\mc{S}$. Then $f$ is in $\mc{S}$ as well.
\end{lema}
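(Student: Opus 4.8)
The plan is to realize $f$ as the diagonal $\mrm{D}(F)$ of a bisimplicial morphism $F$ whose slices $F_{\cdot,n}$ in the first simplicial variable are exactly the constant simplicial morphisms $c(f_n):c(X_n)\rightarrow c(Y_n)$, and then to apply condition \textit{2} in the definition of a $\Dl$-closed class. The only creative step is choosing the right bisimplicial object; everything else is a direct verification.

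First I would consider the constant-in-the-first-variable bisimplicial objects $\Dl\times X$ and $\Dl\times Y$ introduced before Proposition \ref{IlDec}, with $(\Dl\times X)_{n,m}=X_m$ and $(\Dl\times Y)_{n,m}=Y_m$. The morphism $f$ induces a bisimplicial morphism $\Dl\times f:\Dl\times X\rightarrow \Dl\times Y$ which in bidegree $(n,m)$ is $f_m:X_m\rightarrow Y_m$. Fixing the second index $m$ and letting the first index vary, the structure maps in the first variable are identities, so the simplicial object $(\Dl\times X)_{\cdot,m}$ is precisely the constant simplicial object $c(X_m)$, and likewise $(\Dl\times Y)_{\cdot,m}=c(Y_m)$; hence $(\Dl\times f)_{\cdot,m}=c(f_m):c(X_m)\rightarrow c(Y_m)$. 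By hypothesis each $c(f_m)$ lies in $\mc{S}$, so $(\Dl\times f)_{\cdot,m}\in\mc{S}$ for every $m\geq 0$.

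Next I would compute the diagonal. Since $(\Dl\times X)_{n,m}=X_m$ does not depend on $n$ and carries identity structure maps in the first direction, the diagonal $\mrm{D}(\Dl\times X)$ has $n$-th term $(\Dl\times X)_{n,n}=X_n$, with face and degeneracy maps $d_i=d_i^{I}d_i^{II}=d_i^{II}$ and $s_i=s_i^{I}s_i^{II}=s_i^{II}$, that is, exactly the structure maps of $X$. Thus $\mrm{D}(\Dl\times X)=X$, and similarly $\mrm{D}(\Dl\times Y)=Y$ and $\mrm{D}(\Dl\times f)=f$. Applying condition \textit{2} of the $\Dl$-closed class $\mc{S}$, in the form that $(\Dl\times f)_{\cdot,n}\in\mc{S}$ for all $n$ implies $\mrm{D}(\Dl\times f)\in\mc{S}$, I conclude that $f=\mrm{D}(\Dl\times f)\in\mc{S}$, as desired.

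The argument is essentially immediate once the bisimplicial object $\Dl\times f$ is chosen; the closest thing to an obstacle is the bookkeeping in the previous paragraph, namely checking that the diagonal of $\Dl\times f$ recovers $f$ together with its original simplicial structure. This reduces to observing that the identity structure maps in the first variable make the composite face and degeneracy maps on the diagonal agree with those of $X$ and $Y$. Neither saturation of $\mc{S}$ nor condition \textit{1} (containment of the simplicial homotopy equivalences) is needed.
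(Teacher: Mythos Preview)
Your proof is correct and essentially identical to the paper's: the paper uses the bisimplicial object $X\times\Dl$ (constant in the second variable) and the slices $(f\times\Dl)_{n,\cdot}=c(f_n)$, while you use $\Dl\times X$ (constant in the first variable) and the slices $(\Dl\times f)_{\cdot,m}=c(f_m)$; these are mirror images under swapping the two simplicial indices, and both invoke the same clause of condition~\textit{2}.
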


\begin{proof} Consider the bisimplicial objects $X\times\Dl$ and $Y\times\Dl$ which are constant with respect to the second simplicial degree. That is, 
$(X\times \Dl)_{n,m} = X_n$ and analogously for $Y\times \Dl$. Then $f$ gives the bisimplicial morphism $f\times  \Dl$ def{i}ned in the same way. 
By assumption $(f\times  \Dl)_{n,\cdot}$ is in $\mc{S}$ for each $n\geq 0$, so its diagonal $\mrm{D}(f\times  \Dl)=f$ is in $\mc{S}$ as well.
\end{proof}

The good homotopical behavior of $\Dl$-closed classes is highlighted by the fact that it 
produces a natural structure of Brown category of cof{i}brant objects. This means, roughly speaking, that
pushouts along cof{i}brations exist and preserve weak equivalences, and that each object has a cylinder. 
Among the properties of Brown categories of cof{i}brant objects are the existence, up to homotopy, of a calculus of left fractions for the localized category, and of
cof{i}ber sequences enjoying the usual good properties.  The reader is referred to \cite{Br} for further detail. 

\begin{prop}\emph{(\cite[Proposition 4.9]{R})}\label{DlClBrown}
Let $(\simp\mc{C},\mc{S})$ be a relative category closed by f{i}nite coproducts, and such that $\mc{S}$ is $\Dl$-closed. Then
$(\simp\mc{C},\mc{S})$ is a Brown category of cof{i}brant objects, where the cof{i}brations are the termwise coprojections.
\end{prop}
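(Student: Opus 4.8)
The plan is to verify the axioms of a Brown category of cofibrant objects for $(\simp\mc{C},\mc{S})$ directly, taking the weak equivalences to be $\mc{S}$ and the cofibrations to be the termwise coprojections. Recall that these axioms require: (a) $\mc{S}$ and the cofibrations each contain all isomorphisms and are closed under composition; (b) the two-out-of-three property for $\mc{S}$; (c) pushouts along cofibrations exist, the pushout of a cofibration is again a cofibration, and the pushout of a trivial cofibration (a cofibration in $\mc{S}$) is again a trivial cofibration; and (d) every object admits a path object (dually, since we are in the cofibrant-object setting, a cylinder object) factoring the codiagonal. I would address these in that order, leaning on the closure hypotheses of Definition of a $\Dl$-closed class and on the finite-coproduct assumption throughout.

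First I would dispatch the formal axioms. Saturation of $\mc{S}$ gives that $\mc{S}$ contains isomorphisms and satisfies two-out-of-three, so (b) and the weak-equivalence half of (a) are immediate. For cofibrations, an isomorphism $X\to Y$ is a termwise coprojection with $A^{(n)}=0$, and a composite of termwise coprojections is a termwise coprojection since $(X_n\amalg A^{(n)})\amalg B^{(n)}\cong X_n\amalg(A^{(n)}\amalg B^{(n)})$; here only finite coproducts are needed, matching the hypothesis. For axiom (c) I would argue degreewise: a termwise coprojection $X\to Y$ looks in each degree like the canonical inclusion $X_n\to X_n\amalg A^{(n)}$, so pushing out against any $X\to Z$ produces degreewise $Z_n\amalg A^{(n)}$, which exists by the finite-coproduct assumption and is again visibly a termwise coprojection. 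To see that a trivial cofibration pushes out to an element of $\mc{S}$, I would exploit Lemma \ref{DeltaCst}: the relevant degreewise maps are constant-simplicial coprojections, and I would try to reduce the statement to the behaviour of $\mc{S}$ on constant simplicial objects, where the coproduct hypothesis on $\mc{C}$ (via $\mc{S}$ being closed by coproducts) controls the situation.

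The delicate axiom, and the one I expect to be the main obstacle, is (d): constructing a cylinder object for an arbitrary $X\in\simp\mc{C}$, that is, a factorization of the codiagonal $X\amalg X\to X$ as a cofibration $X\amalg X\to\mathrm{Cyl}(X)$ followed by a weak equivalence $\mathrm{Cyl}(X)\to X$. The natural candidate is the simplicial cylinder $X\otimes\Dl[1]$ from the simplicial-homotopies subsection, whose definition $(X\otimes\Dl[1])_n=\coprod_{\Dl[1]_n}X_n$ uses only finite coproducts. I would check that $d_0^X\amalg d_1^X:X\amalg X\to X\otimes\Dl[1]$ is a termwise coprojection, which amounts to identifying the two nondegenerate copies of $X_n$ as a retract of the finite coproduct indexed by $\Dl[1]_n$ and isolating the complementary summand as the object $A^{(n)}$; this is the bookkeeping step requiring care, since the degeneracies $s^j$ produce repeated factors in $\Dl[1]_n$. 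The structure map $X\otimes\Dl[1]\to X$ induced by $\Dl[1]\to\Dl[0]$ is an augmentation admitting an extra degeneracy, hence a simplicial homotopy equivalence, and therefore lies in $\mc{S}$ by axiom \textit{1} of the $\Dl$-closed class definition. Assembling these pieces yields the cylinder and completes the verification that $(\simp\mc{C},\mc{S})$ is a Brown category of cofibrant objects.
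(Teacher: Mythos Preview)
The paper does not prove this proposition; it is stated with a citation to \cite[Proposition 4.9]{R}. Your outline is an attempt at a self-contained proof, and most of it is sound: saturation of $\mc{S}$ handles (a) and (b), the degreewise description of termwise coprojections gives existence and stability of pushouts of cofibrations, and the simplicial cylinder $X\otimes\Dl[1]$ works as advertised (the set $\Dl[1]_n$ has $n+2$ elements, two of which are the constant maps hit by $d_0^X,d_1^X$, so $X\amalg X\to X\otimes\Dl[1]$ is a termwise coprojection with complement $\coprod_n X_n$; the projection $X\otimes\Dl[1]\to X$ is a simplicial homotopy equivalence and hence lies in $\mc{S}$). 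You also omit the easy check that every object is cofibrant, though $0\to X$ is trivially a termwise coprojection.

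The genuine gap is in your treatment of the axiom that trivial cofibrations are stable under pushout. Your plan to ``reduce to the behaviour of $\mc{S}$ on constant simplicial objects'' via Lemma~\ref{DeltaCst} does not go through: knowing that the simplicial map $i:X\to Y$ lies in $\mc{S}$ gives no information about the individual constant simplicial maps $c(i_n):c(X_n)\to c(Y_n)$, so there is nothing to transport degreewise to the pushout maps $g_n:Z_n\to Z_n\amalg A^{(n)}$. Nor do the objects $A^{(n)}$ assemble into a simplicial object of $\mc{C}$ (the face maps $d_k:Y_n\to Y_{n-1}$ need not carry the $A^{(n)}$ summand into the $A^{(n-1)}$ summand), so there is no ``cofiber'' object to which closure of $\mc{S}$ under finite coproducts could be applied. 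This axiom is the substantive one here, and the reduction you sketch is not a valid strategy; you should consult \cite{R} for the actual argument.
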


\begin{defi}\label{def{i}HolimDl}\cite[p. 11]{V}
Let $\mc{C}$ be a category closed by coproducts. The \textit{Voevodsky homotopy colimit}
$$ \mtt{hocolim}^{\mrm{V}}_I :(\simp\mc{C})^I \longrightarrow  \simp\mc{C} $$
maps $Z:I\rightarrow \simp\mc{C}$ to $\mtt{hocolim}^{\mrm{V}}_I Z = \mrm{D}(\amalg^I Z)$, the diagonal of the bisimplicial object
given by the simplicial replacement of $Z$. In degree $n$, $\mtt{hocolim}^{\mrm{V}}_I Z$
is then equal to $\coprod_{i_0\rightarrow\cdots\rightarrow i_n} Z_{i_0,n}$.
\end{defi}

\noindent Note that if $\mtt{colim}_I Z$ exists, there is a natural augmentation $\mtt{hocolim}^{\mrm{V}}_I Z\rightarrow \mtt{colim}_I Z$ 
given by (\ref{Augmcolim}).

\begin{lema}\label{hocolimRelative} Let $(\simp\mc{C},\mc{S})$ be a relative category closed by coproducts and such that $\mc{S}$ is $\Dl$-closed. Then
the Voevodsky homotopy colimit is a relative functor. 
\end{lema}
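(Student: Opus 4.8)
The plan is to unwind the definition of a relative functor and reduce the statement to a single application of the second defining property of a $\Dl$-closed class. Since $\mtt{hocolim}^{\mrm{V}}_I = \mrm{D}(\amalg^I(-))$ is already a functor by Definition \ref{def{i}HolimDl}, what remains to be checked is that it preserves weak equivalences. A weak equivalence in the source $((\simp\mc{C})^I,\mc{S})$ is by definition a pointwise one, i.e. a natural transformation $\tau:Z\rightarrow Z'$ between diagrams $Z,Z':I\rightarrow\simp\mc{C}$ whose component $\tau_i:Z_i\rightarrow Z'_i$ lies in $\mc{S}$ for every object $i$ of $I$. So I must show that $\mrm{D}(\amalg^I\tau)$ lies in $\mc{S}$.

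First I would regard $\amalg^I Z$ as a bisimplicial object of $\mc{C}$, where the external simplicial direction is the replacement one and the internal direction is the simplicial direction of the objects $Z_i$ of $\simp\mc{C}$. Concretely, in external degree $n$ one has $(\amalg^I Z)_n = \coprod_{i_0\rightarrow\cdots\rightarrow i_n} Z_{i_0}$, a coproduct of simplicial objects of $\mc{C}$ indexed by $\mrm{N}_n(I)$, and likewise for $Z'$. By the description of the naturality map (\ref{SRnatural}) (applied with $f=1_I$), the morphism $\amalg^I\tau$ has, in external degree $n$, the form $\coprod_{i_0\rightarrow\cdots\rightarrow i_n}\tau_{i_0}$, namely the coproduct of the components of $\tau$.

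Next I would invoke that $(\simp\mc{C},\mc{S})$ is closed by coproducts, so that $\mc{S}$ is stable under coproducts. Since each $\tau_{i_0}$ belongs to $\mc{S}$ by hypothesis, so does the coproduct $\coprod_{i_0\rightarrow\cdots\rightarrow i_n}\tau_{i_0}$; that is, the external slice $(\amalg^I\tau)_{n,\cdot}$ lies in $\mc{S}$ for every $n\geq 0$. The second defining property of a $\Dl$-closed class then applies directly to the bisimplicial morphism $\amalg^I\tau$ (in the version $F_{n,\cdot}\in\mc{S}$), and yields that its diagonal $\mrm{D}(\amalg^I\tau)=\mtt{hocolim}^{\mrm{V}}_I(\tau)$ lies in $\mc{S}$, which is exactly what is required.

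The only point requiring genuine care, and the step I expect to be the crux, is the identification of the external slices of $\amalg^I\tau$ with coproducts of the maps $\tau_{i_0}$: once the bisimplicial bookkeeping for the simplicial replacement of a diagram of \emph{simplicial} objects is set up correctly, the conclusion is immediate from coproduct-stability of $\mc{S}$ together with property \emph{2} of $\Dl$-closedness. In particular, no simplicial homotopies or decalage arguments are needed at this stage.
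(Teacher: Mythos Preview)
Your proposal is correct and follows essentially the same argument as the paper: both identify the external degree-$n$ slice of $\amalg^I\tau$ as a coproduct of the maps $\tau_{i_0}$, use closure of $\mc{S}$ under coproducts to place each slice in $\mc{S}$, and then apply property~\textit{2} of $\Dl$-closedness to the diagonal. Your write-up is more detailed, but the structure and the key steps are identical.
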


\begin{proof}
Let $f:Z\rightarrow T$ be a morphism between the diagrams $Z,T:I\rightarrow\simp\mc{C}$ such that $f_i\in\mc{S}$ for all $i\in I$. Since $\mc{S}$ is closed by 
coproducts, $\amalg^I f : \amalg^I Z\rightarrow \amalg^I T$ is a bisimplicial morphism with $(\amalg^I f)_{n,\cdot}\in\mc{S}$ for each $n\geq 0$. Then,
$\mtt{hocolim}^{\mrm{V}}_{I} f = \mrm{D} (\amalg^I f) \in\mc{S}$ because $\mc{S}$ is $\Dl$-closed.
\end{proof}

\begin{defi}\label{hLKanDl}
Given a functor $f:I\rightarrow J$ between small categories, we def{i}ne
$f_{!}:\simp\mc{C}^I\longrightarrow\simp\mc{C}^J$ as
$$ (f_{!} X ) (j) = \mtt{hocolim}^{\mrm{V}}_{(f/j)} u_j^\ast X $$
where $u_j : (f/j)\rightarrow I$ maps $\{f(i)\rightarrow j\}$ to $i$. Note that if $\pi:I\rightarrow [0]$ is the trivial functor, then
$\pi_{!} X = \mtt{hocolim}^{\mrm{V}}_{I}  X$.
\end{defi}

Next we state the main result of the section.

\begin{thm}\label{hocolimDlClosed}
Let $(\simp\mc{C},\mc{S})$ be a relative category closed by coproducts and such that $\mc{S}$ is $\Dl$-closed. Then, the following properties hold:\\
\textbf{i.} For each small category $I$, $\mtt{hocolim}^{\mrm{V}}_I$ is a realizable homotopy colimit on
$(\simp\mc{C},\mc{S})$, invariant under homotopy right cof{i}nal changes of diagrams.\\
\textbf{ii.} For each functor $f:I\rightarrow J$ between small categories, consider $f_{!}
:\simp\mc{C}^I\rightarrow\simp\mc{C}^J$ given in \ref{hLKanDl}. Then
$(f_{!},f^\ast)$ is a relative adjoint pair between $(\simp\mc{C}^I,\mc{S})$ and $(\simp\mc{C}^J,\mc{S})$.
\end{thm}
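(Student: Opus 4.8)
The plan is to prove \textbf{ii} for an arbitrary $f:I\rightarrow J$ and to deduce the adjunction of \textbf{i} as the special case of the projection $\pi:I\rightarrow[0]$, since $\pi^{\ast}=c_I$ and $\pi_{!}=\mtt{hocolim}^{\mrm{V}}_I$ by Definition \ref{hLKanDl}; the cof{i}nality invariance in \textbf{i} will be treated separately. Thus I must exhibit relative natural transformations $\alpha:f_{!}f^{\ast}\dashrightarrow 1$ and $\beta:1\dashrightarrow f^{\ast}f_{!}$ obeying the triangle identities. That $f^{\ast}$ is relative is immediate, and $f_{!}$ is relative because each $\mtt{hocolim}^{\mrm{V}}_{(f/j)}$ preserves $\mc{S}$ (Lemma \ref{hocolimRelative}) and $f_{!}$ is assembled naturally in $j$.

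For the counit I would use the honest natural transformation whose component at $j$ is $\mtt{hocolim}^{\mrm{V}}_{(f/j)}u_j^{\ast}f^{\ast}Z\rightarrow \mtt{hocolim}^{\mrm{V}}_{(f/j)}c(Z_j)=Z_j\otimes\mrm{N}(f/j)\rightarrow Z_j$, induced by the cocone $Z_{f(i)}\rightarrow Z_j$ followed by $\mrm{N}(f/j)\rightarrow\Dl[0]$. The unit is subtler, because the naive inclusion of the ``identity'' component fails to be natural in $I$; I would rectify this using that each overcategory $(I/i)$ has a terminal object. Put $P(X)_i=\mtt{hocolim}^{\mrm{V}}_{(I/i)}v_i^{\ast}X$, with $v_i:(I/i)\rightarrow I$ the projection. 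The augmentation (\ref{Augmcolim}) gives $\epsilon_i:P(X)_i\rightarrow X_i$, and the extra degeneracy furnished by the terminal object $(i,\mrm{id})$ makes each internal row a simplicial homotopy equivalence, so $\epsilon_i\in\mc{S}$ by the $\Dl$-closed axioms; moreover $\epsilon$ is natural in $i$ and in $X$. The functors $(I/i)\rightarrow(f/f(i))$ induce, through the naturality (\ref{SRnatural}) of the simplicial replacement, an honest natural transformation $\rho:P(X)\rightarrow f^{\ast}f_{!}X$. As $\epsilon$ is a pointwise weak equivalence it is invertible in the localized functor category, so the zigzag $X\xleftarrow{\epsilon}P(X)\xrightarrow{\rho}f^{\ast}f_{!}X$ defines $\beta$.

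The main obstacle is the triangle identities, and here Illusie's decalage is the decisive tool. The identity built from $\beta_{f^{\ast}}$ and $f^{\ast}\alpha$ is direct: one checks $f^{\ast}\alpha\comp\rho=\epsilon$ at each object, whence the composite equals $\epsilon\comp\epsilon^{-1}=1$. The remaining identity, built from $f_{!}\beta$ and $\alpha_{f_{!}}$, is the hard one: applying $f_{!}$ to the zigzag $\beta$ produces a \emph{double} simplicial replacement, and the composite with $\alpha$ compares two a priori distinct ways of collapsing it onto a single diagonal. This is exactly the situation governed by Proposition \ref{IlDec}: the diagonals $\mrm{D}(\Lambda^I)$ and $\mrm{D}(\Lambda^{II})$ of the decalage augmentations are simplicial homotopy equivalences which are moreover \emph{naturally} simplicially homotopic. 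Since $\mc{S}$ contains the simplicial homotopy equivalences and identif{i}es simplicially homotopic maps in its localization, this natural homotopy forces the composite to be the identity in $Fun(\simp\mc{C}^I,\simp\mc{C}^J)[\mc{S}^{-1}]$.

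Finally, for the cof{i}nality invariance in \textbf{i}, let $g:I\rightarrow J$ be homotopy right cof{i}nal, so each $\mrm{N}(x/g)$ is contractible. The transformation (\ref{hocolimf}) is the diagonal of $\amalg^{\bullet}(g):\amalg^I g^{\ast}Z\rightarrow\amalg^J Z$, so by the diagonal axiom it suf{f}ices to prove that for every internal degree the map $\amalg^I g^{\ast}W\rightarrow\amalg^J W$ lies in $\mc{S}$, for $W:J\rightarrow\mc{C}$. I would interpolate the two simplicial replacements by a two-sided bar-type bisimplicial object over $g$; contractibility of each $\mrm{N}(x/g)$ supplies extra degeneracies on one family of rows, placing them in $\mc{S}$, after which the $\Dl$-closed diagonal axiom places the comparison map in $\mc{S}$. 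Together with the adjunction of \textbf{ii}, this establishes both statements.
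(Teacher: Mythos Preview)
Your overall architecture matches the paper's almost exactly: the same counit, the same replacement $P=(1_I)_!$ with its augmentation to build the unit as a zigzag, and the same appeal to Illusie's decalage (Proposition~\ref{IlDec}) for the hard triangle identity. Your direct argument that $\epsilon_i\in\mc{S}$ via the extra degeneracy coming from the terminal object of $(I/i)$ is valid and in fact slightly cleaner than the paper's route, which deduces it from the cof{i}nality proposition.

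The genuine gap is in your cof{i}nality argument. You write that ``contractibility of each $\mrm{N}(x/g)$ supplies extra degeneracies on one family of rows.'' This is false: contractibility of a simplicial set means only that the map to $\Dl[0]$ is a \emph{weak} homotopy equivalence, not a simplicial homotopy equivalence, and it certainly does not furnish an extra degeneracy. Extra degeneracies arise when the undercategory has an initial object, which is strictly stronger than homotopy right cof{i}nality. So the rows of your interpolating bar object are not placed in $\mc{S}$ by axiom~1 of a $\Dl$-closed class, and the diagonal axiom cannot be invoked as you describe.

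What the paper actually does here is nontrivial and cannot be skipped. To show that $X\otimes K\rightarrow X$ lies in $\mc{S}$ whenever $K$ is merely contractible, one first proves that $\mc{S}$ is stable under sequential colimits of termwise coprojections (Lemma~\ref{exseqABC}), and then uses the $Ex_\infty$ f{i}brant replacement together with the anodyne-extension description of trivial cof{i}brations of simplicial sets to reduce to the case of a simplicial homotopy equivalence (Corollary~\ref{wheDlCl}). Only then does the two-sided bar argument go through: with this in hand, the row maps $X_{j_0}\otimes\mrm{N}(j_n/g)\rightarrow X_{j_0}$ lie in $\mc{S}$, hence so do their coproducts, and the diagonal axiom f{i}nishes. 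You should also note that the identif{i}cation of the adjunction-induced transformation~(\ref{hocolimf}) with the diagonal of $\amalg^{\bullet}(g)$ is not automatic; the paper verif{i}es it explicitly using the commutative diagram built from the bar constructions.
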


The proof needs various auxiliary results, and will f{i}nished at the end of the section. 
We begin by investigating further properties verif{i}ed by a $\Dl$-closed class which is in addition closed by 
coproducts.

\begin{lema}\label{exseqABC} Let $(\simp\mc{C},\mc{S})$ be a relative category closed by coproducts, and such that $\mc{S}$ is $\Dl$-closed. If 
$$X^0 \stackrel{a^0}{\rightarrowtail} X^1 \stackrel{a^1}{\rightarrowtail} \cdots \stackrel{\,\,a^{m-1}}{\rightarrowtail \,\,\,} X^m \stackrel{a^{m}}{\rightarrowtail} \cdots $$
is a countable sequence of termwise coprojections in $\simp\mc{C}$, then $\mtt{colim}_m X^m$ exists. If in addition each $a^m\in\mc{S}$,
then the canonical morphism $X^0\rightarrow \mtt{colim}_m X^m$ is in $\mc{S}$ as well.
\end{lema}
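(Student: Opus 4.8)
The plan is to separate the two assertions. For existence I would argue degreewise: by the termwise-coprojection hypothesis each $a^m_n\colon X^m_n\to X^{m+1}_n$ is a coprojection $X^m_n\hookrightarrow X^m_n\amalg A^{(m,n)}$, so the sequential colimit $\mtt{colim}_m X^m_n$ is the countable coproduct $X^0_n\amalg\coprod_{m\geq 0}A^{(m,n)}$, which exists since $\mc{C}$ is closed by coproducts. As colimits in $\simp\mc{C}$ are computed degreewise once they exist in each degree, these assemble into a simplicial object $X^\infty=\mtt{colim}_m X^m$, and the canonical morphism $\iota\colon X^0\to X^\infty$ is again a termwise coprojection. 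This step is routine.

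For $\iota\in\mc{S}$ I would factor it through the Voevodsky homotopy colimit over the poset $\mbb{N}$. Write $X^\bullet\colon\mbb{N}\to\simp\mc{C}$ for the tower; by Definition \ref{def{i}HolimDl} one has $\mtt{hocolim}^{\mrm{V}}_{\mbb{N}}X^\bullet=\mrm{D}(\amalg^{\mbb{N}}X^\bullet)$, and since $\mtt{colim}_{\mbb{N}}X^\bullet=X^\infty$ exists there is the natural augmentation $\mrm{aug}\colon\mtt{hocolim}^{\mrm{V}}_{\mbb{N}}X^\bullet\to X^\infty$ noted after Definition \ref{def{i}HolimDl}. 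I will construct a morphism $v\colon X^0\dashrightarrow\mtt{hocolim}^{\mrm{V}}_{\mbb{N}}X^\bullet$ with $\mrm{aug}\circ v=\iota$ and show that $v,\mrm{aug}\in\mc{S}$; the claim then follows from saturation.

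The clean half is $v\in\mc{S}$, which uses only $a^m\in\mc{S}$ and not the coprojection hypothesis. Consider the morphism of $\mbb{N}$-diagrams $\phi\colon c_{\mbb{N}}X^0\to X^\bullet$ (the constant diagram at $X^0$) whose value at $m$ is the finite composite $X^0\to X^m$; each such composite lies in $\mc{S}$ by saturation, so $\phi$ is a pointwise weak equivalence, and hence $\mtt{hocolim}^{\mrm{V}}_{\mbb{N}}\phi\in\mc{S}$ by Lemma \ref{hocolimRelative}. Now $\mtt{hocolim}^{\mrm{V}}_{\mbb{N}}(c_{\mbb{N}}X^0)=\mrm{D}(\amalg^{\mbb{N}}c_{\mbb{N}}X^0)=X^0\otimes\mrm{N}(\mbb{N})$, and since $\mbb{N}$ has initial object $0$ the map $\mrm{N}(\mbb{N})\to\Dl[0]$ is a simplicial homotopy equivalence; therefore the projection $X^0\otimes\mrm{N}(\mbb{N})\to X^0\otimes\Dl[0]=X^0$ and its vertex section $s\colon X^0\to X^0\otimes\mrm{N}(\mbb{N})$ are simplicial homotopy equivalences, hence in $\mc{S}$ because $\mc{S}$ contains the simplicial homotopy equivalences. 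Setting $v:=(\mtt{hocolim}^{\mrm{V}}_{\mbb{N}}\phi)\circ s$, naturality of the augmentation gives $\mrm{aug}\circ v=\iota$, and $v\in\mc{S}$. The point that avoids circularity is that I compare with the constant tower at $X^0$, using only the finite composites $X^0\to X^m$, rather than with the constant tower at $X^\infty$, which would presuppose that the colimit inclusions $X^m\to X^\infty$ already lie in $\mc{S}$.

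The main obstacle is $\mrm{aug}\in\mc{S}$, i.e. that for a tower of termwise coprojections the Voevodsky homotopy colimit maps to the ordinary colimit by a weak equivalence; this is precisely where the coprojection hypothesis must enter. The difficulty is that neither $\Dl$-closed axiom applies directly: the augmentation $\amalg^{\mbb{N}}X^\bullet\to c(X^\infty)$ is not levelwise in $\mc{S}$, and because $\mbb{N}$ has no terminal object the augmented object admits no extra degeneracy in the replacement direction. Here I would exploit the degreewise splittings $X^{m+1}_n\cong X^m_n\amalg A^{(m,n)}$ together with Illusie's décalage (Proposition \ref{IlDec}): the expected route is that the termwise-coprojection structure supplies, after passing to the décalage, the contracting homotopy onto $X^\infty$ that the missing extra degeneracy would have provided, exhibiting $\mrm{aug}$ as a simplicial homotopy equivalence on diagonals and hence in $\mc{S}$. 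Granting $\mrm{aug}\in\mc{S}$, the identity $\iota=\mrm{aug}\circ v$ and saturation finish the proof.
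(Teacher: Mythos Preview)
Your overall architecture matches the paper exactly: set up the square
\[
\xymatrix@C=30pt{
X^0\otimes\mrm{N}(\mbb{N}) \ar[r]^{\mtt{hocolim}^{\mrm{V}}_{\mbb{N}}\phi} \ar[d] & \mtt{hocolim}^{\mrm{V}}_{\mbb{N}}X^\bullet \ar[d]^{\mrm{aug}} \\
X^0 \ar[r]^{\iota} & X^\infty
}
\]
show the left vertical and the top horizontal lie in $\mc{S}$, and then show $\mrm{aug}\in\mc{S}$. Your treatment of the first two maps is correct and is essentially the paper's argument.

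The gap is precisely where you say it is: you do not prove $\mrm{aug}\in\mc{S}$, and the d\'ecalage suggestion will not do it. Illusie's d\'ecalage compares $\mrm{D}\,dec(Y)$ with $Y$ for a \emph{single} simplicial object $Y$; it produces the natural augmentations $\Lambda^{I},\Lambda^{II}$ and the homotopies between their diagonals. It does not manufacture an extra degeneracy for the augmentation $\amalg^{\mbb{N}}X^\bullet\to c(X^\infty)$, and nothing in the termwise-coprojection hypothesis supplies one: the replacement index runs over $\mrm{N}(\mbb{N})$, which has no terminal object, so the usual extra-degeneracy mechanism is simply unavailable. In short, the coprojection splittings live degreewise in the \emph{internal} simplicial direction, not in the replacement direction where you would need them.

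The paper's argument for $\mrm{aug}\in\mc{S}$ uses the coprojection hypothesis in a completely different way. First one reduces to the case where each $X^k$ is a \emph{constant} simplicial object: $\mrm{aug}$ is the diagonal of the bisimplicial map whose $n$-th row is the augmentation for the tower $\{X^k_n\}_k$ of objects of $\mc{C}$, so by the $\Dl$-closed axiom it suffices to treat each row. For a tower of constant simplicial objects a termwise coprojection is an honest coprojection in $\mc{C}$, so the tower is isomorphic to
\[
A^0 \rightarrowtail A^0\amalg A^1 \rightarrowtail A^0\amalg A^1\amalg A^2 \rightarrowtail \cdots
\]
and this \emph{splits as a coproduct of $\mbb{N}$-diagrams} $\coprod_{l\geq 0}\tau_l A$, where $\tau_l A$ is $0$ below stage $l$ and the constant diagram $A^l$ from stage $l$ on. Since $\mtt{hocolim}^{\mrm{V}}_{\mbb{N}}$ commutes with coproducts, $\mrm{aug}$ becomes $\coprod_{l\geq 0}\bigl(A^l\otimes\mrm{N}(l/\mbb{N})\to A^l\bigr)$, a coproduct of simplicial homotopy equivalences (each $l/\mbb{N}$ has an initial object), hence lies in $\mc{S}$ because $\mc{S}$ is closed under coproducts. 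That coproduct decomposition of the tower is the missing idea.
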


\begin{proof} Assume given coprojections $a^m: X^m {\rightarrowtail} X^{m+1}$ for each $m\geq 0$. Being $\mc{C}$ closed by coproducts, it holds that 
$\mtt{colim}_m X^m$ exists in $\simp\mc{C}$ because it exists degreewise. Assume moreover that $a^{m}\in\mc{S}$ for all $m\geq 0$, and let us check that 
$X^0\rightarrow \mtt{colim}_m X^m $ is in $\mc{S}$ as well. Denote by $\mathbb{N}$ the category given by the poset of natural numbers, and consider the 
commutative square
$$\xymatrix@M=4pt@H=4pt@C=25pt{ \mtt{hocolim}^{\mrm{V}}_{\mathbb{N}} X^0 = X^0\otimes \mrm{N}(\mathbb{N})\ar[r]^-{\alpha'}\ar[d]_{f} & 
\mtt{hocolim}^{\mrm{V}}_{\mathbb{N}} X^{\bullet} \ar[d]^{g} \\ 
X^0 \ar[r]^-{\alpha} & \mtt{colim}_{\mathbb{N}} X^{\bullet}}$$
where $f$ and $g$ are the natural morphisms from the Voevodsky homotopy colimit to the colimit of a diagram, while $\alpha$ and $\alpha'$ are induced by the morphism 
of diagrams $\Lambda:X^0\rightarrow X^{\bullet}$ with $\Lambda^m = a_{m-1}\cdots a_{0}:X^0\rightarrow X^m$.
We have that $f\in\mc{S}$. Indeed, $f=X^0\otimes \pi : X^0\otimes\mrm{N}(\mbb{N}) \rightarrow X^0$. Since $\mbb{N}$ has an initial object,
$\pi:\mrm{N}(\mbb{N})\rightarrow \Dl[0]$ is a simplicial homotopy equivalence. Hence so is $f$, and in particular $f\in\mc{S}$. On the other hand,
since $\Lambda^m = a_{m-1}\cdots a_{0}\in \mc{S}$ for all $m$, it follows from Lemma \ref{hocolimRelative} that $\alpha'=\mtt{hocolim}(\Lambda)$ in $\mc{S}$.
To f{i}nish, it remains to prove that $g\in\mc{S}$.\\
Note that $g$ is the diagonal of the bisimplicial morphism $G$ given in bidegree $(n,m)$ by 
$G_{n,m}: (\mtt{hocolim}^{\mrm{V}}_{\mathbb{N}} X^{\bullet}_n)_m \rightarrow colim_{\mathbb{N}} X^{\bullet}_n$. Then it suf{f}{i}ces to prove that
$G_{n,\cdot}$ is in $\mc{S}$ for each $n\geq 0$. In other words, we may assume that the simplicial object $X^k$
is constant for each $k\geq 0$. In this case, a termwise coprojection $a^k : X^k\rightarrow X^{k+1}$ is a coprojection. 
Therefore we assume that $X^{k} = A^0\amalg \cdots \amalg A^{k}$, and
that our sequence $\{X^k,a^k\}_{k\geq 0}$ is
$$A^0\rightarrow A^0\amalg A^1 \rightarrow \cdots \rightarrow \coprod_{i=0,\cdots,k} A^i \rightarrow \cdots$$
Therefore $colim_k X^k =\coprod_{l\geq 0} A^l$, and the sequence $\{X^k,a^k\}$ is the coproduct $\amalg_{l\geq 0}\tau_l A$ of the sequences
$$\tau_l A : \ \ \ 0\rightarrow \stackrel{l)}{\cdots} \rightarrow 0 \rightarrow A^l \rightarrow A^l\rightarrow A^l\rightarrow \cdots$$
Since by def{i}nition $\mtt{hocolim}_I^{\mrm{V}}$ commutes with coproducts, 
$$\mtt{hocolim}_{\mbb{N}}^{\mrm{V}} X^{\bullet} = \coprod_{l\geq 0} \mtt{hocolim}_{\mbb{N}}^{\mrm{V}} \tau_l A = \coprod_{l\geq 0} A^l\otimes \mrm{N}(l/\mbb{N}) $$ 
Hence $g=\coprod_{l\geq 0} g^{(l)}$, where $g^{(l)}:A^l\otimes \mrm{N}(l/\mbb{N})\rightarrow A^l$ is induced by $\mrm{N}(l/\mbb{N})\rightarrow\Dl[0]$.
Then $g$ is in $\mc{S}$ because it is a coproduct of simplicial homotopy equivalences.
\end{proof}

\begin{obs} The previous results imply that
$(\simp\mc{C},\mc{S})$ is under the previous assumptions an ABC cof{i}bration
category in the sense of \cite{RB}. As proved in loc. cit. this guarantees the
existence of homotopy colimits on $(\simp\mc{C},\mc{S})$. In Theorem \ref{hocolimDlClosed} we will see in addition that 
these homotopy colimits may be computed using Voevodsky's formula.
\end{obs}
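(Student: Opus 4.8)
The plan is to verify directly that $(\simp\mc{C},\mc{S})$, with cofibrations declared to be the termwise coprojections, satisfies the axioms of an ABC cofibration category in the sense of \cite{RB}. Almost all of these axioms are already available from Proposition \ref{DlClBrown}, which identifies $(\simp\mc{C},\mc{S})$ with this choice of cofibrations as a Brown category of cofibrant objects. This structure supplies the weak-equivalence axioms (2-out-of-3, indeed 2-out-of-6, being immediate from the saturation of $\mc{S}$), the cofibrancy of every object over the initial object $0$, the closure of cofibrations and acyclic cofibrations under composition and cobase change, the existence of pushouts along cofibrations together with their homotopy invariance, and the existence of cylinders, whence Brown's factorization of an arbitrary morphism as a cofibration followed by a weak equivalence. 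The only clause of the ABC definition not subsumed by the Brown structure is the one governing colimits of sequences of cofibrations.

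That remaining clause is precisely the content of Lemma \ref{exseqABC}. The ABC definition requires, on top of the cofibration-category axioms, that every countable sequence of cofibrations $X^0\rightarrowtail X^1\rightarrowtail\cdots$ admit a colimit, that the canonical map $X^0\rightarrow\mtt{colim}_m X^m$ be again a cofibration, and that this map lie in $\mc{S}$ whenever every $a^m$ does. The existence of the colimit and the weak-equivalence assertion are exactly the two statements of Lemma \ref{exseqABC}. The one supplementary point to record is that the telescope inclusion $X^0\rightarrow\mtt{colim}_m X^m$ is itself a termwise coprojection; this is a purely degreewise check, since termwise coprojections are defined level by level, sequential colimits in $\simp\mc{C}$ are computed degreewise, and a sequential colimit of coprojections in $\mc{C}$ is again a coprojection, its complement at level $n$ being the coproduct $\coprod_m A^{(n),m}$ of the successive complements. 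With this last clause matched, the identification of $(\simp\mc{C},\mc{S})$ as an ABC cofibration category is complete, and the existence of homotopy colimits then follows from the corresponding theorem of \cite{RB}.

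The anticipated obstacle is bookkeeping rather than substance: one must align the exact axiom list of \cite{RB} with the output of Proposition \ref{DlClBrown} and Lemma \ref{exseqABC}, and in particular confirm that the colimit clause there is phrased for countable sequences, matching Lemma \ref{exseqABC}, rather than for arbitrary transfinite ones. Should the transfinite version be demanded, the argument of Lemma \ref{exseqABC} transfers verbatim to any ordinal-indexed continuous sequence of coprojections: its proof reduces the weak-equivalence claim, through the comparison of the Voevodsky homotopy colimit with the ordinary colimit and the coproduct decomposition $\amalg_l \tau_l A$ of a telescope, to a coproduct of simplicial homotopy equivalences $A^l\otimes\mrm{N}(l/\mbb{N})\rightarrow A^l$, and both this decomposition and the contractibility of the relevant undercategory nerves $\mrm{N}(\beta/\lambda)$ persist for any well-ordered indexing category. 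Thus no homotopical input beyond the two cited results is required.
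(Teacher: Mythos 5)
Your proposal is correct and matches the paper's intended argument: the remark carries no detailed proof beyond citing ``the previous results,'' which are exactly Proposition \ref{DlClBrown} (supplying the Brown/cofibration-category axioms for termwise coprojections) and Lemma \ref{exseqABC} (supplying the sequential-colimit clause), precisely the two ingredients you assemble. Your supplementary degreewise check that the telescope inclusion $X^0\rightarrow\mtt{colim}_m X^m$ is again a termwise coprojection, with complement $\coprod_m A^{(n),m}$ in degree $n$, is a correct and worthwhile filling-in of a detail the paper leaves implicit.
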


\begin{cor}\label{wheDlCl}
Let $(\simp\mc{C},\mc{S})$ be a relative category closed by coproducts, and such that $\mc{S}$ is $\Dl$-closed. If $f:L\rightarrow K$ is a map in $\simp Set$ which is a weak homotopy 
equivalence, then for each simplicial object $X$ it holds that $X\otimes f:X\otimes L\rightarrow X\otimes K$ is in $\mc{S}$.
\end{cor}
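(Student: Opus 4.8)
The plan is to reduce the statement about an arbitrary weak homotopy equivalence $f:L\rightarrow K$ of simplicial sets to the two special features of a $\Dl$-closed class: invariance under simplicial homotopy equivalences (axiom 1) and, crucially, the passage-to-colimit result just established in Lemma \ref{exseqABC}. The key technical input is that any weak homotopy equivalence between simplicial sets can be approximated, up to weak homotopy equivalence, by nicer maps whose effect under $X\otimes(-)$ lands in $\mc{S}$ by the results already available. First I would record that $X\otimes(-):\simp Set\rightarrow\simp\mc{C}$ carries simplicial homotopy equivalences of simplicial sets to simplicial homotopy equivalences of $\simp\mc{C}$, hence into $\mc{S}$ by axiom 1; this handles the easy case and isolates what remains.

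The main idea is to exploit the standard factorization theory of simplicial sets. Every weak homotopy equivalence $f:L\rightarrow K$ can be factored, via the small object argument, as $L\rightarrow K'\rightarrow K$ where $L\rightarrow K'$ is built from pushouts of horn inclusions $\Lambda^n_k\hookrightarrow\Dl[n]$ (an anodyne extension) and $K'\rightarrow K$ is a trivial fibration. Applying $X\otimes(-)$ to such building blocks is the heart of the matter: a monomorphism $L\hookrightarrow M$ of simplicial sets becomes, degreewise, a coprojection $X_n\rightarrow X_n\amalg(\text{summands over }M_n\setminus L_n)$, so that $X\otimes L\rightarrow X\otimes M$ is a termwise coprojection in $\simp\mc{C}$. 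For an anodyne extension expressed as a (possibly transfinite, but here countable) sequence of such pushouts of horn inclusions, each stage is a termwise coprojection, and moreover each horn inclusion $\Lambda^n_k\hookrightarrow\Dl[n]$ is a simplicial homotopy equivalence; hence each stage lies in $\mc{S}$. Lemma \ref{exseqABC} then applies to the countable sequence of termwise coprojections lying in $\mc{S}$ and yields that the induced map $X\otimes L\rightarrow X\otimes(\mtt{colim}_m)$ is in $\mc{S}$.

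I would then invoke two-out-of-three for the saturated class $\mc{S}$. Since $f$ is a weak equivalence and the anodyne part is handled as above, it remains to treat the trivial fibration $K'\rightarrow K$; because trivial fibrations of simplicial sets admit sections that are simplicial homotopy inverses (both $K$ and the relevant objects are among simplicial sets, and trivial fibrations between them are simplicial homotopy equivalences), $X\otimes(K'\rightarrow K)$ is a simplicial homotopy equivalence of $\simp\mc{C}$, hence in $\mc{S}$ by axiom 1. Combining the two parts by saturation (two-out-of-three) gives $X\otimes f\in\mc{S}$.

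The hard part will be controlling the \emph{transfinite} nature of the small object argument within the countable-colimit framework of Lemma \ref{exseqABC}, which is stated only for countable sequences: I expect the cleanest route is to avoid the full small object argument and instead use that a weak homotopy equivalence can be compared through its Kan replacement, reducing everything to the case of an inclusion of a deformation retract together with the horn-filling mechanism, so that only countable towers of termwise coprojections in $\mc{S}$ appear. Making this reduction precise — ensuring each intermediate map is simultaneously a termwise coprojection \emph{and} a member of $\mc{S}$, so that Lemma \ref{exseqABC} genuinely applies — is the principal obstacle; the degreewise translation of monomorphisms into coprojections and of horn inclusions into simplicial homotopy equivalences under $X\otimes(-)$ is the routine computational glue.
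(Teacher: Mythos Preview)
Your approach is essentially the paper's, with one cosmetic difference in the initial reduction and one genuine but small gap in the anodyne step.

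On the reduction: the paper does exactly what you propose at the end as the ``cleanest route.'' It applies the $Ex_\infty$ Kan replacement, so that $Ex_\infty(f)$ is a weak equivalence between fibrant simplicial sets and hence a simplicial homotopy equivalence, and the natural maps $\epsilon_L:L\to Ex_\infty L$, $\epsilon_K:K\to Ex_\infty K$ are themselves anodyne extensions. Your alternative factorization of $f$ as anodyne followed by trivial fibration works equally well (trivial fibrations of simplicial sets admit sections and are simplicial homotopy equivalences), so either reduction is fine. Your worry about transfinite composites is unnecessary: the paper's description of anodyne extensions uses only retracts, cobase change, small coproducts, and \emph{sequential} (countable) colimits, which is exactly the regime of Lemma \ref{exseqABC}.

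The gap: you write that each horn inclusion is a simplicial homotopy equivalence, ``hence each stage lies in $\mc{S}$.'' But the stages of the small object argument are \emph{pushouts} of (coproducts of) horn inclusions, and a pushout of a simplicial homotopy equivalence need not be one. What you need here is precisely Proposition \ref{DlClBrown}: since $(\simp\mc{C},\mc{S})$ is a Brown category of cofibrant objects with the termwise coprojections as cofibrations, cobase change preserves trivial cofibrations, so $X\otimes(\text{pushout of horn inclusion})$ stays in $\mc{S}$. The paper organizes this cleanly by defining $\mc{W}'$ to be the class of inclusions $g$ of simplicial sets with $X\otimes g\in\mc{S}$ for every $X$, observing that $\mc{W}'$ contains the horn fillers, and then checking closure under retracts and coproducts (since $\mc{S}$ has them), cobase change (via \ref{DlClBrown}), and sequential colimits (via \ref{exseqABC}); hence $\mc{W}'$ contains all anodyne extensions. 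Once you plug in \ref{DlClBrown} at the cobase-change step, your argument is complete and matches the paper's.
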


\begin{proof} The corollary is proved using a standard argument based on the $Ex_{\infty}$ f{i}brant replacement of simplicial sets and anodyne extensions.
Let $f:L\rightarrow K$ be a weak homotopy equivalence of simplicial sets, 
and $X$ be a simplicial object of $\mc{C}$. The usual resolution functor $Ex_{\infty}$ provides weak homotopy equivalences $\epsilon_L:L\rightarrow Ex_{\infty}(L)$ 
and $\epsilon_K:K\rightarrow Ex_{\infty}(K)$ such that $Ex_{\infty}(L)$ and $Ex_{\infty}(K)$ are f{i}brant simplicial sets. In addition, $\epsilon_L$ is moreover an 
anodyne extension (see \cite[p. 68]{GZ}). Since $Ex_{\infty}(f)$ is a weak homotopy equivalence between f{i}brant simplicial sets, it is a simplicial homotopy 
equivalence. Then $X\otimes Ex_{\infty}(f)\in\mc{S}$, and it suf{f}{i}ces to prove that $X\otimes \epsilon_K: X\otimes K\rightarrow X\otimes Ex_{\infty}(K)$ and 
$X\otimes \epsilon_L: X\otimes L\rightarrow X\otimes Ex_{\infty}(L)$ are in $\mc{S}$.
Therefore, it suf{f}{i}ces to prove the corollary for an anodyne extension $f$. Recall that the class of anodyne extensions is the smallest class of 
inclusions of simplicial sets containing the horn-f{i}llers $i_{k,n}:\Lambda^{k}[n]\rightarrow \Dl[n]$ for $0\leq k\leq n$, and such that it is closed by retracts, 
cobase change, small coproducts and sequential colimits. Let $\mc{W}'$ be the class of morphisms consisting of the inclusions $f:K\rightarrow L$ of simplicial sets
such that the resulting termwise coprojection $X\otimes f$ of $\simp\mc{C}$ is in $\mc{S}$ for each simplicial object $X$. Since the horn-f{i}llers $i_{k,n}$ are simplicial 
homotopy equivalences, they are in $\mc{W}'$. In addition, $\mc{W}'$ is closed by retracts and small coproducts because $\mc{S}$ is. On the other hand, 
$\mc{W}'$ is closed by cobase change and sequential colimits because trivial termwise coprojections in $(\simp\mc{C},\mc{S})$ are closed by them by Proposition 
\ref{DlClBrown} and Lemma \ref{exseqABC}.
\end{proof}

\begin{obs} Under the previous assumptions it holds moreover that $(\simp\mc{C},\mc{S})$ is a \textit{simplicial} Brown category of cof{i}brant objects. 
By this we mean that
given a cof{i}bration $f:X\rightarrow Y$ of $\simp\mc{C}$ and an inclusion
of simplicial sets $i:K\rightarrow L$, the natural morphism
$$X\otimes L \cup_{X\otimes K} Y\otimes K \longrightarrow Y\otimes L$$
is a cof{i}bration. In addition, it is in $\mc{S}$ whenever $f\in\mc{S}$
or $i$ is a weak homotopy equivalence.
\end{obs}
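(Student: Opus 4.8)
\emph{The plan.} Write $P=X\otimes L\cup_{X\otimes K}Y\otimes K$ for the source of the map under consideration, let $\phi\colon P\to Y\otimes L$ be that map, and let $j_1\colon X\otimes L\to P$ and $j_2\colon Y\otimes K\to P$ be the two structure morphisms of the pushout. Since $X\otimes i$ is a termwise coprojection (as $i$ is an inclusion), the pushout $P$ exists and, like the action $\otimes$, is computed degreewise. I would first show that $\phi$ is a cofibration by a direct degreewise inspection. As $i$ is an inclusion we may split $L_n=K_n\sqcup(L_n\setminus K_n)$, and as $f$ is a termwise coprojection we may write $Y_n\cong X_n\amalg A^{(n)}$ with $f_n$ the canonical inclusion. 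A short computation then gives $P_n\cong\coprod_{K_n}Y_n\amalg\coprod_{L_n\setminus K_n}X_n$ and $(Y\otimes L)_n\cong P_n\amalg\coprod_{L_n\setminus K_n}A^{(n)}$, with $\phi_n$ the canonical coprojection; hence $\phi$ is a termwise coprojection, i.e. a cofibration.

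\emph{The case $i$ a weak homotopy equivalence.} Here everything reduces to Corollary~\ref{wheDlCl} together with the Brown category structure. Since $i$ is an inclusion, $X\otimes i$ and $Y\otimes i$ are termwise coprojections, and since $i$ is a weak homotopy equivalence they lie in $\mc{S}$ by Corollary~\ref{wheDlCl}; thus $X\otimes i$ is a trivial cofibration. Its cobase change along $f\otimes K$ is precisely $j_2$, which is therefore again a trivial cofibration by the axioms of a Brown category of cofibrant objects (Proposition~\ref{DlClBrown}); in particular $j_2\in\mc{S}$. As $\phi\, j_2=Y\otimes i\in\mc{S}$, the saturation of $\mc{S}$ forces $\phi\in\mc{S}$.

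\emph{The case $f\in\mc{S}$.} This is the only point requiring a genuinely new input, namely that tensoring a weak equivalence with an arbitrary simplicial set stays in $\mc{S}$: I claim $f\otimes K\in\mc{S}$ for every $K$. To prove it I would realize $X\otimes K$ as the diagonal of the bisimplicial object $Z$ with $Z_{n,m}=\coprod_{K_m}X_n$, so that $f$ induces a bisimplicial map $F\colon Z\to Z'$ (where $Z'_{n,m}=\coprod_{K_m}Y_n$) with $\mrm{D}(F)=f\otimes K$; for each fixed $m$ the column $F_{\cdot,m}$ is the coproduct $\coprod_{K_m} f$ of copies of $f$, which lies in $\mc{S}$ because $\mc{S}$ is closed by coproducts. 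The $\Dl$-closedness of $\mc{S}$ (property $2$ of the def{i}nition) then yields $f\otimes K=\mrm{D}(F)\in\mc{S}$, exactly in the spirit of Lemma~\ref{DeltaCst}. With this in hand the argument closes symmetrically to the previous case: $f\otimes K$ is a trivial cofibration, its cobase change along $X\otimes i$ is $j_1$, which is hence a trivial cofibration and so lies in $\mc{S}$; since $\phi\, j_1=f\otimes L\in\mc{S}$, the saturation of $\mc{S}$ again gives $\phi\in\mc{S}$.

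The main obstacle is exactly the step $f\otimes K\in\mc{S}$; once it is recognized as an instance of the diagonal axiom for $\Dl$-closed classes, with the coproduct $\coprod_{K_m} f$ sitting in each column, the rest is a formal consequence of the Brown category structure of Proposition~\ref{DlClBrown} and of the two-out-of-three property of the saturated class $\mc{S}$. It is worth stressing that Corollary~\ref{wheDlCl}, whose proof already carried out the $Ex_\infty$ and anodyne cell analysis, is precisely what makes the case of a weak equivalence $i$ immediate, so that no further cellular induction is needed.
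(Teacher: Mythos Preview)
The paper states this remark without proof, so there is no argument in the text to compare against. Your proof is correct and is the natural one: the degreewise computation showing $\phi$ is a termwise coprojection is straightforward, and both triviality cases are handled cleanly via the Brown category axioms (Proposition~\ref{DlClBrown}) together with two-out-of-three. The only substantive step you had to supply is that $f\otimes K\in\mc{S}$ whenever $f\in\mc{S}$, and your bisimplicial argument (writing $X\otimes K$ as the diagonal of $\{Z_{n,m}=\coprod_{K_m}X_n\}$ so that each column $F_{\cdot,m}=\coprod_{K_m}f$ lies in $\mc{S}$ by closure under coproducts, then applying the $\Dl$-closed axiom) is exactly right and parallels the trick used in Lemma~\ref{DeltaCst}. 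Your observation that Corollary~\ref{wheDlCl} already absorbs all the cellular work for the case of a weak equivalence $i$ is also well taken.
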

\vspace{0.1cm}
\subsection{The two-sided bar construction.}

\begin{defi} Given a bifunctor $F:I\times I^\comp \rightarrow \mc{D}$, the simplicial \textit{two-sided bar construction} of $F$ is the simplicial object $\mathcal{W} (F)$ given by
$$ \mathcal{W}_n (F)  =  \coprod_{i_0\rightarrow \cdots\rightarrow i_n} F(i_0,i_n) $$
The face maps of $\mathcal{W} (F)$ are def{i}ned as follows. If $0<k<n$,
 $d_k: \mathcal{W}_n (F) \rightarrow \mathcal{W}_{n-1} (F) $ sends the component $F(i_0,i_n)$ indexed by $\underline{i}=\{i_0\rightarrow \cdots\rightarrow i_n\}$ to the component $F(i_0,i_n)$ indexed by
$d_k(\underline{i})$. If $k=0$, $d_0:\mathcal{W}_n (F) \rightarrow \mathcal{W}_{n-1} (F)$ sends $F(i_0,i_n)$ with index $\underline{i}$,
to $F(i_1,i_n)$ with index $d_0(\underline{i})$ through $F(i_0\rightarrow i_1, 1_{i_n}):F(i_0,i_n)\rightarrow F(i_1,i_n)$.
If $k=n$, then $d_n:\mathcal{W}_n (F) \rightarrow \mathcal{W}_{n-1} (F)$ sends $F(i_0,i_n)$ with index $\underline{i}$
to $F(i_0,i_{n-1})$ with index $d_n(\underline{i})$ through $F(1_{i_0}, i_{n-1}\rightarrow i_n):F(i_0,i_n)\rightarrow F(i_0,i_{n-1})$. On the other hand, the degeneracy maps $s_k:\mathcal{W}_n (F) \rightarrow \mathcal{W}_{n+1} (F)$
send the term $F(i_0,i_n)$ indexed by $\underline{i}$ to the same term with index $s_k(\underline{i})$.
\end{defi}

\begin{obs}\label{coendTwoSided}\mbox{}\\
\textit{1}. Note that the colimit of $\mathcal{W} (F)$, if it exists, is the coequalizer of $d_0,d_1:\mathcal{W}_1 (F) \rightarrow \mathcal{W}_0 (F)$, which agrees with the coend $\int^i F(i,i)$
of $F$ by \cite[Lemma 1.2]{W}. Therefore $\mathcal{W} (F)$ has in this case the augmentation
$$\xymatrix@M=4pt@H=4pt@C=25pt{ \int^i F(i,i) & {\;} \coprod_{i} F(i,i) \ar[l] \; \ar@/_1.5pc/[r] & {\;} \coprod_{i\rightarrow j} F(i,j) \;
\ar@<0.5ex>[l] \ar@<-0.5ex>[l] \ar@/_1.5pc/[r]
\ar@/_1pc/[r] & \; \coprod_{i\rightarrow j\rightarrow k} F(i,k)\; \ar@<0ex>[l] \ar@<1ex>[l]
\ar@<-1ex>[l] \cdots }$$

\vspace{0.2cm}

\noindent \textit{2}. If $X:I\rightarrow \mc{D}$, the simplicial replacement of $X$ def{i}ned previously agrees with the two-sided bar construction of $X':I\times I^{\comp} \rightarrow \mc{D}$ obtained as the composition of $X$ with the projection $I\times I^{\comp}\rightarrow I$.
\end{obs}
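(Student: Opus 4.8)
The plan is to prove both assertions of the remark by direct comparison with the relevant (co)limit and bar constructions; neither part uses any homotopy-theoretic input, so the whole argument is purely categorical. For assertion \textit{1} I would reduce the computation of $\mtt{colim}\,\mathcal{W}(F)$ to a coequalizer and then identify that coequalizer with the coend. For assertion \textit{2} I would unwind the two definitions and check that they coincide degreewise together with all their structure maps.

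First, for assertion \textit{1}, I would invoke the standard fact that for \emph{any} simplicial object $Y$ of $\mc{D}$ the colimit $\mtt{colim}\,Y$, when it exists, is the coequalizer of the two bottom face maps $d_0,d_1:Y_1\rightrightarrows Y_0$. The reason is that a cocone under $Y$ with vertex $Z$ amounts to a single map $f_0:Y_0\rightarrow Z$ with $f_0 d_0=f_0 d_1$: the two maps $[0]\rightarrow[1]$ force this relation, while the remaining naturality constraints hold automatically, since once $f_0 d_0=f_0 d_1$ all the operators $[0]\rightarrow[n]$ become identified after post-composition with $f_0$ (the base $[n]$ being connected). Applying this to $Y=\mathcal{W}(F)$, where $\mathcal{W}_0(F)=\coprod_i F(i,i)$ and $\mathcal{W}_1(F)=\coprod_{i\rightarrow j}F(i,j)$, the map $d_0$ sends the $F(i,j)$-summand into $F(j,j)$ through $F(i\rightarrow j,1_j)$ and $d_1$ sends it into $F(i,i)$ through $F(1_i,i\rightarrow j)$. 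These are exactly the two structure maps defining the coend of $F$, so the coequalizer is $\int^i F(i,i)$; I would cite \cite[Lemma 1.2]{W} for this last identification rather than reprove it, and the augmentation displayed in the statement records the resulting cone.

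For assertion \textit{2}, I would first unwind the definition of $X':I\times I^\comp\rightarrow\mc{D}$, the composite of $X$ with the projection $I\times I^\comp\rightarrow I$, so that $X'(i_0,i_n)=X_{i_0}$ for every pair. In each degree this yields $\mathcal{W}_n(X')=\coprod_{i_0\rightarrow\cdots\rightarrow i_n}X'(i_0,i_n)=\coprod_{i_0\rightarrow\cdots\rightarrow i_n}X_{i_0}=\amalg^I_n X$, so the two simplicial objects agree degreewise under the identity. It then remains to match the structure maps: the inner faces $d_k$ for $0<k<n$ and all the degeneracies act on both sides by reindexing through identities; the bottom face $d_0$ acts on both through $X'(i_0\rightarrow i_1,1_{i_n})=X(i_0\rightarrow i_1)$; and the top face $d_n$ acts through $X'(1_{i_0},i_{n-1}\rightarrow i_n)=X(1_{i_0})=1$, in agreement with the simplicial replacement, where every face $d_k$ with $k>0$ acts by the identity on the indexed summand. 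Hence $\amalg^I X=\mathcal{W}(X')$ on the nose.

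The only point requiring care—and the closest thing to an obstacle—is the bookkeeping of variance, since $F$ is contravariant in its second slot. One must verify that $d_1$ at level $1$ is $F(1_{i_0},i_0\rightarrow i_1)$ landing in $F(i_0,i_0)$, and that this matches the correct one of the two structure maps of the coend. Once the conventions are pinned down both claims are formal, and the genuinely nontrivial ingredient, the identification of the coequalizer with the coend, is delegated to \cite[Lemma 1.2]{W}.
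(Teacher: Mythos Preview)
Your proposal is correct and matches the paper's treatment: this is a remark stated without proof, relying on the standard fact that the colimit of a simplicial object is the coequalizer of its bottom two faces together with the citation to \cite[Lemma 1.2]{W} for the coend identification, and on a direct unwinding of definitions for part \textit{2}. You have simply filled in the routine verifications the paper leaves implicit, and your handling of the variance bookkeeping for $d_n$ in part \textit{2} is exactly what is needed.
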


\begin{num}\label{CoendGordo} A diagram $X:I\rightarrow \mc{D}$ induces $X\otimes \mrm{N}(\cdot / I): I\times I^\comp \rightarrow \simp\mc{D}$ given by
 $$\left( X\otimes \mrm{N}(\cdot / I)\right) (i,j) = X(i) \otimes  \mrm{N}(j / I) \ $$
Therefore $\mc{W}(X\otimes \mrm{N}(\cdot / I))$ is the bisimplicial object given in bidegree $(n,m)$ by
$$\mc{W}_{n,m}(X\otimes \mrm{N}(\cdot / I)) =
\coprod_{i_0\rightarrow \cdots\rightarrow i_n}\ \coprod_{i_n\rightarrow j_0\rightarrow \cdots\rightarrow j_m} X_{i_0}$$
\end{num}
\noindent Recall that given a simplicial object $Y$, we denote by $Y\times \Dl$ and $\Dl\times Y$ the bisimplicial objects induced by $Y$ which are constant in 
the second and f{i}rst degree respectively. Consider the natural augmentation $\beta : \mc{W}(X\otimes \mrm{N}(\cdot / I)) \rightarrow \Dl\times \amalg^I X $ 
given in bidegree $(n,m)$ by the coprojection
$$ \beta_{n,m} : \ds\coprod_{\xymatrix@R=5pt@C=0pt@H=1pt@M=0.5pt{ \scr{i_0} & \scr{\rightarrow} & \scr{\cdots} & \scr{\rightarrow} & \scr{i_n} \ar@{}|{\scr{\downarrow}}[d]\\
           \scr{j_m} &  \scr{\leftarrow} & \scr{\cdots} & \scr{\leftarrow} & \scr{j_0} }} X_{i_0} \longrightarrow  \ds\coprod_{j_0\rightarrow \cdots\rightarrow j_m} X_{j_0} $$

\vspace{0.2cm}

In this case, we have a second natural augmentation for $\mc{W}(X\otimes \mrm{N}(\cdot / I))$. Indeed, if we send $\mrm{N}(\cdot / I)$ to $\Dl[0]$, we obtain
$\alpha : \mc{W}(X\otimes \mrm{N}(\cdot / I)) \rightarrow \amalg^I X \times \Dl$ def{i}ned as the coprojection
$$
 \alpha_{n,m} : \ds\coprod_{\xymatrix@R=5pt@C=0pt@H=1pt@M=0.5pt{ \scr{i_0} & \scr{\rightarrow} & \scr{\cdots} & \scr{\rightarrow} & \scr{i_n} \ar@{}|{\scr{\downarrow}}[d]\\
           \scr{j_m} &  \scr{\leftarrow} & \scr{\cdots} & \scr{\leftarrow} & \scr{j_0} }} X_{i_0} \longrightarrow  \ds\coprod_{i_0\rightarrow \cdots\rightarrow i_n} X_{i_0}
$$

\begin{lema}\label{BarDec}
Given a diagram $X:I\rightarrow\mc{D}$, the diagonals of the bisimplicial maps $\alpha$ and $\beta$ def{i}ned above,
$\mrm{D}(\alpha),\mrm{D}(\beta) : \mrm{D} \mc{W}(X\otimes \mrm{N}(\cdot / I)) \rightarrow \amalg^I X$, are simplicial homotopy equivalences in $\simp\mc{D}$ which 
are in addition simplicially homotopic.
\end{lema}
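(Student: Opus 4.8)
The plan is to recognize the bisimplicial object $\mc{W}(X\otimes\mrm{N}(\cdot/I))$ as Illusie's decalage $dec(\amalg^I X)$ of the simplicial replacement, under which the two augmentations $\alpha$ and $\beta$ become exactly $\Lambda^{II}$ and $\Lambda^{I}$; the conclusion then follows at once from Proposition \ref{IlDec}. So first I would produce a natural isomorphism of bisimplicial objects $\Phi:\mc{W}(X\otimes\mrm{N}(\cdot/I))\xrightarrow{\sim}dec(\amalg^I X)$.

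On objects $\Phi$ is transparent. By \ref{CoendGordo} the component in bidegree $(n,m)$ is $\coprod_{i_0\to\cdots\to i_n}\coprod_{i_n\to j_0\to\cdots\to j_m}X_{i_0}$, and splicing the two chains at their shared endpoint $i_n$ identifies the indexing set with $\mrm{N}_{n+m+1}(I)$: a pair of chains corresponds to the single chain $(k_0\to\cdots\to k_{n+m+1})$ with $k_l=i_l$ for $0\le l\le n$ and $k_{n+1+l}=j_l$ for $0\le l\le m$. Since $X_{i_0}=X_{k_0}$, this is exactly $(\amalg^I X)_{n+m+1}=dec(\amalg^I X)_{n,m}$, and I let $\Phi$ be the identity on each summand.

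Next I would check that $\Phi$ respects all structure maps. In the bar (first) direction the faces $d_0,\dots,d_n$ of $\mc{W}$ act on the chain $i_0\to\cdots\to i_n$: for $0<k<n$ by deleting $i_k$; for $k=0$ by deleting $i_0$ and composing the base by $X(i_0\to i_1)$; and for $k=n$, via $F(1_{i_0},i_{n-1}\to i_n)$, by deleting $i_n$ and replacing $\mrm{N}(i_n/I)$ with $\mrm{N}(i_{n-1}/I)$, which on the spliced chain simply deletes $k_n$. These reproduce exactly $d_0,\dots,d_n$ of $\amalg^I X$ on $(\amalg^I X)_{n+m+1}$, i.e. the decalage faces $d_k^{I}=d_k$. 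In the internal (second) direction the simplicial structure of $\mrm{N}(i_n/I)$ acts on $i_n\to j_0\to\cdots\to j_m$ by deleting the vertex $j_l$, which deletes $k_{n+1+l}$ from the spliced chain; this is $d_{n+l+1}$ of $\amalg^I X$, i.e. the decalage face $d_l^{II}=d_{n+l+1}$. The degeneracies match by the same index bookkeeping ($s_k$ in the bar direction is $s_k$, and $s_l$ in the internal direction is $s_{n+l+1}$), so $\Phi$ is an isomorphism of bisimplicial objects, natural in $X$. Finally I would trace the iterated faces defining the two decalage augmentations: $\Lambda^{I}$ (given on bidegree $(n,m)$ by iterating $d_0$ a total of $n+1$ times) collapses the spliced chain to $(j_0\to\cdots\to j_m)$ while composing the base by $X(i_0\to j_0)$, which is precisely the augmentation $\beta$; and $\Lambda^{II}$ (iterating $d_{n+1}$ a total of $m+1$ times) collapses it to $(i_0\to\cdots\to i_n)$ by the identity on the base, which is precisely $\alpha$. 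Thus $\Phi$ intertwines $\alpha$ with $\Lambda^{II}$ and $\beta$ with $\Lambda^{I}$; taking diagonals, $\mrm{D}(\alpha)$ and $\mrm{D}(\beta)$ agree with $\mrm{D}(\Lambda^{II})$ and $\mrm{D}(\Lambda^{I})$ up to the isomorphism $\mrm{D}(\Phi)$, and Proposition \ref{IlDec} gives immediately that both are simplicial homotopy equivalences and that they are simplicially homotopic.

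The only genuine work, and the step most prone to error, is the compatibility verification: one must keep careful track of the index shift $j_l\leftrightarrow k_{n+1+l}$ so that the internal faces of $\mrm{N}(i_n/I)$ land on the correct decalage faces $d_{n+l+1}$, and must confirm that the two boundary faces $d_0$ and $d_n$ of the bar construction---the only ones acting nontrivially through $F$---reproduce exactly $d_0$ and $d_n$ of $\amalg^I X$. Once the identification $\Phi$ is in place everything is formal, and the homotopy statement is a direct citation of Illusie's result.
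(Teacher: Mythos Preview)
Your proof is correct and follows exactly the same approach as the paper: identify $\mc{W}(X\otimes\mrm{N}(\cdot/I))$ with Illusie's decalage $dec(\amalg^I X)$ so that $\alpha$ and $\beta$ correspond to $\Lambda^{II}$ and $\Lambda^{I}$, then invoke Proposition~\ref{IlDec}. The paper's proof is a terse two-sentence version of your argument, and your detailed verification of the face-map bookkeeping (the index shift $j_l\leftrightarrow k_{n+1+l}$ and the boundary cases $d_0,d_n$) is precisely what underlies its claim that the identification is ``canonical''.
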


\begin{proof}
The result is a consequence of the fact that the bisimplicial object $\mathcal{W} (X \otimes N(\cdot / I))$ is Illusie's bisimplicial decalage of
$\amalg^I X$. In bidegree $(n,m)$, $\mathcal{W}_{n,m} (X \otimes N(\cdot / I))$ is canonically isomorphic to $dec(\amalg^I X)_{n,m} = \coprod_{l_0\rightarrow \cdots\rightarrow l_{n+m+1}} X_{l_0}$,
in such a way that $\Lambda^I$ correspond to $\beta$ and $\Lambda^{II}$ correspond to $\alpha$. Therefore, the statement follows from Proposition \ref{IlDec}.
\end{proof}

\begin{num}\label{AuxCof{i}nal} Given a functor $f:I\rightarrow J$ and a diagram $X:J\rightarrow \mc{D}$,
consider the bifunctor $X\otimes \mrm{N}(\cdot / f) : J\times J^{\comp}\rightarrow \simp\mc{D}$ with
$\left( X\otimes \mrm{N}(\cdot / f)\right)(j,j') = X_j \otimes \mrm{N}(j' / f)$. As before, the associated two-sided bar construction 
$\mc{W}(X\otimes \mrm{N}(\cdot / f))$ has two natural augmentations $\alpha' : \mc{W}(X\otimes \mrm{N}(\cdot / f)) \rightarrow \amalg^J X \times \Dl$
and $\beta':\mc{W}(X\otimes \mrm{N}(\cdot / f)) \rightarrow \Dl\times \amalg^I f^\ast X$  given respectively by
$$\begin{array}{lcr}
 \alpha'_{n,m} :\!\!\!\!\!\!\! \ds\coprod_{\xymatrix@R=5pt@C=0pt@H=1pt@M=0.5pt{ \scr{j_0} & \scr{\rightarrow} & \scr{\cdots} & \scr{\rightarrow} & \scr{j_n} \ar@{}|{\scr{\downarrow}}[d]\\
           \scr{f(i_m)} &  \scr{\leftarrow} & \scr{\cdots} & \scr{\leftarrow} & \scr{f(i_0)} }}\!\!\!\!\!\!\! X_{j_0} \longrightarrow \!\!\! \ds\coprod_{j_0\rightarrow \cdots\rightarrow j_n}\!\!\!\! X_{j_0} & &
  \beta'_{n,m} :\!\!\!\!\!\!\! \ds\coprod_{\xymatrix@R=5pt@C=0pt@H=1pt@M=0.5pt{ \scr{j_0} & \scr{\rightarrow} & \scr{\cdots} & \scr{\rightarrow} & \scr{j_n} \ar@{}|{\scr{\downarrow}}[d]\\
           \scr{f(i_m)} &  \scr{\leftarrow} & \scr{\cdots} & \scr{\leftarrow} & \scr{f(i_0)} }}\!\!\!\!\!\!\! X_{j_0} \longrightarrow \!\!\! \ds\coprod_{i_0\rightarrow \cdots\rightarrow i_m}\!\!\!\! X_{f(i_0)}
  \end{array}
$$
Analogously, $\alpha'$ is induced by $\mrm{N}(\cdot / f)\rightarrow \Dl[0]$, while $\beta'_{0,\ast}$ is the coequalizer of $\mc{W}_{0,\cdot} (X\otimes \mrm{N}(\cdot / f))
\leftleftarrows \mc{W}_{1,\cdot} (X\otimes \mrm{N}(\cdot / f))$.
\end{num}

Given a diagram $Z:J\rightarrow \simp\mc{C}$ and $f:I\rightarrow J$, consider the natural map 
$$\mtt{hocolim}^{\mrm{V}}_{I} f^\ast Z = \mrm{D}\amalg^I f^\ast Z \longrightarrow \mtt{hocolim}^{\mrm{V}}_{J} Z = \mrm{D}\amalg^J Z$$ 
given by the diagonal of (\ref{SRnatural}) induced by $(f:I\rightarrow J, 1_{f^{\ast}Z}): f^\ast Z\rightarrow Z$.

\begin{prop}\label{hlcDl}
Let $(\simp\mc{C},\mc{S})$ be a relative category closed by coproducts, and such that $\mc{S}$ is $\Dl$-closed.
If $f:I\rightarrow J$ is a homotopy right cof{i}nal functor, then for each 
diagram $Z : J\rightarrow\simp\mc{C}$ the induced map $\mtt{hocolim}^{\mrm{V}}_{I} f^\ast Z\rightarrow \mtt{hocolim}^{\mrm{V}}_{J} Z$ is in $\mc{S}$.
\end{prop}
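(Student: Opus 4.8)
The plan is to force the natural map to become invertible in $\simp\mc{C}[\mc{S}^{-1}]$ by sandwiching it, up to simplicial homotopy, between two morphisms of $\mc{S}$ extracted from the two-sided bar construction of \ref{AuxCof{i}nal}; this is the relative analogue of Lemma \ref{BarDec}. First I would reduce to diagrams valued in $\mc{C}$. The map $\mtt{hocolim}^{\mrm{V}}_I f^\ast Z\to\mtt{hocolim}^{\mrm{V}}_J Z$ is the diagonal of the bisimplicial morphism $\amalg^\bullet(f,1)\colon\amalg^I f^\ast Z\to\amalg^J Z$, and fixing the internal simplicial degree $p$ produces the morphism $\amalg^I f^\ast Z_p\to\amalg^J Z_p$ attached to the degree-$p$ diagram $Z_p\colon J\to\mc{C}$, $Z_p(j)=Z(j)_p$. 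By the second axiom in the definition of a $\Dl$-closed class it therefore suffices to show that each of these lies in $\mc{S}$; so I may assume $Z=X\colon J\to\mc{C}$ and must prove that the natural map $\mrm{ind}\colon\amalg^I f^\ast X\to\amalg^J X$ belongs to $\mc{S}$.

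For such an $X$, I would apply \ref{AuxCof{i}nal} to obtain $\mc{W}(X\otimes\mrm{N}(\cdot/f))$ together with its two augmentations $\alpha'$ and $\beta'$, and pass to diagonals, getting $\mrm{D}(\alpha')\colon\mrm{D}\mc{W}(X\otimes\mrm{N}(\cdot/f))\to\amalg^J X$ and $\mrm{D}(\beta')\colon\mrm{D}\mc{W}(X\otimes\mrm{N}(\cdot/f))\to\amalg^I f^\ast X$. Both turn out to be in $\mc{S}$. For $\alpha'$: in each bar degree $n$ the map $\alpha'_{n,\cdot}$ is the coproduct over the chains $j_0\to\cdots\to j_n$ of $X_{j_0}\otimes(\mrm{N}(j_n/f)\to\Dl[0])$; since $f$ is homotopy right cof{i}nal, $\mrm{N}(j_n/f)$ is contractible, so $\mrm{N}(j_n/f)\to\Dl[0]$ is a weak homotopy equivalence and $\alpha'_{n,\cdot}\in\mc{S}$ by Corollary \ref{wheDlCl} and closure of $\mc{S}$ under coproducts; hence $\mrm{D}(\alpha')\in\mc{S}$ by $\Dl$-closedness. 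For $\beta'$, which does not use cof{i}nality: fixing the second degree $m$, the simplicial object $\mc{W}_{\cdot,m}(X\otimes\mrm{N}(\cdot/f))$ splits as the coproduct over chains $i_0\to\cdots\to i_m$ of the simplicial replacements $\amalg^{(J/f(i_0))}$ of $X$ restricted to the overcategories $(J/f(i_0))$, and $\beta'_{\cdot,m}$ is the coproduct of the corresponding augmentations onto $X_{f(i_0)}=\mtt{colim}_{(J/f(i_0))}X$. As each $(J/f(i_0))$ has a terminal object, these augmentations admit an extra degeneracy and are simplicial homotopy equivalences, so $\beta'_{\cdot,m}\in\mc{S}$ and $\mrm{D}(\beta')\in\mc{S}$ again by $\Dl$-closedness.

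The main obstacle is to relate $\mrm{D}(\alpha')$ to $\mrm{D}(\beta')$ through $\mrm{ind}$, and here I would exhibit $\mc{W}(X\otimes\mrm{N}(\cdot/f))$ as a relative decalage. There is a bisimplicial coprojection $\phi\colon\mc{W}(X\otimes\mrm{N}(\cdot/f))\to dec(\amalg^J X)$ sending the summand indexed by $(j_0\to\cdots\to j_n,\ j_n\to f(i_0),\ i_0\to\cdots\to i_m)$ identically onto the summand of $dec(\amalg^J X)_{n,m}=(\amalg^J X)_{n+m+1}$ indexed by the concatenated chain $j_0\to\cdots\to j_n\to f(i_0)\to\cdots\to f(i_m)$; for $f$ the identity this recovers the identif{i}cation of Lemma \ref{BarDec}. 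A bookkeeping check on faces and degeneracies shows $\phi$ is bisimplicial and satisf{i}es $\Lambda^{II}\circ\phi=\alpha'$ and $\Lambda^I\circ\phi=(\Dl\times\mrm{ind})\circ\beta'$, where $\Dl\times\mrm{ind}$ is the bisimplicial map induced by $\mrm{ind}$. Passing to diagonals yields $\mrm{D}(\alpha')=\mrm{D}(\Lambda^{II})\circ\mrm{D}(\phi)$ and $\mrm{ind}\circ\mrm{D}(\beta')=\mrm{D}(\Lambda^I)\circ\mrm{D}(\phi)$. By Proposition \ref{IlDec} the diagonals $\mrm{D}(\Lambda^I)$ and $\mrm{D}(\Lambda^{II})$ of the decalage of $\amalg^J X$ are simplicially homotopic, and precomposing their homotopy with $\mrm{D}(\phi)$ shows that $\mrm{D}(\alpha')$ and $\mrm{ind}\circ\mrm{D}(\beta')$ are simplicially homotopic.

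To conclude I would use that simplicially homotopic morphisms coincide in $\simp\mc{C}[\mc{S}^{-1}]$. Thus $\gamma(\mrm{ind})\circ\gamma(\mrm{D}(\beta'))=\gamma(\mrm{D}(\alpha'))$ in the localization; since $\mrm{D}(\alpha')$ and $\mrm{D}(\beta')$ lie in $\mc{S}$, both $\gamma(\mrm{D}(\alpha'))$ and $\gamma(\mrm{D}(\beta'))$ are isomorphisms, whence $\gamma(\mrm{ind})$ is an isomorphism and $\mrm{ind}\in\mc{S}$ because $\mc{S}$ is saturated. This proves the reduced claim, and the general statement follows from the reduction carried out in the f{i}rst paragraph.
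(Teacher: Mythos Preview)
Your proof is correct and uses the same core ingredients as the paper (the two-sided bar construction of \ref{AuxCof{i}nal}, Corollary~\ref{wheDlCl} for $\mrm{D}(\alpha')$, and Illusie's decalage homotopy), but you organize the argument more economically. The paper defines $\Phi=\mrm{D}(\beta')\,\mrm{D}(\alpha')^{-1}$ and then checks \emph{both} identities $\Psi\Phi=1$ and $\Phi\Psi=1$; this requires the full diagram~(\ref{4cuadr}), which involves all three bar constructions $\mc{W}(f^\ast X\otimes\mrm{N}(\cdot/I))$, $\mc{W}(X\otimes\mrm{N}(\cdot/f))$, and $\mc{W}(X\otimes\mrm{N}(\cdot/J))$. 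You instead observe directly that $\mrm{D}(\beta')\in\mc{S}$ via the extra degeneracy coming from the terminal object of each $(J/f(i_0))$, and then only need the comparison map $\phi$ into $dec(\amalg^J X)$ (effectively the right half of~(\ref{4cuadr})) to obtain $\mrm{ind}\circ\mrm{D}(\beta')\sim\mrm{D}(\alpha')$ and conclude by saturation. Your route avoids the third bar construction $\mc{W}(f^\ast X\otimes\mrm{N}(\cdot/I))$ entirely; the paper's route, in exchange, produces an explicit inverse $\Phi$ in the localized category, which is reused later in the proof of Theorem~\ref{hocolimDlClosed} when identifying $\mtt{hocolim}^{\mrm{V}}(f)$ with $\Psi$.
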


\begin{proof} The proof is a (dual) abstract version of the corresponding proof for simplicial sets given in \cite[XI; 9.2]{BK}. 
Let us see f{i}rst that $\Psi:\mtt{hocolim}^{\mrm{V}}_{I} f^\ast Z\rightarrow \mtt{hocolim}^{\mrm{V}}_{J} Z$ is in $\mc{S}$ if the diagram $Z$ is a simplicial 
constant $J$-diagram induced by $X:J\rightarrow \mc{C}$. Recall that, then, $\Psi:\amalg^I f^\ast X \rightarrow 
\amalg^J X$ is in degree $n$ the coprojection
$\coprod_{i_0\rightarrow \cdots\rightarrow i_n} X_{f(i_0)} \rightarrow \coprod_{j_0\rightarrow \cdots\rightarrow j_n} X_{j_0}$, which sends 
the term $X_{f(i_0)}$ with index $\underline{i}$ to the same term with index $f(\underline{i})$.\\ 
We will def{i}ne an inverse of $\Psi:\amalg^{I} f^\ast X\rightarrow \amalg^{J} X$ in
$\simp\mc{C} [\mc{S}^{-1}]$. Under the notations of (\ref{AuxCof{i}nal}), consider the zigzag
\begin{equation}\label{invPsi}
\xymatrix@R=25pt@C=28pt@H=4pt@M=4pt{
 \amalg^J X  & \mrm{D}\mc{W}(X\otimes \mrm{N}(\cdot / f)) \ar[l]_-{\mrm{D}(\alpha')}  \ar[r]^-{\mrm{D}(\beta')} & \amalg^I f^\ast X  }
\end{equation}
We claim that $\mrm{D}(\alpha')\in\mc{S}$. Indeed, for each $j\in J$, $\mrm{N}(j/f)\rightarrow \Dl[0]$ is by assumption a weak 
homotopy equivalence. It follows from corollary \ref{wheDlCl} that for each $j,j'\in J$, the induced map $X_{j}\otimes \mrm{N}(j' / f)\rightarrow X_j$ is in $\mc{S}$. 
Since $\mc{S}$  is assumed to be closed by coproducts, then for each $n\geq 0$, $\alpha'_{n,\cdot} :
\coprod_{\underline{j}} (X_{j_0}\otimes \mrm{N}(j_n / f)) \rightarrow \coprod_{\underline{j}} X_{j_0}$ is in $\mc{S}$ as well. But then $\mrm{D}(\alpha')\in\mc{S}$.\\
Therefore $\mrm{D}(\alpha')\in\mc{S}$ as claimed, so (\ref{invPsi}) gives a map $\Phi : \amalg^J X \dashrightarrow \amalg^I f^\ast  X$ in $\simp\mc{C} [\mc{S}^{-1}]$. Let us see that $\Phi$ is inverse to $\Psi$. Applying
Lemma \ref{BarDec} to the diagrams $X:J\rightarrow\mc{C}$ and $f^\ast X : I\rightarrow \mc{C}$, we deduce four maps
$\mrm{D}(\alpha),\mrm{D}(\beta): \mrm{D}\mc{W}(f^\ast X\otimes \mrm{N}(\cdot / I))\rightarrow \amalg^I f^\ast X$ and $\mrm{D}(\alpha''),\mrm{D}(\beta'') : 
\mrm{D}\mc{W}(X\otimes \mrm{N}(\cdot / J))\rightarrow \amalg^J X$
such that $\mrm{D}(\alpha),\mrm{D}(\beta),\mrm{D}(\alpha''),\mrm{D}(\beta'')\in\mc{S}$ and $\mrm{D}(\alpha)=\mrm{D}(\beta)$, $\mrm{D}(\alpha'')=\mrm{D}(\beta'')$ 
in $\simp\mc{C}[\mc{S}^{-1}]$.\\
Consider the natural functors $(i / I)\rightarrow (f(i) / f)$, $\{i\rightarrow i'\}\mapsto \{f(i)\rightarrow f(i')\}$,  and $(j/f) \rightarrow (j / J)$, 
$\{j\rightarrow f(i)\}\mapsto \{j\rightarrow f(i)\}$. They induce maps of bisimplicial sets
$$ \mc{W} ( f^\ast X\otimes \mrm{N}(\cdot / I))\longrightarrow \mc{W} ( X\otimes \mrm{N}(\cdot / f)) \longrightarrow \mc{W} ( X\otimes \mrm{N}(\cdot / J)) $$
$$
\ds\coprod_{\xymatrix@R=5pt@C=0pt@H=1pt@M=1pt{ \scr{i_0} & \scr{\rightarrow} & \scr{\cdots} & \scr{\rightarrow} & \scr{i_n} \ar@{}|{\scr{\downarrow}}[d]\\
           \scr{i'_m} &  \scr{\leftarrow} & \scr{\cdots} & \scr{\leftarrow} & \scr{i'_0} }} X_{f(i_0)} \longrightarrow
\ds\coprod_{\xymatrix@R=5pt@C=0pt@H=1pt@M=1pt{ \scr{j_0} & \scr{\rightarrow} & \scr{\cdots} & \scr{\rightarrow} & \scr{j_n} \ar@{}|{\scr{\downarrow}}[d]\\
           \scr{f(i'_m)} &  \scr{\leftarrow} & \scr{\cdots} & \scr{\leftarrow} & \scr{f(i'_0)} }} X_{j_0} \longrightarrow
\ds\coprod_{\xymatrix@R=5pt@C=0pt@H=1pt@M=1pt{ \scr{j_0} & \scr{\rightarrow} & \scr{\cdots} & \scr{\rightarrow} & \scr{j_n} \ar@{}|{\scr{\downarrow}}[d]\\
           \scr{j'_m} &  \scr{\leftarrow} & \scr{\cdots} & \scr{\leftarrow} & \scr{j'_0} }} X_{j_0}$$
Combining these maps with the previous augmentations we form the following diagram in which all squares commute in $\simp\mc{C}$
\begin{equation}\label{4cuadr}\xymatrix@R=23pt@C=25pt@H=4pt@M=4pt{
 \amalg^I f^\ast X \ar[r]^{1} & \amalg^I f^\ast X \ar[r]^{\Psi} & \amalg^J X \\
  \mrm{D}\mc{W} ( f^\ast X\otimes \mrm{N}(\cdot / I)) \ar[r] \ar[d]_{\mrm{D}(\alpha)} \ar[u]^{\mrm{D}(\beta)} & \mrm{D}\mc{W} ( X\otimes \mrm{N}(\cdot / f)) \ar[r] 
\ar[d]_{\mrm{D}(\alpha')} \ar[u]^{\mrm{D}(\beta')}
 & \mrm{D}\mc{W} ( X\otimes \mrm{N}(\cdot / J)) \ar[d]^{\mrm{D}(\alpha'')} \ar[u]_{\mrm{D}(\beta'')} \\
 \amalg^I f^\ast X \ar[r]^{\Psi} & \amalg^J  X \ar[r]^{1}   & \amalg^J  X  }\end{equation}
Therefore, $\Psi \, \Phi = \mrm{D}(\beta'')\, \mrm{D}(\alpha'')^{-1} = 1$ and $\Phi \, \Psi = \mrm{D}(\beta)\, \mrm{D}(\alpha)^{-1} = 1$ in $\simp\mc{C}[\mc{S}^{-1}]$.\\
Assume now that $Z:J\rightarrow \simp\mc{C}$ is a non-necessarily constant $J$-diagram. By the previous case, we deduce that for each $n\geq 0$ the simplicial map 
$\Psi'_{\cdot,n}:\amalg^{I} f^\ast Z_{\cdot,n}\rightarrow \amalg_{J} Z_{\cdot,n}$ is in $\mc{S}$. As $n$ varies $\Psi'$ is a map of bisimplicial objects whose 
diagonal agrees with $\Psi$. Therefore $\Psi$ is in $\mc{S}$ as well.
\end{proof}

As a corollary we obtain another proof for Quillen's Theorem A (cf. \cite{Q}).

\begin{cor}
If $f:I\rightarrow J$ is homotopy right cof{i}nal, then $\mrm{N}(f):\mrm{N}(I)\rightarrow\mrm{N}(J)$ is a weak homotopy equivalence of simplicial sets.
\end{cor}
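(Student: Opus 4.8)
The plan is to deduce the corollary by specializing Proposition \ref{hlcDl} to a suitably chosen $\Delta$-closed relative category built from simplicial sets. The key observation is that the Voevodsky homotopy colimit of a \emph{constant diagram} $c_I(K)$ over a category $I$, where $K$ is a simplicial object, recovers the tensor $K \otimes \mrm{N}(I)$ up to the usual bar-construction manipulations; in particular, applying $\mtt{hocolim}^{\mrm{V}}_I$ to the terminal constant diagram should produce something whose homotopy type is governed by $\mrm{N}(I)$. Concretely, I would work in the relative category $(\simp\mrm{Set}, \mc{S})$, where $\mc{S}$ is the class of weak homotopy equivalences, and first verify that $\mc{S}$ is a $\Delta$-closed class closed by coproducts: it contains simplicial homotopy equivalences, it satisfies the realization/diagonal axiom for bisimplicial sets (this is the classical fact that a levelwise weak equivalence of bisimplicial sets induces a weak equivalence on diagonals, e.g. via the Bousfield--Friedlander theorem or Moerdijk's lemma), and weak equivalences are clearly stable under coproducts.

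Granting this, I would take the constant diagram $X = c_I(\Delta[0]) : I \to \simp\mrm{Set}$ sending every object to the terminal simplicial set $\Delta[0]$, so that $f^\ast X = c_I(\Delta[0])$ agrees with $c_I(\Delta[0])$ viewed over $I$ and $J$ appropriately. The simplicial replacement $\amalg^I c_I(\Delta[0])$ is, in degree $n$, the coproduct $\coprod_{i_0 \to \cdots \to i_n} \Delta[0]$, which is precisely the discrete set $\mrm{N}_n(I)$; hence $\amalg^I c_I(\Delta[0]) = \mrm{N}(I)$ as a simplicial set, and taking the diagonal (which acts trivially on a simplicial-constant object) gives $\mtt{hocolim}^{\mrm{V}}_I c_I(\Delta[0]) = \mrm{N}(I)$. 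The same computation over $J$ yields $\mtt{hocolim}^{\mrm{V}}_J c_J(\Delta[0]) = \mrm{N}(J)$, and the natural map induced by $(f, 1)$ is exactly $\mrm{N}(f) : \mrm{N}(I) \to \mrm{N}(J)$. Proposition \ref{hlcDl}, applied to the homotopy right cofinal $f$ and the diagram $Z = c_J(\Delta[0])$, then asserts that this map lies in $\mc{S}$, i.e. $\mrm{N}(f)$ is a weak homotopy equivalence.

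The main obstacle I anticipate is the very first step: confirming that the class of weak homotopy equivalences of simplicial sets is genuinely $\Delta$-closed in the precise sense of the paper's definition, since that definition demands saturation together with the diagonal axiom for bisimplicial objects. The saturation of weak equivalences in $\simp\mrm{Set}$ is standard (they are the maps inverted by $\gamma$ into the homotopy category), and the diagonal axiom is the classical realization lemma, so no genuinely new work is required — but one must state these facts carefully and cite them, rather than reprove them. A secondary technical point is verifying that the natural map of Proposition \ref{hlcDl} identifies, under the degreewise computation above, with $\mrm{N}(f)$ on the nose; this is a direct unwinding of the definition of $\amalg^\bullet(f, 1)$ in (\ref{SRnatural}) and presents no difficulty once the identification $\amalg^I c_I(\Delta[0]) = \mrm{N}(I)$ is in hand.
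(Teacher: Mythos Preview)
Your proposal is correct and follows essentially the same approach as the paper: apply Proposition \ref{hlcDl} in $(\simp Set,\mc{S})$ with $\mc{S}$ the weak homotopy equivalences (which is $\Dl$-closed and closed under coproducts, cited from \cite{V}) to the constant diagram $T:J\to\simp Set$, $j\mapsto\Dl[0]$, and identify $\mtt{hocolim}^{\mrm{V}}_I f^\ast T\cong\mrm{N}(I)$, $\mtt{hocolim}^{\mrm{V}}_J T\cong\mrm{N}(J)$, with the induced map being $\mrm{N}(f)$. Your additional remarks on why the diagonal is trivial here and on unwinding $\amalg^\bullet(f,1)$ are sound and only make the argument more explicit.
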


\begin{proof} Weak homotopy equivalences are closed by coproducts, and form a $\Dl$-closed class of $\simp Set$ (see \cite{V}). Denote by 
$T:J\rightarrow \simp Set$ the constant diagram $j\mapsto \Dl[0]$.
By the previous theorem  $\amalg^I f^\ast T \rightarrow \amalg^J T$ is a weak homotopy equivalence. But $\amalg^I f^\ast T \cong  \mrm{N}(I)$,
$\amalg^J T \cong  \mrm{N}(J)$ and the previous map corresponds to $\mrm{N}(f)$, which is then a weak homotopy equivalence.
\end{proof}

\begin{proof}[\textbf{Proof of Theorem \ref{hocolimDlClosed}}]
We have seen in Lemma \ref{hocolimRelative} that $\mtt{hocolim}^{\mrm{V}}$ is a relative functor, then so is $f_{!}$ by def{i}nition. 
The fact that $\mtt{hocolim}_I^{\mrm{V}}$ is a realizable homotopy colimit is deduced from \textit{ii} putting $J=[0]$ and 
$f:I\rightarrow [0]$ the trivial functor.\\ 
Let us see then part \textit{ii}. So let $f:I\rightarrow J$
be a functor between small categories. Given $Y:J\rightarrow\simp\mc{C}$, $j\in J$ and $n\geq 0$, the adjunction morphism $\theta^f_Y : f_{!} f^\ast Y\rightarrow Y$ is
$$ (\theta^f_Y)_n (j): (f_{!} f^\ast Y)_n (j) = \ds\coprod_{f(i_0)\rightarrow \cdots \rightarrow f(i_n)\rightarrow j}\!\!\!\! Y_{f(i_0),n} \ \ \longrightarrow \ \  Y_{j,n} $$
induced by $Y(f(i_0)\rightarrow j) : Y_{f(i_0),n}\rightarrow Y_{j,n}$.\\
Next we describe the adjunction morphism
$\Phi : 1_{\simp\mc{C}^I}\dashrightarrow f^\ast f_{!} $ in $Fun(\simp\mc{C}^I,\simp\mc{C}^I)[\mc{S}^{-1}]$.
To this end we use the `replacement' $Q: \simp\mc{C}^I\rightarrow \simp\mc{C}^I$ of \cite[8.2]{Du}.
By def{i}nition,  $Q:= {1_I}_{!}$ is the homotopy left Kan extension associated with $1_I:I\rightarrow I$.
If $X:I\rightarrow\simp\mc{C}$, then
$$ ( Q X )(i) = \mtt{hocolim}^{\mrm{V}}_{(I/i)} u_i^\ast X$$
where $u_i : (I/i) \rightarrow I$ is def{i}ned as $u_i (\{i'\rightarrow i\}) = i'$. Note that $u_i$ factors as $w_i:(f/f(i))\rightarrow I$ followed by $v_i:(I/i)\rightarrow (f/f(i))$ , where $w_i(\{f(i')\rightarrow f(i)\})=i'$ and $v_i(\{i'\rightarrow i\})=\{f(i')\rightarrow f(i)\}$.
Then, we have an induced map
$\psi_X(i) : (QX)(i)\rightarrow \mtt{hocolim}^{\mrm{V}}_{(f/f(i))} w_i^\ast X = (f_{!} X)(f(i))= (f^\ast f_{!} X) (i)$.
It gives rise to a natural transformation $\psi : Q\rightarrow f^\ast f_{!}$.\\
Assume constructed a natural transformation $\rho:Q\rightarrow 1_{\simp\mc{C}^I}$ such that $(\rho_X)(i)\in\mc{S}$ for each diagram $X$ and $i\in I$. We then 
def{i}ne $\Phi : 1_{\simp\mc{C}^I}\dashrightarrow f^\ast f_{!} $ as $\Phi_X=\psi_X \, \rho_X^{-1}$. That is, as the zigzag
$$\xymatrix@M=4pt@H=4pt@C=35pt{ X &  QX \ar[l]_-{\rho_X} \ar[r]^-{\psi_X} & f^\ast f_{!} X}$$
Since each $(I / i)$ has as f{i}nal object $1_i : i\rightarrow i$, we have a natural transformation
$\tau : u_i^\ast X \rightarrow  \pi^\ast X_i $ f{i}lling the triangle
$$\xymatrix@M=4pt@H=4pt@C=35pt{
 (I / i)  \ar[r]^-{\pi} \ar[rd]_{u_i^\ast X}^{{}^{\stackrel{\tau}{\Rightarrow}}} &  [0]  \ar[d]^{X_i}\\
                                      & \simp\mc{C} }$$
Given $\{f:i'\rightarrow i\}\in (I/i)$, $\tau_f$ is def{i}ned as $X(f):X_{i'} \rightarrow X_i$. We obtain $\rho_X(i)=\mtt{hocolim}^{\mrm{V}}(\pi,\tau):\mtt{hocolim}^{\mrm{V}}_{(I/i)} u_i^\ast X \rightarrow  \mtt{hocolim}^{\mrm{V}}_{[0]} X_i = X_i$. It is straightforward to check that $\rho_X(i)$ is natural on $i$ and $X$, so $\rho:Q\rightarrow 1_{\simp\mc{C}}$ is a natural transformation.\\
Let us see now that $\rho_X(i)\in\mc{S}$ for all $i$ in $I$. Consider each $i\in I$ as the functor $i : [0]\rightarrow (I/i)$, $0\mapsto 1_i$.  This is a
homotopy right cof{i}nal functor since $1_i$ is a f{i}nal object of $(I/i)$. It follows from Proposition \ref{hlcDl} that the induced map
$\mtt{hocolim_{[0]}} i^\ast u_i^\ast X = X_i \rightarrow \mtt{hocolim}^{\mrm{V}}_{(I/i)} u_i^\ast X = (QX) (i)$ is in $\mc{S}$. On the other hand, the composition of
this map with $(Q X)(i)\rightarrow X_i$ is clearly the identity on $X_i$. Hence $\rho_X(i)\in\mc{S}$ as required.
To f{i}nish, it remains to see that the compositions
\begin{eqnarray}
\xymatrix@M=4pt@H=4pt@C=35pt{ f^\ast   \ar[r]^-{\Phi_{f^\ast}} & f^\ast f_{!} f^\ast  \ar[r]^-{f^\ast \theta^f} & f^\ast}\label{Adj1Dl}\\
\xymatrix@M=4pt@H=4pt@C=42pt{  f_{!}\ar[r]^-{f_{!} \Phi} & f_{!} f^\ast f_{!}  \ar[r]^-{\theta^f_{f_{!} }} & f_{!}  }\label{Adj2Dl}
\end{eqnarray}
are the identity in $Fun(\simp\mc{C}^J,\simp\mc{C}^I)[\mc{S}^{-1}]$ and $Fun(\simp\mc{C}^I,\simp\mc{C}^J)[\mc{S}^{-1}]$, respectively.
It is easy to see that (\ref{Adj1Dl}) is the identity in $Fun(\simp\mc{C}^J,\simp\mc{C}^I)[\mc{S}^{-1}]$. Indeed, it follows directly from the def{i}nitions
that $\rho_{f^\ast Y} : Q(f^\ast Y) \rightarrow f^\ast Y$ agrees with $\theta_Y^f \, \psi_{f^\ast Y}$ in $\simp\mc{C}^I$, for each $Y:J\rightarrow \simp\mc{C}$. 
The fact that (\ref{Adj2Dl}) is equal to the identity is more involved. We must see that 
$ a=f_{!} \rho :  f_{!} Q \rightarrow  f_{!} $ agrees with $b=\theta^f_{ f_{!} } \,  f_{!} \psi$ in $Fun(\simp\mc{C}^I,\simp\mc{C}^J)[\mc{S}^{-1}]$. 
If $n\geq 0$, $j\in J$ and $X:I\rightarrow\simp\mc{C}$, then $f_{!} QX:J\rightarrow\simp\mc{C}$ is given by
$$(f_{!} QX)_n(j) =\!\!\coprod_{f(i_0)\rightarrow \cdots\rightarrow f(i_n)\rightarrow j}\ \ \coprod_{l_0\rightarrow\cdots\rightarrow l_n\rightarrow i_0} X_{l_0,n} = \!\!
\ds\coprod_{\xymatrix@R=5pt@C=0pt@H=1pt@M=0.5pt{ & \mbox{} &  \scr{f(l_0)} & \scr{\rightarrow} & \scr{\cdots} & \scr{\rightarrow} & \scr{f(l_n)} \ar@{}|{\scr{\downarrow}}[d]\\
     \scr{j} & \scr{\leftarrow}   &    \scr{f(i_n)} &  \scr{\leftarrow} & \scr{\cdots} & \scr{\leftarrow} & \scr{f(i_0)} }} X_{l_0,n}$$
Note that $f_{!} Q$ is by def{i}nition the composition of $\widetilde{F}:\simp\mc{C}^I\rightarrow \simp\simp\mc{C}^J$ with the diagonal $\mrm{D}: \simp\simp\mc{C}^J\rightarrow \simp\mc{C}^J$, where
$$(\widetilde{F} X)_{n,m}(j) = \ds\coprod_{\xymatrix@R=5pt@C=0pt@H=1pt@M=0.5pt{ & \mbox{} &  \scr{f(l_0)} & \scr{\rightarrow} & \scr{\cdots} & \scr{\rightarrow} & \scr{f(l_n)} \ar@{}|{\scr{\downarrow}}[d]\\
     \scr{j} & \scr{\leftarrow}   &    \scr{f(i_n)} &  \scr{\leftarrow} & \scr{\cdots} & \scr{\leftarrow} & \scr{f(i_0)} }} X_{l_0,m}$$
Also $f_{!}$ is by def{i}nition the composition of ${F}:\simp\mc{C}^I\rightarrow \simp\simp\mc{C}^J$ with the diagonal, where 
$$(F\, X)_{n,m}(j) = \ds\coprod_{\scr{ f(i_0)\rightarrow\cdots\rightarrow f(i_n)}\rightarrow j}  X_{i_0 , m}
$$
On the other hand $a$ and $b$ are the diagonals of the bisimplicial maps $a'$ and $b'$ def{i}ned as
$$ (a'_X)_{n,m}(j) :\!\!\! \ds\coprod_{\xymatrix@R=5pt@C=0pt@H=1pt@M=0.5pt{ & \mbox{} &  \scr{f(l_0)} & \scr{\rightarrow} & \scr{\cdots} & \scr{\rightarrow} & \scr{f(l_n)} \ar@{}|{\scr{\downarrow}}[d]\\
     \scr{j} & \scr{\leftarrow}   &    \scr{f(i_n)} &  \scr{\leftarrow} & \scr{\cdots} & \scr{\leftarrow} & \scr{f(i_0)} }} X_{l_0,m} \longrightarrow
\ds\coprod_{\xymatrix@R=5pt@C=0pt@H=1pt@M=0.5pt{ \scr{f(i_0)} & \scr{\rightarrow} & \scr{\cdots} & \scr{\rightarrow} & \scr{f(i_n)} &\scr{\rightarrow}     \scr{j}}} X_{i_0,m}$$
$$(b'_X)_{n,m}(j):\!\!\! \ds\coprod_{\xymatrix@R=5pt@C=0pt@H=1pt@M=0.5pt{ & \mbox{} &  \scr{f(l_0)} & \scr{\rightarrow} & \scr{\cdots} & \scr{\rightarrow} & \scr{f(l_n)} \ar@{}|{\scr{\downarrow}}[d]\\
     \scr{j} & \scr{\leftarrow}   &    \scr{f(i_n)} &  \scr{\leftarrow} & \scr{\cdots} & \scr{\leftarrow} & \scr{f(i_0)} }} X_{l_0,m}  \longrightarrow
\ds\coprod_{\xymatrix@R=5pt@C=0pt@H=1pt@M=0.5pt{ \scr{f(l_0)} & \scr{\rightarrow} & \scr{\cdots} & \scr{\rightarrow} & \scr{f(l_n)} &\scr{\rightarrow}     \scr{j}}} X_{l_0,m}
$$
Putting all together, we have $a',b':\widetilde{F}\rightarrow F$ in $Fun(\simp\mc{C}^I,\simp\simp\mc{C}^J)$ such that $\mrm{D}a' = a$ and $\mrm{D} b' = b$.
Note that for f{i}xed $m\geq 0$ and $j\in J$, it holds that
$(\widetilde{F}X)_{\cdot,m} (j)$ agrees with $\mrm{D}(dec(\amalg^{(f/j)} u_j^{\ast} X_{\cdot,m}))$, the diagonal of Illusie's decalage of $\amalg^{(f/j)} u_j^\ast X_{\cdot,m}$.
In addition, $a'_{\cdot,m}(j)$ and $b'_{\cdot,m}(j)$ correspond to the natural augmentations $\mrm{D}(\Lambda^I),\mrm{D}(\Lambda^{II}):\mrm{D}( 
dec(\amalg^{(f/j)} u_j^\ast X_{\cdot,m}))\rightarrow \amalg^{(f/j)} u_j^\ast X_{\cdot,m}$ associated with $dec(\amalg^{(f/j)} u_j^\ast X_{\cdot,m})$.
Therefore by Proposition \ref{IlDec} we have a simplicial homotopy $H_{\cdot,m}(j): (\widetilde{F}X)_{\cdot,m} (j)\otimes \Dl[1]\rightarrow 
({F}X)_{\cdot,m} (j)$ between $a'_{\cdot,m}(j)$ and $b'_{\cdot,m}(j)$. Since the homotopies $H_{\cdot,m}(j)$ are natural they produce as $m$ and $j$ vary a 
natural bisimplicial map $H:\widetilde{F}X\otimes \Dl[1]\rightarrow FX$, which gives indeed a natural transformation
$H:\widetilde{F}-\otimes \Dl[1]\rightarrow F$ with $H\, d_0 = a'$ and $H\, d_1 = b'$. 
Then, taking diagonals we get $h=\mrm{D}H : \mrm{D}(\widetilde{F}-\otimes \Dl[1])=(\mrm{D}\widetilde{F})\otimes \Dl[1]\rightarrow \mrm{D}F$ such that
$h\, d_0 = a$ and $h\, d_1 = b$.
But  $d_0$ and $d_1$ are pointwise in $\mc{S}$ because they are simplicial homotopy equivalences, and have also a common section, so $d_0=d_1$ in 
$Fun(\simp\mc{C}^I, \simp\mc{C}^J)[\mc{S}^{-1}]$. We then conclude that $a=b$ there as well.\\
To f{i}nish the proof, we must check that $\mtt{hocolim}_I^{\mrm{V}}$ is invariant under homotopy right cof{i}nal changes of diagrams.
Given a homotopy right cof{i}nal functor $f:I\rightarrow J$, we claim that the relative natural transformation 
$\mtt{hocolim}^{\mrm{V}}(f): \mtt{hocolim}_I^{\mrm{V}}f^{\ast} \dashrightarrow \mtt{hocolim}_J^{\mrm{V}}$ induced by adjunction as in 
(\ref{hrcdef{i}bis}) agrees in $Fun(\simp\mc{C}^J, \simp\mc{C})[\mc{S}^{-1}]$
with the natural transformation $\Psi: \mtt{hocolim}_I^{\mrm{V}}f^{\ast} \rightarrow \mtt{hocolim}_J^{\mrm{V}}$ of Proposition 
\ref{hlcDl}. Then $\mtt{hocolim}^{\mrm{V}}(f)$ would be an isomorphism of $\mc{R}el\mc{C}at$. Let us see the claim. Using the explicit adjunction morphisms 
described before, $\mtt{hocolim}^{\mrm{V}}(f)$ is the composition of 
$(\mtt{hocolim}^{\mrm{V}}_I f^{\ast}\compc\rho)^{-1}: \mtt{hocolim}^{\mrm{V}}_I f^{\ast}\rightarrow \mtt{hocolim}^{\mrm{V}}_I f^{\ast} Q $ with 
$$\xymatrix@M=4pt@H=4pt@C=47pt{  
 \mtt{hocolim}^{\mrm{V}}_I f^{\ast} Q 
\ar[r]^-{\mtt{hocolim}^{\mrm{V}}_I f^{\ast}\compc \psi} & \mbox{$\begin{array}{c}\mtt{hocolim}^{\mrm{V}}_I f^{\ast} c_J  \mtt{hocolim}^{\mrm{V}}_J\\[0.1cm] 
        =  \mtt{hocolim}^{\mrm{V}}_I c_I  \mtt{hocolim}^{\mrm{V}}_J \end{array}$} \ar[r]^-{\theta\compc \mtt{hocolim}^{\mrm{V}}_J} 
&  \mtt{hocolim}^{\mrm{V}}_J } $$  
Note that $\mtt{hocolim}^{\mrm{V}}_I f^{\ast} Q X$ is, by def{i}nition, equal to $\mrm{D}\mc{W} ( X\otimes \mrm{N}(\cdot / f)) $, the diagonal of the 
two-sided bar construction associated with $ X\otimes \mrm{N}(\cdot / f): J\times J^{\comp} \rightarrow \simp\mc{C}$. Under the notations of 
(\ref{AuxCof{i}nal}), $\mtt{hocolim}^{\mrm{V}}_I f^{\ast}\compc\rho$ is just the diagonal of $\beta'$, while 
$(\theta\compc \mtt{hocolim}^{\mrm{V}}_J)\, (\mtt{hocolim}^{\mrm{V}}_I f^{\ast}\compc \psi)$ is just the diagonal of 
$\alpha'$.\\ 
Then, to see that $\mtt{hocolim}^{\mrm{V}}(f)$ agrees with $\Psi$ in  $Fun(\simp\mc{C}^J, \simp\mc{C})[\mc{S}^{-1}]$ it suf{f}{i}ces to check 
that $ \Psi \, \mrm{D}\beta'$ is equal to  $\mrm{D}\alpha'$ there. But this is proved easily using the right hand side of diagram (\ref{4cuadr}) 
and the natural homotopies of Lemma \ref{BarDec}.
\end{proof}


\section{Characterization of realizable homotopy colimits.}\label{sectionDescenso}

In this section we characterize those relative categories closed by coproducts and possessing all realizable homotopy colimits, that are invariant 
under homotopy right cof{i}nal changes of diagrams. We prove that these are precisely the relative categories closed by coproducts and possessing a 
`geometric realization' 
for simplicial objects, which we call \textit{simple functor}. This is encoded in the notion of simplicial descent category 
(see Def{i}nition \ref{Def{i}Descenso}). More concretely, the main result of the section is the following 

\begin{thm}\label{characterization} Let $(\mc{C},\mc{W})$ be a relative category closed by coproducts. The following are equivalent:
\begin{compactitem}
\item[\textbf{i.}] $(\mc{C},\mc{W})$ admits realizable homotopy colimits $\mtt{hocolim}_I:\mc{C}^I\rightarrow \mc{C}$, which
are invariant under homotopy right cof{i}nal changes of diagrams.
\item[\textbf{ii.}] $(\mc{C},\mc{W})$ admits a simplicial descent structure with simple functor $\mbf{s}:\simp\mc{C}\rightarrow \mc{C}$.
\end{compactitem}
In addition, if these equivalent conditions hold then for each small category $I$ there is a natural 
isomorphism $\mtt{hocolim}_I \simeq \mbf{s}\amalg^I$ of $\mc{R}el\mc{C}at$, 
where $\amalg^I:\mc{C}^I\rightarrow \simp\mc{C}$ denotes the simplicial replacement.
\end{thm}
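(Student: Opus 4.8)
The plan is to route both implications through the relative equivalence between a simplicial descent category and a $\Dl$-closed class established in \cite{R}, transporting the Voevodsky homotopy colimit of Theorem \ref{hocolimDlClosed} in one direction and recovering the simple functor as a homotopy colimit over $\Dl^\comp$ in the other. For $\textbf{ii}\Rightarrow\textbf{i}$, assume a simplicial descent structure with simple functor $\mbf{s}:\simp\mc{C}\rightarrow\mc{C}$. By \cite{R}, $\mbf{s}$ and the constant-object functor $c:\mc{C}\rightarrow\simp\mc{C}$ form a relative equivalence between $(\mc{C},\mc{W})$ and $(\simp\mc{C},\mc{S})$, where $\mc{S}$ is $\Dl$-closed and closed by coproducts; by Theorem \ref{hocolimDlClosed}, $(\simp\mc{C},\mc{S})$ carries realizable homotopy colimits $\mtt{hocolim}^{\mrm{V}}_I=\mrm{D}\amalg^I$ that are invariant under homotopy right cof{i}nal changes. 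Since an adjunction is a purely $2$-categorical datum and is preserved under equivalences in any $2$-category, I would conjugate $\mtt{hocolim}^{\mrm{V}}_I$ by this equivalence --- using the exponentiation $2$-functor $(-)^I$ of Lemma \ref{exp2func} on diagram categories and the naturality $c_I^{\simp\mc{C}}c\simeq c^I c_I^{\mc{C}}$ of constant-diagram functors --- to obtain a relative adjunction exhibiting $\mbf{s}\,\mtt{hocolim}^{\mrm{V}}_I\,c^I:\mc{C}^I\rightarrow\mc{C}$ as relative left adjoint of $c_I$, i.e. a realizable homotopy colimit, whose invariance is transported along with the adjunction data. The formula then drops out: for $X:I\rightarrow\mc{C}$ the simplicial replacement $\amalg^I(c^I X)$ is constant in the new simplicial direction, so its diagonal is the ordinary replacement $\amalg^I X$; hence $\mbf{s}\,\mtt{hocolim}^{\mrm{V}}_I\,c^I X=\mbf{s}\,\mrm{D}\amalg^I(c^I X)=\mbf{s}\,\amalg^I X$, giving $\mtt{hocolim}_I\simeq\mbf{s}\amalg^I$.

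For $\textbf{i}\Rightarrow\textbf{ii}$, I would take as candidate simple functor the homotopy colimit over the indexing category of simplicial objects, $\mbf{s}:=\mtt{hocolim}_{\Dl^\comp}:\simp\mc{C}=\mc{C}^{\Dl^\comp}\rightarrow\mc{C}$, and check the axioms of a simplicial descent structure (Def{i}nition \ref{Def{i}Descenso}). Exactness is immediate, since a relative left adjoint is a relative functor; additivity holds because a relative left adjoint preserves coproducts. For the normalization axiom $\mbf{s}(cA)\simeq A$, note that the projection $\pi:\Dl^\comp\rightarrow[0]$ is homotopy right cof{i}nal, as $\mrm{N}(\ast/\pi)=\mrm{N}(\Dl^\comp)$ is contractible; invariance under homotopy right cof{i}nal changes then gives $\mbf{s}(cA)=\mtt{hocolim}_{\Dl^\comp}\pi^\ast A\simeq\mtt{hocolim}_{[0]}A=A$ naturally in $A$.

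The main obstacle is the Eilenberg--Zilber (bisimplicial) axiom, which compares $\mbf{s}$ with the diagonal $\mrm{D}:\simp\simp\mc{C}\rightarrow\simp\mc{C}$. Writing $\mrm{D}=\delta^\ast$ for the diagonal functor $\delta:\Dl^\comp\rightarrow\Dl^\comp\times\Dl^\comp$, which is homotopy right cof{i}nal, invariance under homotopy right cof{i}nal changes yields $\mbf{s}\,\mrm{D}=\mtt{hocolim}_{\Dl^\comp}\delta^\ast\simeq\mtt{hocolim}_{\Dl^\comp\times\Dl^\comp}$, and the Fubini isomorphism of Proposition \ref{Fubini} rewrites the right-hand side as the iterated homotopy colimit $\mtt{hocolim}_{\Dl^\comp}\mtt{hocolim}_{\Dl^\comp}$, i.e. the iterated simple functor; this is exactly the descent axiom identifying the simple of a diagonal with the iterated simple. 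I expect this step to be the hard part: one must not merely match the objects but track the adjunction-induced relative natural transformations so that the resulting identif{i}cations hold in $\mc{R}el\mc{C}at$, and one must supply (or cite) the cof{i}nality of $\delta$ together with the compatibility of $\mtt{hocolim}(\delta)$ with the Fubini isomorphism. Once $(\mbf{s},\mc{S})$ is certif{i}ed as a simplicial descent structure, the f{i}nal clause follows for free: by Proposition \ref{UniqRelAd} a realizable homotopy colimit is unique up to unique compatible isomorphism as a relative left adjoint of $c_I$, so the $\mtt{hocolim}_I$ of $\textbf{i}$ must coincide with the functor $\mbf{s}\amalg^I$ produced by feeding this $\mbf{s}$ back through the $\textbf{ii}\Rightarrow\textbf{i}$ construction.
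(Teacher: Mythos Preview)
Your approach matches the paper's closely: for $\textbf{ii}\Rightarrow\textbf{i}$ you transport the Voevodsky homotopy colimit across the relative equivalence $(\mbf{s},c)$ between $(\mc{C},\mc{W})$ and $(\simp\mc{C},\mc{S})$, exactly as the paper does in Theorem \ref{AdjointPair}; and for $\textbf{i}\Rightarrow\textbf{ii}$ you set $\mbf{s}:=\mtt{hocolim}_{\Dl^\comp}$ and verify the descent axioms using cof{i}nality of $\pi:\Dl^\comp\rightarrow[0]$ and of the diagonal $\delta:\Dl^\comp\rightarrow\Dl^\comp\times\Dl^\comp$ together with Fubini, which is precisely the paper's argument for (S2), (S4), and (S3).

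There is, however, a genuine gap: you never address axiom (S5), which requires that $\mbf{s}(d_0^A):\mbf{s}(cA)\rightarrow\mbf{s}(A\otimes\Dl[1])$ be a weak equivalence for every object $A$. This does not follow from anything you have written; none of cof{i}nality, Fubini, or the adjunction $(\mbf{s},c)$ alone controls the simple of the cylinder. The paper handles this by a separate adjunction trick: $-\otimes\Dl[1]:\mc{C}\rightarrow\simp\mc{C}$ is an ordinary left adjoint to evaluation at $[1]$, both functors preserve weak equivalences, so after localizing one gets that $\mbf{s}(-\otimes\Dl[1])$ is left adjoint to $ev_1\,c=1_{\mc{C}[\mc{W}^{-1}]}$; hence the adjunction counit exhibits $\mbf{s}(A\otimes\Dl[1])\simeq A$, and combining with $\lambda$ forces $\mbf{s}(A\otimes s^0)$ and thus $\mbf{s}(d_0^A)$ to be weak equivalences. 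You should supply this argument (or an equivalent). A smaller point: the compatibility between $\mu$ and $\lambda$ demanded in (S4) is not automatic from your construction of each separately; the paper derives it from Fubini plus the fact that $p_1\delta=p_2\delta=1_{\Dl^\comp}$, and you should make that explicit rather than fold it into the vague acknowledgment that ``one must track the adjunction-induced relative natural transformations.''
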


\begin{obs}
We will see moreover in Theorem \ref{AdjointPair} that for relative categories satisfying the equivalent conditions of 
previous theorem all homotopy left Kan extension exist 
and may be computed pointwise.
\end{obs}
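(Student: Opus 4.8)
The plan is to establish this assertion, which is the content of Theorem~\ref{AdjointPair}, by transporting the analogous result for $\Dl$-closed classes (Theorem~\ref{hocolimDlClosed}.\textit{ii}) across the relative equivalence furnished by the simple functor. Under condition~\textbf{ii} the functor $\mbf{s}$ is a relative equivalence $\mbf{s}:(\simp\mc{C},\mc{S})\rightarrow(\mc{C},\mc{W})$, with $\mc{S}$ a $\Dl$-closed class closed by coproducts (this is the theorem of \cite{R} recalled in the introduction); let $\mbf{t}:(\mc{C},\mc{W})\rightarrow(\simp\mc{C},\mc{S})$ be a relative quasi-inverse. The decisive input, namely the transport of homotopy colimits under $\mbf{s}$ established in \cite{R} and recovering the ``In addition'' clause of Theorem~\ref{characterization}, is that for every small category $K$ there is a natural isomorphism $\mbf{s}\,\mtt{hocolim}^{\mrm{V}}_{K}\,\mbf{t}^{K}\dashrightarrow \mbf{s}\amalg^{K}=\mtt{hocolim}_{K}$ in $\mc{R}el\mc{C}at$.

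First I would settle existence. By Lemma~\ref{exp2func} the 2-functor $(-)^{K}$ carries the relative equivalence $\mbf{s}$ to a relative equivalence $\mbf{s}^{K}:(\simp\mc{C}^{K},\mc{S})\rightarrow(\mc{C}^{K},\mc{W})$ for each small $K$, and these are compatible with the restriction functors $f^{\ast}$ and $u_{j}^{\ast}$, which are just precompositions. For a functor $f:I\rightarrow J$, Theorem~\ref{hocolimDlClosed}.\textit{ii} gives a relative adjunction between the Voevodsky Kan extension $f_{!}:(\simp\mc{C}^{I},\mc{S})\rightarrow(\simp\mc{C}^{J},\mc{S})$ and $f^{\ast}$. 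Composing this relative adjunction with the relative adjoint equivalences induced by $\mbf{s}^{I}$ and $\mbf{s}^{J}$, and using that composites of relative adjunctions are again relative adjunctions, produces a relative left adjoint to $f^{\ast}:(\mc{C}^{J},\mc{W})\rightarrow(\mc{C}^{I},\mc{W})$. This is the desired homotopy left Kan extension, so existence holds for every $f$.

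Next I would identify its pointwise values. The transported adjoint is $\mbf{s}^{J}\,f_{!}\,\mbf{t}^{I}$, and by the pointwise formula of Definition~\ref{hLKanDl} its value at $j\in J$ on a diagram $X:I\rightarrow\mc{C}$ is $\mbf{s}\big(\mtt{hocolim}^{\mrm{V}}_{(f/j)}\,u_{j}^{\ast}\,\mbf{t}^{I}X\big)$. Since $u_{j}^{\ast}$ commutes with $\mbf{t}$ (both reindex and apply $\mbf{t}$ pointwise), the correspondence of the first paragraph applied with $K=(f/j)$ rewrites this as $\mtt{hocolim}_{(f/j)}\,u_{j}^{\ast}X=\mbf{s}\amalg^{(f/j)}\,u_{j}^{\ast}X$. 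Hence the transported adjoint agrees, objectwise in $j$, with the formula $(f_{!}X)(j)=\mtt{hocolim}_{(f/j)}\,u_{j}^{\ast}X$; by the uniqueness of relative left adjoints (Proposition~\ref{UniqRelAd}) this formula is, up to canonical isomorphism of $\mc{R}el\mc{C}at$, the homotopy left Kan extension itself, which is precisely what ``computed pointwise'' means.

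The main obstacle will be the coherence underlying the last paragraph: one must check that the intertwining isomorphism $\mbf{s}\,\mtt{hocolim}^{\mrm{V}}_{K}\,\mbf{t}^{K}\dashrightarrow\mtt{hocolim}_{K}$ is natural in $K$, and in particular compatible with the structural functors $u_{j}:(f/j)\rightarrow I$ and $f:I\rightarrow J$, so that the objectwise identifications above assemble into a single isomorphism of relative functors $\mc{C}^{I}\rightarrow\mc{C}^{J}$ rather than a merely pointwise one. This naturality is exactly what the $\mbf{s}$-correspondence of \cite{R} supplies, and once it is in hand the invariance of Voevodsky homotopy colimits under homotopy right cof{i}nal changes built into Theorem~\ref{hocolimDlClosed} ensures that the value $\mtt{hocolim}_{(f/j)}\,u_{j}^{\ast}X$ is insensitive to the presentation of the slice $(f/j)$ and varies functorially with $f$.
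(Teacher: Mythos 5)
Your proposal is correct in substance and follows the same skeleton as the paper's proof of Theorem \ref{AdjointPair}: both arguments transport the Voevodsky relative adjunction $(f_{!}^{\simp\mc{C}},f^{\ast})$ of Theorem \ref{hocolimDlClosed}.\textit{ii} across the relative equivalences between $(\mc{C}^{K},\mc{W})$ and $(\simp\mc{C}^{K},\mc{S})$, $\mc{S}=\mbf{s}^{-1}\mc{W}$, using Proposition \ref{DescensoFuntores} and the stability of relative adjunctions under composition. The one material divergence is your use of an \emph{abstract} quasi-inverse $\mbf{t}$, which is precisely what creates the ``main obstacle'' you flag in your last paragraph and then defer to \cite{R} without discharging. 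The paper avoids this issue entirely by taking the quasi-inverse to be the constant functor $c$: Theorem \ref{equivCat}.\textit{ii} says that $(\mbf{s},c)$ is a relative \emph{adjoint} equivalence, and with $\mbf{t}=c$ the comparison you need is a strict equality rather than a coherence problem, since $\mtt{hocolim}^{\mrm{V}}_{K}\,c^{K}=\mrm{D}\amalg^{K}c=\amalg^{K}$ (this is equation (\ref{hocolims})), whence $\mbf{s}^{J}\,f_{!}^{\simp\mc{C}}\,c=f_{!}^{\mc{C}}$ on the nose, pointwise formula included; the composite right adjoint is then identified as $\mbf{s}^{I}f^{\ast}c=\mbf{s}^{I}cf^{\ast}\simeq f^{\ast}$ directly by axiom (S4), with no appeal to Proposition \ref{UniqRelAd} and no natural-in-$K$ intertwiner to verify. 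If you insist on an abstract $\mbf{t}$, your loose end does close: any two quasi-inverses of $\mbf{s}$ are isomorphic in $Fun(\mc{C},\simp\mc{C})[\mc{S}^{-1}]$, and a single chosen isomorphism $\mbf{t}\dashrightarrow c$ extends pointwise via Lemma \ref{exp2func} and commutes strictly with all reindexing functors $u_{j}^{\ast}$, $f^{\ast}$ (because the extensions are defined objectwise), which supplies exactly the naturality in $K$ you left open; but this is extra bookkeeping that the concrete choice $\mbf{t}=c$ makes unnecessary. One small inaccuracy: your closing appeal to invariance under homotopy right cof{i}nal changes plays no role in the pointwise identification of $f_{!}$; in the paper it enters only at the very end, to transfer that invariance from $\mtt{hocolim}^{\mrm{V}}_{I}$ to $\mtt{hocolim}_{I}=\mbf{s}\,\mtt{hocolim}^{\mrm{V}}_{I}\,c$.
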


Before going into detail, let us explain why the formula $$\mtt{hocolim}_I \simeq \mtt{hocolim}_{\simp} \amalg^{I}$$ obtained in previous theorem holds.
Recall that in the context of colimits a similar formula works. More concretely, given a diagram $X:I\rightarrow \mc{C}$ we know that $\mtt{colim}_I X$ agrees 
with the coequalizer of $\amalg_{i\in I} X_i  \leftleftarrows \amalg_{i\rightarrow j} X_i$, which is a truncation of $\amalg^I X$.
It is straightforward to prove this equality just using the def{i}nitions, but it may be obtained as well from the following a posteriori argument.

Recall that given $f:I\rightarrow J$, a left Kan extension $f_{!}: \mc{C}^I\rightarrow \mc{C}^J$ is def{i}ned as a left adjoint
of $f^{\ast}: \mc{C}^J\rightarrow \mc{C}^I$. Note that $\mtt{colim}_J f_{!} \simeq \mtt{colim}_I$ because both are left adjoints of $c_I = f^{\ast} c_J$. 

Consider the equalizer category  $\mtt{eq}$ consisting of 
$$\xymatrix@M=4pt@H=4pt@C=33pt{ [0]\ar@<1ex>[r]^{d^0} \ar@<-1ex>[r]_{d^1} & [1]   }$$ 
Given a small category $I$ denote by $\mtt{eq}^{\circ}.I$ the category with objects the functors  $\alpha : [m]\rightarrow I$ for $m=0,1$. A 
morphism $(\alpha,[m])\rightarrow (\alpha',[m'])$ in $\mtt{eq}^{\circ}.I$ is a morphism $f:[m']\rightarrow [m]$ of $\mtt{eq}$ such that 
$\alpha f= \alpha'$. We have the functors 
$$\xymatrix@M=4pt@H=4pt@C=33pt{ I & \mtt{eq}^{\circ}.I \ar[l]_{p} \ar[r]^{q} & \mtt{eq}^{\circ}} $$
given by $p(\alpha:[m]\rightarrow I) = \alpha (0)$ and $q(\alpha:[m]\rightarrow I) = [m]$, for $m=0,1$. It is easy to see that:

\begin{compactitem}
 \item[\textit{i.}] $p$ is right cof{i}nal. Then $\mtt{colim}_I X \simeq \mtt{colim}_{\mtt{eq}^{\circ}.I}\, p^{\ast} X$.
 \item[\textit{ii.}] $q$ has a left Kan extension $q_{!}:\mc{C}^{\mtt{eq}^{\circ}.I}\rightarrow \mc{C}^{\mtt{eq}^{\circ}}$ given by 
$$(q_{!} Y)([m])= \amalg_{(\alpha,[m])\in \mtt{eq}^{\circ}.I} Y (\alpha,[m])$$
for $m=0,1$. Then $\mtt{colim}_{\mtt{eq}^{\circ}.I}\,Y \simeq   \mtt{colim}_{\mtt{eq}^{\circ}}\, q_{!} Y$.
\end{compactitem}
Therefore $\mtt{colim}_I X \simeq \mtt{colim}_{\mtt{eq}^{\circ}}\, q_{!} p^{\ast} X$. That is, $\mtt{colim}_I$
may be computed by the coequalizer of $q_{!} p^{\ast} X$, which is by def{i}nition $\{\amalg_{i\in I} X_i  \leftleftarrows \amalg_{i\rightarrow j} X_i\}$.\\

To adapt this argument to the homotopical setting, we f{i}rst need to replace $p$ with a homotopy right cof{i}nal functor. This can be done by considering $\Dl$ 
instead $\mtt{eq}$. That is, given a small category $I$ we construct its category of simplexes $\simp.I$ analogously, and we have two functors  
$$\xymatrix@M=4pt@H=4pt@C=33pt{ I & \simp.I \ar[l]_{p} \ar[r]^{q} & \simp} $$
given by $p(\alpha:[m]\rightarrow I) = \alpha (0)$ and $q(\alpha:[m]\rightarrow I) = [m]$, for $m\geq 0$. This time $p$ is homotopy right cof{i}nal. 
Our next task would be to obtain a relative left adjoint $q_{!} : (\mc{C}^{\simp.I},\mc{W})\rightarrow (\simp\mc{C},\mc{W})$ of $q^\ast:
(\simp\mc{C},\mc{W})\rightarrow (\mc{C}^{\simp.I},\mc{W}) $. We have the candidate $q_{!} : \mc{C}^{\simp.I}\rightarrow  \simp\mc{C}$ def{i}ned as 
$$(q_{!} Y)([m])= \amalg_{(\alpha,[m])\in \mtt{eq}^{\circ}.I} Y (\alpha,[m])$$ 
for $m\geq 0$. It is indeed left adjoint to 
$q^{\ast} :\simp \mc{C}\rightarrow \mc{C}^{\simp.I}$.
If we want $q_{!}:(\mc{C}^{\simp.I},\mc{W})\rightarrow (\simp\mc{C},\mc{W}) $ to be a relative functor, we must ask $\mc{W}$ to be closed by coproducts.
Under this assumption we do have that $q_{!}$ is a relative left adjoint of $q^\ast$.
 
Therefore, in case $(\mc{C},\mc{W})$ is a homotopical category closed by coproducts, we have obtained the a posteriori formula  
$$\mtt{hocolim}_I X \simeq \mtt{hocolim}_{\simp} q_{!}p^{\ast} X$$
for realizable homotopy colimits that are invariant under homotopy right cof{i}nal changes of diagrams. To f{i}nish, the simplicial object $q_{!}p^{\ast} X$ is 
nothing else than $\amalg^I X$, the simplicial replacement of $X$.
\vspace{0.2cm}
\subsection{Simplicial descent categories.}\mbox{}\\[0.2cm]
\indent
Next we recall the def{i}nition of simplicial descent categories, as well as some of their properties needed later. 
We use here a slight variant of the notion given in \cite{R}, more suitable for 
the setting of relative categories and relative adjunctions. More concretely, the only dif{f}erence is that now the 
transformations $\mu$ and $\lambda$ of (S3) and (S4) are assumed to be isomorphisms of $\mc{R}el\mc{C}at$ 
instead of zigzags of natural weak equivalences as in loc. cit. This is a minor change not af{f}ecting the main 
properties of simplicial descent categories.

\begin{defi}\label{Def{i}Descenso} Let $(\mc{C},\mc{W})$ be a relative category closed by f{i}nite coproducts. A \textit{simplicial descent structure} on $(\mc{C},\mc{W})$ is 
 a triple $(\mbf{s},\mu,\lambda)$ satisfying the following f{i}ve axioms:\\[0.1cm]
$\mathbf{(S1)}$ $\mbf{s}:\simp\mc{C}\rightarrow \mc{C}$ is a functor, called the \textit{simple functor}. It commutes with f{i}nite coproducts
up to weak equivalence. More precisely, the canonical morphism
$\mbf{s}X \amalg \mbf{s}(Y)\rightarrow \mbf{s}(X\amalg Y)$ is a weak equivalence for all $X$ and $Y$ in $\simp\mc{C}$.\\[0.05cm]
$\mathbf{(S2)}$ If $f:X\rightarrow Y$ is a morphism in $\simp\mc{C}$ such that $f_n$ is a weak equivalence for all $n\geq 0$,
then $\mbf{s}(f)$ is a weak equivalence. In other words, $\mbf{s}: (\simp\mc{C} , \mc{W})\rightarrow (\mc{C} , \mc{W})$ is a relative functor.\\[0.05cm]
$\mathbf{(S3)}$ $\mu:\mbf{s}\mrm{D}\dashrightarrow\mbf{s}\mathbf{s}$ is an isomorphism of $Fun(\simp\simp\mc{C},\mc{C})[\mc{W}^{-1}]$. If
$Z$ is a bisimplicial object, recall that $\mbf{s}\mrm{D}(Z)$ is the simple
of the diagonal of $Z$. On the other hand
$\mbf{s}\mbf{s}(Z):=\mbf{s}(n\rightarrow\mbf{s}(m\rightarrow
Z_{n,m}))$ is the iterated simple of $Z$.\\[0.05cm]
$\mathbf{(S4)}$ $\lambda:\mbf{s} c\dashrightarrow
1_{\mc{C}}$ is an isomorphism of  $Fun(\mc{C},\mc{C})[\mc{W}^{-1}]$, which is assumed to be compatible with $\mu$ in the sense of (\ref{compatibLambdaMu}) below.\\[0.05cm]
$\mathbf{(S5)}$  The image under the simple functor of the map $d_0^A:A\rightarrow A\otimes \Dl[1]$ is a weak equivalence for each object $A$ of $\mc{C}$.\\[0.05cm]
A \textit{simplicial descent category} is a relative category closed by f{i}nite coproducts and endowed with a simplicial descent structure. We will denote 
a simplicial descent category by $(\mc{C},\mc{W},\mbf{s},\mu,\lambda)$, and also by $(\mc{C},\mc{W},\mbf{s})$, or even $(\mc{C},\mc{W})$, for brevity.
\end{defi}

\begin{num}\label{compatibLambdaMu}[Compatibility between $\mu$ and $\lambda$.]
Given a simplicial object $X$, denote by $X\times\Dl$, $\Dl\times X$ the bisimplicial objects with
$(X\times\Dl)_{n,m}=X_n$ and $(\Dl\times X)_{n,m}=X_m$. Note that $\mbf{s}\mbf{s}(X\times\Dl)=
\mbf{s}(n\rightarrow \mbf{s}c(X_n))$ and
$\mbf{s}\mbf{s}(\Dl\times X)= \mbf{s}c\mbf{s}(X)$. The
compositions
\begin{equation}\label{compatibLambdaMuEquac}\xymatrix@M=4pt@H=4pt@R=7pt@C=25pt{
 \mbf{s}(X) \ar[r]^-{\mu_{\Dl\times X}} &  \mbf{s}c\mbf{s}(X)\ar[r]^-{\lambda_{\mbf{s}(X)}} & \mbf{s}(X) &
 \mbf{s}(X) \ar[r]^-{\mu_{X\times \Dl}} &  \mbf{s}\mbf{s}c(X)\ar[r]^-{\mbf{s}(\lambda_{X})} &
 \mbf{s}(X)}\end{equation}
give rise to isomorphisms of $\mbf{s}$ in
$Fun(\simp\mc{C},\mc{C})[\mc{W}^{-1}]$. Then, $\lambda$ is said to be \textit{compatible} with $\mu$ if these isomorphisms are the identity in 
$Fun(\simp\mc{C},\mc{C})[\mc{W}^{-1}]$.\\
\end{num}

We will use that simplicial descent structures are inherited by diagram categories, that is an easy consequence of the axioms.

\begin{prop}\label{DescensoFuntores} Let $I$ be a small category and let $(\mbf{s},\mu,\lambda)$ be a simplicial descent structure on $(\mc{C},\mc{W})$. Then, the triple
$(\mbf{s}^{I},\mu^{I},\lambda^{I})$
def{i}ned pointwise is a simplicial descent structure on $(\mc{C}^I,\mc{W})$. Given $X:\simp\rightarrow \mc{C}^I$,
$(\mbf{s}^{I}X)(i)=\mbf{s}(n\rightarrow X_n(i))$, and $(\mu^{I},\lambda^{I})$ is def{i}ned analogously.
\end{prop}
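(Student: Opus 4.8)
The plan is to reduce each of the five axioms for $(\mbf{s}^I,\mu^I,\lambda^I)$ to the corresponding axiom for $(\mbf{s},\mu,\lambda)$ on $(\mc{C},\mc{W})$, using two facts: that weak equivalences and finite coproducts in $\mc{C}^I$ are computed pointwise, and that exponentiation $(-)^I$ is a 2-functor (Lemma \ref{exp2func}). The first step is to fix the canonical isomorphism of categories $\simp(\mc{C}^I)\cong(\simp\mc{C})^I$ sending a simplicial object $X$ of $\mc{C}^I$ to the diagram $i\mapsto(n\mapsto X_n(i))$, together with its bisimplicial analogue $\simp\simp(\mc{C}^I)\cong(\simp\simp\mc{C})^I$. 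Under these identifications the pointwise simple functor $\mbf{s}^I$ is precisely the exponentiation $(\mbf{s})^I$ of Lemma \ref{exp2func}; the diagonal $\mrm{D}$ on $\mc{C}^I$ corresponds to $(\mrm{D})^I$, the constant functor $c$ to $(c)^I$, and the constructions $-\times\Dl$, $\Dl\times-$ and the action $-\otimes\Dl[1]$ are likewise the exponentiations of their counterparts on $\mc{C}$. Consequently the pointwise data $\mu^I$ and $\lambda^I$ coincide with the exponentiated 2-cells $(\mu)^I$ and $(\lambda)^I$.

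The axioms \textbf{(S1)}, \textbf{(S2)} and \textbf{(S5)} are then immediate pointwise verifications. For \textbf{(S1)}, the canonical map $\mbf{s}^I X\amalg\mbf{s}^I Y\rightarrow\mbf{s}^I(X\amalg Y)$ evaluated at $i\in I$ is the canonical map $\mbf{s}X(i)\amalg\mbf{s}Y(i)\rightarrow\mbf{s}(X(i)\amalg Y(i))$, which lies in $\mc{W}$ by \textbf{(S1)} for $\mbf{s}$; since coproducts and weak equivalences in $\mc{C}^I$ are pointwise, the map is a pointwise weak equivalence. For \textbf{(S2)}, if $f$ is a levelwise pointwise weak equivalence then each $f(i)$ is a levelwise weak equivalence in $\simp\mc{C}$, so $\mbf{s}(f(i))=\mbf{s}^I(f)(i)$ is in $\mc{W}$ by \textbf{(S2)} for $\mbf{s}$. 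For \textbf{(S5)}, given an object $A$ of $\mc{C}^I$ one has $(A\otimes\Dl[1])(i)=A(i)\otimes\Dl[1]$ and $\mbf{s}^I(d_0^A)(i)=\mbf{s}(d_0^{A(i)})$, a weak equivalence by \textbf{(S5)} for $\mbf{s}$.

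For \textbf{(S3)} and \textbf{(S4)} I would invoke 2-functoriality directly. Since $(-)^I$ preserves composition of 1-cells, under the identifications above one has $(\mbf{s}\mrm{D})^I=\mbf{s}^I\mrm{D}^I=\mbf{s}^I\mrm{D}$ and $(\mbf{s}\mbf{s})^I=\mbf{s}^I\mbf{s}^I$, and similarly $(\mbf{s}c)^I=\mbf{s}^I c$ and $(1_{\mc{C}})^I=1_{\mc{C}^I}$. A 2-functor carries invertible 2-cells to invertible 2-cells; hence $\mu^I=(\mu)^I$ and $\lambda^I=(\lambda)^I$, being the images of the isomorphisms $\mu$ and $\lambda$ of $\mc{R}el\mc{C}at$, are again isomorphisms of $\mc{R}el\mc{C}at$, i.e. isomorphisms in $Fun(\simp\simp(\mc{C}^I),\mc{C}^I)[\mc{W}^{-1}]$ and $Fun(\mc{C}^I,\mc{C}^I)[\mc{W}^{-1}]$ respectively. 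The compatibility between $\mu^I$ and $\lambda^I$ required in \ref{compatibLambdaMu} follows by applying $(-)^I$ to the two defining equations for $\mu$ and $\lambda$: since $(-)^I$ preserves vertical and horizontal composition of 2-cells and identity 2-cells, and carries $-\times\Dl$ and $\Dl\times-$ to their pointwise analogues, the images of those equations are exactly the compatibility equations for the exponentiated structure.

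The only genuine work lies in the first step: checking that the identifications $\simp(\mc{C}^I)\cong(\simp\mc{C})^I$ and $\simp\simp(\mc{C}^I)\cong(\simp\simp\mc{C})^I$ are strict isomorphisms of categories intertwining all the structure functors $\mrm{D}$, $c$, $-\times\Dl$, $\Dl\times-$, $-\otimes\Dl[1]$ and the maps $d_0^A$ appearing in the axioms. Once this bookkeeping is in place, the verification of \textbf{(S1)}, \textbf{(S2)}, \textbf{(S5)} is pointwise and \textbf{(S3)}, \textbf{(S4)} together with the compatibility are a formal consequence of $(-)^I$ being a 2-functor. I expect this compatibility bookkeeping, rather than any homotopical input, to be the main (though routine) obstacle.
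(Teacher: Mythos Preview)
Your proposal is correct, and it is essentially the argument the paper has in mind: the paper does not actually give a proof of this proposition, stating only that it is ``an easy consequence of the axioms.'' Your pointwise verification of \textbf{(S1)}, \textbf{(S2)}, \textbf{(S5)} together with the appeal to the 2-functoriality of $(-)^I$ (Lemma~\ref{exp2func}) for \textbf{(S3)}, \textbf{(S4)} and the compatibility condition is precisely the way to fill this in, and your care in identifying $\simp(\mc{C}^I)\cong(\simp\mc{C})^I$ so that the pointwise $\mbf{s}^I$ matches the exponentiated $(\mbf{s})^I$ is the only real bookkeeping required.
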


Simplicial descent categories are closely related to Voevodsky $\Dl$-closed classes. More concretely,

\begin{prop}\label{DlClosedDescenso}\mbox{}\\
\vspace{-0.5cm}
\begin{compactitem}
\item[\textbf{\textit{i}.}] Consider a relative category $(\simp\mc{C},\mc{S})$ closed by f{i}nite coproducts. The following are equivalent:
\begin{compactitem}
\item[\textit{1}.] $\mc{S}$ is $\Dl$-closed.
\item[\textit{2}.] $(\simp\mc{C},\mc{S},\mrm{D}:\simp\simp\mc{C}\rightarrow\simp\mc{C})$ is a simplicial descent category.
\end{compactitem}
\item[\textbf{\textit{ii}.}] Given a simplicial descent category $(\mc{C},\mc{W},\mbf{s})$  then the class of $\mc{S}=\mbf{s}^{-1}\mc{W}$ of $\simp\mc{C}$ 
is $\Dl$-closed and closed by f{i}nite coproducts.
If in addition $\mc{W}$ is closed by coproducts, then so is $\mc{S}$.
\end{compactitem}
\end{prop}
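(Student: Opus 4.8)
The plan for part \textbf{i} is to read the five axioms of a simplicial descent structure with the ambient relative category taken to be $(\simp\mc{C},\mc{S})$ and the simple functor taken to be $\mbf{s}=\mrm{D}:\simp\simp\mc{C}\rightarrow\simp\mc{C}$; here $\simp(\simp\mc{C})=\simp\simp\mc{C}$ and $\simp\simp(\simp\mc{C})=\simp\simp\simp\mc{C}$ are the bi- and trisimplicial objects of $\mc{C}$. Assuming $\mc{S}$ is $\Dl$-closed, I would check (S1)--(S5) for $\mrm{D}$ as follows. Coproducts in $\simp\simp\mc{C}$ are formed bidegreewise, so $\mrm{D}$ preserves them strictly and the comparison map of (S1) is an identity, hence in $\mc{S}$. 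Axiom (S2) for $\mrm{D}$ asserts exactly that a bisimplicial map $F$ with $F_{n,\cdot}\in\mc{S}$ for all $n$ has $\mrm{D}(F)\in\mc{S}$, which is one half of condition 2 of $\Dl$-closedness. For (S3) and (S4) the crucial point is that for $\mbf{s}=\mrm{D}$ both $\mbf{s}\mrm{D}$ and $\mbf{s}\mbf{s}$ coincide with the total diagonal: for a trisimplicial object $Z$ one has $\mbf{s}\mrm{D}(Z)=\mbf{s}\mbf{s}(Z)=(t\mapsto Z_{t,t,t})$, each being the restriction of $Z$ along $\Dl\rightarrow\Dl\times\Dl\times\Dl$, and similarly $\mbf{s}c(X)=\mrm{D}(c(X))=X$. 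Thus $\mu$ and $\lambda$ may be taken to be identities, the compatibility of \ref{compatibLambdaMu} is automatic, and (S3), (S4) hold. Finally $d_0^A:A\rightarrow A\otimes\Dl[1]$ is a simplicial homotopy equivalence, with the projection $A\otimes\Dl[1]\rightarrow A$ as inverse, so it lies in $\mc{S}$ by condition 1, giving (S5).

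Conversely, assuming $(\simp\mc{C},\mc{S},\mrm{D})$ is a simplicial descent category, I would recover the two conditions of $\Dl$-closedness thus. Condition 2 for a bisimplicial map with $F_{n,\cdot}\in\mc{S}$ is precisely (S2); for the transposed hypothesis $F_{\cdot,n}\in\mc{S}$ I would use that the diagonal is symmetric, $\mrm{D}(F)=\mrm{D}(F^{t})$ where $F^{t}_{n,m}=F_{m,n}$, so that (S2) applied to $F^{t}$ yields $\mrm{D}(F)\in\mc{S}$. For condition 1, the key observation is that objects of the ambient category $\simp\mc{C}$ are themselves simplicial objects of $\mc{C}$, so that after the identifications $\mrm{D}(c(A))=A$ and $\mrm{D}(c(A)\otimes\Dl[1])=A\otimes\Dl[1]$ the map appearing in (S5) is the internal cylinder inclusion $d_0^A:A\rightarrow A\otimes\Dl[1]$ for an \emph{arbitrary} simplicial object $A$. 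Hence (S5) gives $d_0^A\in\mc{S}$ for every $A$; since $p\,d_0^A=1=p\,d_1^A$ for the projection $p:A\otimes\Dl[1]\rightarrow A$, saturation forces $p\in\mc{S}$ and then $d_0^A=d_1^A$ in $\simp\mc{C}[\mc{S}^{-1}]$. Therefore simplicially homotopic maps become equal in the localization, which by the remark following the definition of a $\Dl$-closed class is equivalent to condition 1.

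For part \textbf{ii}, with $\mc{S}=\mbf{s}^{-1}\mc{W}$, I would first note that $\mc{S}$ is saturated: since $\gamma_{\mc{W}}\mbf{s}$ inverts $\mc{S}$ it factors through $\gamma_{\mc{S}}$ as some $\bar{\mbf{s}}$ with $\bar{\mbf{s}}\gamma_{\mc{S}}=\gamma_{\mc{W}}\mbf{s}$, so if $\gamma_{\mc{S}}(f)$ is invertible then so is $\gamma_{\mc{W}}(\mbf{s}(f))$, whence $\mbf{s}(f)\in\mc{W}$ and $f\in\mc{S}$. Condition 1 holds because the simple functor of a simplicial descent category sends simplicial homotopy equivalences to weak equivalences: feeding the external cylinder $Z_{n,m}=\coprod_{\Dl[1]_m}X_n$ through the isomorphism $\mu:\mbf{s}\mrm{D}\cong\mbf{s}\mbf{s}$ of (S3) reduces $\mbf{s}(d_i^X)$ to the levelwise maps $\mbf{s}(d_i):\mbf{s}c(X_n)\rightarrow\mbf{s}(c(X_n)\otimes\Dl[1])$, so that, exactly as in part \textbf{i}, (S5) inverts the cylinder projection and forces $\mbf{s}(d_0^X)=\mbf{s}(d_1^X)$ in $\mc{C}[\mc{W}^{-1}]$. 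Condition 2 follows from (S3): given $F$ with $F_{n,\cdot}\in\mc{S}$, the map $\mbf{s}\mbf{s}(F)=\mbf{s}(n\mapsto\mbf{s}(F_{n,\cdot}))$ is the image by $\mbf{s}$ of a levelwise weak equivalence, hence in $\mc{W}$ by (S2), and transporting along $\mu$ and using saturation gives $\mbf{s}\mrm{D}(F)\in\mc{W}$, i.e. $\mrm{D}(F)\in\mc{S}$; the transposed case again uses $\mrm{D}(F)=\mrm{D}(F^{t})$. Closure under finite coproducts follows from (S1): the canonical map $\mbf{s}(X)\amalg\mbf{s}(Y)\rightarrow\mbf{s}(X\amalg Y)$ being a weak equivalence, closure of $\mc{W}$ under finite coproducts and two-out-of-three give $\mbf{s}(f\amalg g)\in\mc{W}$ from $\mbf{s}(f),\mbf{s}(g)\in\mc{W}$; when $\mc{W}$ is closed by arbitrary coproducts the same reasoning applies once $\mbf{s}$ is known to commute with arbitrary coproducts up to weak equivalence, the corresponding property of a simple functor over a category closed by coproducts.

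The technical heart of the argument is the cylinder analysis underlying condition 1. In part \textbf{i} the subtle point is the recognition that, because the objects of the ambient category are themselves simplicial objects, the single axiom (S5) already inverts every cylinder projection $A\otimes\Dl[1]\rightarrow A$ and not merely those of constant objects; the identifications $\mrm{D}(c(A))=A$ and $\mrm{D}(c(A)\otimes\Dl[1])=A\otimes\Dl[1]$ are what make this work. In part \textbf{ii} the analogous statement, that $\mbf{s}$ sends simplicial homotopy equivalences to weak equivalences, is the genuine descent-category lemma and requires combining (S3), (S4) and (S5) through the external cylinder rather than a bare homotopy; this, together with upgrading (S1) from finite to arbitrary coproducts in the last clause, is where I expect the real work to lie.
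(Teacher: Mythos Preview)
Your proposal is correct and essentially reconstructs the argument that the paper defers to \cite[Theorem 4.2]{R}; the paper's own proof simply cites that reference for everything except the final clause about arbitrary coproducts, which it obtains from Lemma \ref{SimpleCoprod}. Your last paragraph correctly identifies this as the one step requiring extra input---that $\mbf{s}$ preserves arbitrary coproducts up to weak equivalence---and that is precisely what Lemma \ref{SimpleCoprod} supplies; your cylinder reduction for condition 1 in part \textit{ii} also coincides with the argument the paper spells out in the proof of Proposition \ref{simplif{i}cacionAxiomas}.
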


\begin{proof} The last statement of \textit{ii} follows from Lemma \ref{SimpleCoprod}. The remaining assertions of the proposition are proved in 
\cite[Theorem 4.2]{R}.
\end{proof}

\begin{lema}\label{SimpleCoprod}
Let $(\mc{C},\mc{W})$ be a relative category closed by coproducts, and endowed with a simplicial descent structure. Then the simple functor $\mbf{s}$ preserves 
all small coproducts up to weak equivalence. That is, given a family $\{X^\alpha\}_{\alpha\in\Lambda}$ of simplicial objects, the following natural map is a weak equivalence
$$ \coprod_\alpha \mbf{s}(X^\alpha) \longrightarrow \mbf{s} (\coprod_\alpha X^\alpha) \ .$$
\end{lema}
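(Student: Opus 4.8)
The plan is to settle first the case of constant simplicial objects, where axiom \textbf{(S4)} does all the work, and then to reduce the general statement to it; the genuinely delicate point will be the passage from finite to arbitrary coproducts. For a family $\{c(A^\alpha)\}_\alpha$ with $A^\alpha$ objects of $\mc{C}$, axiom \textbf{(S4)} provides weak equivalences $\lambda_{A^\alpha}:\mbf{s}c(A^\alpha)\rightarrow A^\alpha$ and $\lambda_{\coprod_\alpha A^\alpha}:\mbf{s}c(\coprod_\alpha A^\alpha)\rightarrow\coprod_\alpha A^\alpha$. By naturality of $\lambda$ along the coproduct inclusions $A^\alpha\rightarrow\coprod_\alpha A^\alpha$, the triangle relating the comparison map $\coprod_\alpha\mbf{s}c(A^\alpha)\rightarrow\mbf{s}c(\coprod_\alpha A^\alpha)$, the map $\lambda_{\coprod_\alpha A^\alpha}$ and the coproduct $\coprod_\alpha\lambda_{A^\alpha}$ commutes in $\mc{C}[\mc{W}^{-1}]$. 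Since $\mc{W}$ is closed by coproducts, $\coprod_\alpha\lambda_{A^\alpha}$ is a weak equivalence, and as $\lambda_{\coprod_\alpha A^\alpha}$ is one too, two-out-of-three together with saturation of $\mc{W}$ forces the comparison map into $\mc{W}$. Applied degreewise, the same computation shows that $\coprod_\alpha(n\mapsto\mbf{s}c(X^\alpha_n))\rightarrow(n\mapsto\mbf{s}c(\coprod_\alpha X^\alpha_n))$ is a pointwise weak equivalence, hence is carried by $\mbf{s}$ to a weak equivalence by \textbf{(S2)}; and through axiom \textbf{(S3)} applied to $X\times\Dl$, which identifies $\mbf{s}(X)$ naturally with $\mbf{s}(n\mapsto\mbf{s}c(X_n))$, this constant case feeds back into the simplicial level.

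Next I would reduce the general statement to a question about infinite coproducts along coprojections. Writing $X=\coprod_\alpha X^\alpha$, axiom \textbf{(S1)} already gives weak equivalences $\coprod_{\alpha\in F}\mbf{s}(X^\alpha)\rightarrow\mbf{s}(\coprod_{\alpha\in F}X^\alpha)$ for every finite $F\subseteq\Lambda$, compatibly as $F$ grows. Since $\coprod_\alpha X^\alpha=\mtt{colim}_F\coprod_{\alpha\in F}X^\alpha$ is a filtered colimit whose transition maps are termwise coprojections (in each degree they are canonical inclusions into a larger coproduct), and since this colimit is computed degreewise and hence exists, it would then suffice to prove that $\mbf{s}$ carries this filtered colimit to the corresponding colimit up to weak equivalence, namely that $\mtt{colim}_F\,\mbf{s}(\coprod_{\alpha\in F}X^\alpha)\rightarrow\mbf{s}(\coprod_\alpha X^\alpha)$ lies in $\mc{W}$.

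This last step is the main obstacle: the axioms \textbf{(S1)}--\textbf{(S5)} only regulate the single functor $\mbf{s}$ on finite coproducts and on diagonals, so controlling its interaction with an a priori transfinite colimit requires extra input, and one must be careful not to invoke circularly that $\mc{S}=\mbf{s}^{-1}\mc{W}$ is closed by arbitrary coproducts, which is exactly what this lemma is used to establish. To resolve it I would use that, by Proposition \ref{DlClosedDescenso}, $\mc{S}=\mbf{s}^{-1}\mc{W}$ is $\Dl$-closed and closed by finite coproducts, so that $(\simp\mc{C},\mc{S})$ is a Brown category of cofibrant objects with the termwise coprojections as cofibrations (Proposition \ref{DlClBrown}), and then run the decalage telescope underlying Lemma \ref{exseqABC} directly at the level of $\mbf{s}$. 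The observation that breaks the apparent circularity is that a coproduct of simplicial homotopy equivalences is again a simplicial homotopy equivalence, because $-\otimes\Dl[1]$ is formed degreewise and thus commutes with coproducts, while $\mbf{s}$ sends simplicial homotopy equivalences to weak equivalences by the $\Dl$-closedness of $\mc{S}$; consequently the only infinite coproducts entering the telescope computation are coproducts of simplicial homotopy equivalences, whose images under $\mbf{s}$ are weak equivalences without any prior coproduct-closure hypothesis on $\mc{S}$. Combining these weak equivalences with the finite comparison of \textbf{(S1)} and the constant case of the first paragraph then yields that $\coprod_\alpha\mbf{s}(X^\alpha)\rightarrow\mbf{s}(\coprod_\alpha X^\alpha)$ is a weak equivalence, completing the argument.
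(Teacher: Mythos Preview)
Your argument has genuine gaps, and the paper's route is both much shorter and avoids them. The paper simply invokes Theorem~\ref{equivCat}: the simple functor $\mbf{s}:\simp\mc{C}[\mc{W}^{-1}]\rightarrow\mc{C}[\mc{W}^{-1}]$ is a left adjoint (right adjoint $c$), hence preserves every colimit that exists. Because $\mc{W}$ is closed under small coproducts, coproducts in $\mc{C}[\mc{W}^{-1}]$ and $\simp\mc{C}[\mc{W}^{-1}]$ exist and are computed before localizing; so the comparison map is an isomorphism in $\mc{C}[\mc{W}^{-1}]$, hence lies in $\mc{W}$ by saturation. There is no circularity here: Theorem~\ref{equivCat} is proved from the descent axioms and finite coproducts alone (see the proof of Proposition~\ref{simplif{i}cacionAxiomas}), not from the coproduct-closure of $\mc{S}$ that this lemma is used to establish.

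In your first paragraph the constant case is fine (modulo the fact that $\lambda$ is a zigzag, not a single weak equivalence), but the bootstrap via \textbf{(S3)} is circular: the identification $\mbf{s}(X)\simeq\mbf{s}(n\mapsto\mbf{s}c(X_n))$ rewrites the desired map as $\coprod_\alpha\mbf{s}(Y^\alpha)\rightarrow\mbf{s}(\coprod_\alpha Y^\alpha)$ for $Y^\alpha=(n\mapsto\mbf{s}c(X^\alpha_n))$, and you are back to the lemma for the $Y^\alpha$. Your filtered-colimit approach in the later paragraphs also does not close. The intermediate object $\mtt{colim}_F\,\mbf{s}(\coprod_{\alpha\in F}X^\alpha)$ is a filtered colimit in $\mc{C}$ which need not exist, since only coproducts are assumed; Lemma~\ref{exseqABC} concerns countable sequences, not the directed poset of finite subsets of a possibly uncountable $\Lambda$; and its proof uses Lemma~\ref{hocolimRelative} (that $\mtt{hocolim}^{\mrm{V}}_I$ sends pointwise-$\mc{S}$ maps to $\mc{S}$), which invokes exactly the coproduct-closure of $\mc{S}$ you are trying to sidestep. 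Your observation that a coproduct of simplicial homotopy equivalences is again one is correct and does rescue one step of that argument, but not the preceding one.
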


\begin{proof}
The simple functor $\mbf{s} : \simp\mc{C} [\mc{W}^{-1}]\rightarrow \mc{C} [\mc{W}^{-1}]$ is a left adjoint by Theorem \ref{equivCat}, so it commutes with colimits. Although colimits in localized categories rarely exist, the case of coproducts is an exception to this rule. Indeed, since by assumption the
class $\mc{W}$ is closed by small coproducts, it follows that $\mc{C}[\mc{W}^{-1}]$ is so, and that the localization functor $\mc{C}\rightarrow \mc{C}[\mc{W}^{-1}]$ preserves coproducts. The same applies
to $\simp\mc{C}[\mc{W}^{-1}]$. Therefore,
$\coprod_\alpha \mbf{s}(X^\alpha) \longrightarrow \mbf{s} (\coprod_\alpha X^\alpha)$ is an isomorphism in $\mc{C}[\mc{W}^{-1}]$ since $\mbf{s}$ commutes with coproducts there. But, being $\mc{W}$ saturated, this implies the result.
\end{proof}

The following result is the key point to transfer the properties of $\Dl$-closed classes to general simplicial descent categories.

\begin{thm}\label{equivCat} Let $(\mbf{s},\mu,\lambda)$ be a simplicial descent structure on $(\mc{C},\mc{W})$. Then, the following properties hold:\\
\textbf{i.} The simple functor $\mbf{s}:\simp\mc{C}\rightarrow \mc{C}$ is a realizable homotopy colimit.\\
\textbf{ii.} Set $\mc{S}=\mbf{s}^{-1}\mc{W}$. Then, $(\mbf{s},c)$ is a relative adjoint equivalence between $(\simp\mc{C},\mc{S})$ and $(\mc{C},\mc{W})$.
\end{thm}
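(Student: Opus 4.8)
The plan is to realize $(\mbf{s},c)$ as a relative adjunction whose counit is $\lambda$ and whose unit is manufactured from Illusie's d\'ecalage, and then to upgrade it to a relative adjoint equivalence once $\simp\mc{C}$ is localized at $\mc{S}=\mbf{s}^{-1}\mc{W}$. The basic tool is Proposition \ref{DlClosedDescenso}: the class $\mc{S}$ is $\Dl$-closed, so it contains all simplicial homotopy equivalences; equivalently $\mbf{s}$ carries simplicial homotopy equivalences to weak equivalences and simplicially homotopic maps to maps that agree in $\mc{C}[\mc{W}^{-1}]$. For a bisimplicial object $Z$ I write $\mbf{s}^{II}(Z)$ for the simplicial object $n\mapsto\mbf{s}(Z_{n,\bullet})$, so that $\mbf{s}\mbf{s}(Z)=\mbf{s}(\mbf{s}^{II}(Z))$.

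I take $\alpha=\lambda:\mbf{s}c\dashrightarrow 1_{\mc{C}}$ as counit, which is an isomorphism by (S4). For the unit $\beta:1_{\simp\mc{C}}\dashrightarrow c\mbf{s}$ I exploit the two augmentations $\Lambda^{I}:dec(X)\to\Dl\times X$ and $\Lambda^{II}:dec(X)\to X\times\Dl$ of the d\'ecalage. Applying $\mbf{s}^{II}$ to $\Lambda^{I}$ yields a map into $\mbf{s}^{II}(\Dl\times X)=c\mbf{s}(X)$, while applying it to $\Lambda^{II}$ yields a map into the simplicial object $n\mapsto\mbf{s}c(X_n)$, which maps to $X$ through $\lambda$. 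As each row $\Lambda^{II}_{n,\bullet}$ is an augmentation with an extra degeneracy, hence a simplicial homotopy equivalence, the composite $\mbf{s}^{II}(dec\,X)\to X$ is a pointwise weak equivalence. I thus set $\beta_X$ to be the zigzag $X\xleftarrow{\ \sim\ }\mbf{s}^{II}(dec\,X)\xrightarrow{\ \mbf{s}^{II}(\Lambda^{I})\ }c\mbf{s}(X)$; it is natural and relative because $dec$, $\Lambda^{I}$, $\Lambda^{II}$, $\mbf{s}^{II}$ and $\lambda$ all are.

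The heart of the proof is the two triangle identities, and the first is where the real work lies. To see that $(\lambda\compc\mbf{s})\comp(\mbf{s}\compc\beta)=1_{\mbf{s}}$, I apply $\mbf{s}$ to the defining zigzag of $\beta_X$ and transport along the isomorphism $\mu_{dec\,X}:\mbf{s}\mrm{D}(dec\,X)\xrightarrow{\ \sim\ }\mbf{s}\mbf{s}(dec\,X)$. Naturality of $\mu$ on $\Lambda^{I}$ and $\Lambda^{II}$, together with the two compatibility relations of (\ref{compatibLambdaMuEquac})---which state precisely that $\lambda_{\mbf{s}X}\mu_{\Dl\times X}$ and $\mbf{s}(\lambda_X)\mu_{X\times\Dl}$ are identities---identify the forward leg with $\mbf{s}\mrm{D}(\Lambda^{I})$ and the backward leg with $\mbf{s}\mrm{D}(\Lambda^{II})$. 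By Proposition \ref{IlDec} these two diagonals are simplicial homotopy equivalences which are moreover simplicially homotopic, so after $\mbf{s}$ they coincide, as equal weak equivalences, in $\mc{C}[\mc{W}^{-1}]$; hence the transported zigzag is the identity. The second identity $(c\compc\lambda)\comp(\beta\compc c)=1_{c}$ is comparatively easy, since $dec(c(A))$ is the constant bisimplicial object on $A$: there $\mbf{s}^{II}(\Lambda^{I})$ is an identity and the backward leg is $c(\lambda_A)$, so $\beta_{c(A)}$ is inverse to $c(\lambda_A)$ and the composite is the identity. The delicate point throughout is that these identities must hold between relative natural transformations, not merely objectwise; this is guaranteed by the naturality in $X$ of Illusie's homotopies recorded after Proposition \ref{IlDec}.

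Finally, part ii. Since $\mc{W}\subseteq\mc{S}$ by (S2), the adjunction of part i is equally an adjunction between $(\simp\mc{C},\mc{S})$ and $(\mc{C},\mc{W})$, both $\mbf{s}$ and $c$ being relative for these structures (for $c$ because $\mbf{s}c\cong_{\lambda}1_{\mc{C}}$). The counit $\lambda$ is already an isomorphism by (S4). For the unit, the backward leg of $\beta_X$ is a pointwise weak equivalence, hence in $\mc{S}$, so it remains to check that the forward leg $\mbf{s}^{II}(\Lambda^{I})$ lies in $\mc{S}$, i.e. that $\mbf{s}(\mbf{s}^{II}(\Lambda^{I}))=\mbf{s}\mbf{s}(\Lambda^{I})\in\mc{W}$; via $\mu$ this map is identified with $\mbf{s}\mrm{D}(\Lambda^{I})$, which is a weak equivalence because $\mrm{D}(\Lambda^{I})$ is a simplicial homotopy equivalence. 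Therefore $\beta$ is an isomorphism of $Fun(\simp\mc{C},\simp\mc{C})[\mc{S}^{-1}]$ and $(\mbf{s},c)$ is a relative adjoint equivalence between $(\simp\mc{C},\mc{S})$ and $(\mc{C},\mc{W})$.
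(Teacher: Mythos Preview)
Your proof is correct and follows the same architecture as the paper: both construct the unit $\beta$ (the paper calls it $\Phi$) from Illusie's d\'ecalage, via the zigzag $X\xleftarrow{\sim}\mbf{s}^{II}(dec\,X)\to c\mbf{s}(X)$ obtained by applying $\mbf{s}$ in the second index to the two augmentations $\Lambda^{I},\Lambda^{II}$, and both then verify the triangle identities and the $\mc{S}$-invertibility of $\beta$.

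The only noteworthy divergence is in how the two triangle identities are handled. You prove the ``hard'' identity $(\lambda\compc\mbf{s})(\mbf{s}\compc\beta)=1_{\mbf{s}}$ directly, transporting along $\mu_{dec\,X}$ and invoking the compatibility relations of (\ref{compatibLambdaMuEquac}) to reduce to the equality $\mbf{s}\mrm{D}(\Lambda^{I})=\mbf{s}\mrm{D}(\Lambda^{II})$, which follows from Proposition~\ref{IlDec}. The paper instead checks only the ``easy'' identity $(c\compc\lambda)(\Phi\compc c)=1_c$ (exactly your observation that on constant objects the zigzag degenerates) and then derives the other identity \emph{formally} from the fact that $\lambda$ is an isomorphism, without ever appealing to $\mu$. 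Your route has the virtue of making the role of axiom (S3) and of the $\mu$--$\lambda$ compatibility completely explicit; the paper's route is more economical and, crucially, is designed to go through under the weaker hypothesis (S3)$'$ of Proposition~\ref{simplif{i}cacionAxiomas}, where $\mu$ is not yet available and must be constructed afterwards.
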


The proof is the same as the one of \cite[Theorem 5.1]{R}. In any case, we will explicitly check \textit{ii} in the proof of next proposition, which allows to weaken
axiom (S3). This weakening will only be used in section \ref{sectionBK}, where we see that the Bousf{i}eld-Kan simplicial homotopy colimit  gives 
a simplicial descent structure  on the subcategory of cof{i}brant objects of any model category.

\begin{prop}\label{simplif{i}cacionAxiomas} Let $(\mc{C},\mc{W})$ be a relative category closed by f{i}nite coproducts. Assume that there exists $(\mbf{s},\lambda)$ as in 
def{i}nition \ref{Def{i}Descenso} satisfying axioms \emph{(S1)}, \emph{(S2)}, \emph{(S4)} and \emph{(S5)}, and
\begin{quote}
 \emph{(S3)'} Given a bisimplicial morphism $F$ of $\simp\simp\mc{C}$, it holds that $\mbf{s}(n\rightarrow \mbf{s}(m\rightarrow F_{n,m}))$ is a weak equivalence
if and only if $\mbf{s}(n\rightarrow F_{n,n})$ is so.
\end{quote}
Then $(\mc{C},\mc{W})$ is a simplicial descent category. More precisely, there exists $\mu:\mbf{s}\mrm{D}\dashrightarrow\mbf{s}\mathbf{s}$ such that 
$(\mbf{s},\mu,\lambda)$ is a simplicial descent structure on $(\mc{C},\mc{W})$.
\end{prop}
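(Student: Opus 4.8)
The plan is to deduce $\mu$ from the relative adjoint equivalence of part \emph{ii} of Theorem \ref{equivCat}, which I re-establish directly from the weaker hypotheses, and then to read $\mu$ off its unit. Writing $\mc{S}:=\mbf{s}^{-1}\mc{W}$, I would proceed in three stages: (a) show that $\mc{S}$ is $\Dl$-closed and closed by f{i}nite coproducts; (b) prove that $(\mbf{s},c)$ is a relative adjoint equivalence between $(\simp\mc{C},\mc{S})$ and $(\mc{C},\mc{W})$, with counit $\lambda$ and some unit $\eta:1_{\simp\mc{C}}\dashrightarrow c\mbf{s}$; (c) define $\mu$ from $\eta$ and check the compatibility (\ref{compatibLambdaMu}). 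The point of the weakening is that the full force of (S3) is used only to produce $\mu$, whereas $\Dl$-closedness requires merely the detection property (S3)$'$.

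For stage (a) I would f{i}rst treat the diagonal axiom: given a bisimplicial morphism $F$ with $F_{n,\bullet}\in\mc{S}$ for all $n$, the simplicial morphism $n\mapsto\mbf{s}(F_{n,\bullet})$ is a pointwise weak equivalence, so $\mbf{s}\mbf{s}(F)\in\mc{W}$ by (S2); then (S3)$'$ forces $\mbf{s}\mrm{D}(F)\in\mc{W}$, i.e. $\mrm{D}(F)\in\mc{S}$ (the clause with $F_{\bullet,n}\in\mc{S}$ follows by transposing $F$). Note no coproducts enter here. Next I would verify that $\mbf{s}$ inverts simplicial homotopy equivalences. First I upgrade (S5) from objects to an arbitrary simplicial $X$: the bisimplicial morphism whose $n$-th row is $d_0^{X_n}:c(X_n)\to X_n\otimes\Dl[1]$ has all rows in $\mc{S}$ by (S5), and its diagonal is $d_0^X:X\to X\otimes\Dl[1]$, which therefore lies in $\mc{S}$ by the diagonal axiom just proved. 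The usual argument with the projection $X\otimes\Dl[1]\to X$ then shows that simplicially homotopic maps have equal image under $\mbf{s}$, hence $\mbf{s}$ inverts simplicial homotopy equivalences; since $\mc{W}$ is saturated this is precisely axiom $1$ of $\Dl$-closedness. Closure of $\mc{S}$ under f{i}nite coproducts follows from (S1), so f{i}nite coproducts suff{i}ce throughout, matching the hypotheses.

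Stage (b) is the main obstacle, since it is the only genuinely computational part. The counit is $\lambda$, an isomorphism by (S4); the work is to construct the unit $\eta:1_{\simp\mc{C}}\dashrightarrow c\mbf{s}$, to check $\mbf{s}(\eta_W)\in\mc{W}$ for every simplicial $W$, and to verify the two triangle identities. I would carry this out exactly as in \cite[Theorem 5.1]{R}, the required transformations being produced from Illusie's d\'ecalage (Proposition \ref{IlDec}) and the homotopy invariance of $\mbf{s}$ established in stage (a) out of (S5). This is the step that the weakening of (S3) is designed to leave untouched.

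Finally, stage (c). For a bisimplicial object $Z$ the units $\eta_{Z_{n,\bullet}}:Z_{n,\bullet}\dashrightarrow c\mbf{s}(Z_{n,\bullet})$ assemble, by naturality of $\eta$, into a relative natural transformation of bisimplicial objects $\eta^Z:Z\dashrightarrow W$, where $W$ is constant in the second direction with $W_{n,m}=\mbf{s}(Z_{n,\bullet})$; its $n$-th row is $\eta_{Z_{n,\bullet}}\in\mc{S}$. Since $\mc{S}$ is $\Dl$-closed, $\mrm{D}(\eta^Z)\in\mc{S}$, and as $\mrm{D}(W)=(n\mapsto\mbf{s}(Z_{n,\bullet}))$ we obtain $\mu_Z:=\mbf{s}\mrm{D}(\eta^Z):\mbf{s}\mrm{D}(Z)\dashrightarrow\mbf{s}\mbf{s}(Z)$ lying in $\mc{W}$, hence an isomorphism of $Fun(\simp\simp\mc{C},\mc{C})[\mc{W}^{-1}]$; this is (S3). (That $\mu$ is well def{i}ned as a relative natural transformation follows from naturality together with the $2$-functoriality of Lemmas \ref{exp2func} and \ref{loc2func}.) For the compatibility (\ref{compatibLambdaMuEquac}) I would compute the two relevant diagonals: $\mrm{D}(\eta^{\Dl\times X})=\eta_X$, so $\mu_{\Dl\times X}=\mbf{s}(\eta_X)$ and the f{i}rst composite is $\lambda_{\mbf{s}X}\circ\mbf{s}(\eta_X)$; while $\mrm{D}(\eta^{X\times\Dl})$ is, in degree $n$, the component of $\eta_{c(X_n)}$, so the second composite is $\mbf{s}\big(n\mapsto\lambda_{X_n}\circ(\eta_{c(X_n)})_n\big)$. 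Both are the identity by the two triangle identities of the adjunction $(\mbf{s},c,\eta,\lambda)$, read off in the appropriate simplicial degree. Hence $(\mbf{s},\mu,\lambda)$ is a simplicial descent structure, as desired.
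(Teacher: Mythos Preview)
Your proposal is correct and follows essentially the same route as the paper's proof: both first upgrade (S5) to arbitrary simplicial objects via the bisimplicial decomposition $X\otimes\Dl[1]=\mrm{D}(X\boxtimes\Dl[1])$ together with (S2) and (S3)$'$, then construct the unit $\eta$ (the paper's $\Phi$) from Illusie's d\'ecalage, verify the triangle identities, and finally set $\mu=\mbf{s}\mrm{D}(\simp\eta)$ with compatibility read off from those same triangle identities. The only cosmetic difference is that you package stage~(a) as ``$\mc{S}$ is $\Dl$-closed'' whereas the paper extracts just the consequences it needs inline; the content is identical.
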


\begin{proof} Set $\mc{S}=\mbf{s}^{-1}\mc{W}$. Let us see that axioms (S1), (S2), (S3)', (S4) and (S5) still guarantee that there exists an isomorphism 
$\Phi:1_{\simp\mc{C}}\rightarrow c\mbf{s}$ of $Fun(\simp\mc{C},\simp\mc{C})[\mc{S}^{-1}]$ such that $(\mbf{s},c,\lambda,\Phi)$ is an adjoint relative equivalence 
between the relative categories  $(\simp\mc{C},\mc{S})$ and $(\mc{C},\mc{W})$. The argument is just a slight variant of the one in \cite[Theorem 5.1]{R}.\\ 
We claim that (S2), (S3)' and (S5) imply that given a simplicial homotopy equivalence $e:X\rightarrow Y$ in 
$\simp\mc{C}$ then $\mbf{s}(e)\in\mc{W}$. To see this, it suf{f}{i}ces to check that the image under the simple functor
of the simplicial morphism $d_0^X : X\rightarrow X \otimes \Dl[1]$ is a weak equivalence for each simplicial object $X$. Note that $X\otimes \Dl[1]$ is the diagonal
of the bisimplicial object $X\boxtimes \Dl[1]$ given in bidegree $n,m$ by $(X\boxtimes \Dl[1])_{n,m} = \amalg_{\Dl_m [1]} X_n$. Also $d_0^X$ is the diagonal of 
the bisimplicial map $F:X\times\Dl\rightarrow X\boxtimes \Dl[1]$ such that $F_{n,m} : X_n\rightarrow \amalg_{\Dl_m[1]} X_n$ is induced by $(d_0)_m:\Dl_m[0] = \ast 
\rightarrow \Dl_m[1]$. For a f{i}xed $n\geq 0$, $F_{n,\cdot} $ agrees with $d_0^{X_n}:X_n\rightarrow X_n\otimes \Dl[1]$, so $\mbf{s}(m\rightarrow F_{n,m})$ is in 
$\mc{W}$ by (S5). By (S2) we deduce that 
$\mbf{s}(n\rightarrow \mbf{s}(m\rightarrow F_{n,m}))$ is in $\mc{W}$ as well, so by (S3)' we conclude that $\mbf{s}(n\rightarrow F_{n,n}) = \mbf{s}(d_0^X)$
is in $\mc{W}$ as required.
Therefore the claim holds, and in particular the simple of an augmentation with an extra degeneracy is a weak equivalence.\\
Given an object $X$ in $\simp\mc{C}$, consider its associated Illusie's bisimplicial decalage $dec(X)$ with $dec(X)_{n,m}=X_{n+m+1}$. 
It comes equipped with the augmentations $\Lambda^I: dec(X) \rightarrow \Dl\times X$ and $\Lambda^{II}: dec(X) \rightarrow X\times \Dl$. They are such that 
$\Lambda^{II}$ has an extra degeneracy by rows and  $\Lambda^I$ has an extra degeneracy by columns. Therefore, by previous claim and (S3)' we deduce that applying $\mbf{s}$ with 
respect to the second index produces the zigzag 
$$ \{\mbf{s}c X_k\}_k \longleftarrow \mbf{s}(m\rightarrow dec(X)_{\cdot,m}) \longrightarrow c\mbf{s}(X)$$   
that is pointwise in $\mc{S}$. By def{i}nition of these augmentations, this zigzag is the identity on $c\mbf{s}c(A)$ in case $X$ is the constant simplicial object equal to $A$ in each degree.
We def{i}ne $\Phi : 1_{\simp\mc{C}}\dashrightarrow c\mbf{s}$ on $X$ as the composition of the above zigzag with the inverse of $\lambda_{X_k} : \{\mbf{s}c X_k\}_k\dashrightarrow \{X_k\}_k$. 
This is possible because the class $\mc{W}$ of pointwise weak equivalences of $\simp\mc{C}$ is contained in $\mc{S}$ by (S2). 
It follows that $\{\lambda_{X_k}\}_k$ is an isomorphism in $Fun(\simp\mc{C},\simp\mc{C})[\mc{S}^{-1}]$. Therefore $\Phi:1_{\simp\mc{C}}\dashrightarrow c\mbf{s}$ is an isomorphism of $Fun(\simp\mc{C},\simp\mc{C})[\mc{S}^{-1}]$ such that the composition 
$c\lambda \, \Phi_c : c\dashrightarrow c$ is the identity in $Fun(\mc{C},\simp\mc{C})[\mc{S}^{-1}]$. Finally, note that this formally implies the other triangle 
identity: $\lambda_{\mbf{s}}\, \mbf{s}\Phi:\mbf{s}\dashrightarrow \mbf{s}$ is the identity in $Fun(\simp\mc{C},\mc{C})[\mc{S}^{-1}]$.  
Indeed, since $\lambda$ is an isomorphism and a natural transformation then $\mbf{s}c  \lambda_{\mbf{s}} = \lambda_{\mbf{s}c\mbf{s}}$ because 
$\lambda_{\mbf{s}}\, \mbf{s}c\lambda_{\mbf{s}} =  \lambda_{\mbf{s}} \lambda_{\mbf{s}c\mbf{s}}$. Then $c\lambda \, \Phi_c = 1_{c}$ implies 
$\mbf{s}c\lambda_{\mbf{s}} \, \mbf{s}\Phi_{c\mbf{s}} = 1_{\mbf{s}c\mbf{s}}$, so $\lambda_{\mbf{s}c\mbf{s}} \, \mbf{s}\Phi_{c\mbf{s}} = 1_{\mbf{s}c\mbf{s}}$. In 
view of the commutative diagram
$$\xymatrix@M=4pt@H=4pt@C=25pt{ \mbf{s} \ar[r]^{\mbf{s}\Phi} \ar[d]_{\mbf{s}\Phi} & \mbf{s}c\mbf{s}  \ar[r]^{\lambda_{\mbf{s}}}  \ar[d]^{\mbf{s}c\mbf{s}\Phi} & \mbf{s}  \ar[d]^{\mbf{s}\Phi}\\
\mbf{s}c\mbf{s} \ar[r]^{\mbf{s}\Phi_{c\mbf{s}}}  & \mbf{s}c\mbf{s}c\mbf{s}  \ar[r]^{\lambda_{\mbf{s}c\mbf{s}}} & \mbf{s} c\mbf{s}
}$$
we deduce $\lambda_{\mbf{s}}\, \mbf{s}\Phi=1_{\mbf{s}}$ as claimed.\\
We are now ready to construct $\mu:\mbf{s}\mrm{D}\dashrightarrow \mbf{s}\mbf{s}$ compatible with $\lambda$. Given a bisimplicial object $Z$, performing the previous 
construction of $\Phi$ to $Z$ with respect to the second index gives $(\simp\Phi)_Z = \{(\Phi_{Z_{n,\cdot}})_m\}_{n,m} : Z\dashrightarrow (\simp (c\mbf{s}))(Z) = \{\mbf{s}(k\rightarrow Z_{n,k})\}_{n,m}$.   
This produces the isomorphism $\simp\Phi : 1_{\simp\simp\mc{C}}\dashrightarrow \simp (c\mbf{s})$ in $Fun(\simp\simp\mc{C},\simp\simp\mc{C})[\simp\mc{S}^{-1}]$, where
$\simp\mc{S}$ is the class of $\simp\simp\mc{C}$ formed by those morphisms $F$ such that $\mbf{s}(k\rightarrow F_{n,k})\in\mc{S}$ for each $n\geq 0$. 
By (S3)' we have that $\mrm{D}(\simp\mc{S})\subset \mc{S}$. It follows that applying $\mrm{D}$ to $\simp\Phi$ we obtain the isomorphism
$\mrm{D}(\simp\Phi) : \mrm{D}\dashrightarrow \simp\mbf{s}$ of $Fun(\simp\simp\mc{C},\simp\mc{C})[\mc{S}^{-1}]$, so applying $\mbf{s}$ we get the desired isomorphism
$\mu = \mbf{s}\mrm{D}(\simp\Phi) : \mbf{s} \mrm{D}\dashrightarrow \mbf{s}\mbf{s}$ of $Fun(\simp\simp\mc{C},\mc{C})[\mc{W}^{-1}]$. Finally, given a simplicial object $X$
the compositions (\ref{compatibLambdaMuEquac}) are obtained by triangle identities satisf{i}ed by $\Phi$ and $\lambda$, and therefore they are equal to the identity.
\end{proof}

\begin{obs} Previous proposition comes to say that in the notion of simplicial descent category the axiom (S3) may be replaced by the weaker axiom 
(S3)'. Also (S1) could be weakened, indeed the preservation of f{i}nite coproducts by the simple functor 
$\mbf{s}$ can be deduced from the remaining axioms. We will not use this fact in the present paper, however.
\end{obs}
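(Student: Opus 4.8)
The plan is to put $\mc{S}=\mbf{s}^{-1}\mc{W}$ and to extract $\mu$ from a single comparison isomorphism $\Phi:1_{\simp\mc{C}}\dashrightarrow c\mbf{s}$ built out of Illusie's decalage. The first step is to observe that (S2), (S3)' and (S5) already force $\mbf{s}$ to carry simplicial homotopy equivalences into $\mc{W}$. Since any simplicial homotopy factors through the cylinder inclusions $d_0^X,d_1^X:X\rightarrow X\otimes\Dl[1]$, which share a common section, it is enough to show $\mbf{s}(d_0^X)\in\mc{W}$ for every $X$. I would write $d_0^X$ as the diagonal of a bisimplicial morphism $F$ whose $n$-th row is the cylinder inclusion $d_0^{X_n}:X_n\rightarrow X_n\otimes\Dl[1]$. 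Axiom (S5) gives $\mbf{s}(m\rightarrow F_{n,m})=\mbf{s}(d_0^{X_n})\in\mc{W}$, then (S2) promotes this to $\mbf{s}(n\rightarrow\mbf{s}(m\rightarrow F_{n,m}))\in\mc{W}$, and (S3)' transfers the conclusion to the diagonal $\mbf{s}(n\rightarrow F_{n,n})=\mbf{s}(d_0^X)$. In particular the simple of any augmentation carrying an extra degeneracy lands in $\mc{W}$.

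Next I would construct $\Phi$. For a simplicial object $X$, the decalage $dec(X)$ carries the two augmentations $\Lambda^I:dec(X)\rightarrow\Dl\times X$ and $\Lambda^{II}:dec(X)\rightarrow X\times\Dl$ of Proposition \ref{IlDec}, one with an extra degeneracy by rows and the other by columns. Applying $\mbf{s}$ in the second simplicial index thus yields the zigzag $\{\mbf{s}cX_k\}_k\longleftarrow\mbf{s}(m\rightarrow dec(X)_{\cdot,m})\longrightarrow c\mbf{s}(X)$, which is pointwise in $\mc{S}$ by the previous step. Composing with the inverse of $\{\lambda_{X_k}\}_k$, which is an isomorphism in the localized functor category because $\mc{W}\subset\mc{S}$ by (S2), defines $\Phi:1_{\simp\mc{C}}\dashrightarrow c\mbf{s}$. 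I expect $(\mbf{s},c,\lambda,\Phi)$ to be a relative adjoint equivalence between $(\simp\mc{C},\mc{S})$ and $(\mc{C},\mc{W})$: the triangle identity $c\lambda\,\Phi_c=1_c$ follows from the explicit shape of the two augmentations on a constant object, and the second identity $\lambda_{\mbf{s}}\,\mbf{s}\Phi=1_{\mbf{s}}$ then comes for free by the usual formal argument, using naturality of $\lambda$ together with the square relating $\mbf{s}\Phi$, $\lambda_{\mbf{s}}$ and $\lambda_{\mbf{s}c\mbf{s}}$.

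With $\Phi$ available, the construction of $\mu$ is the heart of the argument. I would run the same decalage construction one index at a time on a bisimplicial object $Z$, obtaining $\simp\Phi:1_{\simp\simp\mc{C}}\dashrightarrow\simp(c\mbf{s})$, an isomorphism of $Fun(\simp\simp\mc{C},\simp\simp\mc{C})[\simp\mc{S}^{-1}]$, where $\simp\mc{S}$ consists of those bisimplicial morphisms $F$ with $\mbf{s}(k\rightarrow F_{n,k})\in\mc{S}$ for all $n$. This is exactly where (S3)' is indispensable: it yields the inclusion $\mrm{D}(\simp\mc{S})\subset\mc{S}$, so applying $\mrm{D}$ turns $\simp\Phi$ into an isomorphism $\mrm{D}(\simp\Phi):\mrm{D}\dashrightarrow\simp\mbf{s}$ of $Fun(\simp\simp\mc{C},\simp\mc{C})[\mc{S}^{-1}]$, and postcomposing with $\mbf{s}$ produces $\mu=\mbf{s}\mrm{D}(\simp\Phi):\mbf{s}\mrm{D}\dashrightarrow\mbf{s}\mbf{s}$ in $Fun(\simp\simp\mc{C},\mc{C})[\mc{W}^{-1}]$.

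Finally I would check the compatibility (S4), namely that the two composites of (\ref{compatibLambdaMuEquac}) are the identity; since $\mu$ is defined directly from $\simp\Phi$, these should follow formally from the triangle identities already established for $\Phi$ and $\lambda$. The step demanding most care, and the one I expect to be the main obstacle, is the interaction of the two simplicial directions: one must verify that performing the decalage construction in the second index is genuinely natural as the first index varies, so that $\simp\Phi$ is an honest natural transformation of bisimplicial objects, and that the weakened axiom (S3)' is strong enough to replace the full diagonal comparison of (S3). The naturality of the homotopies underlying Proposition \ref{IlDec}, recorded just after its statement, is precisely what lets this step go through.
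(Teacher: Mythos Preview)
Your proposal is correct and follows essentially the same route as the paper's proof of the preceding proposition: the reduction of simplicial homotopy equivalences via the bisimplicial description of $d_0^X$, the construction of $\Phi$ from the two decalage augmentations composed with $\lambda^{-1}$, the formal derivation of the second triangle identity from the first, and the definition of $\mu$ as $\mbf{s}\mrm{D}(\simp\Phi)$ using (S3)' to pass $\simp\mc{S}$ through the diagonal are all exactly as in the paper. The one place where you are slightly terse is in asserting that the decalage zigzag is pointwise in $\mc{S}$ ``by the previous step'': one of the two arrows needs (S3)' in addition to the extra-degeneracy argument, as the paper notes explicitly, but this does not affect the correctness of your outline.
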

\vspace{0.1cm}
\subsection{Simplicial descent structures provide realizable homotopy colimits.}\mbox{}\\[0.2cm]
\indent
In this subsection we prove that a simplicial descent structure produces, in the case of exact coproducts, realizable homotopy colimits
given by a formula following the pattern of Bousf{i}eld-Kan and Voevodsky. We will then work with a simplicial descent category $(\mc{C},\mc{W},\mbf{s})$ 
such that $(\mc{C},\mc{W})$ is closed by coproducts. In this case we will just say that $(\mc{C},\mc{W},\mbf{s})$ is a simplicial descent category closed 
by coproducts.

\begin{defi}\label{HocolimDesc} 
Let $(\mc{C},\mc{W},\mbf{s})$ be a simplicial descent category closed by coproducts.
Given a small category $I$, def{i}ne $\mtt{hocolim}_I : \mc{C}^I \rightarrow \mc{C}$ as the composition
$$\xymatrix@M=4pt@H=4pt@C=35pt{
 \mc{C}^I \ar[r]^-{\coprod^I} \ar[rd]_-{\mtt{hocolim}_I\, }&  \simp\mc{C} \ar[d]^-{\mbf{s}}\\
  & \mc{C}}$$
of the simple functor with the simplicial replacement.
If needed, we will write $\mtt{hocolim}_I^\mc{C}$ to emphasize the target category where we are taking homotopy colimits.
\end{defi}

\begin{obs}
Note that if $X:I\rightarrow\mc{C}$ is a diagram and $c:\mc{C}\rightarrow\simp\mc{C}$ is the constant functor then by def{i}nition
\begin{equation}\label{hocolims} \mtt{hocolim}_I X = \mbf{s} ( \mtt{hocolim}_I^{\mrm{V}} cX )\end{equation}
where $\mtt{hocolim}_I^{\mrm{V}}: \simp\mc{C}^I\rightarrow\simp\mc{C}$ is the homotopy colimit given in def{i}nition \ref{def{i}HolimDl}.
\end{obs}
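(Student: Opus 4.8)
The plan is to prove (\ref{hocolims}) by unwinding both definitions and observing that they produce the same object of $\mc{C}$. By Definition \ref{HocolimDesc} the left-hand side is $\mtt{hocolim}_I X = \mbf{s}(\amalg^I X)$, the simple functor applied to the simplicial replacement of $X$, while by Definition \ref{def{i}HolimDl} the right-hand side is $\mbf{s}(\mtt{hocolim}^{\mrm{V}}_I cX) = \mbf{s}(\mrm{D}(\amalg^I cX))$. Hence it suffices to exhibit an identification $\mrm{D}(\amalg^I cX) = \amalg^I X$ in $\simp\mc{C}$ and then apply $\mbf{s}$ to both sides.

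First I would compute the bisimplicial object $\amalg^I cX$. Since the simplicial replacement is built degreewise along the nerve, in bidegree $(n,m)$ it equals $\coprod_{i_0\to\cdots\to i_n} (cX)_{i_0,m} = \coprod_{i_0\to\cdots\to i_n} X_{i_0}$, where the last equality holds because the constant simplicial object $c(X_{i_0})$ is $X_{i_0}$ in every internal degree $m$. In particular $\amalg^I cX$ is independent of the second index, so it coincides with the bisimplicial object $(\amalg^I X)\times\Dl$ that is constant in the second direction. I would then check that the structure maps agree: in the internal direction every face and degeneracy map of $cX$ is an identity, matching the constant second direction of $(\amalg^I X)\times\Dl$; in the nerve direction the maps are exactly those of $\amalg^I X$, since $(cX)(i_0\to i_1) = c(X(i_0\to i_1))$ acts on the relevant summand by $X(i_0\to i_1)$ precisely as in the definition of the face map $d_0$ of $\amalg^I X$.

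Finally I would take the diagonal. For a bisimplicial object constant in the second direction the diagonal recovers the original simplicial object: $\mrm{D}((\amalg^I X)\times\Dl)_n = (\amalg^I X)_{n}$, with face map $d_k^{\mrm{D}} = d_k^{I} d_k^{II} = d_k^{I}$, which is the $k$-th face map of $\amalg^I X$ because $d_k^{II}$ is the identity. Thus $\mtt{hocolim}^{\mrm{V}}_I cX = \amalg^I X$ and, applying $\mbf{s}$, one obtains $\mbf{s}(\mtt{hocolim}^{\mrm{V}}_I cX) = \mbf{s}(\amalg^I X) = \mtt{hocolim}_I X$, which is (\ref{hocolims}). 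The argument is entirely definitional, so there is no genuine obstacle; the only point deserving care is the bookkeeping of the bisimplicial structure maps in the identification $\amalg^I cX = (\amalg^I X)\times\Dl$, and this becomes transparent once one notes that the constant functor $c$ trivializes all internal simplicial structure.
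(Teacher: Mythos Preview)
Your proposal is correct and is precisely the unwinding-of-definitions that the paper has in mind; the paper itself offers no argument beyond the phrase ``by definition'', so your detailed verification that $\amalg^I cX = (\amalg^I X)\times\Dl$ and hence $\mrm{D}(\amalg^I cX)=\amalg^I X$ is exactly the content being asserted.
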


In the next proposition we prove that $\mtt{hocolim}_I$ is a relative functor invariant under homotopy right cof{i}nal changes of diagrams.

\begin{prop}\label{HomotopyInvariance}
Let $(\mc{C},\mc{W})$ be simplicial descent category closed by coproducts. Given diagrams $X,Y:I\rightarrow \mc{C}$ and
a natural transformation $\rho : X \rightarrow Y$ with $\rho_i\in\mc{W}$ for all $i\in I$, then $\mtt{hocolim}_I \rho$ is also in $\mc{W}$.
\end{prop}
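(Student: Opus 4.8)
The plan is to exploit the factorization $\mtt{hocolim}_I=\mbf{s}\,\amalg^I$ of Definition \ref{HocolimDesc}, which gives $\mtt{hocolim}_I\rho=\mbf{s}(\amalg^I\rho)$. Since axiom (S2) asserts precisely that $\mbf{s}:(\simp\mc{C},\mc{W})\to(\mc{C},\mc{W})$ is a relative functor, it suffices to show that the simplicial morphism $\amalg^I\rho:\amalg^I X\to\amalg^I Y$ is a degreewise weak equivalence, that is, that $(\amalg^I\rho)_n\in\mc{W}$ for every $n\geq 0$. Once this is established, applying $\mbf{s}$ and invoking (S2) immediately yields $\mtt{hocolim}_I\rho\in\mc{W}$.

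To verify the degreewise statement I would first write out $\amalg^I\rho$ explicitly in each simplicial degree. Applying the naturality formula (\ref{SRnatural}) to the morphism of diagrams $(1_I,\rho):X\to Y$ lying over the identity functor of $I$, the map $(\amalg^I\rho)_n$ sends the summand $X_{i_0}$ indexed by $\underline{i}=\{i_0\to\cdots\to i_n\}\in\mrm{N}_n(I)$ to the summand $Y_{i_0}$ indexed by the same $\underline{i}$ through $\rho_{i_0}$. Hence $(\amalg^I\rho)_n=\coprod_{\underline{i}\in\mrm{N}_n(I)}\rho_{i_0}$ is a coproduct of the morphisms $\rho_{i_0}$, each of which lies in $\mc{W}$ by hypothesis. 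Because $(\mc{C},\mc{W})$ is closed by coproducts, the class $\mc{W}$ is closed under arbitrary coproducts, so each $(\amalg^I\rho)_n$ belongs to $\mc{W}$, as required.

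There is no serious obstacle here: the statement is a formal consequence of the definitions together with the two standing hypotheses that $\mc{W}$ is coproduct-closed and that $\mbf{s}$ satisfies (S2), and the only point requiring care is the bookkeeping in the explicit degreewise description of the simplicial replacement. For the invariance under homotopy right cof{i}nal changes announced before the statement, I would argue separately using the relation (\ref{hocolims}), $\mtt{hocolim}_I=\mbf{s}\,\mtt{hocolim}^{\mrm{V}}_I c$: for a homotopy right cof{i}nal $f:I\to J$ and $X:J\to\mc{C}$ one has $f^\ast cX=c(f^\ast X)$, so Proposition \ref{hlcDl}—applicable since $\mc{S}=\mbf{s}^{-1}\mc{W}$ is $\Dl$-closed and coproduct-closed by Proposition \ref{DlClosedDescenso}—places the induced map $\mtt{hocolim}^{\mrm{V}}_I f^\ast cX\to\mtt{hocolim}^{\mrm{V}}_J cX$ in $\mc{S}$, whence applying $\mbf{s}$ and using $\mc{S}=\mbf{s}^{-1}\mc{W}$ produces a weak equivalence $\mtt{hocolim}_I f^\ast X\to\mtt{hocolim}_J X$.
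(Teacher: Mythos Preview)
Your proof is correct and follows exactly the same approach as the paper: write $\mtt{hocolim}_I\rho=\mbf{s}(\amalg^I\rho)$, observe that $\amalg^I\rho$ is degreewise a coproduct of weak equivalences hence in $\mc{W}$ by the coproduct-closure hypothesis, and conclude by (S2). Your final paragraph on cof{i}nality is also correct but belongs to the next proposition (Proposition~\ref{cof{i}nalitythm}), not to this one.
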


\begin{proof}
Since $\mc{W}$ is assumed to be closed by coproducts, we have that $\amalg^I \rho : \amalg^I X \rightarrow \amalg^I Y$ is a simplicial map which is degree-wise in
$\mc{W}$. Therefore, axiom (S2) ensures that $\mtt{hocolim}_I \rho = \mbf{s}(\amalg^I\rho) \in \mc{W}$.
\end{proof}

By construction $\mtt{hocolim}_I$ is natural on $I$. Recall that given two diagrams $X:I\rightarrow \mc{C}$ and $Y:J\rightarrow \mc{C}$, a morphism
$(f,\tau):X\rightarrow Y$ between them is a functor $f:X\rightarrow Y$ plus a natural transformation $\alpha : f^\ast X\rightarrow Y$. Then we have the natural 
morphism of $\mc{C}$ 
$$\mtt{hocolim} (f,\tau) : \mtt{hocolim}_I X\rightarrow \mtt{hocolim}_J Y $$
def{i}ned as the simple of the simplicial morphism $\amalg^{\bullet}(f,\tau):\amalg^I X\rightarrow \amalg^J Y$ given in (\ref{SRnatural}).

\begin{prop}\label{cof{i}nalitythm}
Let $(\mc{C},\mc{W})$ be simplicial descent category closed by coproducts and $f:I\rightarrow J$ a
homotopy right cof{i}nal functor. Then, for each diagram $X:J\rightarrow\mc{C}$ the natural map
$$\mtt{hocolim}_I f^\ast X\longrightarrow \mtt{hocolim}_J X$$
induced by the morphism of diagrams $(f,1): f^\ast X\rightarrow X$ is a weak equivalence.
\end{prop}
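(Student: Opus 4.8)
The plan is to reduce the statement to the $\Delta$-closed case already handled in Proposition \ref{hlcDl}, transporting it along the simple functor. First I would set $\mc{S}=\mbf{s}^{-1}\mc{W}$. Since $(\mc{C},\mc{W})$ is closed by coproducts, the second part of Proposition \ref{DlClosedDescenso} guarantees that $\mc{S}$ is a $\Delta$-closed class of $\simp\mc{C}$ which is closed by coproducts, so that $(\simp\mc{C},\mc{S})$ satisfies the hypotheses of Proposition \ref{hlcDl}.

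The essential point will be to recognize the map under study as the image under $\mbf{s}$ of a Voevodsky-level cofinality map. To this end I would feed Proposition \ref{hlcDl} with the constant simplicial diagram $cX\colon J\to\simp\mc{C}$, $j\mapsto c(X_j)$. As the constant functor commutes with restriction one has $f^\ast(cX)=c(f^\ast X)$, and as $cX$ is degreewise constant in the simplicial direction the bisimplicial objects $\amalg^I f^\ast(cX)$ and $\amalg^J cX$ are constant in their second index, with diagonals $\amalg^I f^\ast X$ and $\amalg^J X$ respectively. Hence $\mtt{hocolim}^{\mrm{V}}_I f^\ast(cX)=\amalg^I f^\ast X$ and $\mtt{hocolim}^{\mrm{V}}_J cX=\amalg^J X$, which is just the identity (\ref{hocolims}); and the map $\Psi$ of Proposition \ref{hlcDl} --- being the diagonal of the simplicial replacement morphism induced by $(f,1)$ --- is precisely the morphism $\amalg^\bullet(f,1)\colon\amalg^I f^\ast X\to\amalg^J X$ of (\ref{SRnatural}).

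Proposition \ref{hlcDl} then yields $\Psi\in\mc{S}$. Since $\mtt{hocolim}_I=\mbf{s}\amalg^I$, the map $\mtt{hocolim}_I f^\ast X\to\mtt{hocolim}_J X$ induced by $(f,1)$ equals $\mbf{s}(\amalg^\bullet(f,1))=\mbf{s}(\Psi)$, and by the very definition of $\mc{S}$ as $\mbf{s}^{-1}\mc{W}$ this lies in $\mc{W}$, as required. I expect the only delicate step to be the bookkeeping in the middle paragraph: checking that the diagonal of the bisimplicial cofinality map attached to the constant diagram $cX$ coincides with the $\mc{C}$-level morphism $\amalg^\bullet(f,1)$, so that applying $\mbf{s}$ reproduces exactly the map to be studied. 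Once this identification is secured, all the genuine homotopical work --- Illusie's d\'ecalage and the weak-equivalence estimate of Corollary \ref{wheDlCl} --- has already been carried out inside Proposition \ref{hlcDl}, and the conclusion is then formal.
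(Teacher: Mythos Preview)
Your proposal is correct and follows essentially the same route as the paper: set $\mc{S}=\mbf{s}^{-1}\mc{W}$, invoke Proposition \ref{DlClosedDescenso} to ensure $(\simp\mc{C},\mc{S})$ satisfies the hypotheses of Proposition \ref{hlcDl}, apply the latter to the constant diagram $cX$ so that the Voevodsky cofinality map becomes $\amalg^\bullet(f,1)\in\mc{S}$, and conclude by applying $\mbf{s}$. The paper's own proof is simply a terser version of exactly this argument; your extra paragraph spelling out the identification $\mtt{hocolim}^{\mrm{V}}_I(cX)=\amalg^I X$ is correct and merely makes explicit what the paper leaves implicit in the equality $\mtt{hocolim}^{\mrm{V}}_I f^\ast X=\amalg^I f^\ast X$.
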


\begin{proof}
By Proposition \ref{DlClosedDescenso} we have that $\mc{S}=\mbf{s}^{-1}\mc{W}$ is a $\Dl$-closed class of $\simp\mc{C}$, which is closed by coproducts if $\mc{W}$ 
is. If follows from Proposition \ref{hlcDl} that $\mtt{hocolim}_I^{\mrm{V}} f^{\ast}X = \amalg^I f^\ast X \rightarrow \mtt{hocolim}_J^{\mrm{V}} X = \amalg^J X$ is 
in $\mc{S}$. But this is the same as saying that $\mbf{s}(\amalg^I f^\ast X) = \mtt{hocolim}_I f^\ast X
\rightarrow \mbf{s}(\amalg^J X)=\mtt{hocolim}_J X$ is in $\mc{W}$.
\end{proof}

\begin{defi}
Given a functor $f:I\rightarrow J$, the \textit{homotopy left Kan extension} of $f$ is $f_{!} : \mc{C}^I\rightarrow \mc{C}^J$ given by
\begin{equation}\label{lKextpointwise} (f_{!} X) (j) = \mtt{hocolim}_{(f/j)} u_j^\ast X\end{equation}
where $u_j : (f/j)\rightarrow I$ maps $\{f(i)\rightarrow j\}$ to $i$.
\end{defi}

\begin{thm} \label{AdjointPair}
Let $(\mc{C},\mc{W})$ be simplicial descent category closed by coproducts. Under the previous notations, the following
properties hold.\\
\textbf{i.} For any small category $I$, $\mtt{hocolim}_I$ is a realizable homotopy colimit on $(\mc{C},\mc{W})$, invariant under homotopy right 
cof{i}nal changes of diagrams.\\
\textbf{ii.} Given $f:I\rightarrow J$ between small diagrams, $(f_{!}, f^\ast )$ is a relative adjunction between $(\mc{C}^I,\mc{W})$
and $(\mc{C}^J,\mc{W})$. 
\end{thm}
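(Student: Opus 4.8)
The plan is to transport the corresponding statements for Voevodsky homotopy colimits (Theorem \ref{hocolimDlClosed}) from the $\Dl$-closed class $\mc{S}=\mbf{s}^{-1}\mc{W}$ to $(\mc{C},\mc{W})$ along the relative adjoint equivalence $(\mbf{s},c)$ of Theorem \ref{equivCat}.ii. By Proposition \ref{DlClosedDescenso}, $\mc{S}$ is $\Dl$-closed and closed by coproducts, so Theorem \ref{hocolimDlClosed} applies to $(\simp\mc{C},\mc{S})$; write $f_!^{\mrm{V}}:\simp\mc{C}^I\rightarrow\simp\mc{C}^J$ for the Voevodsky homotopy left Kan extension of Definition \ref{hLKanDl}. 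The first step is to record, for each $f:I\rightarrow J$, the factorizations
$$ f_! = \mbf{s}^J \circ f_!^{\mrm{V}} \circ c^I, \qquad \mtt{hocolim}_I = \mbf{s} \circ \mtt{hocolim}^{\mrm{V}}_I \circ c^I, $$
where $c^I:\mc{C}^I\rightarrow\simp\mc{C}^I$ is the pointwise constant-simplicial-object functor. Both follow from formula (\ref{hocolims}) together with the identity $u_j^\ast \circ c^I = c^{(f/j)}\circ u_j^\ast$, i.e. the fact that forming constant simplicial objects commutes with restriction of diagrams.

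For part ii I would compose relative adjunctions. By Theorem \ref{equivCat}.ii and Lemma \ref{reladjDiag}, $(\mbf{s}^I,c^I)$ and $(\mbf{s}^J,c^J)$ are relative adjoint equivalences; being equivalences, $c^I$ is relative left adjoint to $\mbf{s}^I$ and $\mbf{s}^J$ is relative left adjoint to $c^J$. On the other hand $(f_!^{\mrm{V}},f^\ast)$ is a relative adjunction by Theorem \ref{hocolimDlClosed}.ii. Since composites of relative adjunctions are relative adjunctions, $f_! = \mbf{s}^J f_!^{\mrm{V}} c^I$ is relative left adjoint to $\mbf{s}^I f^\ast c^J$. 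It then remains to identify this right adjoint with $f^\ast$: from $\mbf{s}^I f^\ast = f^\ast \mbf{s}^J$ (both send $W$ to $i\mapsto\mbf{s}(W_{f(i)})$) and $\mbf{s}^J c^J = (\mbf{s}c)^J$, I get $\mbf{s}^I f^\ast c^J = f^\ast(\mbf{s}c)^J$; applying the $2$-functor $(-)^J$ of Lemma \ref{exp2func} to the isomorphism $\lambda:\mbf{s}c\dashrightarrow 1_\mc{C}$ and whiskering with $f^\ast$ gives an isomorphism $f^\ast(\mbf{s}c)^J\dashrightarrow f^\ast$ of $\mc{R}el\mc{C}at$. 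Transporting the adjunction along this isomorphism shows that $(f_!,f^\ast)$ is a relative adjunction, which proves ii. Specializing to $J=[0]$ (so that $(f/j)\cong I$ for the unique object $j$, whence $f_!=\mtt{hocolim}_I$ and $f^\ast=c_I$) yields that $\mtt{hocolim}_I$ is a realizable homotopy colimit, the adjunction half of i.

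For the invariance half of i I must show that the relative natural transformation $\mtt{hocolim}(f)$ of (\ref{hocolimf}) is an isomorphism when $f:I\rightarrow J$ is homotopy right cofinal. Whiskering the Voevodsky comparison map $\mtt{hocolim}^{\mrm{V}}(f):\mtt{hocolim}^{\mrm{V}}_I f^\ast\dashrightarrow\mtt{hocolim}^{\mrm{V}}_J$ --- an isomorphism of $\mc{R}el\mc{C}at$ by Theorem \ref{hocolimDlClosed}.i --- on the left by the relative functor $\mbf{s}$ and on the right by $c^J$, and using (\ref{hocolims}) together with $f^\ast c^J = c^I f^\ast$, produces an isomorphism $\mtt{hocolim}_I f^\ast\dashrightarrow\mtt{hocolim}_J$ of $\mc{R}el\mc{C}at$; concretely this is precisely the map of Proposition \ref{cof{i}nalitythm}, which is a pointwise weak equivalence. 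The remaining point, and the main obstacle, is to verify that this transported map coincides in $Fun(\mc{C}^J,\mc{C})[\mc{W}^{-1}]$ with the adjunction-induced $\mtt{hocolim}(f)$ of Definition \ref{hrcdef{i}}. This is a $2$-categorical coherence statement: the comparison map of the composite adjunction is the conjugate, by the equivalence $(\mbf{s},c)$, of the comparison map of the Voevodsky adjunction. It is the exact analogue, one level up, of the computation at the end of the proof of Theorem \ref{hocolimDlClosed} identifying $\mtt{hocolim}^{\mrm{V}}(f)$ with the explicit map $\Psi$ of Proposition \ref{hlcDl}; carrying it out amounts to unwinding the units and counits of the three composed adjunctions and invoking the naturality of the Illusie homotopies of Lemma \ref{BarDec}. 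Once this identification is in place the invariance follows, completing the proof.
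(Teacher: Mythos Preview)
Your proof is correct and follows essentially the same route as the paper: compose the relative adjoint equivalence $(\mbf{s},c)$ on diagram categories (via Lemma \ref{reladjDiag}) with the Voevodsky adjunction $(f_!^{\mrm{V}},f^\ast)$ from Theorem \ref{hocolimDlClosed}, then identify the resulting right adjoint $\mbf{s}^I f^\ast c$ with $f^\ast$ via $\lambda$. You are more explicit than the paper about the coherence needed for the invariance half---the paper dispatches it in one line---but the identification of the adjunction-induced $\mtt{hocolim}(f)$ with the transported Voevodsky comparison is a formal mate-calculus argument for composite adjunctions, not something that requires re-invoking the Illusie homotopies of Lemma \ref{BarDec}.
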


\begin{proof}[\textbf{Proof of Theorem \ref{AdjointPair}.}] 
Since $\mtt{hocolim}_I= \pi_{!}$ where $\pi:I\rightarrow [0]$ is the trivial functor, to see that $\mtt{hocolim}_I$ is a realizable homotopy colimit it suf{f}{i}ces to 
prove the parts \textit{ii}.
Fix a simple functor $\mbf{s}:\simp\mc{C}\rightarrow\mc{C}$. By  Proposition \ref{DlClosedDescenso} $\mc{S}=\mbf{s}^{-1}\mc{W}$ is a $\Dl$-closed class of $\simp\mc{C}$ which is closed by coproducts if $\mc{W}$ is.
Given $f:I\rightarrow J$, consider the following diagram of functors of relative categories
$$\xymatrix@M=4pt@H=4pt@C=33pt{ (\mc{C}^I,\mc{W}) \ar@<0.5ex>[r]^-{c}  & (\simp\mc{C}^I,\mc{S}) \ar@<0.5ex>[l]^-{\mbf{s}^I}
\ar@<0.5ex>[r]^-{\,\, f_{!}^{\simp\mc{C}}}  & (\simp\mc{C}^J,\mc{S}) \ar@<0.5ex>[l]^-{f^\ast}
\ar@<0.5ex>[r]^-{\mbf{s}^J} & (\mc{C}^J,\mc{W}) \ar@<0.5ex>[l]^-{c}  }$$
By Theorem \ref{hocolimDlClosed} $(f_{!}^{\simp\mc{C}},f^\ast)$ is a relative adjunction.
By Proposition \ref{DescensoFuntores} $(\mc{C}^I,\mc{W})$ and $(\mc{C}^J,\mc{W})$ are simplicial descent categories with simple functor def{i}ned pointwise. Note 
that $(\mbf{s}^I)^{-1}\mc{W}$ agrees with the class of $\simp\mc{C}^I$ def{i}ned pointwise by $\mc{S}$, and analogously for $J$. We conclude by Theorem 
\ref{equivCat} that the pairs $(c,\mbf{s}^I)$ and $(\mbf{s}^J,c)$ are relative adjoint equivalences of categories, and in particular relative adjunctions.
It turns out that $\mbf{s}^J\, f_{!}^{\simp\mc{C}} \, c = f_{!}^{\mc{C}}$ and
$\mbf{s}^I\, f^\ast \, c $ form a relative adjunction as well. But $\mbf{s}^I f^\ast  c (A) = \mbf{s}^I c f^\ast (A)$ is naturally equivalent to $f^\ast (A)$ 
by (S4). Hence, $(f_{!}^{\mc{C}},f^\ast)$ is a relative adjunction.
To f{i}nish, the invariance of $\mtt{hocolim}_I =  \mbf{s}\, \mtt{hocolim}_I^{\mrm{V}}\, c$ under homotopy right cof{i}nal changes of diagrams 
follows from the invariance of $\mtt{hocolim}_I^{\mrm{V}}$ under them.
\end{proof}
\vspace{0.1cm}
\subsection{Realizable homotopy colimits provide simplicial descent structures.}\mbox{}\\[0.2cm]
\indent
We f{i}nish here the proof of Theorem \ref{characterization}, with the following proposition.  

\begin{prop}
Let $(\mc{C},\mc{W})$ be a relative category closed by coproducts. Assume that $(\mc{C},\mc{W})$ admits realizable homotopy colimits 
$\mtt{hocolim}_I:\mc{C}^I\rightarrow \mc{C}$, which are invariant under homotopy right cof{i}nal changes of diagrams.
Then $(\mc{C},\mc{W})$ admits a simplicial descent structure with simple functor $\mbf{s}=\mtt{hocolim}_{\simp}:\simp\mc{C}\rightarrow \mc{C}$, and 
there is a unique isomorphism of $\mc{R}el\mc{C}at$ between $\mtt{hocolim}_{I}$ and $\mbf{s}\amalg^I$ compatible with the adjunction morphisms.
\end{prop}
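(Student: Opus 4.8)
The plan is to take $\mbf{s}=\mtt{hocolim}_{\simp}$ and to verify axioms \textbf{(S1)}, \textbf{(S2)}, \textbf{(S4)}, \textbf{(S5)} together with the weakened axiom \textbf{(S3)}$'$, so that Proposition \ref{simplif{i}cacionAxiomas} supplies a compatible $\mu$ and hence a simplicial descent structure. First I would establish the formula $\mtt{hocolim}_I\simeq\mbf{s}\amalg^I$, which is independent of the descent axioms, following the a posteriori argument recalled before this subsection. There it is noted that $p:\simp.I\rightarrow I$ is homotopy right cof{i}nal, so invariance under homotopy right cof{i}nal changes of diagrams (Def{i}nition \ref{hrcdef{i}}) gives a relative natural isomorphism $\mtt{hocolim}(p):\mtt{hocolim}_{\simp.I}p^{\ast}\dashrightarrow\mtt{hocolim}_I$; and that $q_{!}\dashv q^{\ast}$ is a relative adjunction with $q^{\ast}c_{\simp}=c_{\simp.I}$, so $\mtt{hocolim}_{\simp}q_{!}$ is relative left adjoint to $c_{\simp.I}$, whence by uniqueness of relative adjoints (Proposition \ref{UniqRelAd}) $\mtt{hocolim}_{\simp.I}\simeq\mtt{hocolim}_{\simp}q_{!}$. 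Since $q_{!}p^{\ast}X=\amalg^I X$, composing these isomorphisms yields $\mtt{hocolim}_I\simeq\mtt{hocolim}_{\simp}\amalg^I=\mbf{s}\amalg^I$. Both functors being relative left adjoint to $c_I$, Proposition \ref{UniqRelAd} also furnishes the asserted uniqueness of the isomorphism compatible with the adjunction morphisms.

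Next I verify the axioms. \textbf{(S2)} is immediate, since $\mbf{s}=\mtt{hocolim}_{\simp}$ is a relative functor by hypothesis. For \textbf{(S1)}, by Lemma \ref{reladjLoc} the localization of $\mbf{s}$ is a genuine left adjoint $\simp\mc{C}[\mc{W}^{-1}]\rightarrow\mc{C}[\mc{W}^{-1}]$, hence preserves coproducts; because $\mc{W}$ is closed by coproducts the localization functors preserve coproducts, so the canonical map $\mbf{s}X\amalg\mbf{s}Y\rightarrow\mbf{s}(X\amalg Y)$ becomes an isomorphism after localizing and is therefore a weak equivalence by saturation. For \textbf{(S4)} I set $\lambda=\mtt{hocolim}(\pi)$ for the trivial functor $\pi:\simp\rightarrow[0]$; since $c_{\simp}=\pi^{\ast}$ and $\mtt{hocolim}_{[0]}=1_{\mc{C}}$ (being relative left adjoint to $c_{[0]}=1_{\mc{C}}$), and $\pi$ is homotopy right cof{i}nal as $\mrm{N}(\simp)$ is contractible, invariance makes $\lambda:\mbf{s}c\dashrightarrow 1_{\mc{C}}$ an isomorphism. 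For \textbf{(S5)} I would use the formula: as $\mrm{N}([1])=\Dl[1]$, the cylinder $A\otimes\Dl[1]$ equals $\amalg^{[1]}(c_{[1]}A)$ and its projection to $c(A)=\amalg^{[0]}A$ is $\amalg^{\bullet}(\pi_{[1]},1)$ for $\pi_{[1]}:[1]\rightarrow[0]$; applying $\mbf{s}$ and the formula identifies its image with $\mtt{hocolim}(\pi_{[1]})$, a weak equivalence because $\pi_{[1]}$ is homotopy right cof{i}nal ($\mrm{N}([1])=\Dl[1]$ is contractible). As this projection is a retraction of $d_0^A$, saturation of $\mc{W}$ forces $\mbf{s}(d_0^A)$ to be a weak equivalence.

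The remaining and most delicate axiom is \textbf{(S3)}$'$. I would deduce it from a relative natural isomorphism $\mbf{s}\mrm{D}\simeq\mbf{s}\mbf{s}$ of functors $\simp\simp\mc{C}\rightarrow\mc{C}$, which for any bisimplicial morphism $F$ relates source and target by isomorphisms in $\mc{C}[\mc{W}^{-1}]$ and hence gives $\mbf{s}\mbf{s}(F)\in\mc{W}\Leftrightarrow\mbf{s}\mrm{D}(F)\in\mc{W}$. Viewing a bisimplicial object as a $\simp\times\simp$-diagram, Fubini (Proposition \ref{Fubini}) gives $\mbf{s}\mbf{s}=\mtt{hocolim}_{\simp}\mtt{hocolim}_{\simp}\simeq\mtt{hocolim}_{\simp\times\simp}$, while $\mbf{s}\mrm{D}=\mtt{hocolim}_{\simp}\delta^{\ast}$ for the diagonal functor $\delta:\simp\rightarrow\simp\times\simp$. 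The key input is that $\delta$ is homotopy right cof{i}nal: for each object $([n],[m])$ the relevant comma category is (the opposite of) the category of simplices of $\Dl[n]\times\Dl[m]$, whose nerve realizes to the contractible space $|\Dl[n]|\times|\Dl[m]|$. Invariance then yields $\mtt{hocolim}_{\simp}\delta^{\ast}\simeq\mtt{hocolim}_{\simp\times\simp}$, and chaining with Fubini produces $\mbf{s}\mrm{D}\simeq\mbf{s}\mbf{s}$. With \textbf{(S1)}, \textbf{(S2)}, \textbf{(S4)}, \textbf{(S5)} and \textbf{(S3)}$'$ established, Proposition \ref{simplif{i}cacionAxiomas} provides $\mu$ such that $(\mbf{s},\mu,\lambda)$ is a simplicial descent structure, concluding the proof. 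I expect the verification that $\delta$ (and $p$) are homotopy right cof{i}nal, namely the contractibility of these simplex categories, to be the main point requiring care, the rest being a formal consequence of adjointness, Fubini and the cof{i}nality hypothesis.
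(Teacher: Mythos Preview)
Your proposal is correct and follows essentially the same route as the paper: set $\mbf{s}=\mtt{hocolim}_{\simp}$, verify the descent axioms using adjointness, Fubini, and the homotopy right cof{i}nality of $\pi:\simp\rightarrow[0]$ and of the diagonal $\delta:\simp\rightarrow\simp\times\simp$, and read off the formula $\mtt{hocolim}_I\simeq\mbf{s}\amalg^I$ from the $p,q$ argument preceding the subsection.

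The only differences are cosmetic. The paper verifies the full axiom (S3) directly, constructing $\mu$ as the composite $\mbf{s}\mrm{D}\stackrel{\mtt{hocolim}(\delta)}{\dashrightarrow}\mtt{hocolim}_{\simp\times\simp}\dashrightarrow\mbf{s}\mbf{s}$ and then checking the compatibility (\ref{compatibLambdaMuEquac}) by hand from Fubini and $p_i\delta=1_{\simp}$; you instead deduce (S3)$'$ from the same isomorphism and invoke Proposition~\ref{simplif{i}cacionAxiomas}, which spares you that compatibility check. For (S5) the paper argues via the ordinary adjunction $(-\otimes\Dl[1])\dashv ev_1$, while you use the formula to identify $\mbf{s}(A\otimes s^0)$ with $\mtt{hocolim}(\pi_{[1]})$; both reach $\mbf{s}(A\otimes s^0)\in\mc{W}$ and then conclude by $s^0d^0=1$. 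The one place to be careful in your version is the implicit naturality of the formula isomorphism $\mtt{hocolim}_I\simeq\mbf{s}\amalg^I$ in $I$ (needed so that $\mbf{s}\amalg^\bullet(\pi_{[1]},1)$ matches $\mtt{hocolim}(\pi_{[1]})$), but this follows from the way the isomorphism is built out of adjunction data and Proposition~\ref{UniqRelAd}.
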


\begin{proof} Under the above assumptions, let us see that $\mbf{s}:=\mtt{hocolim}_{\simp}:\simp\mc{C}\rightarrow\mc{C}$ endows $(\mc{C},\mc{W})$ with a 
simplicial descent structure.\\[0.05cm]
\indent (S1): By Lemma \ref{reladjLoc}
${\mbf{s}}:\simp\mc{C}[\mc{W}^{-1}]\rightarrow\mc{C}[\mc{W}^{-1}]$ is a left adjoint. Then, arguing as in Lemma \ref{SimpleCoprod} we deduce that 
$\mbf{s}:\simp\mc{C}\rightarrow\mc{C}$ commutes with f{i}nite coproducts up to weak equivalence.\\[0.05cm]
\indent (S2): By hypothesis, $\mbf{s}:(\simp\mc{C},\mc{W})\rightarrow (\mc{C},\mc{W})$ is a relative functor. This means 
that it sends pointwise weak equivalences to weak equivalences.\\[0.05cm]
\indent (S3): The diagonal $d:\simp\rightarrow\simp\times\simp$ is a homotopy right cof{i}nal functor (see \cite[Lemma 5.33]{T}). By the cof{i}nality 
property of our realizable homotopy colimits we deduce an induced isomorphism 
$\mtt{hocolim}(d):\mbf{s}\mrm{D}\dashrightarrow \mtt{hocolim}_{\simp\times\simp}$ of $Fun(\simp\simp\mc{C},\mc{C})[\mc{W}^{-1}]$. 
By Lemma \ref{reladjDiag} $\mtt{hocolim}_{\simp}^{\simp}: \simp\simp\mc{C}\rightarrow \simp\mc{C}$ is relative left adjoint to $c^{\simp}:\simp\mc{C}\rightarrow \simp\simp\mc{C}$.
Then $\mtt{hocolim}_{\simp}\,\mtt{hocolim}_{\simp}^{\simp}=\mbf{s}\mbf{s}$ is another relative left adjoint of 
$c_{\simp\times\simp}:\mc{C}\rightarrow \simp\simp\mc{C}$. We conclude by 
Proposition \ref{UniqRelAd} that there is a unique isomorphism $k:\mtt{hocolim}_{\simp\times\simp}\dashrightarrow\mbf{s}\mbf{s}$ of 
$Fun(\simp\simp\mc{C},\mc{C})[\mc{W}^{-1}]$ compatible with 
the adjunction morphisms. We def{i}ne $\mu$ as the composition $\mtt{hocolim}(d)$ and $k$.\\[0.05cm]
\indent (S4): Def{i}ne $\lambda:\mbf{s}\, c\dashrightarrow 1_{\mc{C}}$ as the adjunction morphism of $(\mbf{s},c)$. We claim that
$\lambda$ is an isomorphism of $Fun(\mc{C},\mc{C})[\mc{W}^{-1}]$. On the one hand, the trivial functor $\pi:\simp\rightarrow [0]$ is homotopy right cof{i}nal 
because $(\pi/0)\equiv \simp$ has an initial object. Then the induced morphism $ \mtt{hocolim}(\pi) : \mtt{hocolim}_{\simp} \pi^\ast  = \mbf{s} c \dashrightarrow 
\mtt{hocolim}_{[0]}$ is an isomorphism. On the other hand, the adjunction morphism 
$u:\mtt{hocolim}_{[0]}\dashrightarrow 1_{\mc{C}}$ of $(\mtt{hocolim}_{[0]},1_{\mc{C}})$ is an isomorphism as well. Finally, it holds that $\lambda = u \, \mtt{hocolim}(\pi)$, so the claim is proved.\\
The compatibility between $\mu$ and $\lambda$ follows from the Fubini property of $\mtt{hocolim}$ (see Proposition \ref{Fubini}) together with the fact that $d:\simp\rightarrow \simp\times\simp$ composed with the projections $p_1,p_2:\simp\times\simp\rightarrow\simp$
is the identity.\\[0.05cm]
\indent (S5): Note that $-\otimes\Dl[1] : \mc{C}\rightarrow \simp\mc{C}$ is left adjoint to the evaluation at $[1]$, $ev_1 : \simp\mc{C}\rightarrow\mc{C}$, 
$ev_1X=X_1$. In addition, the adjunction morphism $ev_1(-)\otimes \Dl[1]\rightarrow 1_{\simp\mc{C}}$ is, in the degree $n$, 
$X_1\otimes\Dl[1]=\amalg_{\alpha:[n]\rightarrow [1]} X_1\rightarrow X_n$ given by $\{X(\alpha):X_1\rightarrow X_n\}_{\alpha}$. In case $X$ is constant equal to 
$A$, this morphism is then equal to $A\otimes s^0: A\otimes \Dl[1]\rightarrow A\otimes \Dl[0]$.\\
Since both $-\otimes\Dl[1]$ and $ev_1$ preserve weak equivalences, it turns out that they still give an adjoint pair after localizing by $\mc{W}$.
It follows from the fact that the composition of adjunctions is an adjunction that
$\mbf{s} (-\otimes \Dl[1]) : \mc{C}[\mc{W}^{-1}]\rightarrow\mc{C}[\mc{W}^{-1}]$ is left adjoint to $ev_1\, c = 1_{\mc{C}[\mc{W}^{-1}]}$. In particular 
$\mbf{s} (-\otimes \Dl[1])$ is isomorphic to $1_{\mc{C}[\mc{W}^{-1}]}$, through the adjunction map $\mbf{s} (-\otimes \Dl[1])\rightarrow 1_{\mc{C}[\mc{W}^{-1}]}$.
Given an object $A$ of $\mc{C}$, this adjunction map is the composition of the isomorphism $\lambda_A$ of (S4) with 
$\mbf{s}(A\otimes s^0)\mbf{s}(A\otimes\Dl[1])\rightarrow \mbf{s}c (A)$. Then $\mbf{s}(A\otimes s^0)$ is a weak equivalence because it is an isomorphism 
of $\mc{C}[\mc{W}^{-1}]$. But then $\mbf{s}(A\otimes d^0)$ is also a weak equivalence because $s^0\, d^0 = 1_{\Dl[0]}$.
\end{proof}

We close the section with the study of the preservation of homotopy colimits by relative functors.

\begin{defi}
Consider relative categories $(\mc{C},\mc{W})$ and $(\mc{D},\mc{W})$, and realizable homotopy colimits 
$$\mtt{hocolim}_I^{\mc{C}} :  (\mc{C}^I,\mc{W}) \rightleftarrows  (\mc{C},\mc{W}) : c_I \ \ \ \ \ \ \mtt{hocolim}_I^{\mc{D}} :  (\mc{D}^I,\mc{W}) \rightleftarrows  (\mc{D},\mc{W}) : c_I$$ 
Given a relative functor $F:(\mc{C},\mc{W})\rightarrow(\mc{D},\mc{W})$ we have a natural relative transformation
$$\rho_F^I: \mtt{hocolim}^{\mc{D}}_I F \dashrightarrow F\,\mtt{hocolim}^{\mc{C}}_I$$
of $Fun(\mc{C}^I,\mc{D})[\mc{W}^{-1}]$ induced by adjunction from 
$$F(\beta_I^{\mc{C}}): F\dashrightarrow  F c_I\mtt{hocolim}_I^{\mc{C}} = c_I F\,\mtt{hocolim}_I^{\mc{C}}$$ 
Here $\beta_I^{\mc{C}}:1_{\mc{C}^{I}}\dashrightarrow c_I\mtt{hocolim}_I^{\mc{C}}$ is the adjunction morphism of $(\mtt{hocolim}_I^{\mc{C}},c_I)$.\\ 
We say that $F$ \textit{commutes with $I$-homotopy colimits} if $\rho_F^I$ is an isomorphism. Note that this def{i}nition does not depend on the representatives 
chosen for $\mtt{hocolim}_I^{\mc{C}}$ and $\mtt{hocolim}_I^{\mc{D}}$.\\
If $F$ commutes with $I$-homotopy colimits for each small category $I$ we simply say that $F$ \textit{commutes with homotopy colimits}.
In case $F$ commutes with $\Omega$-homotopy colimits for each discrete category $\Omega$, we say that $F$ \textit{commutes with homotopy coproducts}.
\end{defi}

\begin{cor}\label{morphHCC} Let $F:(\mc{C},\mc{W})\rightarrow(\mc{D},\mc{W})$ be relative functor between relative categories closed by coproducts and 
admitting realizable homotopy colimits which are invariant under homotopy right cof{i}nal changes of diagrams. Then $F$ commutes with homotopy colimits 
if and only if it commutes with $\simp$-homotopy colimits and with homotopy coproducts. 
\end{cor}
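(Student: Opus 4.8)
The plan is to prove the nontrivial implication; the converse is immediate, since $\simp$ and every discrete category are particular indexing categories. So assume $F$ commutes with $\simp$-homotopy colimits and with homotopy coproducts, and fix a small category $I$. By Theorem \ref{characterization} both $(\mc{C},\mc{W})$ and $(\mc{D},\mc{W})$ carry simplicial descent structures, with simple functors $\mbf{s}_{\mc{C}}=\mtt{hocolim}_{\simp}^{\mc{C}}$ and $\mbf{s}_{\mc{D}}=\mtt{hocolim}_{\simp}^{\mc{D}}$, and for each small $I$ there are isomorphisms $\mtt{hocolim}_I\simeq\mbf{s}\amalg^I$ of $\mc{R}el\mc{C}at$ compatible with the adjunction morphisms. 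I would work throughout with the models $\mbf{s}_{\mc{C}}\amalg^I$ and $\mbf{s}_{\mc{D}}\amalg^I$.

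First I would restate the two hypotheses in usable form. Commuting with $\simp$-homotopy colimits says exactly that the comparison $\rho_F^{\simp}\colon\mbf{s}_{\mc{D}}\,\simp F\dashrightarrow F\,\mbf{s}_{\mc{C}}$ is an isomorphism of $Fun(\simp\mc{C},\mc{D})[\mc{W}^{-1}]$, where $\simp F\colon\simp\mc{C}\to\simp\mc{D}$ applies $F$ in each simplicial degree. For homotopy coproducts, note that for a discrete category $\Omega$ and a family $\{A_{\alpha}\}$ the simplicial replacement $\amalg^{\Omega}A$ is the constant object $c(\coprod_{\alpha}A_{\alpha})$; hence by axiom (S4) one has $\mtt{hocolim}_{\Omega}A\simeq\coprod_{\alpha}A_{\alpha}$, and under this identification $\rho_F^{\Omega}$ is the canonical map $\coprod_{\alpha}F(A_{\alpha})\to F(\coprod_{\alpha}A_{\alpha})$. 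Thus commuting with homotopy coproducts forces $F$ to preserve all small coproducts up to weak equivalence.

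Next I would build the comparison degreewise. For $X\colon I\to\mc{C}$ the coproduct comparisons assemble into a natural simplicial map
$$\theta_X\colon\amalg^I(F^I X)\longrightarrow \simp F(\amalg^I X),$$
which in degree $n$ is the canonical morphism $\coprod_{i_0\to\cdots\to i_n}F(X_{i_0})\to F\big(\coprod_{i_0\to\cdots\to i_n}X_{i_0}\big)$. Since $F$ preserves small coproducts up to weak equivalence, $\theta_X$ is a pointwise weak equivalence of simplicial objects, so $\mbf{s}_{\mc{D}}(\theta_X)\in\mc{W}$ by (S2). Composing with $\rho_F^{\simp}$ whiskered by $\amalg^I$ produces
$$\mtt{hocolim}_I^{\mc{D}}F^I X=\mbf{s}_{\mc{D}}\amalg^I(F^I X)\xrightarrow{\ \mbf{s}_{\mc{D}}\theta_X\ }\mbf{s}_{\mc{D}}\,\simp F(\amalg^I X)\xrightarrow{\ \rho_F^{\simp}\ }F\,\mbf{s}_{\mc{C}}(\amalg^I X)=F\,\mtt{hocolim}_I^{\mc{C}}X,$$
an isomorphism of $Fun(\mc{C}^I,\mc{D})[\mc{W}^{-1}]$, both factors being isomorphisms.

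The main obstacle, and the final step, is to identify this composite with the canonical transformation $\rho_F^I$, so that $F$ commutes with $I$-homotopy colimits in the sense of the definition. I would do this by transposing across the relative adjunction $(\mtt{hocolim}_I^{\mc{D}},c_I)$: a map out of $\mtt{hocolim}_I^{\mc{D}}F^I$ is determined by its adjoint, a map $F^I\dashrightarrow c_I F\,\mtt{hocolim}_I^{\mc{C}}$, and by construction the transpose of $\rho_F^I$ is $F(\beta_I^{\mc{C}})$. It then suffices to check that the transpose of the composite above is likewise $F(\beta_I^{\mc{C}})$; this is a diagram chase combining the compatibility of the isomorphisms $\mtt{hocolim}_I\simeq\mbf{s}\amalg^I$ with the adjunction units, the fact that the transpose of $\rho_F^{\simp}$ is $\simp F(\beta_{\simp}^{\mc{C}})$ (the unit of $(\mbf{s}_{\mc{C}},c)$ on simplicial objects, using $\simp F\,c=c\,F$), and the naturality of $\theta$. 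This is essentially the chain rule for the comparison transformation of a relative functor along a composite of relative adjunctions in the $2$-category $\mc{R}el\mc{C}at$, with the strict functor $\amalg^I$ entering only through $\theta$. Once this compatibility is settled, $\rho_F^I$ is an isomorphism and the proof is complete.
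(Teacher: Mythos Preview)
Your proposal is correct and follows essentially the same approach as the paper: both build the degreewise comparison $\theta\colon\amalg^I F^I\to(\simp F)\amalg^I$ using preservation of homotopy coproducts, apply $\mbf{s}_{\mc{D}}$ and (S2), and then compose with $\rho_F^{\simp}\compc\amalg^I$. The paper simply asserts that $\rho_F^I$ equals this composite and stops there, whereas you flag the identification as requiring a check via transposition across $(\mtt{hocolim}_I^{\mc{D}},c_I)$; your extra care here is warranted but the underlying argument is the same.
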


\begin{proof} Consider a relative functor $F:(\mc{C},\mc{W})\rightarrow(\mc{D},\mc{W})$ and a diagram $X:I\rightarrow \mc{C}$. Since $F$ commutes with homotopy coproducts then for each $n\geq 0$ the canonical morphism 
$$\amalg_{i_0\rightarrow\cdots\rightarrow i_n} F(X_{i_0}) \longrightarrow F(\amalg_{i_0\rightarrow\cdots\rightarrow i_n} X_{i_0})$$
is an isomorphism of $\mc{D}[\mc{W}^{-1}]$, and therefore an equivalence. It turns out that the canonical morphism $\varrho:\amalg^I F \rightarrow F\amalg^I$ is a pointwise weak equivalence.
Under the assumptions on $(\mc{C},\mc{W})$ and $(\mc{D},\mc{W})$, we know that $\mtt{hocolim}_{\simp}^{\mc{C}}\amalg^I$
 and $\mtt{hocolim}_{\simp}^{\mc{D}}\amalg^I$ are $I$-homotopy colimits. Then $\rho_F^I$ is an isomorphism since it is the composition of the weak equivalence
$ \mtt{hocolim}_{\simp}^{\mc{C}} (\varrho): \mtt{hocolim}_{\simp}^{\mc{C}}\amalg^I F \rightarrow \mtt{hocolim}_{\simp}^{\mc{C}}F\amalg^I$ with the isomorphism
$\rho_F^{\simp}\compc \amalg^I : \mtt{hocolim}_{\simp}^{\mc{C}}F\amalg^I\dashrightarrow F \mtt{hocolim}_{\simp}^{\mc{C}}\amalg^I$.
\end{proof}

\begin{obs} Previous corollary together with Theorem \ref{characterization} can be stated as an equivalence of categories 
between the category formed by the simplicial descent categories and the category formed by the relative categories 
closed by coproducts and possessing realizable homotopy colimits invariant under homotopy right cof{i}nal changes of diagrams.
For brevity's sake, the details are left to the reader.
\end{obs}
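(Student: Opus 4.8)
The plan is to turn the informal assertion into an honest equivalence of categories by (a) fixing the objects and morphisms of the two categories, (b) defining comparison functors $\Phi$ and $\Psi$ that act as the identity on underlying relative categories and relative functors, and (c) producing the two natural isomorphisms. Concretely, let $\mc{SDC}$ be the category whose objects are the simplicial descent categories closed by coproducts $(\mc{C},\mc{W},\mbf{s})$ and whose morphisms $(\mc{C},\mc{W},\mbf{s})\to(\mc{D},\mc{W},\mbf{t})$ are the relative functors $F:(\mc{C},\mc{W})\to(\mc{D},\mc{W})$ that preserve coproducts up to weak equivalence and commute with the simple functors, in the sense that the canonical relative natural transformation $\mbf{t}\,(\simp F)\dashrightarrow F\,\mbf{s}$ is an isomorphism of $\mc{R}el\mc{C}at$. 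Let $\mc{RHC}$ be the category whose objects are the relative categories closed by coproducts admitting realizable homotopy colimits invariant under homotopy right cof{i}nal changes of diagrams, and whose morphisms are the relative functors commuting with homotopy colimits. (One works throughout in a f{i}xed universe, or restricts to small relative categories, so that these are legitimate categories.)

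Next I would introduce the comparison functors. The functor $\Phi:\mc{SDC}\to\mc{RHC}$ sends $(\mc{C},\mc{W},\mbf{s})$ to $(\mc{C},\mc{W})$, which lies in $\mc{RHC}$ by Theorem \ref{AdjointPair}, and keeps the underlying relative functor on morphisms. The functor $\Psi:\mc{RHC}\to\mc{SDC}$ sends $(\mc{C},\mc{W})$ to $(\mc{C},\mc{W},\mbf{s})$ with $\mbf{s}=\mtt{hocolim}_{\simp}$, which is a simplicial descent structure by Theorem \ref{characterization}, and again keeps the underlying relative functor. Since both operate as the identity on underlying relative categories and functors, preservation of identities and composites is immediate; the only substantive point is that they are well def{i}ned on morphisms, i.e.\ that the two conditions imposed on morphisms of $\mc{SDC}$ and $\mc{RHC}$ coincide.

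This matching is exactly where Corollary \ref{morphHCC} enters, and I would argue it through two identif{i}cations. First, for a discrete category $\Omega$ the simplicial replacement $\amalg^\Omega A$ of a family $A=\{A_\alpha\}$ collapses to the constant simplicial object $c(\coprod_\alpha A_\alpha)$, so $\mtt{hocolim}_\Omega A=\mbf{s}\,c(\coprod_\alpha A_\alpha)$ is identif{i}ed with $\coprod_\alpha A_\alpha$ via axiom (S4); hence a relative functor commutes with homotopy coproducts precisely when it preserves coproducts up to weak equivalence. Second, by Theorem \ref{equivCat} the simple functor $\mbf{s}$ is itself a realizable homotopy colimit, so $\mtt{hocolim}_\simp\simeq\mbf{s}$, and commuting with $\simp$-homotopy colimits is the same as commuting with the simple functor. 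Combining these with Corollary \ref{morphHCC} shows that a relative functor commutes with all homotopy colimits if and only if it commutes with the simple functor and with coproducts, so $\Phi$ and $\Psi$ indeed carry morphisms of one category to morphisms of the other.

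Finally I would exhibit the natural isomorphisms. On objects $\Phi\Psi$ is literally $\mathrm{id}_{\mc{RHC}}$, since admitting invariant realizable homotopy colimits is a property of $(\mc{C},\mc{W})$ and both functors retain the underlying data; on morphisms it is the identity as well, so $\Phi\Psi=\mathrm{id}$. For $\Psi\Phi$ one has $\Psi\Phi(\mc{C},\mc{W},\mbf{s})=(\mc{C},\mc{W},\mtt{hocolim}_\simp)$, and by Theorem \ref{equivCat} both $\mbf{s}$ and $\mtt{hocolim}_\simp$ are relative left adjoints of $c_\simp$; Proposition \ref{UniqRelAd} then supplies a unique isomorphism $\mbf{s}\dashrightarrow\mtt{hocolim}_\simp$ of $\mc{R}el\mc{C}at$ compatible with the adjunction morphisms. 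This isomorphism is realized in $\mc{SDC}$ by the identity functor $1_{\mc{C}}$, viewed as a morphism $(\mc{C},\mc{W},\mbf{s})\to(\mc{C},\mc{W},\mtt{hocolim}_\simp)$ whose comparison $\rho^\simp_{1_\mc{C}}$ is precisely that uniqueness isomorphism, and naturality in $F$ is automatic because every functor in sight acts trivially on underlying data; hence $\Psi\Phi\cong\mathrm{id}$. The only place requiring genuine work is the morphism-level matching of the previous paragraph, where Corollary \ref{morphHCC} is combined with the two identif{i}cations $\mtt{hocolim}_\Omega\simeq\coprod$ and $\mtt{hocolim}_\simp\simeq\mbf{s}$; the object-level correspondence and both natural isomorphisms are then formal consequences of Theorem \ref{characterization} and the uniqueness of relative adjoints.
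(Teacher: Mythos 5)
Your proposal is correct and follows exactly the route the paper indicates (the paper itself leaves the details to the reader): you combine Theorem \ref{characterization} with Corollary \ref{morphHCC}, using Theorem \ref{equivCat} together with Proposition \ref{UniqRelAd} to identify $\simp$-homotopy colimits with simple functors and homotopy coproducts with coproducts up to weak equivalence, which is precisely the intended matching of morphism classes. The only points you gloss over---closure of the two morphism classes under composition, and the mate computations showing that the canonical comparison transformations correspond under the chosen representatives---are routine 2-categorical verifications and do not constitute gaps.
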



\section{Bousf{i}eld-Kan homotopy colimits are realizable.}\label{sectionBK}

In this section we study the Bousf{i}eld-Kan homotopy colimits, and prove the 

\begin{thm}\label{MCHCbis} Let $(\mathcal{M},\mathcal{W})$ be a model category. Then $(\mc{M},\mc{W})$ admits realizable homotopy colimits, which are invariant 
under homotopy right cof{i}nal changes of diagrams. In addition, 
they may be computed using the corrected Bousf{i}eld-Kan formula
\begin{equation}\label{cBKreal}\mtt{{}_c hocolim}^{BK}_I X = \int^i  \widetilde{QX}(i) \otimes \mrm{N}(i/I)^{\comp}\end{equation}
\end{thm}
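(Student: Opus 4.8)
The plan is to derive the theorem from the characterization of Theorem~\ref{characterization}, by producing a simplicial descent structure whose simple functor is the Bousf{i}eld-Kan realization. First I would replace $(\mc{M},\mc{W})$ by the relative category $(\mc{M}_c,\mc{W})$ of cof{i}brant objects and weak equivalences between them. This is closed by coproducts: a coproduct of cof{i}brant objects is cof{i}brant (the map from the initial object is a coproduct of cof{i}brations), and since the coproduct functor $\mc{M}^{\Omega}\to\mc{M}$ is left Quillen for discrete $\Omega$, it preserves weak equivalences between cof{i}brant objects. Thus $(\mc{M}_c,\mc{W})$ falls within the hypotheses of Theorem~\ref{characterization}.

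As candidate simple functor I would take the Bousf{i}eld-Kan geometric realization $\mbf{s}:\simp\mc{M}_c\to\mc{M}_c$, $\mbf{s}(X)=\int^{[n]}\widetilde{X}_n\otimes\Dl[n]$, where the tensoring $-\otimes\Dl[n]$ is def{i}ned through Hirschhorn's cosimplicial frames and $\widetilde{(-)}$ denotes the functorial Reedy cof{i}brant frame replacement needed to make this tensor homotopically meaningful. The heart of the argument is the verif{i}cation of the descent axioms, for which I would use the weakened form via Proposition~\ref{simplif{i}cacionAxiomas}, checking only (S1), (S2), (S4), (S5) and (S3)'. Here (S2) is the homotopy invariance of the realization of Reedy cof{i}brant simplicial objects, (S1) its compatibility with f{i}nite coproducts, (S4) the weak equivalence $\mbf{s}c(A)\simeq A$ for constant objects, and (S5) the contractibility of the frame cylinder $A\to A\otimes\Dl[1]$; all of these are standard facts about framed model categories. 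The decisive axiom is (S3)', which asserts that the iterated realization of a bisimplicial object agrees up to weak equivalence with the realization of its diagonal, and which I would obtain from the bisimplicial realization theorem in the framed setting.

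Granting the axioms, Proposition~\ref{simplif{i}cacionAxiomas} upgrades $(\mbf{s},\lambda)$ to a full simplicial descent structure, and Theorem~\ref{AdjointPair} then yields realizable homotopy colimits $\mtt{hocolim}_I=\mbf{s}\amalg^I$ on $(\mc{M}_c,\mc{W})$, invariant under homotopy right cof{i}nal changes of diagrams. A Fubini manipulation of coends rewrites $\mbf{s}(\amalg^I X)=\int^{[n]}\big(\coprod_{i_0\to\cdots\to i_n}X_{i_0}\big)\otimes\Dl[n]$ as $\int^{i}X(i)\otimes\mrm{N}(i/I)^{\comp}$; this step is purely the classical passage from the bar construction to the coend form of the Bousf{i}eld-Kan formula.

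Finally I would transfer the structure from $\mc{M}_c$ to all of $\mc{M}$ via cof{i}brant replacement. A functorial cof{i}brant replacement $Q:\mc{M}\to\mc{M}_c$, the inclusion $\iota:\mc{M}_c\hookrightarrow\mc{M}$, and the natural weak equivalence $QX\to X$ constitute a relative equivalence in $\mc{R}el\mc{C}at$. Since $(-)^I$ is a $2$-functor (Lemma~\ref{exp2func}), it carries this to a relative equivalence $(\mc{M}_c^I,\mc{W})\simeq(\mc{M}^I,\mc{W})$ compatible with the constant-diagram functors; because $2$-functors preserve adjunctions, composing the relative adjunction $(\mtt{hocolim}_I^{\mc{M}_c},c_I)$ with these equivalences exhibits $\iota\,\mtt{hocolim}_I^{\mc{M}_c}\,Q^I$ as a relative left adjoint to $c_I:\mc{M}\to\mc{M}^I$, that is, as a realizable homotopy colimit on $(\mc{M},\mc{W})$, whose invariance under homotopy right cof{i}nal changes is inherited through the equivalence. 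Unwinding the composite, with $\widetilde{QX}$ the pointwise frame replacement, reproduces exactly formula~(\ref{cBKreal}). I expect the main obstacle to be (S3)': establishing the interchange of iterated realization with the diagonal of a bisimplicial object in a general non-simplicial model category, where one must choose the frames functorially and cof{i}brantly enough that all the tensorings $-\otimes K$ remain homotopically well-behaved.
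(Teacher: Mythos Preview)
Your overall strategy matches the paper's: establish a simplicial descent structure on $(\mc{M}_c,\mc{W})$ via Proposition~\ref{simplif{i}cacionAxiomas}, invoke Theorem~\ref{characterization} (equivalently Theorem~\ref{AdjointPair}), identify the resulting $\mbf{s}\amalg^I$ with the Bousf{i}eld--Kan coend by a Fubini rearrangement, and transfer to $\mc{M}$ through the relative equivalence given by cof{i}brant replacement. Step~4 of your outline is essentially verbatim the paper's proof of Theorem~\ref{MCHCbis}.

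There are, however, two genuine divergences worth noting. First, the paper takes $\mbf{s}=\mtt{hocolim}^{BK}_{\simp}$ itself as the simple functor, not the framed geometric realization $|X|=\int^{[n]}\widetilde{X_n}\otimes\Dl[n]$ that you propose. These agree only up to natural weak equivalence on Reedy cof{i}brant simplicial objects, whereas (S2) must hold on \emph{all} pointwise cof{i}brant simplicial objects; the paper's choice gets (S2) for free from \cite[19.4.2]{H}, while your choice would force you to precompose with a Reedy cof{i}brant replacement and carry that through the remaining axioms. Second, and more substantively, the paper does not attack (S3)$'$ and (S5) head-on as you suggest. Instead it uses Hirschhorn's adjunction property $\mrm{map}(\mtt{hocolim}^{BK}_I X,A)\simeq \mtt{holim}^{BK}_{I^\comp}\mrm{map}(X,A)$ to convert both questions into statements about $\mtt{holim}^{BK}_{\Dl}$ on f{i}brant simplicial sets, and then quotes the already-established fact that $((\simp Set)_f,\mc{W},\mtt{holim}^{BK}_{\Dl})$ is a cosimplicial descent category. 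This sidesteps exactly the obstacle you f{l}ag: no bisimplicial realization theorem in a general framed model category is needed, because the interchange is checked after applying $\mrm{map}(-,A)$, where it is a known property of simplicial sets. Your direct route is not wrong, but the paper's detour through homotopy function complexes is what makes the verif{i}cation short. One further technical point: in the identif{i}cation $\mbf{s}\amalg^I\simeq\mtt{hocolim}^{BK}_I$ the paper passes through the order-reversed replacement $\amalg'^I=\Upsilon\amalg^I$, because it is $\amalg'^IX$ (not $\amalg^IX$) whose latching maps are manifestly cof{i}brations, and Reedy cof{i}brancy is what licenses the comparison $\mtt{hocolim}^{BK}_{\simp}\to|{-}|$ used in the Fubini step.
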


By Proposition \ref{DerivedFunctorColim}, the localization of a realizable homotopy colimit gives an absolute left derived functor of the colimit. 
Then, it follows that the global and local notions of homotopy colimit coincide for any model category. 

\begin{cor}
 Let $(\mc{M},\mc{W})$ be a model category. Given a small category $I$, the functor 
$\mtt{hocolim}^{BK}_I :\mc{M}^I[\mc{W}^{-1}]\rightarrow \mc{M}[\mc{W}^{-1}]$ induced by the corrected Bousf{i}eld-Kan formula \emph{(\ref{cBKreal})}
on localizations is the absolute left derived functor $\mbb{L}\mtt{colim}_I$ of the colimit $\mtt{colim}_I : \mc{M}^I\rightarrow\mc{M}$.
\end{cor}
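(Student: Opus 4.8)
The plan is to deduce the statement formally from Theorem~\ref{MCHCbis} and Proposition~\ref{DerivedFunctorColim}; the corollary is really just the conjunction of these two facts, so the work lies entirely in checking that their hypotheses and conclusions line up rather than in any new construction.

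First I would observe that a model category $(\mc{M},\mc{W})$ is cocomplete by definition, so for every small category $I$ the ordinary colimit $\mtt{colim}_I:\mc{M}^I\rightarrow\mc{M}$ exists. This is precisely the standing assumption of Proposition~\ref{DerivedFunctorColim}. Next, Theorem~\ref{MCHCbis} supplies a realizable homotopy colimit on $(\mc{M},\mc{W})$, namely the one computed by the corrected Bousfield--Kan formula $\mtt{{}_c hocolim}^{BK}_I$. Taking $\mtt{hocolim}_I:=\mtt{{}_c hocolim}^{BK}_I$ in Proposition~\ref{DerivedFunctorColim}, I would conclude at once that the absolute left derived functor $\mbb{L}\mtt{colim}_I$ exists and coincides with the functor induced on localizations by this realizable homotopy colimit.

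It then only remains to note that the functor ``induced on localizations'' in Proposition~\ref{DerivedFunctorColim} is exactly the $\mtt{hocolim}^{BK}_I:\mc{M}^I[\mc{W}^{-1}]\rightarrow\mc{M}[\mc{W}^{-1}]$ of the statement, which holds because Theorem~\ref{MCHCbis} identifies the realizable homotopy colimit with the corrected Bousfield--Kan formula~(\ref{cBKreal}). Consequently $\mtt{hocolim}^{BK}_I=\mbb{L}\mtt{colim}_I$, as claimed. I do not expect a substantial obstacle here: the only thing to watch is this bookkeeping identification of functors, all the genuine content having already been established in Theorem~\ref{MCHCbis} (that the corrected formula is a relative left adjoint to $c_I$) and in Proposition~\ref{DerivedFunctorColim} (the abstract fact, from \cite{RII}, that a relative left adjoint of the localized constant diagram functor computes the absolute left derived functor of the colimit).
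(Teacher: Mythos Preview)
Your proposal is correct and matches the paper's own argument exactly: the paper simply remarks, just before stating the corollary, that by Proposition~\ref{DerivedFunctorColim} the localization of a realizable homotopy colimit is an absolute left derived functor of the colimit, and then invokes Theorem~\ref{MCHCbis}. Your additional observation that cocompleteness of model categories ensures the existence of $\mtt{colim}_I$ is a helpful clarification that the paper leaves implicit.
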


The proof of previous theorem is based on Theorem \ref{characterization}, and will be f{i}nished at the end of the section.
\vspace{0.2cm}
\subsection{Reminder of Bousf{i}eld-Kan homotopy colimits.}\mbox{}\\[0.2cm]
\indent
To deal with Bousf{i}eld-Kan homotopy colimits in general model categories, we need the machinery of frames and homotopy function complexes, which we now recall. 
Here we use the notations and conventions of \cite{H}, where the reader is referred to for further detail.

\begin{defi} Let $(\mc{M},\mc{W})$ be a model category. A \textit{cosimplicial frame} on an object $A$ of $\mc{M}$ is a cosimplicial object $\widetilde{A}:\Delta\rightarrow \mathcal{M}$ plus a
pointwise weak equivalence $\epsilon:\widetilde{A}\rightarrow cA$ such that 
$\epsilon^0:\widetilde{A}^0\rightarrow A$ is an isomorphism. If $A$ is a cof{i}brant object of $\mathcal{M}$ then,
in addition, $\widetilde{A}$ is assumed to be Reedy cof{i}brant in $\ensuremath{\Delta} \mathcal{M}$.\newline
Any model category possesses a functorial cosimplicial frame $\,\widetilde{\mbox{\,}}\, :%
\mathcal{M}\rightarrow\ensuremath{\Delta} \mathcal{M}$, which is homotopically unique. Given a small category $I$, we also denote by $\,\widetilde{\mbox{\,}}\,:\mathcal{M}^I\rightarrow
\ensuremath{\Delta} \mathcal{M}^I$ the functor def{i}ned pointwise as $\widetilde{X}(i)=\widetilde{X(i)}$, for each diagram $X:I\rightarrow \mc{M}$. A model category 
equipped with a f{i}xed (functorial) cosimplicial frame is called a \textit{framed model category}.\\

In combination with frames we will also use the action $\otimes : \ensuremath{\Delta} \mathcal{M}\times\Delta^\comp Set \rightarrow \mathcal{M}$ given by 
\begin{equation}\label{actionModelos} X\otimes K = \mathtt{colim}_{\ensuremath{\Delta} K} \,\pi_K^\ast X\end{equation} 
Here $%
\ensuremath{\Delta} K$ denotes the category of simplices of $K$  and $\pi_K:%
\ensuremath{\Delta} K\rightarrow \ensuremath{\Delta} $ sends an $n$%
-simplex of $K$ to $[n]$.
\end{defi}

\begin{defi} Let $(\mathcal{M},\mathcal{W})$ be a framed model category, and denote by $\mc{M}_c$ its subcategory of cof{i}brant objects. 
The \textit{Bousf{i}eld-Kan homotopy colimit}, $\mathtt{hocolim}^{BK}_{I}: \simp\mc{M}_c\rightarrow \mc{M}_c$, is def{i}ned on $X:I\rightarrow\mathcal{M}_c$ as the coend of the bifunctor 
$\widetilde{X}\otimes \mathrm{N}(\cdot/ I)^{\comp}:I\times I^{op}\rightarrow \mathcal{M}$ given by $(i,i^{\prime})\mapsto \widetilde{X}(i)\otimes \mathrm{N}(i^{\prime}/I)^{\comp}$. That is, 
\[
\mathtt{hocolim}^{BK}_{I} X = \int^i \widetilde{X}(i)\otimes \mathrm{N}(i/I)^{\comp} 
\]
Bousfield-Kan homotopy colimits are natural on $I$. Indeed,
a functor $f:I'\rightarrow I$ induces a natural map 
$$\mathtt{hocolim}^{BK}(f):\mathtt{hocolim}^{BK}_{I'}f^\ast X\rightarrow \mathtt{hocolim}^{BK}_{I}X$$ 
def{i}ned by the maps $Id\otimes \mathrm{N}(f)^{\comp}: \widetilde{X}(f(i'))\otimes \mathrm{N}(j^{\prime}/I')^{\comp}%
\rightarrow \widetilde{X}(f(i'))\otimes \mathrm{N}(f(j^{\prime})/I)^{\comp}$. Recall that, by \cite[19.6.7]{H},
 $\mathtt{hocolim}^{BK}(f)$ is a weak equivalence in case $f$ is homotopy right cofinal.
\end{defi}

\begin{ej}
The Bousf{i}eld-Kan homotopy limit of (f{i}brant) simplicial sets, $\mtt{holim}_{I^{\comp}}^{BK}:(\simp Sets)^{I^{\comp}}\rightarrow \simp Sets$, is 
a particular case of the dual construction. Namely, given $K:I^{\comp}\rightarrow (\simp Sets)_f$ then
$$\mtt{holim}_{I^{\comp}}^{BK} X = \int_i {K}(i)\times \mathrm{N}(I/i) $$

A crucial point for the results given later is that $\mtt{holim}_{\Dl}^{BK}:\Dl(\simp Set)_f\rightarrow (\simp Sets)_f$ is a simple functor endowing $((\simp Set)_f,\mc{W})$ with a cosimplicial descent
 structure. This means that the dual of axioms (S1),$\cdots$,(S5) are satisf{i}ed for $((\simp Set)_f,\mc{W},\mtt{holim}_{\Dl}^{BK})$. For a proof the reader may 
consult \cite[Theorem 3.2]{R}. 
\end{ej}

Bousfield-Kan homotopy colimits have the correct homotopical behavior only on pointwise cof{i}brant diagrams.
For this reason, to define it on general diagrams, it is more convenient to `correct' it by composing it with a cofibrant replacement.

\begin{defi}
The \textit{corrected Bousf{i}eld-Kan homotopy colimit}, $\mathtt{{}_c hocolim}^{BK}_{I}: \simp\mc{M}\rightarrow \mc{M}$, is def{i}ned as the 
composition of $\mathtt{hocolim}^{BK}_{I}$ with the functor $Q^I:\mc{M}^I\rightarrow \mc{M}^I$ 
induced pointwise by a functorial cof{i}brant replacement $Q:\mc{M}\rightarrow \mc{M}$. That is,
$\mathtt{{}_c hocolim}^{BK}_{I} X = \mathtt{hocolim}^{BK}_{I} Q^I X$.
\end{defi}

To finish, we recall that for any model category there exists a homotopy function complex $\mrm{map}(\cdot,\cdot):\mc{M}^{\comp}\times\mc{M}\rightarrow \simp Sets$,
that allows to characterize weak equivalences of $\mc{M}$ in terms of weak equivalences of simplicial sets. More precisely, it holds that
$f:X\rightarrow Y$ is a weak equivalence of $\mc{M}$ if and only if for each f{i}brant object $A$ of $\mc{M}$ the induced morphism 
$f^{\ast}:\mrm{map}(Y,A)\rightarrow \mrm{map}(X,A)$ is a weak equivalence of simplicial sets (see \cite[17.7.7]{H}).
\vspace{0.1cm}
\subsection{Bousf{i}eld-Kan homotopy colimit as a simple functor.}

\begin{prop}\label{ModSimpl} Let $(\mathcal{M},\mathcal{W})$ be a model category. Then $(\mathcal{M}_c,\mathcal{W})$ is a simplicial descent category with 
simple functor $\mathbf{s}=\mathtt{hocolim}^{BK}_{\Delta^\comp} : \Delta^\comp\mathcal{M}_c\rightarrow\mathcal{M}_c$.
\end{prop}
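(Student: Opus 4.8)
The plan is to verify the axioms of Definition \ref{Def{i}Descenso} for $(\mc{M}_c,\mc{W})$ with $\mbf{s}=\mtt{hocolim}^{BK}_{\simp}$, and then to invoke Proposition \ref{simplif{i}cacionAxiomas}, so that it suffices to establish (S1), (S2), (S4), (S5) together with the weaker (S3)', the missing $\mu$ and its compatibility with $\lambda$ being manufactured by that proposition. Note first that $\mc{M}_c$ is closed by coproducts (a coproduct of cofibrant objects is cofibrant, with initial object the cofibrant $\emptyset$), and that $\mbf{s}$ takes values in $\mc{M}_c$ since the Bousf{i}eld-Kan homotopy colimit of a pointwise cof{i}brant diagram is cof{i}brant. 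The organizing principle is to reduce every homotopical statement about $\mbf{s}$ to the corresponding \emph{dual} statement about the cosimplicial simple functor $\mtt{holim}^{BK}_{\Delta}$ on $((\simp Set)_f,\mc{W})$, which by \cite[Theorem 3.2]{R} (recalled in the Example above) already carries a cosimplicial descent structure.

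The key technical step, and the main obstacle, is to establish a natural weak equivalence of simplicial sets
\[
\mrm{map}(\mbf{s}X,W)\ \simeq\ \mtt{holim}^{BK}_{\Delta}\,\mrm{map}(X,W)
\]
for every $X\in\simp\mc{M}_c$ and every fibrant object $W$ of $\mc{M}$, where $\mrm{map}(X,W)$ denotes the cosimplicial simplicial set $[n]\mapsto\mrm{map}(X_n,W)$. To produce it I would unwind $\mbf{s}X=\int^{[n]}\widetilde{X}(n)\otimes\mrm{N}([n]/\simp)^{\comp}$ and apply $\mrm{map}(-,W)$: since $\mrm{map}$ carries this homotopy coend in the first variable into the corresponding homotopy end, and since for cof{i}brant $X_n$ and fibrant $W$ the two-variable adjunction between the frame action $\otimes$ and $\mrm{map}$ gives $\mrm{map}(\widetilde{X}(n)\otimes K,W)\simeq\mrm{map}(X_n,W)^{K}$ (see \cite{H}), the outcome is the end $\int_{[n]}\mrm{map}(X_n,W)^{\mrm{N}(\Delta/[n])}$, which is exactly the end computing $\mtt{holim}^{BK}_{\Delta}\mrm{map}(X,W)$. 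The delicate points are the Reedy cof{i}brancy of the frames that make the coend a genuine homotopy colimit, the fibrancy of $W$, and the homotopy-correctness of passing from strict (co)ends to Bousf{i}eld-Kan (co)limits; this bookkeeping is where the real work lies.

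Granting this comparison, the remaining axioms follow by transfer, using that a map $g$ of $\mc{M}_c$ lies in $\mc{W}$ if and only if $\mrm{map}(g,W)$ is a weak equivalence for every fibrant $W$ (\cite[17.7.7]{H}). For (S2), a pointwise weak equivalence $f:X\rightarrow Y$ in $\simp\mc{M}_c$ induces a degreewise weak equivalence of cosimplicial simplicial sets $\mrm{map}(Y,W)\rightarrow\mrm{map}(X,W)$, so the dual of (S2) for $\mtt{holim}^{BK}_{\Delta}$ (equivalently \cite{H}) yields that $\mrm{map}(\mbf{s}f,W)$ is a weak equivalence for all $W$; hence $\mbf{s}(f)\in\mc{W}$. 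For (S5), applying $\mrm{map}(-,W)$ to $d_0^A:A\rightarrow A\otimes \Dl[1]$ turns the simplicial cylinder into the cosimplicial path object $\mrm{map}(A,W)^{\Dl[1]}$ and $d_0^A$ into the corresponding coface, so (S5) becomes the dual of (S5) for $\mtt{holim}^{BK}_{\Delta}$. For (S3)', applying the comparison twice identifies $\mrm{map}(\mbf{s}\mbf{s}Z,W)$ with the iterated limit $\mtt{holim}^{BK}_{\Delta}\mtt{holim}^{BK}_{\Delta}\mrm{map}(Z,W)$ and $\mrm{map}(\mbf{s}\mrm{D}Z,W)$ with $\mtt{holim}^{BK}_{\Delta}$ of its bicosimplicial diagonal; the dual of (S3) for $\mtt{holim}^{BK}_{\Delta}$ renders these naturally weakly equivalent, so a bisimplicial morphism is sent to a weak equivalence by the iterated simple exactly when it is by the diagonal simple.

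Finally, (S1) follows because the defining coend commutes with coproducts and the cosimplicial frame preserves coproducts up to weak equivalence (by homotopical uniqueness of frames), so the canonical $\mbf{s}X\amalg\mbf{s}Y\rightarrow\mbf{s}(X\amalg Y)$ is a weak equivalence; and (S4) follows by observing that the trivial projection $\pi:\simp\rightarrow[0]$ is homotopy right cof{i}nal, since $[0]$ is terminal in $\simp$ and hence $\mrm{N}(\simp)$ is contractible, whence \cite[19.6.7]{H} makes the natural augmentation $\lambda_A:\mbf{s}(cA)=\mtt{hocolim}^{BK}_{\simp}\pi^{\ast}A\rightarrow A$ a weak equivalence. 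With (S1), (S2), (S3)', (S4) and (S5) established, Proposition \ref{simplif{i}cacionAxiomas} supplies an isomorphism $\mu:\mbf{s}\mrm{D}\dashrightarrow\mbf{s}\mbf{s}$ compatible with $\lambda$, so that $(\mc{M}_c,\mc{W},\mbf{s})$ is a simplicial descent category, as claimed.
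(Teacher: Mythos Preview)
Your approach is essentially the same as the paper's: both invoke Proposition~\ref{simplif{i}cacionAxiomas} and verify (S1), (S2), (S3)$'$, (S4), (S5) by transferring to the cosimplicial descent structure on $((\simp Set)_f,\mc{W})$ via homotopy function complexes. The comparison $\mrm{map}(\mbf{s}X,W)\simeq\mtt{holim}^{BK}_{\Dl}\mrm{map}(X,W)$ that you sketch is precisely the ``adjunction property'' \cite[19.4.4]{H}, which the paper cites directly rather than re-deriving; similarly the paper handles (S1) via \cite[19.4.5]{H} (hocolim commutes with left Quillen functors) and (S2) via \cite[19.4.2]{H}, avoiding the extra passage through $\mrm{map}$. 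One small slip: in your (S4) argument, $[0]$ is \emph{initial} in $\simp=\Dl^{\comp}$ (it is terminal in $\Dl$), not terminal---but this still makes $\mrm{N}(\simp)$ contractible, so the conclusion stands.
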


\begin{proof}
To begin with, we observe that $(\mathcal{M}_c,\mathcal{W})$ is a relative
category closed by coproducts. The class $\mathcal{W}$ is known to be saturated in $\mc{M}$, and therefore it is saturated in $\mc{M}_c$ as well. On the other hand,
it is a basic property of model categories that cof{i}brant objects and weak equivalences between them are closed by coproducts.
By Proposition \ref{simplif{i}cacionAxiomas}, it suf{f}{i}ces to see that 
axioms (S1),(S2), (S3)', (S4) and (S5) are satisf{i}ed for $(\mathcal{M}_c,\mathcal{W})$ and $\mbf{s}=\mathtt{hocolim}^{BK}_{\Delta^\comp}$.\\[0.1cm]
\indent (S1) Let $X$ ,$Y:\simp\rightarrow \mc{M}_c$
be pointwise cof{i}brant simplicial
objects. If we endow $\mathcal{M}\times \mc{M}$ with the product model
structure, then the coproduct and the codiagonal form a Quillen pair $\coprod : \mathcal{M}\times \mc{M} \leftrightarrows 
\mathcal{M} : \delta $. By \cite[19.4.5]{H} $\mathtt{%
hocolim}_{\simp}^{BK}$ commutes with the left adjoint of a Quillen pair up to
essentially unique zigzag, which is a zigzag of weak equivalences on
pointwise cof{i}brant objects. Therefore, the canonical morphism $\mathtt{hocolim}^{BK}_{\Delta^\comp}X\coprod\mathtt{hocolim}^{BK}_{\Delta^\comp}Y \rightarrow 
\mathtt{hocolim}^{BK}_{\Delta^\comp} (X\coprod Y)$ is a weak equivalence.\\[0.1cm]
\indent (S2) The Bousf{i}eld-Kan homotopy colimit is homotopy invariant on pointwise cof{i}brant diagrams by \cite[19.4.2]{H}.\\[0.1cm]
\indent (S4) The trivial functor $\pi:\simp\rightarrow [0]$ is homotopy right cof{i}nal, then for each cof{i}brant object $B$ of $\mc{M}$ we have the induced natural 
weak equivalence $\mathtt{hocolim}^{BK}_{\simp}c (B)\rightarrow \mathtt{hocolim}^{BK}_{[0]}B$. But by def{i}nition $\mathtt{hocolim}^{BK}_{[0]}B$ is naturally isomorphic 
to $B$. In this way we obtain the natural weak equivalence $\lambda: \mathtt{hocolim}^{BK}_{\simp}c\rightarrow 1_{\mc{M}_c}$.\\[0.1cm]
For the proof of the remaining axioms, we will strongly use the so called `adjunction property' of $\mathtt{hocolim}^{BK}_I$ (\cite[19.4.4]{H}). 
It states that given $X:I\rightarrow \mc{M}_c$ and a f{i}brant object $A$ of $\mc{M}$, there is a natural zigzag of weak equivalences 
between f{i}brant simplicial sets
$$\mrm{map} (\mathtt{hocolim}^{BK}_{I}X, A) \dashrightarrow \mtt{holim}_{I^{\comp}}^{BK}\mrm{map}(X,A)$$
\indent (S3)' Let $F:Z^1\rightarrow Z^2$ be a morphism of $\simp\simp\mc{M}_c$, and let us see that 
$\mbf{s}\mbf{s}(F)=\mbf{s}(n\rightarrow\mbf{s}(m\rightarrow F_{n,m}))$ is a weak equivalence if and only if  $\mbf{s}\mrm{D}(F)$ is. This follows from 
the adjunction property. Indeed, given a f{i}brant object $A$ of $\mc{M}$, we have for $i=1,2$ a natural zigzag of weak equivalences of f{i}brant simplicial sets
$$\mrm{map}(\mtt{hocolim}_{\simp\times\simp}^{BK} Z^i ,A ) \simeq \mtt{holim}_{\Dl\times\Dl}^{BK} \mrm{map}(Z^i,A) \simeq \mtt{holim}_{\Dl}^{BK}\mtt{holim}_{\Dl}^{BK} \mrm{map}(Z^i,A) $$ 
where the last isomorphism is the Fubini property of the homotopy limit of simplicial sets (see, for instance, \cite{T}). 
Again, there is a natural zigzag of weak equivalences $\mtt{holim}_{\Dl}^{BK} \mrm{map}(Z^i,A)\simeq \mrm{map}(\mtt{hocolim}_{\simp}^{BK}Z_i,A)$ 
of f{i}brant simplicial sets. It 
follows from the homotopy invariance of $\mtt{holim}_{\Dl}^{BK}$ on pointwise f{i}brant diagrams that
$$\mtt{holim}_{\Dl}^{BK}\mtt{holim}_{\Dl}^{BK} \mrm{map}(Z^i,A) \simeq 
\mtt{holim}_{\Dl}^{BK}\mrm{map}(\mtt{hocolim}_{\simp}^{BK}Z^i,A)\simeq$$
$$\simeq \mrm{map}(\mtt{hocolim}_{\simp}^{BK}\mtt{hocolim}_{\simp}^{BK}Z^i,A)$$ 
On the other hand, $\mtt{holim}_{\Dl}^{BK}$ is a simple functor on $((\simp Set)_f,\mc{W})$. In particular, for each 
$K:\Dl\times\Dl\rightarrow (\simp Sets)_f$ there is a natural isomorphism $\mtt{holim}_{\Dl}^{BK}\mrm{D} K\rightarrow \mtt{holim}_{\Dl}^{BK}\mtt{holim}_{\Dl}^{BK} K$ in 
$(\simp Set)_f[\mc{W}^{-1}]$. We deduce that 
$$\mrm{map}(\mtt{hocolim}_{\simp}^{BK}\mtt{hocolim}_{\simp}^{BK}Z^i,A)\simeq 
\mtt{holim}_{\Dl}^{BK}\mtt{holim}_{\Dl}^{BK} \mrm{map}(Z^i,A)\simeq$$
$$\simeq \mtt{holim}_{\Dl}^{BK}\,\mrm{D}\, \mrm{map}(Z^i,A)$$ 
By def{i}nition $\mrm{map}(Z^i,A):\Dl\times\Dl\rightarrow \simp (Sets)_f$ is def{i}ned pointwise, which means that it is in bidegree $(n,m)$ equal to 
$\mrm{map}(Z^i,A)_{n,m}=\mrm{map}(Z_{n,m}^i,A)$.
Therefore $\mrm{D} \,\mrm{map}(Z^i,A) = \mrm{map}(\mrm{D} Z^i,A)$, and 
$$\mrm{map}(\mtt{hocolim}_{\simp}^{BK}\mtt{hocolim}_{\simp}^{BK}Z^i,A)\simeq \mtt{holim}_{\Dl}^{BK} \mrm{map}(\mrm{D} Z^i,A) \simeq \mrm{map}(\mtt{hocolim}_{\simp}^{BK}\mrm{D} Z^i,A)$$
We conclude that $\mtt{hocolim}_{\simp}^{BK}\mtt{hocolim}_{\simp}^{BK}(F)$ induces a weak equivalence on homotopy function complexes if and only if 
$\mtt{hocolim}_{\simp}^{BK}\mrm{D}(F)$ does. Then $\mtt{hocolim}_{\simp}^{BK}\mtt{hocolim}_{\simp}^{BK}(F)$ is a weak equivalence if and only if 
$\mtt{hocolim}_{\simp}^{BK}\mrm{D}(F)$ is.\\[0.1cm]
\indent (S5) To avoid confusion with action (\ref{actionModelos}), we denote here the simplicial action $\mc{M}_c\times \simp Set\rightarrow \simp\mc{M}_c$ of
(\ref{ActionSset}) by $\boxtimes$. So, for a cof{i}brant object $B$ of $\mc{M}$ and a simplicial set $K$, $B\boxtimes K$ is now the simplicial object with 
$(B\boxtimes K)_n = \amalg_{K_n} B$.\\
On the other hand, given simplicial sets $K$ and $L$, denote by $K^L$ the cosimplicial simplicial set given in cosimplicial degree $n$ by $ (K^L)^n =\prod_{L_n} K $. This def{i}nes a functor 
$(-)^L : (\simp Set)^{\comp} \rightarrow \Dl(\simp Set)$. Since $\mtt{holim}_{\Dl}^{BK}:\Dl(\simp Set)_f\rightarrow (\simp Sets)_f$ is a simple functor, the dual 
of (S5) means that the image under $\mtt{holim}_{\Dl}^{BK}$ of $d_0^K : K^{\Dl[1]}\rightarrow K^{\Dl[0]}$ is a weak equivalence for each f{i}brant simplicial set $K$.\\
Consider now a cof{i}brant object $B$ of $\mc{M}$. We must check that
$\mathtt{hocolim}^{BK}_{\simp}$ of the simplicial morphism  $d_0^B:B\boxtimes \Dl[0]=cB\rightarrow B\boxtimes \Dl[1]$ is a weak equivalence. We prove this using the adjunction property. 
We must see that $\mathtt{hocolim}^{BK}_{\simp}(d^0_B)$ induces a weak equivalence on homotopy function complexes. But, given a f{i}brant object $A$ of $\mc{M}$ and 
a simplicial set $L$,
$$\mrm{map} (\mtt{hocolim}^{BK}_{\simp}(B\boxtimes L),A)\simeq 
\mtt{holim}^{BK}_{\Dl}\mrm{map}(B\boxtimes L,A)$$
By def{i}nition $\mrm{map}(B\boxtimes L,A)\simeq \mrm{map}(B,A)^L :\Dl\rightarrow (\simp Set)_f$. Indeed, in degree $n$ we have  
$$\mrm{map}(B\boxtimes L,A)_n=\mrm{map}((B\boxtimes L)_n,A) = \mrm{map} (\coprod_{L_n} B,A) \simeq  \prod_{L_n}\mrm{map}(B,A)= (\mrm{map}(B,A)^L)^n$$
Hence $\mrm{map} (\mtt{hocolim}^{BK}_{\simp}(B\boxtimes L),A)\simeq \mtt{holim}^{BK}_{\Dl} (\mrm{map}(B,A)^L)$.
Finally, $\mathtt{hocolim}^{BK}_{\simp}(d^0_B)\in\mc{W}$ because 
$\mrm{map} (\mathtt{hocolim}^{BK}_{\simp}(d^0_B),A)\simeq \mtt{holim}^{BK}_{\Dl} ({d^0}^{\mrm{map}(B,A)})$ is a weak equivalence of simplicial sets.
\end{proof}

\begin{prop}\label{MCHC} Let $(\mathcal{M},\mathcal{W})$ be a model category. Then $(\mc{M}_c,\mc{W})$ admits
realizable homotopy colimits, which are invariant under homotopy right cof{i}nal changes of diagrams. In addition, they may be computed using the Bousf{i}eld-Kan 
formula 
$$\mtt{hocolim}^{BK}_I X = \int^i  \widetilde{X}(i) \otimes \mrm{N}(i/I)^{\comp}$$
\end{prop}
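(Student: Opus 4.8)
The plan is to obtain the two structural assertions formally from Proposition \ref{ModSimpl}, and then to verify the explicit formula by a coend computation. First I would observe that $(\mc{M}_c,\mc{W})$ is closed by coproducts: the initial object of $\mc{M}$ is cof{i}brant, and both cof{i}brant objects and weak equivalences between them are stable under coproducts. Thus Proposition \ref{ModSimpl} presents $(\mc{M}_c,\mc{W})$ as a simplicial descent category closed by coproducts with simple functor $\mbf{s}=\mtt{hocolim}^{BK}_{\simp}$. Part \textit{i} of Theorem \ref{AdjointPair} then applies verbatim and yields that $\mtt{hocolim}_I=\mbf{s}\,\amalg^I$ (Def{i}nition \ref{HocolimDesc}) is a realizable homotopy colimit on $(\mc{M}_c,\mc{W})$, invariant under homotopy right cof{i}nal changes of diagrams. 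This proves the f{i}rst two assertions, so the remaining task is to identify $\mbf{s}\,\amalg^I X=\mtt{hocolim}^{BK}_{\simp}(\amalg^I X)$ with the coend $\int^i\widetilde{X}(i)\otimes\mrm{N}(i/I)^{\comp}$, naturally in $X$ and $I$ and as an isomorphism of $\mc{R}el\mc{C}at$.

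For the identif{i}cation I would set $Y=\amalg^I X$, which is pointwise cof{i}brant since each $Y_n=\coprod_{i_0\to\cdots\to i_n}X(i_0)$ is a coproduct of cof{i}brant objects. Writing the action (\ref{actionModelos}) as the coend $\widetilde{A}\otimes K=\int^{[n]}\widetilde{A}^n\otimes K_n$ over $\Dl$, the right-hand side becomes the double coend $\int^i\int^{[n]}\widetilde{X}(i)^n\otimes\mrm{N}(i/I)_n$. I would interchange the two coends by Fubini and, for f{i}xed $[n]$, use $\mrm{N}(i/I)_n=\coprod_{j_0\to\cdots\to j_n}\mrm{Hom}_I(i,j_0)$ together with the co-Yoneda (density) isomorphism $\int^i\widetilde{X}(i)^n\otimes\mrm{Hom}_I(i,j_0)\cong\widetilde{X}(j_0)^n$. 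Since the cosimplicial frame commutes with coproducts up to weak equivalence (a coproduct of Reedy cof{i}brant frames is again a frame on the coproduct, hence homotopically unique), the inner coend is naturally weakly equivalent to $(\widetilde{Y_n})^n$; a second co-Yoneda reduction identif{i}es $\int^{[n]}(\widetilde{Y_n})^n$ with the framed realization $\int^{[n]}\widetilde{Y_n}\otimes\Dl[n]$. On the other side, applying the same expansion to $\mtt{hocolim}^{BK}_{\simp}(Y)=\int^{[n]}\widetilde{Y_n}\otimes\mrm{N}([n]/\simp)^{\comp}$ and using $\mrm{N}([n]/\simp)^{\comp}\cong\mrm{N}(\Dl/[n])$ together with the last-vertex weak equivalence $\mrm{N}(\Dl/[n])\to\Dl[n]$ shows $\mtt{hocolim}^{BK}_{\simp}(Y)$ is weakly equivalent to the same framed realization. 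Chaining these weak equivalences produces the sought natural isomorphism of $\mc{R}el\mc{C}at$.

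The conceptual weight is carried by Theorem \ref{AdjointPair}; the genuine obstacle is the bookkeeping in this last identif{i}cation. The delicate point is that neither the frame nor the simple functor respects coproducts or the passage $Y\mapsto Y_n$ strictly, so every equality above must be read in $\mc{M}[\mc{W}^{-1}]$ rather than in $\mc{M}$, and one must check that the whole chain of Fubini interchanges and co-Yoneda reductions is natural in $X$ and $I$, so that the comparison is a bona f{i}de $2$-morphism of $\mc{R}el\mc{C}at$ and not merely a pointwise weak equivalence. Finally, since the coend is thereby seen to be naturally isomorphic to the relative left adjoint $\mbf{s}\,\amalg^I$ of $c_I$, Proposition \ref{UniqRelAd} promotes this comparison to the unique isomorphism compatible with the adjunction morphisms, which is the form needed for Theorem \ref{MCHCbis} and its corollary.
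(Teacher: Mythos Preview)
Your first paragraph is fine and matches the paper: Proposition \ref{ModSimpl} plus Theorem \ref{characterization} (equivalently Theorem \ref{AdjointPair}) immediately give the existence and cofinality statements, with $\mbf{s}\,\amalg^I$ as a realizable homotopy colimit. The paper proceeds the same way, except that it replaces $\mbf{s}=\mtt{hocolim}^{BK}_{\simp}$ by $\mbf{s}'=\mtt{hocolim}^{BK}_{\simp}\Upsilon$, where $\Upsilon$ reverses the simplicial structure; this is harmless for the adjoint property but is exactly what aligns the indexing of $\amalg^I X$ with the $\mrm{N}(i/I)^{\comp}$ appearing in the Bousfield--Kan formula.

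The gap is in your identification. Your chain of equalities passes several times through weak equivalences inside a coend: once when you replace $\coprod\widetilde{X(i_0)}$ by $\widetilde{\coprod X(i_0)}$, and once when you replace $\mrm{N}([n]/\simp)^{\comp}$ by $\Dl[n]$. Coends are colimits and do not preserve pointwise weak equivalences without a cofibrancy hypothesis; you never verify one, so neither step is justified as stated. The paper avoids this by separating the argument into one weak equivalence and one strict isomorphism. First it checks that ${\amalg'}^I X=\Upsilon\amalg^I X$ is Reedy cofibrant and that the pointwise frame ${\amalg'}^I\widetilde{X}$ is a Reedy cosimplicial frame on it; this is an easy latching computation since the degenerate simplices of $\mrm{N}(I^{\comp})$ split off. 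With Reedy cofibrancy in hand, \cite[19.8.1]{H} gives a natural weak equivalence $\mtt{hocolim}^{BK}_{\simp}{\amalg'}^I X\to |{\amalg'}^I X|$, where $|\cdot|$ is the framed realization. Second, the paper identifies $|{\amalg'}^I X|=\int^n\coprod_{\alpha\in\mrm{N}_n(I^{\comp})}\widetilde{X}^n(\alpha(n))$ with $\int^i\widetilde{X}(i)\otimes\mrm{N}(i/I)^{\comp}$ by Fubini for coends; this step is a strict isomorphism, with no weak equivalences to control. Your last-vertex argument is essentially a rederivation of \cite[19.8.1]{H}, but without the Reedy cofibrancy check it does not go through; and without the $\Upsilon$ reversal your co-Yoneda step lands on $\mrm{N}(i/I)$ rather than $\mrm{N}(i/I)^{\comp}$, which is why the paper introduces $\mbf{s}'$.
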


\begin{proof} We have that $(\mc{M}_c,\mc{W})$ is a simplicial descent category with simple functor $\mbf{s}=\mathtt{hocolim}^{BK}_{\Delta^\comp} : \Delta^\comp
\mathcal{M}_c\rightarrow\mathcal{M}_c$ by Proposition \ref{ModSimpl}. To prove this 
result is more convenient to consider the `opposite' simple functor of $\mbf{s}$. More precisely, denote by $\Upsilon:\simp\mc{M}_c\rightarrow \simp\mc{M}_c$ the functor that reverses the order of the face and degeneracy maps of a simplicial object. Since 
$\Upsilon: (\simp\mc{M}_c,\mc{W})\rightarrow (\simp\mc{M}_c,\mc{W})$ is a relative functor and $\Upsilon^2=1_{\simp\mc{M}_c}$, then it is an equivalence of relative categories. We deduce 
that $\mbf{s}'=\mathtt{hocolim}^{BK}_{\Delta^\comp}\Upsilon$ is also a relative left adjoint to $\Upsilon\, c = c:\mc{M}_c\rightarrow \simp\mc{M}_c$. Therefore 
$\mbf{s}'\simeq \mathtt{hocolim}^{BK}_{\Delta^\comp}$ in $Fun(\simp\mc{M}_c,\mc{M}_c)[\mc{W}^{-1}]$, so $\mbf{s}'$ is another simple functor for 
$(\mc{M}_c,\mc{W})$.\\
Since $(\mathcal{M}_c,\mathcal{W})$ is closed by coproducts, it follows from Theorem \ref{characterization} that it admits
realizable homotopy colimits, which are invariant under homotopy right cof{i}nal changes of diagrams. In addition, given a small category $I$ then
$\mbf{s}' \amalg^I :\mc{M}_c^I\rightarrow \mc{M}_c $ is a realizable homotopy colimit on $(\mc{M}_c,\mc{W})$.
Hence, to prove the proposition it suf{f}{i}ces to see that there is a natural weak equivalence $\mbf{s}' \amalg^I \rightarrow \mathtt{hocolim}^{BK}_{I}$.\\
Denote by ${{\amalg}'}^I:\mc{M}_c^I\rightarrow \simp\mc{M}_c$ the functor mapping $X:I\rightarrow \mc{M}_c$ to $\Upsilon \amalg^I X$. Note that $ {{\amalg}'}^I X$ agrees with the 
simplicial object given in degree $n$ by $\amalg_{\alpha:[n]\rightarrow I^{\comp}}X_{\alpha(n)}$. 
It easy to check that ${{\amalg}'}^I$ carries a pointwise cof{i}brant $I$-diagram to a Reedy cof{i}brant simplicial diagram. Indeed, the $n$-latching object $L_n(\amalg^I X)$ agrees with 
$\amalg_{\alpha\in \mrm{d}\mrm{N}_n(I^{\comp})}X_{\alpha(n)}$, where $\mrm{d}\mrm{N}_n(I^{\comp})$ denotes the degenerate $n$-simplices of $\mrm{N}(I^{\comp})$. Hence the 
$n$-latching map 
$$\amalg_{\alpha\in \mrm{d}\mrm{N}_n(I^{\comp})}X_{\alpha(n)}\longrightarrow \amalg_{\alpha\in \mrm{N}_n(I^{\comp})}X_{\alpha(n)}$$ is a cof{i}bration because each 
$X_{\alpha(n)}$ is cof{i}brant. By the same argument, our chosen functorial cosimplicial frame $\,\widetilde{\mbox{\,}}\,:\mc{M}_c\rightarrow \Dl\mc{M}_c$ induces a functorial Reedy cosimplicial 
frame on each simplicial object of the form ${\amalg}'^I X$. Indeed, by def{i}nition the induced
pointwise cosimplicial frame on diagrams $\,\widetilde{\mbox{\,}}\,:\mc{M}_c^I \rightarrow \Dl\mc{M}_c^I$ is such that $\widetilde{X}(i):=\widetilde{X(i)}$ is a 
Reedy cof{i}brant object of $\Dl\mc{M}$ for each $i\in I$. 
Since the coproduct of cosimplicial frames is again a cosimplicial frame, ${\amalg}'^I \widetilde{X}$ is a cosimplicial frame on ${\amalg}'^I {X}$. To see that it is indeed a 
Reedy cosimplicial frame, we must see that ${\amalg}'^I \widetilde{X}$ is Reedy cof{i}brant (with respect to the simplicial index). But again, its $n$-th latching map is
$$L_n(\amalg^I \widetilde{X})\simeq  \amalg_{\alpha\in \mrm{d}\mrm{N}_n(I^{\comp})}\widetilde{X}(\alpha(n))\longrightarrow \amalg_{\alpha\in\mrm{N}_n(I^{\comp})}\widetilde{X}(\alpha(n))$$
which is a Reedy cof{i}bration because each $\widetilde{X}(\alpha(n))$ is by hypothesis Reedy cof{i}brant in $\Dl\mc{M}$.
Then the Bousf{i}eld-Kan natural transformation $\mtt{hocolim}^{BK}_{\simp}{{\amalg}'}^I \cdot \rightarrow |{{\amalg}'}^I \cdot |$ is a pointwise weak equivalence
by \cite[19.8.1]{H}. Recall that 
$$ |{{\amalg}'}^I X | = \int^n ({{\amalg}'}^I \widetilde{X})_n\otimes \Dl[n] = \int^n ({{\amalg}'}^I \widetilde{X}^n)_n =  \int^n \ds\coprod_{\alpha\in\mrm{N}_n(I^{\comp})}\widetilde{X}^n(\alpha(n)) $$
As in the proof of previous proposition $\boxtimes$ denotes the simplicial action (\ref{ActionSset}), so $\boxtimes : \simp\mc{M}_c\times \simp Set \rightarrow \simp\mc{M}_c$ maps $(B,K)$ to the simplicial object $B\boxtimes K$ given in degree $n$ by
$(B\boxtimes K)_n = \coprod_{K_n} B_n$. For f{i}xed $n,m\geq 0$, it holds that 
$$ \ds\coprod_{\alpha\in\mrm{N}_n(I^{\comp})}\widetilde{X}^m(\alpha(n))  = \int^i \widetilde{X}^m(i)\boxtimes \mrm{N}_n(i/I)^{\comp} $$
Therefore, using the Fubini interchange index Theorem for coends we deduce
$$|{{\amalg}'}^I X | \simeq \int^n \int^i \widetilde{X}^n(i)\boxtimes \mrm{N}_n(i/I)^{\comp} \simeq  \int^i \int^n \widetilde{X}^n(i)\boxtimes \mrm{N}_n(i/I)^{\comp} $$
But, f{i}xed $i,j\in J$, it is straightforward to check that there is a natural isomorphism 
$$\int^n \widetilde{X}^n(i)\boxtimes \mrm{N}_n(j/I)^{\comp} \simeq \mtt{colim}_{\Dl (j/I)^{\comp}} \, \pi_{(j/I)^{\comp}}^{\ast} \widetilde{X}(i) = \widetilde{X}(i)\otimes \mrm{N}(j/I)^{\comp} $$
Hence, putting all together we conclude that
$ |{{\amalg}'}^I X | \simeq \int^i\widetilde{X}(i)\otimes \mrm{N}(i/I)^{\comp} = \mtt{hocolim}^{BK}_I X $.
\end{proof}

\begin{proof}[\textbf{Proof of Theorem \ref{MCHCbis}.}] Denote by $i:\mc{M}_c\rightarrow \mc{M}$ the inclusion. 
By def{i}nition, the image of $Q$ is contained in $\mc{M}_c$, so we may consider $Q':\mc{M}\rightarrow \mc{M}_c$ such that $i\,Q'=Q$. Since $Q$ is a cof{i}brant replacement
functor there is a natural weak equivalence $\epsilon:Q\rightarrow 1_{\mc{M}}$. It gives natural weak equivalences $Q'\, i\rightarrow 1_{\mc{M}_c}$ and 
$i\,Q'\rightarrow 1_{\mc{M}}$. Therefore  
$$i:(\mathcal{M}_c,\mc{W})\rightleftarrows (\mathcal{M},\mc{W}):Q'$$ is a relative adjoint equivalence. In particular $(i,Q')$ and $(Q',i)$ are relative
adjoint pairs of functors. Given a small category $I$, by Proposition \ref{reladjDiag} the pointwise adjoint pair induced by $({Q'},i)$ is also a relative adjunction
$Q':(\mathcal{M}^I,\mc{W})\rightleftarrows (\mathcal{M}_c^I,\mc{W}):i$. Therefore we have the relative adjoint pairs
$$\xymatrix@M=4pt@H=4pt@C=33pt{ (\mc{M}^I,\mc{W}) \ar@<0.5ex>[r]^-{Q'}  & (\mc{M}_c^I,\mc{W}) \ar@<0.5ex>[l]^-{i}
\ar@<0.5ex>[r]^-{\mtt{hocolim}_I^{BK}}  & (\mc{M}_c,\mc{W}) \ar@<0.5ex>[l]^-{c_I}
\ar@<0.5ex>[r]^-{i} & (\mc{M},\mc{W}) \ar@<0.5ex>[l]^-{Q'}  }$$
where the relative adjunction in the middle follows from previous proposition. Then the corrected Bousf{i}eld Kan formula, which agrees with $i\,
\mtt{hocolim}_I^{BK}\,Q'$, is a relative left adjoint to $i\,c_I \,Q' = c_I\, i\, Q'$. Since $i\,Q'$ is isomorphic in $\mc{R}el\mc{C}at$ to the identity, then 
$i\,\mtt{hocolim}_I^{BK}\,Q'$ is a relative left adjoint to $c_I$ which means that it is a realizable homotopy colimit. Since $\mtt{hocolim}_I^{BK}$ is invariant 
under homotopy right cof{i}nal changes of diagrams, the same holds for $i\,\mtt{hocolim}_I^{BK}\,Q'$.
\end{proof}

\begin{obs}\label{OtroHocolim} If $\mtt{hocolim}_{\simp}:\simp\mc{M}\rightarrow\mc{M}$ is any realizable homotopy colimit on a model category 
$(\mathcal{M},\mathcal{W})$, 
for instance $\mtt{hocolim}_{\simp}=\mtt{{}_c hocolim}_{\simp}^{BK}$, then another formula for the realizable homotopy colimit
$\mtt{hocolim}_I :\mc{M}^I\rightarrow \mc{M}$ is
$$\mtt{hocolim}_I X = \mtt{hocolim}_{\simp} \amalg^I (Q X) $$ 
Indeed, as we have seen in previous proof, $\mtt{hocolim}_I^{\mc{M}}$ is given by $\mtt{hocolim}_I^{\mc{M}_c} Q$, which agrees with 
$\mtt{hocolim}_{\simp}^{\mc{M}_c} \amalg^I Q$ by Theorem \ref{characterization}. 
Since there is a natural weak equivalence $Q\rightarrow 1$, we deduce that 
$\mtt{hocolim}_{\simp}^{\mc{M}_c}Q \amalg^I Q = \mtt{hocolim}_{\simp}^{\mc{M}}  \amalg^I Q$ is another
realizable homotopy colimit on $(\mc{M},\mc{W})$ as claimed.
\end{obs}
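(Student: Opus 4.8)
The plan is to identify the functor $X\mapsto \mtt{hocolim}_{\simp}\amalg^I(QX)$, up to natural isomorphism of $\mc{R}el\mc{C}at$, with a realizable homotopy colimit that has already been produced, and then to invoke the formal fact that being a realizable homotopy colimit is stable under isomorphism in $\mc{R}el\mc{C}at$: if $F\simeq F'$ in $Fun(\mc{M}^I,\mc{M})[\mc{W}^{-1}]$ and $F$ is relative left adjoint to $c_I$, then the adjunction data transports along the isomorphism and exhibits $F'$ as a relative left adjoint to $c_I$ as well (a consequence of the $2$-categorical structure; compare the uniqueness statement of Proposition \ref{UniqRelAd}). First I would recall from the proof of Theorem \ref{MCHCbis} that the corrected Bousf{i}eld--Kan functor $i\,\mtt{hocolim}_I^{BK}\,Q'$ is a realizable homotopy colimit on $(\mc{M},\mc{W})$, and that, writing $Q$ for the pointwise cof{i}brant replacement regarded as a relative functor $\mc{M}^I\to\mc{M}_c^I$ and suppressing the inclusion $i:\mc{M}_c\hookrightarrow\mc{M}$, it is given by $\mtt{hocolim}_I^{\mc{M}_c}\,Q$.

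Next I would apply Theorem \ref{characterization} to the simplicial descent category $(\mc{M}_c,\mc{W},\mbf{s})$ of Proposition \ref{ModSimpl}, whose simple functor is $\mbf{s}=\mtt{hocolim}^{BK}_{\simp}$. It furnishes a natural isomorphism $\mtt{hocolim}_I^{\mc{M}_c}\simeq \mbf{s}\,\amalg^I$ of $\mc{R}el\mc{C}at$, natural in $I$, whence $\mtt{hocolim}_I^{\mc{M}}\simeq \mbf{s}\,\amalg^I\,Q$ on $(\mc{M},\mc{W})$. Thus it remains only to compare $\mbf{s}\,\amalg^I\,Q$ with $\mtt{hocolim}_{\simp}\,\amalg^I\,Q$ for an arbitrary realizable homotopy colimit $\mtt{hocolim}_{\simp}:\simp\mc{M}\to\mc{M}$ of the simplicial index.

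For this comparison I would proceed as follows. The corrected Bousf{i}eld--Kan functor for the simplicial index, namely $\mtt{{}_c hocolim}^{BK}_{\simp}=i\,\mbf{s}\,Q^{\simp}$ (with $Q^{\simp}:\simp\mc{M}\to\simp\mc{M}_c$ the pointwise cof{i}brant replacement on simplicial objects), is itself a realizable homotopy colimit on $(\mc{M},\mc{W})$; since it and the given $\mtt{hocolim}_{\simp}$ are both relative left adjoints of $c_{\simp}$, Proposition \ref{UniqRelAd} provides a canonical isomorphism $\mtt{hocolim}_{\simp}\simeq i\,\mbf{s}\,Q^{\simp}$ in $\mc{R}el\mc{C}at$. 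Evaluating at the simplicial object $\amalg^I(QX)$, which is \emph{pointwise cof{i}brant} because $QX$ is pointwise cof{i}brant and coproducts preserve cof{i}brant objects, the comparison map $Q^{\simp}\amalg^I Q\to \amalg^I Q$ is a pointwise weak equivalence between pointwise cof{i}brant simplicial objects; axiom (S2) for $\mbf{s}$ then gives $\mbf{s}\,Q^{\simp}\amalg^I Q\simeq \mbf{s}\,\amalg^I Q$. Chaining these isomorphisms yields $\mtt{hocolim}_{\simp}\,\amalg^I Q\simeq \mbf{s}\,\amalg^I Q\simeq \mtt{hocolim}_I^{\mc{M}}$, all natural in $I$, and the stability principle above upgrades this to the assertion that $X\mapsto \mtt{hocolim}_{\simp}\amalg^I(QX)$ is a realizable homotopy colimit.

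I expect the main obstacle to be the bookkeeping of the passage between homotopy colimits computed in $\mc{M}_c$ and those computed in $\mc{M}$, and in particular the justification that an \emph{arbitrary} representative $\mtt{hocolim}_{\simp}$ may be replaced by the Bousf{i}eld--Kan one. Both points are controlled by the relative adjoint equivalence $i:(\mc{M}_c,\mc{W})\rightleftarrows(\mc{M},\mc{W}):Q'$ and the uniqueness of relative left adjoints (Proposition \ref{UniqRelAd}), together with Lemma \ref{reladjDiag} to transport these adjunctions to diagram categories. The remaining verifications --- naturality in $I$ of all the isomorphisms, and that $Q^{\simp}\to 1$ is a pointwise weak equivalence on pointwise cof{i}brant input --- are routine given (S2) and the results already in place.
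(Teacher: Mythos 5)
Your proposal is correct and follows essentially the same route as the paper's own argument: both identify $\mtt{hocolim}_I^{\mc{M}}$ with $\mtt{hocolim}_I^{\mc{M}_c}\,Q \simeq \mbf{s}\,\amalg^I Q$ via Theorem \ref{characterization}, and then use the natural weak equivalence $Q\rightarrow 1$ together with axiom (S2) applied to the pointwise cofibrant simplicial object $\amalg^I (QX)$. The only difference is that you make explicit, via Proposition \ref{UniqRelAd}, the replacement of an arbitrary $\mtt{hocolim}_{\simp}$ by the corrected Bousfield--Kan representative and the transport of the adjunction data along the resulting isomorphism of $\mc{R}el\mc{C}at$, steps the paper leaves implicit.
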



\section{Realizable homotopy colimits and Grothendieck derivators.}\label{sectionDerivator}

Here we study the connection of realizable homotopy colimits with Grothendieck derivators. For a concise exposition of Grothendieck derivator theory, 
the reader is referred to \cite{Gr}. Recall that a \textit{prederivator} is a strict 2-functor $\mathbb{D}: cat^{\circ}\rightarrow \mc{C}at$, and that a 
a \textit{weak right derivator} is a prederivator $\mbb{D}$ satisfying the following 
four axioms:\\
\begin{compactitem}
\item[\textbf{Der 1}.]  Given $I,J$ in $cat$ then the functor $(i^\ast,j^\ast):\mbb{D}(I\amalg J)\rightarrow \mbb{D}(I)\times\mbb{D}(J)$ induced by the canonical inclusions
$i:I\rightarrow I\amalg J$, $j:J\rightarrow I\amalg J$ is an equivalence of categories.
\item[\textbf{Der 2}.] Given $I$ in $cat$ and $i\in I$, denote also by $i:[0]\rightarrow I$ the functor $0\mapsto i$. Then, a morphism $F$ of $\mbb{D}(I)$ such that  
$i^{\ast} F$ is an isomorphism of $\mbb{D}([0])$ for each $i\in I$ is an isomorphism of $\mbb{D}(I)$.
\item[\textbf{Der 3d}.] If $f:I\rightarrow J$ is a functor in $cat$, then $f^{\ast}:\mbb{D}(J)\rightarrow \mbb{D}(I)$ admits a left adjoint $f_{!}:\mbb{D}(I)\rightarrow \mbb{D}(J)$.
\item[\textbf{Der 4d}.] Given $f:I\rightarrow J$ and $j\in J$, consider the diagram
$$\xymatrix@M=4pt@H=4pt@C=35pt{ 
 (f/j) \ar[r]^{u_j} \ar[d]_{\pi} \ar@{}[rd]|{\stackrel{\alpha}{\mbox{\rotatebox[origin=c]{33}{$\Leftarrow$}}}} & I \ar[d]^{f} \\
 [0] \ar[r]_j & J
}$$
where ${\alpha}_{\tau : f(i) \rightarrow j}: fu_j(\tau)\rightarrow j \pi (\tau)$ is $\tau:f(i)\rightarrow j$. 
It gives rise by adjunction to the morphism $\pi_{!} u_j^{\ast} \rightarrow j^{\ast} f_{!}$ of $\mbb{D}([0])$, which is assumed to be
an isomorphism. 
\end{compactitem}

If $\mbb{D}':cat^{\circ}\rightarrow \mc{C}at$ is another weak right derivator, a morphism of weak right derivators $\mbb{D}\rightarrow \mbb{D}'$ is called \textit{right exact} if it preserves homotopy 
left Kan extensions.\\

To avoid set-theoretical problems we assume that for a
given relative category $(\mc{C},\mc{W})$, $\mc{C}at$ contains the localized diagram
categories $\mc{C}^I[\mc{W}^{-1}]$. We deduce from previous results the

\begin{thm}\label{RightDerivator}  Assume that $(\mc{C},\mc{W})$ is a relative category closed by coproducts that admits realizable homotopy colimits, 
which are invariant under homotopy right cof{i}nal changes of diagrams. Then $\mathbb{D}:cat^{\circ}\rightarrow \mc{C}at$, 
$I\mapsto \mc{C}^I[\mc{W}^{-1}]$ is a weak right derivator.
\end{thm}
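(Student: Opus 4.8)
The plan is to realise $\mbb{D}$ as a composite of two $2$-functors and then to check the four axioms, three of which are formal and one of which (\textbf{Der 2}) carries the real content. First, $\mbb{D}$ is a prederivator because it is the composite $cat^{\circ}\xrightarrow{(\mc{C}^{-})}\mc{R}el\mc{C}at\xrightarrow{\mrm{loc}}\mc{C}at$, where the first arrow sends $I$ to $(\mc{C}^I,\mc{W})$, a functor $f$ to $f^{\ast}$ and a natural transformation $\theta$ to $\theta^{\ast}$ (the evident $cat^{\circ}$-indexed variant of the exponentiation $2$-functor of Lemma \ref{exp2func}), and the second is the localization $2$-functor of Lemma \ref{loc2func}; a composite of strict $2$-functors is a strict $2$-functor. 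For \textbf{Der 1} I would use that $\mc{C}^{I\amalg J}=\mc{C}^I\times\mc{C}^J$ with $\mc{W}^{I\amalg J}=\mc{W}^I\times\mc{W}^J$, and that Gabriel--Zisman localization carries this product of relative categories to the product of the localizations; the comparison functor $(i^{\ast},j^{\ast})$ is exactly this identification, hence an equivalence.

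Both \textbf{Der 3d} and \textbf{Der 4d} come directly from Theorem \ref{AdjointPair}. Part \textbf{ii} supplies, for every $f:I\rightarrow J$, a relative adjunction $(f_{!},f^{\ast})$, which localizes by Lemma \ref{reladjLoc} to an adjunction $f_{!}\dashv f^{\ast}$ between $\mbb{D}(I)$ and $\mbb{D}(J)$; thus $f^{\ast}$ admits the left adjoint demanded by \textbf{Der 3d}. Since $f_{!}$ is given by the pointwise formula (\ref{lKextpointwise}) and $\pi_{!}=\mtt{hocolim}_{(f/j)}$ for the trivial functor $\pi:(f/j)\rightarrow[0]$, one has $\pi_{!}u_j^{\ast}=\mtt{hocolim}_{(f/j)}u_j^{\ast}=(f_{!}-)(j)=j^{\ast}f_{!}$, and the base-change map of \textbf{Der 4d} is precisely this identification; so \textbf{Der 4d} holds by construction.

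The main obstacle is \textbf{Der 2}: a morphism $F$ of $\mbb{D}(I)$ with $i^{\ast}F$ invertible for every object $i$ must be shown invertible, i.e. the family $\{i^{\ast}\}_i$ must be jointly conservative. This is the one axiom that is not formal, and I would prove it by transporting the question through the simplicial descent structure. By Theorem \ref{characterization} the hypotheses yield a simplicial descent structure on $(\mc{C},\mc{W})$, hence on $(\mc{C}^I,\mc{W})$ with simple functor $\mbf{s}^I$ defined pointwise (Proposition \ref{DescensoFuntores}). Set $\mc{S}_I=(\mbf{s}^I)^{-1}\mc{W}$; because $\mbf{s}^I$ is pointwise, $\mc{S}_I$ is exactly the class of morphisms of $\simp(\mc{C}^I)$ that are pointwise in $\mc{S}=\mbf{s}^{-1}\mc{W}$, and by Proposition \ref{DlClosedDescenso} it is $\Dl$-closed and closed by coproducts. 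Theorem \ref{equivCat}, part \textbf{ii}, then provides a relative adjoint equivalence $(\mbf{s}^I,c)$ between $(\simp(\mc{C}^I),\mc{S}_I)$ and $(\mc{C}^I,\mc{W})$, so $\mbb{D}(I)\simeq\simp(\mc{C}^I)[\mc{S}_I^{-1}]$; and since $i^{\ast}\mbf{s}^I=\mbf{s}\,i^{\ast}$ while $\mbf{s}:\simp\mc{C}[\mc{S}^{-1}]\rightarrow\mc{C}[\mc{W}^{-1}]$ is itself an equivalence, this transport carries each $i^{\ast}$ on $\mbb{D}(I)$ to $i^{\ast}$ on $\simp(\mc{C}^I)[\mc{S}_I^{-1}]$. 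It therefore suffices to prove joint conservativity on the latter category.

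Here the extra homotopical structure pays off. By Proposition \ref{DlClBrown}, $(\simp(\mc{C}^I),\mc{S}_I)$ is a Brown category of cofibrant objects, so its calculus of left fractions represents every morphism of $\simp(\mc{C}^I)[\mc{S}_I^{-1}]$ by a roof $X\xrightarrow{t}W\xleftarrow{s}Y$ with $s\in\mc{S}_I$, that is $F=\gamma(s)^{-1}\gamma(t)$. Applying $i^{\ast}$ gives $i^{\ast}F=\gamma(i^{\ast}s)^{-1}\gamma(i^{\ast}t)$ with $i^{\ast}s\in\mc{S}$; if $i^{\ast}F$ is invertible then $\gamma(i^{\ast}t)$ is invertible, and since $\mc{S}$ is saturated this forces $i^{\ast}t\in\mc{S}$. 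If this holds for every $i$ then $t$ is pointwise in $\mc{S}$, i.e. $t\in\mc{S}_I$, whence $\gamma(t)$ and therefore $F$ is invertible; transporting back establishes \textbf{Der 2}. The points demanding care are the compatibility of the descent equivalences with the restriction functors $i^{\ast}$ (guaranteed by the pointwise definition of $\mbf{s}^I$) and the passage to the calculus of fractions on the diagram category; once these are in place the four axioms hold and $\mbb{D}$ is a weak right derivator.
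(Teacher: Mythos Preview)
Your proof is correct and follows essentially the same route as the paper. The paper packages your Der 2 argument into a separate Lemma \ref{isosLoc} (every morphism in $\mc{C}^I[\mc{W}^{-1}]$ factors as $\tau F'\tau'$ with $F'$ an honest morphism of $\mc{C}^I$ and $\tau,\tau'$ isomorphisms), whose proof is precisely the descent-to-$\simp\mc{C}^I$ plus Brown calculus-of-fractions manoeuvre you carry out; the remaining axioms are handled identically via Theorem \ref{AdjointPair} and the pointwise formula (\ref{lKextpointwise}).
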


\begin{proof} Der 1 is clear from the def{i}nition of $\mbb{D}$. To see Der 2, consider a morphism $F$ of $\mbb{D}(I)=\mc{C}^I[\mc{W}^{-1}]$. 
By Lemma \ref{isosLoc}, $F=\tau F' \tau'$, where $F'$ is a morphism of $\mc{C}^I$ and $\tau$, $\tau'$ are isomorphisms of $\mc{C}^I[\mc{W}^{-1}]$. If for each 
$i\in I$ it holds that $i^{\ast} F = F_i = \tau_i {F'}_i {\tau'}_i$ is an isomorphism, then $F'_i$ is an isomorphism as well. Since $\mc{W}$ is saturated then $F'_i\in\mc{W}$ for each $i$. This means
that $F'$ is a weak equivalence of $\mc{C}^I$, then an isomorphism of $\mc{C}^I[\mc{W}^{-1}]$. Then $F=\tau F' \tau'$ is also an isomorphism of $\mc{C}^I[\mc{W}^{-1}]$.
To see Der 3, consider a functor $f:I\rightarrow J$ between small categories. By Theorem \ref{AdjointPair} \textit{ii} we have that $f_{!}:(\mc{C}^J,\mc{W})\rightarrow (\mc{C}^I,\mc{W})$
is a relative left adjoint to $f^{\ast}:(\mc{C}^I,\mc{W})\rightarrow (\mc{C}^J,\mc{W})$. By Lemma \ref{reladjLoc} the induced functor 
$f_{!}:\mc{C}^I[\mc{W}^{-1}]\rightarrow \mc{C}^J[\mc{W}^{-1}]$ is left adjoint to $f^{\ast}:\mc{C}^J[\mc{W}^{-1}]\rightarrow \mc{C}^I[\mc{W}^{-1}]$ as required.
In addition, $f_{!}$ is def{i}ned pointwise by the formula (\ref{lKextpointwise}), so Der 4 holds as well.
\end{proof}

\begin{lema}\label{isosLoc}
Let $(\mc{C},\mc{W})$ be a simplicial descent category. If $I$ is a small category and $F$ is a morphism of $\mc{C}^I[\mc{W}^{-1}]$, there exist isomorphisms 
$\tau$ and ${\tau}'$ of $\mc{C}^I[\mc{W}^{-1}]$ and a morphism $F'$ of $\mc{C}^I$ such that $F=\tau F' {\tau}'$.
\end{lema}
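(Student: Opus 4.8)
The plan is to transport a calculus of fractions from the $\Dl$-closed ``model'' of $(\mc{C}^I,\mc{W})$ back along the simple-functor equivalence. The point is that $\mc{C}^I$ itself need not be a nice relative category, but the simplicial objects over it equipped with the class $(\mbf{s}^I)^{-1}\mc{W}$ form a Brown category of cof{i}brant objects, where left fractions are available.

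First I would observe that all of the relevant structure survives passage to $\mc{C}^I$. By Proposition \ref{DescensoFuntores}, $(\mc{C}^I,\mc{W})$ is again a simplicial descent category, with simple functor $\mbf{s}^I:\simp\mc{C}^I\rightarrow\mc{C}^I$ def{i}ned pointwise; being a simplicial descent category it is closed by f{i}nite coproducts. Writing $\mc{S}=(\mbf{s}^I)^{-1}\mc{W}$ for the associated class on $\simp\mc{C}^I$, Proposition \ref{DlClosedDescenso}(ii) shows that $\mc{S}$ is $\Dl$-closed and closed by f{i}nite coproducts, so Proposition \ref{DlClBrown} makes $(\simp\mc{C}^I,\mc{S})$ a Brown category of cof{i}brant objects (the cof{i}brations being the termwise coprojections). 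Finally, Theorem \ref{equivCat}(ii) gives a relative adjoint equivalence $(\mbf{s}^I,c)$ between $(\simp\mc{C}^I,\mc{S})$ and $(\mc{C}^I,\mc{W})$, which by Lemma \ref{reladjLoc} localizes to mutually quasi-inverse equivalences between $\simp\mc{C}^I[\mc{S}^{-1}]$ and $\mc{C}^I[\mc{W}^{-1}]$. Denote by $\lambda:\mbf{s}^I c\dashrightarrow 1_{\mc{C}^I}$ the counit from axiom (S4); its localization is a natural isomorphism with invertible components $\lambda_A:\mbf{s}^I cA\rightarrow A$ in $\mc{C}^I[\mc{W}^{-1}]$.

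The key input is the existence of a calculus of left fractions for the localization of a Brown category of cof{i}brant objects (see \cite{Br} and the discussion following Proposition \ref{DlClBrown}): every morphism of $\simp\mc{C}^I[\mc{S}^{-1}]$ is of the form $s^{-1}g$ for a genuine morphism $g$ of $\simp\mc{C}^I$ and a weak equivalence $s\in\mc{S}$. So, given a morphism $F:A\rightarrow B$ of $\mc{C}^I[\mc{W}^{-1}]$, I would apply this to the corresponding morphism $cF:cA\rightarrow cB$ of $\simp\mc{C}^I[\mc{S}^{-1}]$, obtaining $cF=s^{-1}g$ with $g:cA\rightarrow Y'$ in $\simp\mc{C}^I$ and $s:cB\rightarrow Y'$ in $\mc{S}$. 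Applying the localized functor induced by $\mbf{s}^I$ and using that it preserves composites and inverses gives $\mbf{s}^I(cF)=\mbf{s}^I(s)^{-1}\,\mbf{s}^I(g)$, where $\mbf{s}^I(g)$ is the image of a morphism of $\mc{C}^I$ and $\mbf{s}^I(s)$ lies in $\mc{W}$, because $s\in\mc{S}=(\mbf{s}^I)^{-1}\mc{W}$; hence $\mbf{s}^I(s)^{-1}$ is an isomorphism of $\mc{C}^I[\mc{W}^{-1}]$.

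To f{i}nish I would invoke the naturality of $\lambda$: for $F:A\rightarrow B$ the naturality square yields $F=\lambda_B\circ\mbf{s}^I(cF)\circ\lambda_A^{-1}$. Substituting the previous factorization gives
$$F=\bigl(\lambda_B\,\mbf{s}^I(s)^{-1}\bigr)\,\mbf{s}^I(g)\,\bigl(\lambda_A^{-1}\bigr),$$
so that $\tau:=\lambda_B\,\mbf{s}^I(s)^{-1}$ and $\tau':=\lambda_A^{-1}$ are isomorphisms of $\mc{C}^I[\mc{W}^{-1}]$ and $F':=\mbf{s}^I(g)$ is a morphism of $\mc{C}^I$ with $F=\tau F'\tau'$, as required. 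The main obstacle is the calculus-of-left-fractions statement itself, which is precisely what forces the detour from $(\mc{C}^I,\mc{W})$ to its $\Dl$-closed model $(\simp\mc{C}^I,\mc{S})$; once that representation is available the rest is a routine transport along the equivalence $\mbf{s}^I$. I would only need the \emph{existence} of a left-fraction representative $s^{-1}g$, not its uniqueness, so the ``up to homotopy'' nature of the fraction calculus causes no diff{i}culty.
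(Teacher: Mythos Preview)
Your proof is correct and is essentially the same as the paper's: both reduce to the Brown category of cof{i}brant objects $(\simp\mc{C}^I,\mc{S})$ via Propositions \ref{DlClosedDescenso} and \ref{DlClBrown}, use the left-fraction representation $s^{-1}g$ there, and then transport back along $\mbf{s}^I$ using the isomorphism $\lambda$ of (S4). The only cosmetic difference is that the paper f{i}rst reduces to $I=[0]$ (since $(\mc{C}^I,\mc{W})$ is again a simplicial descent category) and then runs the argument in $\mc{C}$, whereas you work directly in $\mc{C}^I$; the two are equivalent.
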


\begin{proof} Given a small category $I$, Proposition \ref{DescensoFuntores} implies that $(\mc{C}^I,\mc{W})$ is again a simplicial descent category. Therefore we may assume that $I=[0]$, and we have to prove the statement for a morphism $F$ of $\mc{C}[\mc{W}^{-1}]$.
Consider a simple functor $\mbf{s}:\simp\mc{C}\rightarrow\mc{C}$ on $(\mc{C},\mc{W})$. By Propositions \ref{DlClosedDescenso} \textit{ii} and \ref{DlClBrown}, the relative category $(\simp\mc{C},\mc{S}=\mbf{s}^{-1}\mc{W})$ is a Brown category of 
cof{i}brant objects. In this case we know that
each morphism $T$ of $\simp\mc{C}[\mc{S}^{-1}]$ is represented by a length-two zigzag (see \cite[Theorem 1]{Br}). That is, 
$T=w^{-1}T'$ where $T'$ and $w$ are morphisms of $\simp\mc{C}$ and $w\in\mc{S}$.\\
Consider now a morphism $F:A\dashrightarrow B$ of $\mc{C}[\mc{W}^{-1}]$. Then the constant simplicial morphism  
$c(F)$ in $\simp\mc{C}[\mc{S}^{-1}]$ is $c(F)=w^{-1}T'$ with $T'$ and $w$ as before.
Then $\mbf{s}c(F)=(\mbf{s}(w))^{-1}\mbf{s}(T')$. By (S4) there is an isomorphism
$\lambda:\mbf{s}c\dashrightarrow 1_{\mc{C}}$ of $Fun(\mc{C},\mc{C})[\mc{W}^{-1}]$. It follows that $\lambda_B \,\mbf{s}c(F) = F\, \lambda_A$ in $\mc{C}[\mc{W}^{-1}]$. Then
 $F=\lambda_B(\mbf{s}(w))^{-1}\mbf{s}(T') \lambda_A^{-1}$ and the statement 
holds for $\tau = \lambda_B(\mbf{s}( w))^{-1}$, $F'=\mbf{s}(T')$ and ${\tau}'=\lambda_A^{-1}$.
\end{proof}
 
\begin{obs} One can consider the notion of `realizable' right Grothendieck derivator, consisting of a strict 2-functor $\mbb{D}:cat^{\comp}\rightarrow
\mc{R}el\mc{C}at$ (instead of $cat^{\comp}\rightarrow\mc{C}at$) satisfying the corresponding axioms Der 1',$\cdots$, Der 4'.  
In this context previous theorem would mean that, in the exact coproducts case, the existence of realizable homotopy colimits on $(\mc{C},\mc{W})$, which are 
invariant under homotopy right cof{i}nal changes of diagrams, guarantees that $\mbb{D}(I)=(\mc{C}^I,\mc{W})$ is a realizable right Grothendieck derivator.
This variant of Grothendieck's derivator theory will be developed in an independent work (in preparation), in which we give a 2-categorical reformulation of 
the Dwyer-Hirschhorn-Kan-Smith treatment of homotopy colimits (\cite{DHKS}).
\end{obs}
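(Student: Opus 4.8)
The plan is to verify that the prederivator $\mbb{D}:cat^{\comp}\rightarrow\mc{R}el\mc{C}at$, $I\mapsto(\mc{C}^I,\mc{W})$, satisfies the evident liftings Der 1'$,\dots,$Der 4' of the axioms of Theorem \ref{RightDerivator} to $\mc{R}el\mc{C}at$, obtained by replacing the equivalences, adjunctions and isomorphisms of $\mc{C}at$ with their relative counterparts. The whole point is that, in the present situation, each axiom can be checked \emph{before} localizing, so that Theorem \ref{RightDerivator} is recovered a posteriori by applying the localization 2-functor $\mrm{loc}$ of Lemma \ref{loc2func}. Axioms Der 1' and Der 2' hold essentially by construction: the canonical comparison $\mc{C}^{I\amalg J}\rightarrow\mc{C}^I\times\mc{C}^J$ is already an isomorphism of categories preserving and reflecting pointwise weak equivalences, hence a relative isomorphism and in particular a relative equivalence; and Der 2', which in the realizable framework asserts that a morphism of $\mc{C}^I$ pointwise in $\mc{W}$ lies in $\mc{W}^I$, is nothing but the definition of the pointwise class $\mc{W}^I$.

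Axiom Der 3d' is supplied directly by Theorem \ref{AdjointPair} \textit{ii}: for any $f:I\rightarrow J$ the pair $(f_{!},f^{\ast})$ is a relative adjunction between $(\mc{C}^I,\mc{W})$ and $(\mc{C}^J,\mc{W})$, so that $f^{\ast}$ admits a relative left adjoint in $\mc{R}el\mc{C}at$ with no localization needed. For Der 4d', the key observation is that $f_{!}$ is defined pointwise by the formula (\ref{lKextpointwise}): for each $j\in J$ one has $j^{\ast}f_{!}X=(f_{!}X)(j)=\mtt{hocolim}_{(f/j)}u_j^{\ast}X$, whereas $\pi_{!}u_j^{\ast}X=\mtt{hocolim}_{(f/j)}u_j^{\ast}X$ for the trivial functor $\pi:(f/j)\rightarrow[0]$. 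Thus the base change comparison is an identity on objects; what remains is to check that the 2-morphism $\pi_{!}u_j^{\ast}\dashrightarrow j^{\ast}f_{!}$ produced from the comma square by the relative mate construction coincides with this identification, which one reads off by unwinding the relative adjunction morphisms of Theorem \ref{AdjointPair} together with the invariance under homotopy right cofinal changes of diagrams.

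The main obstacle is therefore not any single one of these verifications, each of which runs parallel to the proof of Theorem \ref{RightDerivator}, but rather the prior task of setting up the 2-categorical framework itself. One must first formulate the axioms Der 1'$,\dots,$Der 4' in $\mc{R}el\mc{C}at$ and, above all, establish a \emph{relative mate calculus}, so that the base change 2-morphism of Der 4d' can be formed at all and shown to be natural and coherent --- the formal theory of relative adjoints developed here (notably Proposition \ref{UniqRelAd}) having been pushed only as far as realizable homotopy colimits required. For this reason the systematic development of this variant, which also recasts the Dwyer--Hirschhorn--Kan--Smith treatment of \cite{DHKS}, is deferred to the companion paper in preparation.
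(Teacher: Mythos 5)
The paper contains no proof of this remark---it explicitly defers the development of realizable derivators to a companion work in preparation---and your sketch follows exactly the route the remark envisages: lift Der~1--Der~4 to $\mc{R}el\mc{C}at$, note Der~1$'$ and Der~2$'$ hold by construction of the pointwise class $\mc{W}^I$, obtain Der~3d$'$ from Theorem \ref{AdjointPair}, read off Der~4d$'$ from the pointwise formula (\ref{lKextpointwise}), and recover Theorem \ref{RightDerivator} a posteriori via the 2-functor $\mrm{loc}$ of Lemma \ref{loc2func}. Your only overstatement is the claim that the base-change 2-morphism of Der~4d$'$ cannot be \emph{formed} without further theory: since $\mc{R}el\mc{C}at$ is already a 2-category and $(f_{!},f^{\ast})$, $(\pi_{!},\pi^{\ast})$ are adjunctions in it, the mate exists as the usual composite with units and counits, and what is genuinely deferred to the companion paper is the verification that this mate is invertible, natural and coherent, which is precisely the gap you correctly identify.
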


\begin{obs} To possess realizable homotopy colimits
allows one to def{i}ne cof{i}ber sequences on $\mc{C}[\mc{W}^{-1}]$ in a natural way, in case $\mc{C}$ is a pointed category. Indeed, we can consider the \textit{cone}
functor $\mc{C}one(f):Fl(\mc{C})\rightarrow \mc{C}$ def{i}ned as 
$$ \mc{C}one(f) = \mtt{hocolim} \{\ast \leftarrow X\stackrel{f}{\rightarrow} Y\} $$ 
and the \textit{suspension}  $\Sigma :\mc{C}\rightarrow\mc{C}$ def{i}ned as $\Sigma X= {C}one (X\rightarrow \ast)$. 
Under the hypothesis of previous theorem, it can be proved that the cof{i}ber sequences produced through these cone and suspension functors endow
$\mc{C}[\mc{W}^{-1}]$ with a left triangulated structure, and that 
$\mc{C}[\mc{W}^{-1}]$ is a Verdier triangulated category in case 
$\Sigma:\mc{C}[\mc{W}^{-1}]\rightarrow \mc{C}[\mc{W}^{-1}]$ is an equivalence of categories.
\end{obs}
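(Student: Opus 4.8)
The plan is to pass to the pointed weak right derivator attached to $(\mc{C},\mc{W})$ and to realize the (left) triangulated structure as the one carried by a pointed derivator whose suspension is induced by homotopy pushouts. By Theorem \ref{RightDerivator} the prederivator $\mbb{D}$, $I\mapsto \mc{C}^I[\mc{W}^{-1}]$, is a weak right derivator, and since $\mc{C}$ is pointed the zero object $\ast$ induces a zero object in each $\mc{C}^I[\mc{W}^{-1}]$ which is preserved by the restriction functors, so $\mbb{D}$ is moreover pointed. First I would record that $\mc{C}one$ and $\Sigma$ descend to $\mc{C}[\mc{W}^{-1}]$: both are instances of $\mtt{hocolim}$ over the span (pushout) shape, which is a realizable homotopy colimit and hence a relative functor, so it descends to the localizations by Lemma \ref{loc2func}. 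The distinguished triangles are then declared to be the diagrams of $\mc{C}[\mc{W}^{-1}]$ isomorphic to a standard cofiber sequence $X\xrightarrow{f}Y\xrightarrow{i}\mc{C}one(f)\xrightarrow{\partial}\Sigma X$, where $i$ and the connecting map $\partial$ are read off from the functoriality of $\mtt{hocolim}$ on the ladder of homotopy pushout squares that computes the iterated cofibers, exactly as in the topological cofiber sequence.

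The first real step is additivity. Finite coproducts exist in $\mc{C}[\mc{W}^{-1}]$ and are preserved by $\gamma$, since the relative category is closed by coproducts; I would show that they are biproducts and that each $\mathrm{Hom}$-set carries a natural abelian group structure, the latter coming from the cogroup structure on suspensions $\Sigma X$ induced by the pinch map $\Sigma X\dashrightarrow \Sigma X\amalg\Sigma X$, which is itself a homotopy pushout construction. This makes $\mc{C}[\mc{W}^{-1}]$ additive and $\Sigma$ an additive endofunctor; it is the standard additivity statement for pointed derivators, here adapted to the one-sided situation using only left Kan extensions.

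The core step is the verification of the axioms of the left triangulated structure. The completion of every morphism to a triangle and the identity triangles are immediate from the definition of $\mc{C}one$. For the rotation axiom one identifies the cofiber of $i:Y\rightarrow\mc{C}one(f)$ with $\Sigma X$: this is the pasting law for homotopy pushouts, as the square computing $\mc{C}one(f)$ and the square computing its cofiber paste to a square whose total cofiber is $\Sigma X$. The pasting and cancellation properties of homotopy pushouts are available here precisely because $\mtt{hocolim}$ and the homotopy left Kan extensions are computed pointwise (Theorem \ref{AdjointPair} and axiom Der 4d). The fill-in axiom follows from the functoriality of $\mtt{hocolim}$ over the span shape together with the universal property of the homotopy pushout in $\mbb{D}$, and the octahedral axiom is obtained from a composite $X\rightarrow Y\rightarrow Z$ by arranging the three associated pushout squares into a $2\times 3$ grid and reading the induced triangle off its total cofiber.

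Finally, in the Verdier case one assumes $\Sigma:\mc{C}[\mc{W}^{-1}]\rightarrow\mc{C}[\mc{W}^{-1}]$ is an equivalence. Then the suspended one-sided structure becomes genuinely triangulated: $\Sigma$ is an autoequivalence, so triangles may be rotated in both directions, and the one-sided octahedron together with invertibility of $\Sigma$ upgrades to Verdier's axioms \textbf{TR1}--\textbf{TR4}. I expect the main obstacle to lie in two places: establishing the additive, and in particular the abelian group, structure without access to homotopy limits, since we only have a weak \emph{right} derivator and cannot directly invoke the stable-derivator machinery that uses both Kan extensions; and the octahedral axiom, whose proof rests entirely on the pasting, cancellation and pointwise-computation properties of homotopy pushouts, which must be extracted carefully from Der 4d.
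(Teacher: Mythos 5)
The paper offers no proof of this remark at all -- it is stated with ``it can be proved that\ldots'' and left unproved -- so there is nothing to compare your sketch against; it has to be judged on its own. Your overall architecture (cofiber sequences from homotopy pushouts in the weak right derivator of Theorem \ref{RightDerivator}, rotation via pasting of pushout squares, octahedron from a grid of squares, Verdier structure once $\Sigma$ is invertible) is the standard and correct route. But your additivity step is a step that would fail. You propose to show, \emph{before} assuming $\Sigma$ is an equivalence, that finite coproducts in $\mc{C}[\mc{W}^{-1}]$ are biproducts and that every $\mathrm{Hom}$-set carries a natural abelian group structure. This is false under the stated hypotheses: take $(\mc{C},\mc{W})$ to be pointed simplicial sets with the weak homotopy equivalences, which satisfies all hypotheses of the remark (it is a simplicial descent category closed by coproducts). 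Its localization is the pointed homotopy category of spaces, where $[S^0,X]=\pi_0(X)$ is a mere pointed set and the wedge is not a product. The pinch map only produces a cogroup structure on objects of the form $\Sigma X$, hence group structures on $\mathrm{Hom}(\Sigma X,-)$, not on arbitrary $\mathrm{Hom}(A,B)$; biproducts require stability. So the ``left triangulated structure'' of the remark must be understood in a non-additive sense -- cofiber sequences $X\to Y\to \mc{C}one(f)\to\Sigma X$ together with the coaction of $\Sigma X$ on $\mc{C}one(f)$, as in Quillen--Brown -- with additivity deferred to the Verdier case, where it is automatic because every object is then isomorphic to a double suspension and the cogroup structures on $\Sigma^2(-)$ yield natural abelian $\mathrm{Hom}$-groups and identify coproducts with products.

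A secondary point you gloss over: the fill-in axiom needs every commutative square of $\mc{C}[\mc{W}^{-1}]$ to be liftable to a morphism of $\mc{C}^{[1]}[\mc{W}^{-1}]$, i.e.\ the underlying-diagram functor $\mc{C}^{[1]}[\mc{W}^{-1}]\rightarrow (\mc{C}[\mc{W}^{-1}])^{[1]}$ must be full and essentially surjective. This is \emph{not} a formal consequence of the weak right derivator axioms Der 1--Der 4d (it is the separate ``strongness'' axiom in derivator theory), so ``functoriality of $\mtt{hocolim}$ over the span shape'' does not by itself deliver the fill-in. In the present setting it can be extracted, but it requires the extra structure the paper actually has available: Lemma \ref{isosLoc} (every morphism of the localization factors as genuine morphism conjugated by isomorphisms) together with the Brown category of cofibrant objects structure on $(\simp\mc{C},\mbf{s}^{-1}\mc{W})$ from Proposition \ref{DlClBrown}, whose calculus-of-fractions and factorization properties give the needed lifting of squares. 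With these two repairs -- a non-additive formulation of the one-sided structure, and an explicit strongness argument -- your outline becomes a viable proof of the remark.
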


\noindent We deduce the well-known fact that a model category produces a Grothendieck derivator (c.f. \cite{C}).

\begin{cor}
Let $(\mathcal{M},\mathcal{W})$ be a model category. Then the prederivator $\mbb{D}:cat^{\comp}\rightarrow \mc{C}at$ def{i}ned as 
$\mbb{D}(I)=\mc{M}^I[W^{-1}]$ is a Grothendieck derivator.
\end{cor}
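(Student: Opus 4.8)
The plan is to deduce the statement from Theorems \ref{MCHCbis} and \ref{RightDerivator}, together with their duals obtained by passing to the opposite model category. First I would record that a model category is bicomplete, so in particular $(\mc{M},\mc{W})$ is closed by coproducts. By Theorem \ref{MCHCbis}, $(\mc{M},\mc{W})$ then admits realizable homotopy colimits which are invariant under homotopy right cof\/inal changes of diagrams, and Theorem \ref{RightDerivator} applies verbatim to give that $\mbb{D}:I\mapsto\mc{M}^I[\mc{W}^{-1}]$ is a weak right derivator. This settles Der 1, Der 2, Der 3d and Der 4d, namely the additivity axiom, the conservativity of the family $\{i^\ast\}_{i\in I}$, the existence of the left adjoints $f_{!}$ of $f^\ast$, and their pointwise computation via the formula (\ref{lKextpointwise}).

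To obtain the remaining half of the derivator axioms I would pass to the opposite model category. Since $\mc{M}$ is complete, $(\mc{M}^{\comp},\mc{W})$ is again a model category, and it is closed by coproducts, these being the products of $\mc{M}$. Applying Theorems \ref{MCHCbis} and \ref{RightDerivator} to $(\mc{M}^{\comp},\mc{W})$, and using the dualization discussed in the Remark following Def\/inition \ref{defiRhocolim}, yields that the prederivator $I\mapsto (\mc{M}^{\comp})^I[\mc{W}^{-1}]=(\mc{M}^{I^{\comp}}[\mc{W}^{-1}])^{\comp}$ is a weak right derivator. Transporting this back along the canonical identif\/ication $\mbb{D}'(I)=\mbb{D}(I^{\comp})^{\comp}$ shows precisely that $f^\ast$ also admits right adjoints $f_{\ast}$, computed pointwise by the dual Kan extension formula; that is, $\mbb{D}$ is a weak left derivator as well. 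Note that Der 1 and Der 2 were already checked inside Theorem \ref{RightDerivator} and are self-dual, so no extra work is needed for them.

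Putting the two halves together, $\mbb{D}$ satisf\/ies Der 1, Der 2, Der 3 and Der 4 in both variances, i.e.\ the existence and pointwise computation of both $f_{!}$ and $f_{\ast}$ for every functor $f$ in $cat$, which are the def\/ining axioms of a Grothendieck derivator. Should the ambient notion of Grothendieck derivator also include the strong axiom Der 5, asserting fullness and essential surjectivity of $\mbb{D}(I\times[1])\to\mbb{D}(I)^{[1]}$, I would verify it directly from the model structure by lifting a morphism of $\mc{M}^I[\mc{W}^{-1}]$ to an honest natural transformation of cof\/ibrant diagrams; this is standard for model categories and can be cited from \cite{C}.

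The main obstacle I anticipate is the bookkeeping of the duality in the second paragraph: one must check that applying the homotopy colimit theory to $\mc{M}^{\comp}$ genuinely produces the right Kan extension half of $\mbb{D}$ rather than a second copy of the left half, that is, that the identif\/ication $\mbb{D}'(I)=\mbb{D}(I^{\comp})^{\comp}$ converts ``weak right derivator'' into ``weak left derivator'' on the nose, with the pointwise formula (\ref{lKextpointwise}) dualizing to the expected overcategory formula for $f_{\ast}$. Everything else is an essentially formal consequence of the two cited theorems.
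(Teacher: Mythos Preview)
Your overall strategy coincides with the paper's: establish that $\mbb{D}$ is a weak right derivator via Theorem \ref{RightDerivator}, then dualize through the opposite model category to obtain the weak left derivator, and combine. However, there is a genuine gap in your first step.

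You write that ``a model category is bicomplete, so in particular $(\mc{M},\mc{W})$ is closed by coproducts.'' But in the paper's terminology, \emph{closed by coproducts} requires not only that $\mc{M}$ have coproducts but also that the class $\mc{W}$ be closed under coproducts. Bicompleteness gives the former, not the latter: arbitrary coproducts of weak equivalences need not be weak equivalences in a general model category. Consequently the hypothesis of Theorem \ref{RightDerivator} is not verified for $(\mc{M},\mc{W})$ as stated, and you cannot invoke it directly.

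This is precisely why the paper does \emph{not} apply Theorem \ref{RightDerivator} to $(\mc{M},\mc{W})$. It applies it instead to $(\mc{M}_c,\mc{W})$, the full subcategory of cofibrant objects, where coproducts of weak equivalences are again weak equivalences (this is recorded at the start of the proof of Proposition \ref{ModSimpl}). The conclusion is then transported to $\mc{M}$ via the pointwise equivalence of categories $\mc{M}^I[\mc{W}^{-1}]\simeq \mc{M}_c^I[\mc{W}^{-1}]$ induced by a functorial cofibrant replacement, which gives an isomorphism of prederivators. Your argument is easily repaired by inserting this detour through $\mc{M}_c$; once that is done, the dualization you describe is exactly what the paper does in its final sentence.
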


\begin{proof} We have that $(\mc{M}_c,\mc{W})$ is a relative category closed by coproducts and a simplicial descent category by Proposition \ref{MCHC}. 
Then by Theorem \ref{RightDerivator} the prederivator associated with $(\mc{M}_c,\mc{W})$ is 
a weak right derivator. A cof{i}brant replacement functor $Q:\mc{M}\rightarrow \mc{M}_c$ gives a natural equivalence of categories 
$\mc{M}^I[\mc{W}^{-1}]\simeq \mc{M}_c^I[\mc{W}^{-1}]$, and then an isomorphism of prederivators. Hence, the prederivator associated with $(\mc{M},\mc{W})$ is also 
a weak right derivator. But it is also a weak left derivator by the dual fact, therefore it is a Grothendieck derivator.
\end{proof}

In the context of colimits, it holds that a functor commuting with coproducts and coequalizers commutes with all homotopy colimits and left Kan extensions.
A homotopical version of this fact is the following

\begin{prop}
 Let $F:(\mc{C},\mc{W})\rightarrow(\mc{D},\mc{W})$ be a relative functor between relative categories closed by coproducts and 
admitting realizable homotopy colimits which are invariant under homotopy right cof{i}nal changes of diagrams. If $F$ commutes with $\simp$-homotopy 
colimits and with homotopy coproducts, then the induced morphism $F:\mc{C}^{I}[\mc{W}^{-1}]\rightarrow\mc{D}^I[\mc{W}^{-1}]$ is a right 
exact morphism of weak right derivators.  
\end{prop}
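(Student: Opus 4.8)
The plan is to deduce right exactness from the fact that, under the stated hypotheses, $F$ commutes with $I$-homotopy colimits for \emph{every} small category $I$, and then to reduce the preservation of arbitrary homotopy left Kan extensions to the pointwise formula for them. First I would record that $F$ induces a strict morphism of prederivators $\phi:\mbb{D}_{\mc{C}}\rightarrow \mbb{D}_{\mc{D}}$, where $\phi_I:\mc{C}^I[\mc{W}^{-1}]\rightarrow \mc{D}^I[\mc{W}^{-1}]$ is the localization of post-composition with $F$ (a relative functor, hence localizable by Lemma \ref{loc2func}). Since post-composition by $F$ commutes on the nose with any restriction $f^{\ast}$, which is pre-composition, one has $\phi_I\, f^{\ast}=f^{\ast}\,\phi_J$ for every $f:I\rightarrow J$, and compatibility with $2$-cells is equally immediate; thus $\phi$ is a strict morphism of prederivators. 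By Theorem \ref{RightDerivator} both $\mbb{D}_{\mc{C}}$ and $\mbb{D}_{\mc{D}}$ are weak right derivators, so $\phi$ is a morphism of weak right derivators, and it remains to prove it is right exact. This means that for each $f:I\rightarrow J$ the canonical comparison $\rho_F^{f}:f_{!}\,\phi_I\dashrightarrow \phi_J\,f_{!}$, the mate of $\phi_I\beta:\phi_I\rightarrow \phi_I f^{\ast}f_{!}=f^{\ast}\phi_J f_{!}$ under the adjunction $f_{!}\dashv f^{\ast}$ of $\mbb{D}_{\mc{D}}$, must be an isomorphism (here $\rho_F^{f}$ extends to general $f$ the notation $\rho_F^I$ of the homotopy colimit case $J=[0]$).

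Next I would invoke Corollary \ref{morphHCC}: since $F$ commutes with $\simp$-homotopy colimits and with homotopy coproducts, it commutes with $K$-homotopy colimits for \emph{every} small category $K$, that is, each $\rho_F^{K}$ is an isomorphism. To exploit this for an arbitrary $f$, I would test $\rho_F^{f}$ pointwise: it is an isomorphism in $Fun(\mc{C}^I,\mc{D}^J)[\mc{W}^{-1}]$ iff each of its components is an isomorphism of $\mc{D}^J[\mc{W}^{-1}]$, and by Der 2 this holds iff $j^{\ast}(\rho_F^{f})$ is an isomorphism of $\mc{D}[\mc{W}^{-1}]$ for every $j\in J$. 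Now Theorem \ref{AdjointPair} provides the pointwise formula $(f_{!}X)(j)=\mtt{hocolim}_{(f/j)}u_j^{\ast}X$ of (\ref{lKextpointwise}), which is precisely the content of Der 4d, giving a natural identification $j^{\ast}f_{!}\simeq \mtt{hocolim}_{(f/j)}\,u_j^{\ast}$. Using the strict equalities $j^{\ast}\phi_J=\phi_{[0]}\,j^{\ast}$ (with $\phi_{[0]}=F$) and $u_j^{\ast}F^{I}=F^{(f/j)}u_j^{\ast}$, the restriction $j^{\ast}(\rho_F^{f})$ is carried, through these identifications, to the comparison $\mtt{hocolim}^{\mc{D}}_{(f/j)}F^{(f/j)}u_j^{\ast}\rightarrow F\,\mtt{hocolim}^{\mc{C}}_{(f/j)}u_j^{\ast}$, namely $\rho_F^{(f/j)}$ precomposed with $u_j^{\ast}$. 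Since $F$ commutes with $(f/j)$-homotopy colimits, this is an isomorphism, and Der 2 then yields that $\rho_F^{f}$ itself is an isomorphism, completing the proof.

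The main obstacle is the coherence step: one must check that the abstract mate $j^{\ast}(\rho_F^{f})$ genuinely coincides, through the Der 4d isomorphisms together with $j^{\ast}\phi_J=F\,j^{\ast}$ and $u_j^{\ast}F^{I}=F^{(f/j)}u_j^{\ast}$, with the concrete comparison $\rho_F^{(f/j)}$. I would dispatch this by uniqueness of mates: both $j^{\ast}f_{!}$ and $\mtt{hocolim}_{(f/j)}u_j^{\ast}$ are relative left adjoints of the same restriction functor (Theorem \ref{AdjointPair} and Der 4d), so by Proposition \ref{UniqRelAd} the comparisons they induce against $\phi$ agree with the canonical $\rho_F^{(f/j)}$. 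As all of this is a formal consequence of the relative adjunction $f_{!}\dashv f^{\ast}$ in $\mc{R}el\mc{C}at$ and of the pointwise computation of homotopy left Kan extensions, no further hypotheses on $(\mc{C},\mc{W})$ or $(\mc{D},\mc{W})$ beyond those assumed are required.
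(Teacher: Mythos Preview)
Your argument is correct. The paper's own proof is a one-liner: it cites \cite[Proposition~2.6]{C}, which states precisely that a morphism of weak right derivators is right exact as soon as it preserves homotopy colimits, and then invokes Corollary~\ref{morphHCC} to conclude that $F$ does preserve all homotopy colimits. You do exactly the same two steps, but instead of quoting Cisinski's result you reprove it: you set up the mate $\rho_F^{f}$, restrict along each $j\in J$, use the pointwise formula (Der~4d / Theorem~\ref{AdjointPair}) to identify $j^{\ast}\rho_F^{f}$ with $\rho_F^{(f/j)}\compc u_j^{\ast}$, and conclude via Der~2. That is precisely how \cite[Proposition~2.6]{C} is proved, so your route and the paper's are the same in substance; yours is simply self-contained where the paper outsources the reduction.

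One small remark on your coherence paragraph: the justification you give via Proposition~\ref{UniqRelAd} is slightly off, since $j^{\ast}f_{!}$ is not itself presented as a relative left adjoint of anything in the paper. The clean way to phrase the step is the standard pasting-of-mates argument: the Der~4d isomorphism is itself a mate (of the $2$-cell $\alpha$ in the comma square), and mates are compatible with pasting, so $j^{\ast}\rho_F^{f}$ transports to $\rho_F^{(f/j)}\compc u_j^{\ast}$ under the Der~4d identifications on both sides. This is purely formal $2$-categorical bookkeeping and does not affect the validity of your argument.
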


\begin{proof} By \cite[Proposition 2.6]{C} it suf{f}{i}ces to see that $F$ preserves homotopy colimits, which holds by corollary \ref{morphHCC}.
\end{proof}

We now particularize previous result to the setting of model categories. If $(\mc{M},\mc{W})$ and $(\mc{N},\mc{W})$ are model categories
and $F:\mc{M}\rightarrow \mc{N}$ preserves weak equivalences between cof{i}brant objects, 
recall that the left derived functor of $F$, $\mbb{L}F:\mc{M}[\mc{W}^{-1}]\rightarrow\mc{N}[\mc{W}^{-1}]$, 
exists and is given by the composition of $F$ with a functorial 
cof{i}brant replacement $Q:\mc{M}\rightarrow\mc{M}$. Then $\mbb{L}F$ is the localization of the relative functor
$FQ:(\mc{M},\mc{W})\rightarrow (\mc{N},\mc{W})$, that we also denote by $\mbb{L}F$.

\begin{cor}\label{FcommutesHocolim} Let $(\mc{M},\mc{W})$ and $(\mc{N},\mc{W})$ be model categories, and $F:\mc{M}\rightarrow \mc{N}$ be a functor that
preserves weak equivalences between cof{i}brant objects. If $\mbb{L}F$ commutes with $\simp$-homotopy colimits and with homotopy coproducts, then $\mbb{L}F$ 
commutes with all homotopy colimits. In other words, $\mbb{L}F^I:\mc{M}^{I}[\mc{W}^{-1}]\rightarrow\mc{N}^I[\mc{W}^{-1}]$ is a right 
exact morphism of derivators.
\end{cor}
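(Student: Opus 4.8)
The plan is to deduce this from the unnamed proposition immediately preceding it by transporting the entire situation to the subcategories of cofibrant objects, where the relevant relative categories are genuinely closed by coproducts. First I would record that, by Theorem \ref{MCHCbis}, both $(\mc{M},\mc{W})$ and $(\mc{N},\mc{W})$ admit realizable homotopy colimits which are invariant under homotopy right cof{i}nal changes of diagrams, so that the phrases ``$\mbb{L}F$ commutes with $\simp$-homotopy colimits'', ``with homotopy coproducts'' and ``with all homotopy colimits'' are meaningful for the relative functor $\mbb{L}F = FQ : (\mc{M},\mc{W})\rightarrow(\mc{N},\mc{W})$. This $\mbb{L}F$ is indeed relative, since $Q$ carries any weak equivalence to a weak equivalence between cof{i}brant objects and $F$ preserves these.

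The obstacle to applying the preceding proposition directly is that in a general model category the class $\mc{W}$ is not closed by arbitrary coproducts, so $(\mc{M},\mc{W})$ fails to be closed by coproducts in the sense demanded there. To circumvent this I would pass to cof{i}brant objects. Writing $i:\mc{M}_c\hookrightarrow\mc{M}$ for the inclusion and $Q':\mc{N}\rightarrow\mc{N}_c$ for the corestriction of the cof{i}brant replacement (so that $iQ'=Q$), set $G = Q'\,F\,i:(\mc{M}_c,\mc{W})\rightarrow(\mc{N}_c,\mc{W})$. This is a relative functor, and by Proposition \ref{MCHC} both $(\mc{M}_c,\mc{W})$ and $(\mc{N}_c,\mc{W})$ are relative categories closed by coproducts admitting realizable homotopy colimits invariant under homotopy right cof{i}nal changes.

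Next I would invoke the relative adjoint equivalences $i:(\mc{M}_c,\mc{W})\rightleftarrows(\mc{M},\mc{W}):Q'$ and $i:(\mc{N}_c,\mc{W})\rightleftarrows(\mc{N},\mc{W}):Q'$ built in the proof of Theorem \ref{MCHCbis}. Under these the realizable homotopy colimits on $\mc{M},\mc{N}$ correspond to those on $\mc{M}_c,\mc{N}_c$, and $\mbb{L}F = FiQ'$ agrees with $iGQ'$ in $Fun(\mc{M},\mc{N})[\mc{W}^{-1}]$ through the natural weak equivalence $iQ'\dashrightarrow 1$. Since ``commuting with $I$-homotopy colimits'' asserts invertibility of $\rho^I$ in a localized functor category and is insensitive to replacing functors by relatively isomorphic ones, $\rho_{\mbb{L}F}^I$ is an isomorphism if and only if $\rho_G^I$ is. Hence the hypotheses on $\mbb{L}F$ transfer to $G$, which therefore commutes with $\simp$-homotopy colimits and with homotopy coproducts.

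With these reductions in place the preceding proposition, whose proof rests on Corollary \ref{morphHCC}, applies verbatim to $G$, showing that $G$ commutes with all homotopy colimits and that $G:\mc{M}_c^I[\mc{W}^{-1}]\rightarrow\mc{N}_c^I[\mc{W}^{-1}]$ is a right exact morphism of weak right derivators. Transporting back along the isomorphisms of prederivators $\mc{M}^I[\mc{W}^{-1}]\simeq\mc{M}_c^I[\mc{W}^{-1}]$ and $\mc{N}^I[\mc{W}^{-1}]\simeq\mc{N}_c^I[\mc{W}^{-1}]$ induced by $Q$, under which $\mbb{L}F^I$ corresponds to $G$, then gives that $\mbb{L}F^I$ is right exact; as the prederivator attached to a model category is a genuine Grothendieck derivator (as shown above), this is exactly the asserted conclusion. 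The one point demanding care, and the only place where anything beyond formal bookkeeping occurs, is the compatibility of these two relative equivalences with the homotopy-colimit structure, namely that the identif{i}cation $\mbb{L}F\simeq iGQ'$ intertwines $\rho_{\mbb{L}F}^I$ and $\rho_G^I$; once this is verif{i}ed, everything else is automatic.
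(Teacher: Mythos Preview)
Your argument is correct and proceeds along a genuinely different route from the paper's. The paper does \emph{not} transport to $\mc{M}_c$ and invoke the preceding proposition; instead it works directly on $(\mc{M},\mc{W})$ and $(\mc{N},\mc{W})$ by unwinding the explicit formula $\mtt{hocolim}_I \simeq \mtt{hocolim}_{\simp}\amalg^I Q$ from Remark~\ref{OtroHocolim}. Concretely, the paper first identifies the homotopy coproduct on a model category as $\amalg_{\Lambda}Q$, so that the hypothesis on coproducts yields a zigzag of pointwise weak equivalences $\theta:(\amalg^I Q)(FQ)\dashleftarrow\bullet\dashrightarrow(FQ)(\amalg^I Q)$; it then observes that $\rho_{\mbb{L}F}^I$ factors as $\mtt{hocolim}_{\simp}(\theta)$ followed by an instance of $\rho_{\mbb{L}F}^{\simp}$, and both pieces are isomorphisms by hypothesis.

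Your reduction to $\mc{M}_c$ is more conceptual: you isolate the one obstruction to applying the preceding proposition (closure of $\mc{W}$ under coproducts), remove it by passing to cof{i}brant objects, and let Corollary~\ref{morphHCC} do the work. The price is the bookkeeping you flag at the end---checking that the comparison maps $\rho^I$ match under the relative equivalences $(i,Q')$---which is a mates argument in $\mc{R}el\mc{C}at$. The paper's approach trades that formal 2-categorical step for an explicit manipulation of the $\amalg^I Q$ formula, in effect reproving Corollary~\ref{morphHCC} with a cof{i}brant replacement threaded through. Either way the substance is the same decomposition $\mtt{hocolim}_I\simeq\mtt{hocolim}_{\simp}\amalg^I$; the two proofs differ only in whether the passage to cof{i}brant objects is made once at the level of categories or repeatedly at the level of formulas.
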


\begin{proof} First of all, we observe that realizable homotopy coproducts on a model category are the composition of the usual coproduct with a cosimplicial 
replacement. Given a discrete category $\Lambda$, arguing as in the proof of Theorem \ref{MCHCbis} we deduce that 
$\mtt{hocolim}_{\Lambda}^{\mc{M}} = \mtt{hocolim}_{\Lambda}^{\mc{M}_c} Q$. Since on $(\mc{M}_c,\mc{W})$ the homotopy coproducts are the coproducts, then
$\mtt{hocolim}_{\Lambda}^{\mc{M}} = \amalg_{\Lambda} Q$ as stated. In addition, the adjunction relative natural transformations for 
$(\amalg_{\Lambda} Q , c_{\Lambda})$ are obtained combining those of $(\amalg_{\Lambda}, c_{\Lambda})$ on $\mc{M}_c$ with the natural weak equivalence
$\epsilon:Q\rightarrow 1_{\mc{M}}$. It follows that the canonical relative natural transformation 
$(\amalg_{\Lambda} Q) (F Q) \dashrightarrow (F Q) (\amalg_{\Lambda} Q)$ is a length-two zigzag 
$$(\amalg_{\Lambda} Q) (F Q) \stackrel{\sim}{\longleftarrow} T \longrightarrow   (F Q) (\amalg_{\Lambda} Q)$$
If $\mbb{L}F = F Q$ commutes with homotopy coproducts, the above zigzag is an isomorphism of $\mc{R}el\mc{C}at$, which means that 
$T \rightarrow   (F Q) (\amalg_{\Lambda} Q)$ is a pointwise weak equivalence as well. 
Given a small category $I$, we have the induced zigzag of pointwise weak equivalences
$$\theta: (\amalg^I Q) (FQ) \stackrel{\sim}{\longleftarrow} \amalg^I T \stackrel{\sim}{\longrightarrow} (FQ) (\amalg^I Q) $$
As we saw in Remark \ref{OtroHocolim}, realizable homotopy colimits on $\mc{M}$ and $\mc{N}$
are given by the formula $\mtt{hocolim}_{\simp} \amalg^I Q$.
To f{i}nish, the canonical relative natural transformation $\mtt{hocolim}_I^{\mc{N}}\,F Q\dashrightarrow FQ \, \mtt{hocolim}_I^{\mc{M}}$ factors as 
$$\xymatrix@M=4pt@H=4pt@C=45pt{ {} \mtt{hocolim}_{\simp}\amalg^I Q \,FQ  \ar@{-->}[r]^-{\mtt{hocolim}_{\simp}\theta} & \mtt{hocolim}_{\simp}  FQ \amalg^I Q
\ar@{-->}[r]  & FQ \,  \mtt{hocolim}_{\simp} \amalg^I Q}$$
and hence it is an isomorphism of $\mc{R}el\mc{C}at$.
\end{proof}



\begin{thebibliography}{99}

\bibitem[B]{B} M. Barr, {\it Acyclic models},  CRM Monograph Series, \textbf{17}. Amer. Math. Soc., Providence, 2002.

\bibitem[BaK]{BaK} {\small C. Barwick and D. M. Kan, \textit{Relative categories: another model for the homotopy theory of homotopy theories}, to appear 
in Indag. Math. DOI: 10.1016/j.indag.2011.10.002.}

\bibitem[BK]{BK} A. K. Bousf{i}eld and D. M. Kan, \textit{Homotopy limits, completions and localizations}, Lect. Notes in Math. \textbf{304}, Springer, Berlin
(1972).

\bibitem[Br]{Br} K. S. Brown, {\it Abstract homotopy theory and generalized sheaf cohomology}, Trans. Amer. Math. Soc. \textbf{186} (1974), p. 419-458.

\bibitem[C]{C} D. C. Cisinski, {\it Images directes cohomologiques dans les cat{\'e}gories de mod{\`e}les}, Ann. Math. Blaise Pascal, Vol. \textbf{10}, No. 2 (2003), p. 195-244.

\bibitem[Du]{Du} D. Dugger, \textit{A primer on homotopy colimits}, preprint 2008. Available at\\ 
{http://math.uoregon.edu/~ddugger/hocolim.pdf}

\bibitem[DHKS]{DHKS} W. G. Dwyer, P. S. Hirschhorn, D. M. Kan, and J. H. Smith, \textit{Homotopy limit functors
on model categories and homotopical categories}, Math. Surveys and Monographs \textbf{113}, AMS, Providence, RI (2004).

\bibitem[GZ]{GZ} P. Gabriel and M. Zisman, {\it Calculus of fractions and homotopy theory}, Springer, Berlin 1967.

\bibitem[Ga]{Ga} N. Gambino, {\it Weighted limits in simplicial homotopy theory}, J. Pure and Appl. Algebra \textbf{214} no. 7 (2010), p. 1193-1199.

\bibitem[Gr]{Gr} A. Grothendieck, {\it Les d\'erivateurs}, unpublished manuscript, 1990. Available at\newline 
http://www.math.jussieu.fr/~maltsin/groth/Derivateurs.html

\bibitem[GN]{GN} F. Guill\'{e}n and V. Navarro Aznar, \textit{Un Crit\`{e}re d'extension des foncteurs d\'{e}f{i}nis sur les sch\'{e}mas lisses}, 
Publ. Math. I.H.E.S., \textbf{95} (2002), 1-91.

\bibitem[H]{H} P. S. Hirschhorn, \textit{Model categories and their localizations}, Math. Surveys and Monographs, \textbf{99}, Amer. Math. Soc., Providence (2002).

\bibitem[I]{I} L. Illusie, \textit{Complexe cotangent et d{\'e}formations II}, Lect. Notes in Math., \textbf{283}, Springer, Berlin (1972).

\bibitem[May]{May} J. P. May, {\it Simplicial objects in Algebraic Topology}, Van Nostrand, Princeton, (1967).

\bibitem[Q]{Q} D. Quillen, \textit{Higher algebraic K-theory I}, in Higher K-theories, Lect. Notes in Math., \textbf{341}, Springer, Berlin (1973), p. 85-147.

\bibitem[R]{R} B. Rodr\'{\i}guez-Gonz\'{a}lez, \textit{Simplicial descent categories}, J. Pure and Appl. Algebra \textbf{216} no. 4, (2012). p. 775-788.

\bibitem[RII]{RII} B. Rodr\'{\i}guez-Gonz\'{a}lez, \textit{A derivability criterion based on the existence of adjunctions}, 2012. Available at {arXiv:1202.3359}. 

\bibitem[RB]{RB} A. Radulescu-Banu, \textit{Cof{i}brations in homotopy theory}, preprint 2009. Available at {arXiv:0610009}.

\bibitem[T]{T} R. W. Thomason, {\it Algebraic K-theory and etale cohomology},  Ann. Sci. de L'E.N.S 18, (1985), p. 437-552.

\bibitem[V]{V} V. Voevodsky, {\it Simplicial radditive functors}, Journal of K-theory, \textbf{5} (2010), p. 201-244.

\bibitem[W]{W} C. Weibel, {\it Homotopy ends and Thomason model categories}, Sel. math., New ser. \textbf{7} (2001), p. 533-564.

\end{thebibliography}
\end{document}